\numberwithin{equation}{section}
\theoremstyle{plain}
\newtheorem{theorem}{Theorem}[section]
\newtheorem{corollary}[theorem]{Corollary}
\newtheorem{lemma}[theorem]{Lemma}
\newtheorem{proposition}[theorem]{Proposition}
\newtheorem{conjecture}[theorem]{Conjecture}
\theoremstyle{definition}
\newtheorem{remark}[theorem]{Remark}
\theoremstyle{remark}
\newcommand{\OO}{\mathcal O}
\newcommand{\A}{\mathbb{A}}
\newcommand{\R}{\mathbb{R}}
\newcommand{\Q}{\mathbb{Q}}
\newcommand{\Z}{\mathbb{Z}}
\newcommand{\ord}{\operatorname{ord}}
\newcommand{\GSpin}{\operatorname{GSpin}}
\newcommand{\Gspin}{\operatorname{GSpin}}
\newcommand{\SL}{\operatorname{SL}}
\newcommand{\PSL}{\operatorname{PSL}}
\newcommand{\cha}{\operatorname{Char}}
\newcommand{\fff}{\operatorname{if }}
\newcommand{\Ind}{\operatorname{Ind}}
\newcommand{\pmat}[4]{\begin{pmatrix}
                 #1 & #2\\
                 #3 & #4
\end{pmatrix}}
\newcommand{\smat}[4]{\left(\begin{smallmatrix}
                 #1 & #2\\
                 #3 & #4
\end{smallmatrix}\right)}
\newcommand{\kzxz}[4]{\left(\begin{smallmatrix} #1 & #2 \\ #3 & #4\end{smallmatrix}\right) }
\newcommand{\lp}{\left (}
\newcommand{\rp}{\right )}
\newcommand{\Fcr}{{\mathscr{F}}}
\newcommand{\Fc}{{\mathcal{F}}}
\newcommand{\Oc}{{\mathcal{O}}}
\newcommand{\Sc}{{\mathcal{S}}}
\newcommand{\Zb}{\mathbb{Z}}
\newcommand{\Qb}{\mathbb{Q}}
\newcommand{\SO}{{\mathrm{SO}}}
\newcommand{\GL}{{\mathrm{GL}}}
\newcommand{\sgn}{{\mathrm{sgn}}}
\newcommand{\df}{\mathfrak{d}}
\newcommand{\ef}{\mathfrak{e}}
\newcommand{\wf}{\mathfrak{w}}
\newcommand{\vf}{\mathfrak{v}}
\newcommand{\bvf}{\overline{\mathfrak{v}}}
\newcommand{\pf}{\mathfrak{p}}
\newcommand{\Nm}{{\mathrm{Nm}}}
\newcommand{\Ab}{\mathbb{A}}
\newcommand{\Db}{\mathbb{D}}
\newcommand{\Nb}{\mathbb{N}}
\newcommand{\Hb}{\mathbb{H}}
\newcommand{\Rb}{\mathbb{R}}
\newcommand{\Cb}{\mathbb{C}}
\newcommand{\Wb}{\mathbb{W}}
\newcommand{\Gm}{\mathbb{G}_m}
\newcommand{\ebf}{{\mathbf{e}}}
\newcommand{\tf}{\tilde{f}}
\newcommand{\tH}{{\tilde{H}}}
\newcommand{\tQ}{\tilde{Q}}
\newcommand{\tx}{\tilde{x}}
\newcommand{\tr}{\operatorname{Tr}}
\newcommand{\norm}{\operatorname{N}}
\newcommand{\Gal}{\operatorname{Gal}}
\newcommand{\Spin}{\operatorname{Spin}}
\newcommand{\taub}{{\boldsymbol \tau}}
\newcommand{\ub}{{\boldsymbol u}}
\newcommand{\vb}{{\boldsymbol v}}
\newcommand{\mut}{{t}}
\newcommand{\vc}{\varrho}
\newcommand{\ve}{\varepsilon}
\newcommand{\e}{\epsilon}
\newcommand{\tvc}{\tilde{\vc}}
\newcommand{\tI}{\tilde{\mathcal{I}}}
\newcommand{\Ic}{\mathcal{I}}
\newcommand{\vt}{\vartheta}
\renewcommand{\a}{a}
\newcommand{\ab}{\mathrm{ab}}
\newcommand{\RC}{\mathrm{RC}}
\newcommand{\tRC}{\widetilde{\mathrm{RC}}}
\newcommand{\slf}{{\mathfrak{sl}}}
\newcommand{\spf}{{\mathfrak{sp}}}
\newcommand{\half}{{\tfrac{1}{2}}}
\newcommand{\Ps}{\mathscr{P}}
\newcommand{\zf}{\mathfrak{z}}
\newcommand{\Ff}{\mathfrak{F}}
\newcommand{\Vo}{V_{\circ}}
\newcommand{\vo}{v_{\circ}}
\newcommand{\vol}{\mathrm{vol}}
\newcommand{\tc}{\tilde{c}}
\newcommand{\Ec}{\mathcal{E}}
\newcommand{\tW}{\tilde{W}}
\newcommand{\fac}{\frac{\pi}{3}}
\newcommand{\facinv}{\frac{3}{\pi}}
\newcommand{\TT}{T^\Delta}
\newcommand{\Qip}{\mathbb{Q}(\zeta_{p^\infty})}
\newcommand{\Qab}{\mathbb{Q}^{\mathrm{ab}}}
\newcommand{\reg}{\mathrm{reg}}
\newcommand{\Diff}{\mathrm{Diff}}
\newcommand{\V}{\mathrm{V}}
\newcommand{\tkappa}{\tilde{\kappa}}
\newcommand{\Vc}{\mathrm{V}_\circ}
\newcommand{\Lc}{L_\circ}
\newcommand{\vv}{\mathrm{vv}}
\newcommand{\prin}{\mathrm{prin}}
\newcommand{\oneb}{\mathds{1}}
\newcommand{\error}{\mathrm{error}}
\newcommand{\alv}{{\alpha^\vee}}
   \def\MR#1{}
\begin{document}
\title{Deformations of Theta Integrals and A Conjecture of Gross-Zagier}
\author[J.~Bruinier]{Jan H.~ Bruinier}
\author[Y.~Li]{Yingkun Li}
\address{Fachbereich Mathematik,
Technische Universit\"at Darmstadt, Schlossgartenstrasse 7, D--64289
Darmstadt, Germany}
\email{bruinier@mathematik.tu-darmstadt.de}
\email{li@mathematik.tu-darmstadt.de}

\author[T.~Yang]{Tonghai Yang}
\address{Department of Mathematics, University of Wisconsin Madison, Van Vleck Hall, Madison, WI 53706, USA}
\email{thyang@math.wisc.edu}
\subjclass[2020]{11F37, 11F41, 11G15.}
\thanks{The third author was partially supported by UW-Madison Kelley Mid-Career Award.  }

\begin{abstract}
  In this paper, we complete the proof of the conjecture of Gross and Zagier concerning algebraicity of higher Green functions at a single CM point on the product of modular curves.
  The new ingredient is an analogue of the incoherent Eisenstein series over a real quadratic field, which is constructed as the Doi-Naganuma theta lift of a deformed theta integral on hyperbolic 1-space.
\end{abstract}
\maketitle

 \makeatletter
 \providecommand\@dotsep{5}
 \def\listtodoname{List of Todos}
 \def\listoftodos{\@starttoc{tdo}\listtodoname}
 \makeatother

\tableofcontents
\section{Introduction}
Just over half a century ago, Doi and Naganuma discovered a Hecke-equivariant lifting map from a weight $k$ elliptic modular form to a weight $(k, k)$ Hilbert modular form for a real quadratic field $F$ \cite{DN70}.
This is a special case of cyclic base change \cite{JL70}, which has now become a basic and useful tool in the theory of automorphic forms and automorphic representations.
By the exceptional isogeny
\begin{equation}
  \label{eq:isog}
 \mathrm{O}(2, 2) \sim \mathrm{Res}_{F/\Qb} \SL_2,
\end{equation}
the Doi-Naganuma lifting is also an instance of a theta lifting from $\SL_2$ to $\mathrm{O}(2, 2)$ \cite{Kudla78}.

\subsection{A Problem posed by Gross and Zagier}
In the seminal paper \cite{GZ86}, Gross and Zagier proved their formula relating the central derivative of some Rankin-Selberg $L$-function attached to a weight 2 level $N$ newform $f$ and the N\'eron-Tate height pairing of $f$-isopytic components of Heegner points in the Jacobian of the modular curve $X_0(N)$.
This was extended in \cite{GKZ87} to describe the positions of these Heegner points in the Jacobian using Fourier coefficients of modular forms.
In the degenerate case $N = 1$, the Gross-Zagier formula yields a beautiful factorization formula of the norm of differences of singular moduli \cite{GZ85}.

To calculate the archimedean contribution to the height pairings, one requires the  automorphic Green function
\begin{equation}
  \label{eq:Gs}
  \begin{split}
    G^{\Gamma_0(N)}_s(z_1, z_2) &:= -2 \sum_{\gamma \in \Gamma_0(N)} Q_{s-1} \lp 1 + \frac{|z_1 - \gamma z_2|^2}{2 \Im(z_1) \Im(\gamma z_2)}\rp,~ \Re(s) > 1, \\
    Q_{s-1}(t) &:= \int^\infty_0 ( t + \sqrt{t^2 - 1} \cosh (u))^{-s} du
  \end{split}
\end{equation}
on $X_0(N) \times X_0(N)$.
It is an eigenfunction with respect to the Laplacians in $z_1$ and $z_2$ with eigenvalue $s(1-s)$.
The function vanishes when one of the $z_i$ approaches the cusps, and has a logarithmic  singularity along the diagonal.
In fact, these properties characterize it uniquely.
Using Hecke operators acting on either $z_1$ or $z_2$, we can define
\begin{equation}
  \label{eq:Gm}
  \begin{split}
    G^{\Gamma_0(N), m}_{s}(z_1, z_2)
    &:= \sum_{\gamma \in \Gamma_0(N) \backslash R_N,~ \det(\gamma) = m} G_s^{\Gamma_0(N)}(z_1, \gamma z_2)\\
    &= G^{\Gamma_0(N)}_s(z_1, z_2) \mid T_{m, z_1} = G^{\Gamma_0(N)}_s(z_1, z_2) \mid T_{m, z_2},
  \end{split}
\end{equation}
where $R_N := \{\smat{a}{b}{Nc}{d}: a, b, c, d \in \Zb\}$.
Then $G^{\Gamma_0(N), m}_s$ has a logarithmic singularity along the $m$-th Hecke correspondence $T_m \subset X_0(N)^2$ (see (1.2) in Chapter II of \cite{GZ86}).

For integral parameters $s = r + 1 \in \Nb_{\ge 2}$, these functions are called \textit{higher Green functions}.
In Section V.1 of \cite{GKZ87}, two problems about these functions were raised.
The first one was to give an interpretation of their values at Heegner points as archimedean contributions of certain higher weight height pairings.
This was answered by Zhang in \cite{Zhang97} (see also \cite{Xue10}), where the N\'eron-Tate height pairing of Heegner points is replaced by the arithmetic intersection of Heegner cycles on Kuga-Sato varieties.

The second problem dealt with the algebraicity of higher Green functions at a single CM point.
Let $M^{!, \infty}_{-2r}(\Gamma_0(N))$ be the space of weakly holomorphic modular forms for $\Gamma_0(N)$ of weight $-2r < 0$ with poles only at the cusp infinity (see \eqref{eq:M!!}).
Given $f = \sum_{m \gg -\infty} c(m) q^m \in M^{!, \infty}_{-2r}(\Gamma_0(N))$, we call the following linear combination of  higher Green functions
\begin{equation}
  \label{eq:Grf}
    G^{\Gamma_0(N)}_{r+1, f}(z_1, z_2) := \sum_{m \in \Nb} c(-m) m^{r} G^{\Gamma_0(N), m}_{r+1}(z_1, z_2)
\end{equation}
the \textit{principal higher Green function associated to $f$}.
Along the divisor
$$
Z_f := \sum_{m \ge 1,~ c(-m) \neq 0} T_m. %
$$
the function $G_{r+1, f}^{\Gamma_0(N)}$ has a logarithmic singularity.
Using Serre duality, this function is the same as the higher Green function defined via relations in Section V.4 of \cite{GZ86} (see Remark \ref{rmk:relation}).
We say that it is \textit{rational} when $f$ has rational Fourier coefficients at the cusp infinity.
Even though the theory of complex multiplication does not directly apply as in the case of automorphic Green functions, the value of a rational, principal higher Green function $G^{\Gamma_0(N)}_{r+1, f}$ at a single CM point on $X_0(N)\times X_0(N)$ should be algebraic in nature, predicted by the following conjecture (see e.g.\ \cite{Mellit08} and \cite{Via11}).

\begin{conjecture}
  \label{conj:GZ}
  Suppose $f \in M_{-2r}^{!, \infty}(\Gamma_0(N))$ has rational Fourier coefficients at the cusp infinity. Then for any CM point $(z_1, z_2) \in X_0(N)^2 \backslash Z_f$ with $z_j$ having discriminant $d_j < 0$, there exists $\alpha = \alpha(z_1, z_2) \in \overline{\Qb} \subset \Cb$ such that
  $$
G^{\Gamma_0(N)}_{r+1, f}(z_1, z_2) = |d_1d_2|^{-r/2} \log |\alpha|.
  $$
\end{conjecture}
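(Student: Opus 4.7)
The plan is to realize the principal higher Green function $G^{\Gamma_0(N)}_{r+1,f}$ as a regularized theta lift of $f$ from $\SL_2$ to $\mathrm{O}(2,2)$ via the isogeny \eqref{eq:isog}, evaluate this lift at the CM point, and reinterpret the answer through the see-saw with the real quadratic field $F = \Qb(\sqrt{d_1 d_2})$ (which is real quadratic since $d_1, d_2 < 0$, assuming $d_1 d_2$ is not a square; the square case, where $K_1 = K_2$, must be handled separately). The central difficulty is that a CM point cuts out a positive definite rank-two quadratic subspace, so the restriction of the Siegel theta kernel is \emph{coherent} and its pairing with $f$ vanishes identically; the new idea advertised in the abstract is to introduce a one-parameter deformation of the theta integral whose derivative at $s = 0$ carries the arithmetic content.

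First I would use the Hecke-equivariance in \eqref{eq:Gm} together with Serre duality to reduce to evaluating $G_{r+1,f}$ at a single small CM point. Next, following the Borcherds-Bruinier framework, I would represent $G^{\Gamma_0(N)}_{r+1,f}$ as a regularized theta integral of $f$ against a Siegel theta kernel for the signature-$(2,2)$ space $M_2(\Qb)$ on $\Gamma_0(N)\backslash\Hb$. Restricting this kernel to the chosen CM point decomposes the ambient quadratic lattice orthogonally as a positive definite binary part coming from the $F$-structure (producing a classical Hilbert theta series) plus a split hyperbolic binary space, so that $G_{r+1,f}(z_1,z_2)$ rewrites as the regularized pairing of $f$ with a theta integral over the hyperbolic plane twisted by that definite Hilbert theta function.

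The key new step is to construct a deformation of this indefinite theta integral on hyperbolic space depending on a complex parameter $s$, so that its Doi-Naganuma lift to $F$ behaves like an incoherent Hilbert Eisenstein series: vanishing at $s = 0$, with derivative given by an explicit Fourier expansion of Whittaker type. Pairing $f$ against this deformation and differentiating at $s = 0$ would then express $G_{r+1,f}(z_1, z_2)$ as a sum over totally positive $t \in F$ of $c(-\Nm(t))$ times the $t$-th Fourier coefficient of the derivative of the incoherent Eisenstein series. The main obstacle, which should occupy the bulk of the paper, will be to identify each such Fourier coefficient explicitly as $\log |\alpha_t|$ with $\alpha_t \in \overline{\Qb}$: local Whittaker integrals at every finite place must be computed in closed form and their $s$-derivatives shown to yield logarithms of absolute values of local norms, while the archimedean factor must be matched against the higher Legendre functions $Q_{s-1}$ appearing in \eqref{eq:Gs}, producing the normalization $|d_1 d_2|^{-r/2}$ in the process. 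Once the coefficients are pinned down, rationality of the $c(-m)$ consolidates everything into a single algebraic number $\alpha$ and delivers the conjectured identity.
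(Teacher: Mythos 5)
Your high-level plan — realize the higher Green function as a regularized $\mathrm{O}(2,2)$ theta lift, pass through the isogeny $\mathrm{O}(2,2)\sim\mathrm{Res}_{F/\Qb}\SL_2$, and produce a deformed theta integral whose Doi--Naganuma lift plays the role of an incoherent Hilbert Eisenstein series — does capture the main new object of the paper, namely the form $\tI$. However, there is a structural gap that your outline cannot close: the theta-lift/Eisenstein-series mechanism you describe only ever gives the \emph{averaged} CM value $\Phi_f^r(Z(W))$ over the full CM cycle attached to a fixed torus (or, after the incoherent-Eisenstein-series differentiation, the sum $\Phi_f^r(Z(W))+(-1)^r\Phi_f^r(Z(W)')$ over $F$-conjugates). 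What the paper's new construction $\tI$ buys is the \emph{antisymmetrized} combination $\Phi_f^r(Z(W))-(-1)^r\Phi_f^r(Z(W)')$, which, together with the symmetrized one (already in \cite{BEY21}), yields $\Phi_f^r(Z(W))$ itself, i.e.\ an average over a single Galois orbit over $F$. Conjecture \ref{conj:GZ} is about the value at a \emph{single} CM point $(z_1,z_2)$, and no theta-integral argument of Siegel--Weil type localizes to a single $h\in T_W(\hat\Qb)$. The paper bridges this by combining Theorem \ref{thm:factor} with the main result of \cite{Li21}, which independently controls the \emph{difference} $\Phi_f^r([z_0,h])-\Phi_f^r([z_0,h'])$ of two single-point values. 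Your proposal omits this complementary ingredient entirely and therefore, even if carried out, would prove only the averaged version.

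Two smaller but substantive issues. First, you assert that restricting the theta kernel to a CM point makes the pairing with $f$ ``vanish identically'': it does not (the Green function has a nontrivial CM value), and the role of coherence is different — the averaged theta integral becomes a \emph{coherent} Eisenstein series by Siegel--Weil, while the \emph{incoherent} one only arises after an $s$-derivative in the degenerate principal series. Second, your description of the deformation as a global one-parameter family with a vanishing value and a useful $s$-derivative misdescribes the mechanism: the deformed theta integral is the lift of a non-automorphic function $\tvc_C$ on $H_1$ (essentially $\log t$ times a character), and its lowering-operator image acquires an unavoidable error term equal to a theta function on $V_1$ (see \eqref{eq:diff1}). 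That error term propagates into the differential equation satisfied by $\tI$ and must be shown to contribute only a rational number — this is done via the rationality of a Millson theta lift (Propositions \ref{prop:millson} and \ref{prop:rational}), a genuinely new and nontrivial piece of the argument that your sketch does not anticipate. Finally, the suggested reduction ``by Hecke-equivariance and Serre duality to a single small CM point'' has no analogue in the paper and would not work: Hecke operators permute CM points within a Galois orbit but cannot reduce the algebraicity statement for an arbitrary CM point to one privileged point.
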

Over the years, there have been a lot of partial results towards this conjecture.
When $d_1d_2$ is a perfect square, this conjecture was proved in \cite{Zhang97} conditional on the non-degeneracy of the height pairing of CM cycles.
Using regularized theta liftings, an analytic proof was given in \cite{Via11} with restrictions on $N, d_j$ and later in  full generality in \cite{BEY21}.
When $d_1d_2$ is not a perfect square, less was known before.
For $N = 1, z_1 = i$ and $r = 1$, Mellit proved the conjecture in his thesis \cite{Mellit08} using an algebraic approach.
When one averages over the full Galois orbit of the CM point $(z_1, z_2)$, the conjecture follows from \cite{GKZ87} for $r$ even.
More partial results are available when one averages over different Galois orbits \cite{Li18, BEY21} when $N = 1$.

Motivated by Conjecture \ref{conj:GZ}, the first and third author, together with S.\ Ehlen, considered its generalization to the setting of orthogonal Shimura varieties in \cite{BEY21}.
More precisely, let $\V$ be a rational quadratic space of signature $(n, 2)$ with $n \ge 1$, and $X_K$ be the Shimura variety associated to $\tH_\V := \GSpin(\V)$ and
a compact open subgroup $K \subset \tH_\V(\hat\Qb)$.
For a non-negative integer $r$ and a vector-valued harmonic Maass form $f$ of weight $1- n/2 -2r$, denote by $\Phi_{f}^r$ its regularized theta lift (see \cite{Bruinier02} or equation \eqref{eq:Phijint}).
This function is an eigenfunction of the Laplacian on $X_K$ and has a logarithmic  singularity along the special divisor $Z_f$ associated to $f$ (see \eqref{eq:Zf}).
We call it a \textit{higher Green function} on $X_K$, and say that it is \textit{principal}, resp.\ \textit{rational}, if $f$ is weakly holomorphic, resp.\ has rational principal part Fourier coefficients.
When $\V = M_2(\Qb)$ and $X_K = X_0(N)^2$, the function $\Phi_{f}^r$ becomes $G^{\Gamma_0(N)}_{r + 1, f}$ (see Corollary \ref{cor:1}).


For  a totally real field $F$ of degree $d$ and  an $F$-quadratic space $W = E$ with  $E/F$ a quadratic CM extension, suppose there is an isometric embedding
\begin{equation}
  \label{eq:isometry}
  W_\Qb := \mathrm{Res}_{F/\Qb}W \hookrightarrow \V,
\end{equation}
which in particular implies that $n+2 \ge 2d$.
Then we obtain a CM cycle $Z(W)$ on $X_K$ from a torus $T_W$ in $\tH_\V$ (see section \ref{subsec:CM} for details).
Note that $Z(W)$ is defined over $F$, and is the big CM cycle $Z(W, z_0^\pm)$ in \cite{BKY12}.
We denote $Z(W)_\Qb$
the union of the $F$-conjugates of $Z(W)$.
If $F$ is quadratic, we write $Z(W)_\Qb = Z(W) \cup Z(W)'$.
%

In \cite{Li23}, the second author studied the algebraicity of the \textit{difference} of a rational, principal $\Phi^r_f$ at two CM points in $Z(W)$, and was able to verify the analogue of Conjecture \ref{conj:GZ} in that setting.
This opens up the possibility of proving Conjecture \ref{conj:GZ} when one proves an algebraicity result for the averaged value $\Phi^r_f(Z(W))$.
In this paper, we complete this step by proving the following result complementary to \cite{Li23}.

\begin{theorem}[Algebraicity and Factorization]
  \label{thm:factor}
  Let $\Phi_{f}^r$ be a rational, principal higher Green function on $X_K$.
  Suppose that $E/\Qb$ is a biquadratic CM number field with the real quadratic field $F = \Qb(\sqrt{D})$, and $Z(W) \cap Z_f = \emptyset$.
   Then there exists a positive integer $\kappa$ and $a_1, a_2 \in F^\times$ such that
  \begin{equation}
    \label{eq:sumalg}
\Phi_{ f}^r(Z(W)) = \frac{1}{\kappa}\lp   \log \left|a_1\right| + \sqrt{D} \log\left|a_2\right| \rp.
\end{equation}
For any prime $\pf$ of $F$, the value
$  \kappa^{-1} \ord_\pf(a_j)$ is given in \eqref{eq:n2fac}.
When $n = 2$, we have $a_j = 1$ for $j \equiv r \bmod{2}$.
\end{theorem}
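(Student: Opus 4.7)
My plan is to compute $\Phi_f^r(Z(W))$ by expressing it as a regularized Petersson pairing of $f$ against an explicit theta-theoretic kernel on the elliptic modular curve, and then reading off the claimed factorization from the Fourier expansion of that kernel.

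The first step is a seesaw reduction in the spirit of \cite{BKY12} and \cite{BEY21}. The orthogonal splitting $\V=W_\Qb\oplus W_\Qb^\perp$ pairs the dual pair $(\SL_2,\mathrm{O}(\V))$ with the dual pair $(\SL_2\times \SL_2,\mathrm{O}(W_\Qb)\times\mathrm{O}(W_\Qb^\perp))$. Unfolding the regularized theta lift defining $\Phi_f^r$ along this decomposition turns the big CM evaluation into a regularized Petersson pairing of the form
$$
\Phi_f^r(Z(W)) \;=\; \bigl\langle f,\;\Theta_{W_\Qb^\perp}\cdot\Ic_W\bigr\rangle^{\mathrm{reg}},
$$
where $\Theta_{W_\Qb^\perp}$ is the classical positive-definite theta series attached to $W_\Qb^\perp$ and $\Ic_W$ is a non-holomorphic theta integral attached to the $F$-quadratic space $W$. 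In the imaginary quadratic case treated in \cite{BEY21}, $\Ic_W$ is recognized as the central derivative at $s=0$ of a classical incoherent Eisenstein series, and the factorization of its Fourier coefficients is the engine of the proof.

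The central new step is to identify $\Ic_W$ in the real quadratic setting, where the naive incoherent Eisenstein series vanishes and no central derivative is available. Using the exceptional isogeny \eqref{eq:isog}, I would construct a deformed theta integral $\tI(s)$ on hyperbolic $2$-space whose Doi--Naganuma theta lift is a Hilbert modular form $\Ec_F(s)$ on $F$ playing the role of a real quadratic incoherent Eisenstein series, and whose value at $s=0$ recovers the archimedean piece of $\Phi_f^r$ produced in the previous step. The Fourier expansion of $\Ec_F(0)$ is then computed place by place: because $\Ec_F(0)$ arises from a Doi--Naganuma lift, each $(m_1,m_2)$-Fourier coefficient factors into local Whittaker integrals, with archimedean contribution of the form $\log|\alpha_\infty|$ and non-archimedean contributions which are finite combinatorial counts of representations by the local binary CM lattice $E_\pf$. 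This should yield the formula \eqref{eq:n2fac} for $\kappa^{-1}\ord_\pf(a_j)$.

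Pairing with $f$ and using the rationality of its principal part then gives \eqref{eq:sumalg}. The appearance of $\sqrt{D}$ is forced by the $\Gal(F/\Qb)$-action: the Petersson pairing decomposes into a $(+)$-part and a $(-)$-part under this action, and the $(-)$-part is $\Qb$-rational only after multiplying by $\sqrt{D}$. For $n=2$ one of these two components vanishes by a weight/parity argument depending on the parity of $r$, accounting for $a_j=1$ when $j\equiv r\pmod 2$. The main obstacle is the middle step: producing $\tI(s)$ so that it is simultaneously convergent (or meromorphically continuable) enough to evaluate at $s=0$, compatible under Doi--Naganuma with the Hilbert Eisenstein series whose coefficients encode the desired arithmetic, and compatible with the archimedean regularization of $\Phi_f^r$. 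Once $\tI(s)$ and its Fourier expansion are under control, the remaining steps amount to a careful local-global bookkeeping following the template of \cite{BEY21}.
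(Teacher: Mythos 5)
Your high-level plan (reduce to the $n=2$ case by splitting $\V = W_\Qb \oplus W_\Qb^\perp$ and the Rankin--Cohen identity, then supply a deformed theta integral whose Doi--Naganuma lift plays the role of an incoherent Eisenstein series over $F$) is the right skeleton, and the observation that $\sqrt{D}$ enters through the $\Gal(F/\Qb)$-eigen\-decomposition is also sound. However there are two genuine gaps.

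First, you assert that ``the naive incoherent Eisenstein series vanishes and no central derivative is available,'' and then ask your deformed object $\mathcal{E}_F(0)$ to recover $\Phi_f^r(Z(W))$ on its own. This is not how the argument can go. The incoherent Hilbert Eisenstein series $E^{*,\prime}$ exists, is the derivative at $s=0$ of a genuinely incoherent series, and is indispensable: by the results of \cite{BEY21} (Theorem~5.10 there), its Fourier coefficients compute the \emph{symmetric} combination $\Phi^r_f(Z(W)) + (-1)^r\Phi^r_f(Z(W)')$, i.e.\ the value on the whole $\Qb$-rational cycle $Z(W)_\Qb$. What the deformed theta integral $\tilde{\mathcal{I}}$ provides is the \emph{antisymmetric} combination $\Phi^r_f(Z(W)) - (-1)^r\Phi^r_f(Z(W)')$, because the differential equation \eqref{eq:Ldiff} it satisfies involves the \emph{difference} $E^*(\cdot,\phi^{(1,-1)}) - (-1)^r E^*(\cdot,\phi^{(-1,1)})$ of coherent Eisenstein series, with opposite sign pattern to \eqref{eq:diffintro}. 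Only by adding the two pieces (Theorem \ref{thm:main-O22} and Corollary \ref{cor:1-CM}) does one isolate $\Phi^r_f(Z(W))$. A construction that bypasses $E^{*,\prime}$ entirely would need a different mechanism to separate the two Galois conjugate CM cycles, and you have not supplied one.

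Second, you omit the error term. The deformed theta integral $\vartheta_\a(g,\varphi^+,\tvc_C)$ does not satisfy a clean lowering equation: by Theorem \ref{thm:modified}, $L\,\vartheta_\a(g,\varphi^+,\tvc_C) = \vartheta_\a(g,\varphi^-,\vc) + \log\ve_\vc\,\Theta_{\a,C}(g,\varphi^-,\vc)$, and the second summand propagates into \eqref{eq:Ldiff} as the extra term $\Ic_f(g,\varphi^{(1,-1)}-(-1)^r\varphi^{(-1,1)},\vc)$. When this is paired against $f$ via Stokes' theorem, one must show the result is a rational number, which in turn comes down to the rationality of the Fourier coefficients of a Millson theta lift (Proposition \ref{prop:millson}) together with the choice of matching sections invariant under $\TT(\hat\Zb)$ so as to control the Galois action (Proposition \ref{prop:rational}). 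This is a substantial and delicate part of the argument; your proposal treats the deformed object as if it satisfied a clean differential equation, which it does not. A related point: you want a global $s$-family $\tilde{I}(s)$ and its value at $s=0$; the paper's object $\tilde{\mathcal{I}}$ has no such global $s$-parameter. The $s$-variable is introduced only in the \emph{local} deformed sections $F_{\varphi_p,\vc_p,v,s}$ of Theorem \ref{thm:localmatchs}, precisely because they do not glue to a global standard section, and the local matching up to $O(s^m)$ is what makes the place-by-place Whittaker computation in Proposition \ref{prop:FCmatch} work. Your proposal should be revised to (i) combine the deformed integral with the incoherent Eisenstein series rather than replace it, and (ii) account for the Millson error term.
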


\begin{remark}
  \label{rmk:kappa}
  The denominator $\kappa$ appears as a consequence of our matching of sections (see Propositions \ref{prop:tI+} and \ref{prop:rational}), and only depends on $Z(W)$ and $r$ when $f$ has integral Fourier coefficients.
\end{remark}

\begin{remark}
  \label{rmk:r=0}
  Theorem \ref{thm:factor} also applies to the case $r = 0$ when $f$ has zero constant term, in which case
  $\Phi_f^0 = \Phi_f$
  is the  regularized Borcherds lift of $f$ and  we have $a_2 = 1$.
\end{remark}
    Combining Theorem \ref{thm:factor} with the main result in \cite{Li23}, we deduce the algebraicity of a rational, principal higher Green function at a single CM point when $E/\Qb$ is biquadratic, hence Conjecture \ref{conj:GZ} in particular.

\begin{theorem}
  \label{thm:main}
  In the setting of Theorem \ref{thm:factor}, there exists $\kappa \in \Nb$ and Galois equivariant maps $\alpha_1, \alpha_2: T_W(\hat\Qb) \to E^{\mathrm{ab}}$ such that
  $$
\Phi_{f}^r([z_0, h]) = \frac{1}{\kappa} \lp \log |\alpha_1(h)| + \sqrt{D} \log |\alpha_2(h)| \rp
$$
for all $[z_0, h] \in Z(W_{})$.
Furthermore for $n =2$, we can choose $\alpha_j(h) = 1$ for $j \equiv r \bmod{2}$, i.e.\ there exists a Galois-equivariant map $\alpha: T_W(\hat\Qb) \to E^{\mathrm{ab}}$ such that
  $$
\Phi^{r}_f([z_0, h]) = \frac{1}{\kappa\sqrt{D}^r }  \log |\alpha(h)|
$$
for all $h \in T_W(\hat\Qb)$.
In particular  Conjecture \ref{conj:GZ} is true.
\end{theorem}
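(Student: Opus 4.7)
The plan is to deduce Theorem \ref{thm:main} by combining the main result of the paper, Theorem \ref{thm:factor}, with the algebraicity of \emph{differences} of values of $\Phi_{f}^r$ at CM points in $Z(W)$ established in \cite{Li21}. Writing any individual value as
\begin{equation*}
\Phi_{f}^r([z_0,h]) = \frac{1}{\deg Z(W)} \Phi_{f}^r(Z(W)) + \Bigl( \Phi_{f}^r([z_0,h]) - \frac{1}{\deg Z(W)} \Phi_{f}^r(Z(W)) \Bigr),
\end{equation*}
the first summand is controlled by Theorem \ref{thm:factor}, while the second is a $\Qb$-linear combination of differences $\Phi_{f}^r([z_0,h]) - \Phi_{f}^r([z_0,h'])$ with $[z_0,h'] \in Z(W)$. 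By the main theorem of \cite{Li21}, each such difference equals $\log|\beta(h,h')|$ for some explicit $\beta(h,h') \in E^{\mathrm{ab}}$ depending Galois-equivariantly on the pair $(h,h')$. Together with the factorization $\Phi_{f}^r(Z(W)) = \kappa^{-1}(\log|a_1|+\sqrt{D}\log|a_2|)$ with $a_j \in F^\times$, this gives an expression of the desired shape $\kappa^{-1}(\log|\alpha_1(h)|+\sqrt{D}\log|\alpha_2(h)|)$ after suitably enlarging $\kappa$ to clear denominators coming from $\deg Z(W)$.

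To isolate the two pieces of the decomposition, I would use the $\Gal(F/\Qb)=\langle\sigma\rangle$-action on $Z(W)_\Qb = Z(W) \cup Z(W)'$: applying $\sigma$ to \eqref{eq:sumalg} yields $\Phi_{f}^r(Z(W)') = \kappa^{-1}(\log|a_1^\sigma| - \sqrt{D}\log|a_2^\sigma|)$, and adding/subtracting the two averaged identities singles out the $\log|\alpha_1|$ and $\sqrt{D}\log|\alpha_2|$ components. Combined with the \cite{Li21} differences (which are genuine logarithms of algebraic numbers, not just sums of the two types), this lets us define $\alpha_1(h)$ and $\alpha_2(h)$ as products of the $\beta$'s with appropriate roots of $a_1,a_2$. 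Galois equivariance of $h \mapsto \alpha_j(h)$ then follows from the Galois equivariance statement in \cite{Li21} together with the $F$-rationality of $Z(W)$, which ensures the reciprocity action of $T_W(\hat\Qb)$ on CM points intertwines correctly with the action on $E^{\mathrm{ab}}$.

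For the sharper $n=2$ statement, Theorem \ref{thm:factor} already tells us that the average part contributes only to $\log|\alpha_j|$ with $j\not\equiv r\pmod 2$; hence in that case the two pieces of the above decomposition collapse into a single term $\kappa^{-1}\sqrt{D}^{\,r}\log|\alpha(h)|$ after absorbing the power of $\sqrt{D}$. Finally, Conjecture \ref{conj:GZ} follows by specializing to $\V = M_2(\Qb)$ and $X_K = X_0(N)^2$: by Corollary \ref{cor:1}, $\Phi_{f}^r$ then coincides with the principal higher Green function $G_{r+1,f}^{\Gamma_0(N)}$, and for any CM point $(z_1,z_2) \in X_0(N)^2\setminus Z_f$ with discriminants $d_1,d_2$ the product field $E = \Qb(\sqrt{d_1},\sqrt{d_2})$ is biquadratic with $F = \Qb(\sqrt{D})$ where $D = d_1 d_2 /\gcd(d_1,d_2)^2$, so the hypothesis of Theorem \ref{thm:factor} is met. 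Reading $\sqrt{D}^{\,r} = |d_1 d_2|^{r/2}$ up to a rational factor then matches the normalization in Conjecture \ref{conj:GZ}.

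The main obstacle will be carrying out the separation into the $\log|\alpha_1|$ and $\sqrt{D}\log|\alpha_2|$ pieces cleanly at a single CM point: the \cite{Li21} differences are a priori single logarithms $\log|\beta|$ rather than $F$-bi-graded, so one must use the Galois action of $\sigma$ on pairs of CM points to split each $\beta$ into the two components, and track carefully the factor $\kappa$ so that Remark \ref{rmk:kappa} applies. Verifying Galois equivariance, and in the $n=2$ case the vanishing of one component, then requires comparing the Shimura reciprocity law on $T_W$ with the $\sigma$-equivariance of the construction of $\alpha_1,\alpha_2$.
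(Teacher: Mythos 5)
Your overall skeleton is the same as the paper's: write the value at a single CM point as the average $\Phi^r_f(Z(W))$ (handled by Theorem \ref{thm:factor}) plus a sum of differences handled by \cite{Li21}, then specialize via Corollary \ref{cor:1} for Conjecture \ref{conj:GZ}. However, you misstate what \cite{Li21} actually delivers, and this creates a real gap. You assume that each difference $\Phi^r_f([z_0,h]) - \Phi^r_f([z_0,h'])$ is only known to be a single logarithm $\log|\beta(h,h')|$, and you then flag the ``splitting of each $\beta$ into two $F$-bi-graded components'' as the main unresolved obstacle, proposing a vague Galois-conjugation argument to recover it. In fact the main result of \cite{Li21}, as invoked in the paper, already provides the full decomposition \eqref{eq:diffalg}: there exist $\tkappa\in\Nb$ and Galois-equivariant maps $\tilde\alpha_1,\tilde\alpha_2:T_W(\hat\Qb)\to E^{\mathrm{ab}}$ with
\[
\Phi^r_f([z_0,h]) - \Phi^r_f([z_0,h']) = \frac{1}{\tkappa}\Bigl(\log\Bigl|\tfrac{\tilde\alpha_1(h)}{\tilde\alpha_1(h')}\Bigr| + \sqrt{D}\log\Bigl|\tfrac{\tilde\alpha_2(h)}{\tilde\alpha_2(h')}\Bigr|\Bigr),
\]
and moreover for $n=2$ the map $\tilde\alpha_j$ is identically $1$ for $j\equiv r\bmod 2$. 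With this in hand the proof is a one-line algebraic manipulation: set $\alpha_j(h):=a_j\prod_{[z_0,h']\neq[z_0,h]}\tfrac{\tilde\alpha_j(h)}{\tilde\alpha_j(h')}$ and $\kappa:=\tkappa|Z(W)|$. Your attempted workaround is not a proof: applying the nontrivial element $\sigma\in\Gal(F/\Qb)$ to \eqref{eq:sumalg} does not obviously yield $\Phi^r_f(Z(W'))=\kappa^{-1}(\log|a_1^\sigma|-\sqrt{D}\log|a_2^\sigma|)$, since $\Phi^r_f(Z(W))$ is a real number on which $\sigma$ acts trivially, and in any case the average was never the problem --- the decomposition of the differences was, and no amount of juggling averaged identities over $Z(W)$ and $Z(W)'$ by itself splits a single $\log|\beta(h,h')|$ into the required two Galois-equivariant pieces.

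A second, secondary inaccuracy: the $n=2$ collapse to a single term $\kappa^{-1}\sqrt{D}^{\,r}\log|\alpha(h)|$ requires both the vanishing of $a_j$ for $j\equiv r\bmod 2$ (which Theorem \ref{thm:factor} gives, and you use) and the vanishing of the corresponding $\tilde\alpha_j$ in the \cite{Li21} differences (which you do not address). Both inputs are needed; citing only the first leaves the difference-part contribution unaccounted for. The specialization to Conjecture \ref{conj:GZ} via Corollary \ref{cor:1} is essentially right, although $D$ is determined by $F=\Qb(\sqrt{d_1d_2})$ with its usual congruence conditions rather than by the exact formula $d_1d_2/\gcd(d_1,d_2)^2$ you write; the up-to-rational relation $\sqrt{D}^{\,r}\asymp_{\Qb^\times}|d_1d_2|^{r/2}$ is fine.
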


\subsection{Comparison to Previous Works}
There has been an extensive literature on the CM-value of regularized theta lifts.
    When $r = 0, n = 2$ and $f$ is weakly holomorphic, the CM-value $\Phi_f(Z(W)_\Qb)$ was the subject of the classical work of Gross-Zagier on singular moduli \cite{GZ85}, and generalizations by the first and third author \cite{BY06}.
More generally for arbitrary $n$, totally real field $F$ and harmonic Maass form $f$, the value $\Phi_f(Z(W)_\Qb)$ is the archimedean contribution of the derivative of a Rankin-Selberg $L$-function involving the shadow $\xi(f)$ at $s = 0$ \cite{BY09, BKY12, AGHMP18}.

A crucial ingredient in these works is a real-analytic Hilbert Eisenstein series $E^{*}$ of parallel weight $1$ over $F$.
It is an \textit{incoherent Eisenstein series} in the sense of the Kudla program \cite{Kudla97}.
The arithmetic Siegel-Weil formula predicts that the Fourier coefficients of its derivative, $E^{*, \prime}$, are arithmetic degrees of special cycles \cite{HY11, HY12}, which are logarithms of rational numbers.

Using suitable weight 1 harmonic Maass forms in place of incoherent Eisenstein series, the first and third author, together with S.\ Ehlen, could prove the algebraicity result for higher Green function at a partially averaged CM cycle, and deduce the Gross-Zagier conjecture for $X_K = X_0(1)^2$ when the class group of one of the imaginary quadratic fields in $E$ is an elementary 2 group \cite[Theorem 1.2]{BEY21}.
However, the factorization of the ideal generated by the algebraic numbers are not explicitly given.




Our main result in Theorem \ref{thm:factor} goes far beyond these aforementioned works in an essential way by studying the regularized theta lifts at the partially averaged CM cycle $Z(W)$, which is in general only \textit{half} of $Z(W)_\Qb$ and a priori defined over $F$.
For $r = 0$ and $f$ weakly holomorphic, this means that $\Phi_f(Z(W))$ is the logarithm of a number in the real quadratic field $F$, and therefore cannot be related to the Fourier coefficients of incoherent Eisenstein series!

Furthermore, this partial average is quite different, yet more natural, than the one studied in \cite{BEY21}.
Instead of using the weight 1 harmonic Maass form loc.\ cit., which is an elliptic modular form,
we explicitly construct a Hilbert modular form $\tilde{\mathcal{I}}$, serving as a companion and substitute for the incoherent Eisenstein series, and obtain precise information concerning its Fourier coefficients.
This is the main innovation of the paper and allows us to prove the exact factorization formula for the ideal generated by the algebraic numbers in the spirit of \cite{GZ85}, which was not possible in \cite{BEY21}.
Most importantly, we are able to achieve this for \textit{arbitrary} open compact subgroup $K$, just as in \cite{Li23} for the difference of two CM-values, whereas the ingredients in \cite{BEY21} could only handle the level 1 case.
This enables us to prove Theorem \ref{thm:main} for arbitrary level $K$, which encompasses the case in Conjecture \ref{conj:GZ}.
In that sense, this paper is the complement to \cite{Li23}, both in results and methods, for biquadratic $E$.

Besides the analytic approach to Conjecture \ref{conj:GZ}, which originated from the work of Viazovska for $F = \Qb \oplus \Qb$ \cite{Via11}, there is also an algebraic approach in \cite{Zhang97, Mellit08}.
However, one must overcome serious obstacles to prove Theorem \ref{thm:main} via this approach.
For $F = \Qb \oplus \Qb$, one needs to assume in an essential way the non-degeneracy of the restriction of the  Gillet-Soul\'e height pairing, which is defined on Kuga-Sato varieties, to the subgroup of the Chow group spanned by CM cycles \cite[Theorem 5.2.2]{Zhang97}.
The non-degeneracy of this height pairing on a slightly larger subgroup is conjectured by Beilinson \cite{Bei87} and Bloch \cite{Bloch84} (see Conjecture 1.3.1 in \cite{Zhang97}).
For real quadratic $F$, one needs to find a substitute for the Kuga-Sato variety, construct canonical models, define suitable cycles and  arithmetic intersections such that the archimedean contribution is given by the CM-values of higher Green functions%
\footnote{An idea is to consider powers of the Kuga-Satake abelian scheme over an integral model of $X_K$, though the dimension of such an abelian scheme is $2^{n+1}$ and the fiber product becomes untractable quickly.}.
Assuming that the conjecture of Beilinson and Bloch holds in this case, one can then deduce the result in Theorem \ref{thm:main}.
For $n = 2$ and  concrete families of CM points, it is possible verify Conjecture \ref{conj:GZ} by explicit constructions of cycles and calculations (see \cite{Mellit08}).
In general, it is not clear at all how to construct suitable cycles, not to mention remove the non-degeneracy assumption.
On the other hand, it would be very interesting to see if Theorem \ref{thm:main}, which is proved via the analytic approach, can be used to prove the conjectural non-degeneracy when restricted to the above subgroup of the Chow group in \cite{Zhang97}.


\subsection{Proof Strategy}
\label{subsec:idea}
For simplicity, we focus on the case $n = 2$, from which the general case is not hard to derive (see section \ref{sec:ThmPfs} for details).
Applying the strategy in \cite{Kudla03} and the Rankin-Cohen operator, one can express $\Phi^r_f(Z(W)) + (-1)^r \Phi^r_f(Z(W)')$ as an $F$-linear combination of Fourier coefficients of the holomorphic part of $E^{*, \prime}$, which are logarithms of rational numbers.
This is a standard procedure involving the Siegel-Weil formula and Stokes' Theorem (see e.g.\ the proof of Theorem 3.5 in \cite{Li21a}).
A crucial property of the incoherent Eisenstein series is the following differential equation
(see \cite[Lemma 4.3]{BKY12})
\begin{equation}
  \label{eq:diffintro}
2(L_1 +L_2) E^{*, \prime}(g, 0, \Phi^{(1, 1)}) = E^*(g, 0, \Phi^{(1, -1)}) +E^*(g, 0, \Phi^{(-1, 1)}).
\end{equation}
Here $L_j$ are lowering operators in the $j$-th variable, and $\Phi^{(\e_1, \e_2)} = \Phi_f \otimes \Phi^{(\e_1, \e_2)}_\infty$ are Siegel-Weil sections in the degenerate principal series $I(0, \chi)$ with
 $\chi=\chi_{E/F}$ being the quadratic Hecke character of $F$ associated to $E/F$ (see section \ref{subsec:Eisenstein} for details).
In particular, $E^*(g, 0, \Phi^{(\e, -\e)})$ is a coherent Eisenstein series of weight $(\e, -\e)$ for $\e = \pm 1$.
To prove Theorem \ref{thm:factor}, it suffices to understand $\Phi^r_f(Z(W)) - (-1)^r \Phi^r_f(Z(W)')$, which means we need a substitute of $E^{*, \prime}$ on the left hand side of \eqref{eq:diffintro} such that the right hand side is $E^*(g, 0, \Phi^{(1, -1)}) - E^*(g, 0, \Phi^{(-1, 1)})$.

To obtain this minus sign, we apply the exceptional isogeny in \eqref{eq:isog} and view the coherent Eisenstein series as modular forms on the group $H_0 := \SO(V_0)$ for the quadratic space $V_0$  of signature $(2, 2)$ defined in section \ref{subsec:V}.
Since $E/\Qb$ is biquadratic, there is an odd character $\vc  = \vc_f \cdot \sgn$ of $[F^1]=F^1 \backslash \A_F^1$ such that $\chi = \vc \circ \Nm^-$ (see Remark \ref{rmk:biquad}).
By viewing $\vc$ as an automorphic form on $H_1 := \SO(V_1)$ for the quadratic space $V_1 = (F, \Nm)$, we can consider its theta lift
  following the  diagram
  \begin{equation}
    \label{eq:theta10}
    H_1 \stackrel{\theta_1}{\to} G \stackrel{\theta_0}{\to} H_0,
  \end{equation}
  where
  $G = \SL_2$.
  The first map lifts $\vc$ to a weight one holomorphic cusp form $\vartheta(g', \varphi^-_1, \vc)$ on $G$, which was first studied by Hecke.
    Here $\varphi_1^\pm$ is a Schwartz function on $V_1(\Ab)$ whose archimedean component $\varphi^\pm_{\infty}$ is the Schwartz function in $V_1(\Rb)$ defined in \eqref{eq:varphipm}.
    Then the second map lifts it to a coherent Eisenstein series, and is an instance of the Rallis tower property (\cite{Rallis})%
\footnote{By a change of integration order, we can also rewrite the map as
\begin{equation*}
  G \stackrel{\theta}{\to}
H
\stackrel{\text{Pullback}}{\longrightarrow}
H_0,
\end{equation*}
where $H = \SO(V)$ contains $H_0 \times H_1$.
}.
From this, $\theta_0 \circ \theta_1$ gives us the equation
\begin{equation}
  \label{eq:1}
  \begin{split}
 \Ic(g, \varphi^{(\pm 1, \mp 1)}, \vc)
&:= \int_{G(\Qb)) \backslash G(\Ab)}  \theta_0(g', g, \varphi_{0}^{(\pm 1, \mp 1)}) \vartheta(g', \varphi^-_1, \vc) dg'
\\
&  =       \frac{3}{\pi}
  E^*(g, 0, \Phi^{(\pm 1, \mp 1)}),
  \end{split}
\end{equation}
where $\theta_0$ is a theta kernel for the quadratic space $V_0$,
$\varphi^{(\pm 1, \mp 1)} = \varphi_0^{(\pm 1, \mp 1)} \otimes \varphi_1^-$  is a Schwartz function  on $V(\Ab)$ with $V := V_0 \oplus V_1$ and
$\Phi^{(\pm 1, \mp 1)} =  F_{\varphi, \vc}$ is the section defined in \eqref{eq:Fvarphi}.
Our first main result is Theorem \ref{thm:match}, which ensures that all coherent Eisenstein series can be realized as such lifts. This is reduced to the corresponding local problem and solved in section \ref{subsec:local-match-I}.

To construct $\tI$, we first modify the character $\vc$ to
the function $\tvc_C$ on $H_1(\Ab)$ defined in \eqref{eq:tvc}.
It is a preimage of $\vc$ under the first order invariant differential operator $t \frac{d}{dt}$ on $H_1(\Rb) \cong \Rb^\times$, and hence not a classical automorphic form on $H_1$.
We call its lift $\vartheta(g', \varphi^+_1, \tvc_C)$ to $G$ a \textit{deformed theta integral}, since the archimedean component of $\tvc_C$ is essentially $\log t$ and comes from the first term in the Laurent expansion of $t^s$ at $s = 0$.
This deformed theta integral was first studied in \cite{CL20}. 
It satisfies the following important property (see Theorem \ref{thm:modified})
\begin{equation}
  \label{eq:diff1}
L \vartheta(g', \varphi_1^+, \tvc_C) = \vartheta(g', \varphi_1^-, \vc) + \error.
\end{equation}
Here $L$ is the lowering operator on $G$, and $\error$ is the special value of the theta kernel on $V_1$.

We now define $\tI(g) := \Ic(g, \varphi^{(1, 1)}, \tvc_C)$ in \eqref{eq:tI} using the theta kernel $\theta_0(g', g, \varphi_0^{(1, 1)})$ with the archimedean component of $\varphi^{(1, 1)}_0$ being the Schwartz function $\varphi^{(1, 1)}_{0, \infty}$ defined in \eqref{eq:tv} (the integral is similar to (\ref{eq:1})).
A key observation  is that there is an identity between the actions of the universal enveloping algebras of $H_0$ and $G$, which gives in this special case  (see (\ref{eq:iotavarphi0}) and Lemma \ref{lemma:RC})
\begin{equation}
(L_1 +L_2) \theta_0(g', g, \varphi_0^{(1, 1)}) =  L \theta_0(g', g, \varphi_0^{(1, -1)})  -L \theta_0(g', g, \varphi_0^{(-1, 1)}),
\end{equation}
where $L_1, L_2$, resp.\ $L$, are differential operators
for the variable  $g \in H_0$, resp.\ $g' \in G$.
Putting these together, we see that $\tI$ satisfies the following property
(see the proof of Proposition \ref{prop:diffop} with $r = 0$ for details)
\begin{align*}
-  (L_1 +L_2) \tI(g)
  &= E^*(g, 0, \Phi^{(1, -1)}) - E^*(g, 0, \Phi^{(-1, 1)}) + \error'.
\end{align*}
Up to this the term $\error'$, which is a manifestation of the error term in \eqref{eq:diff1}, we have constructed the Hilbert modular form satisfying the  desired analogue of the differential equation \eqref{eq:diffintro}.

In addition to satisfying the differential equation, we still need to better understand the Fourier coefficients of $\tI$, and compare them to those of $E^{*, \prime}$.
This is done in section \ref{subsec:FEtI}, where we show that they are logarithms of algebraic numbers and give precise factorization information.
To achieve this, we introduce a new local section with an $s$-variable in \eqref{eq:Fvs} and match it with the standard section involving the $s$-variable up to an error of $O(s^m)$ for any positive integer $m$. This builds upon the results in section \ref{subsec:local-match-I} and is accomplished in Theorem \ref{thm:localmatchs}.
These new local sections are of independent interest, as they do not come from pullback of the standard section on $H \cong \SO(3, 3)$.
In fact, they do not even tensor together to form a global section with an $s$-variable.

Finally, we still need to handle the term arising from the error on the right hand side in \eqref{eq:diff1}.
This boils down to proving the rationality of a Millson theta lift, which is given in Proposition \ref{prop:millson}.
For this, we need the Fourier expansion of such a lift computed in \cite{ANS18}, and to choose the matching section with a suitable invariance property.
Proceeding essentially as in \cite{GKZ87} or \cite{BEY21}, with $E^{*, \prime}$ replaced by its sum with $\tI$, we complete the proof of Theorem \ref{thm:factor}.

}


\subsection{Outlook and Organization}
The factorization of the algebraic numbers appearing in the Fourier coefficients of $\tI$ are very closely related to the Fourier coefficients of $E^{*, \prime}$, which suggests that they should reflect the non-archimedean part of the arithmetic intersection between integral versions of $Z_f$ and $Z(W)$ defined over the ring of integers of $F$.
It would be very interesting to relate this arithmetic intersection to special values of derivatives of $L$-functions as in \cite{BKY12} by applying and refining the results in \cite{AGHMP18}.

It would be interesting to investigate the analogues of Theorems \ref{thm:factor} and \ref{thm:main} for other  CM, \'etale $\Qb$-algebras $E/\Qb$.
When $E/\Qb$ has degree 4, there are four cases
\begin{enumerate}
\item $E/\Qb$ is biquadratic,
\item $E/\Qb$ is a product of  imaginary quadratic fields,
 \item $E/\Qb$ is cyclic,
\item  $E/\Qb$ is a non-Galois, quartic extension.
\end{enumerate}
The CM points $Z(W)$ have a moduli interpretation as abelian surfaces with CM by the reflex CM algebra $E^\#$.
The present paper treats case (1).
In  cases (2) and (3), the reflex algebras $E^\#$ are quartic, abelian field extensions of $\Qb$, and the CM cycle $Z(W)$ is already defined over $\Qb$.
In the last case, $E^\#/\Qb$ is a quartic, non-Galois field, and $Z(W)$ is defined over a real quadratic field.
We plan to extend the ideas and techniques in this paper to prove the analogue of Theorem \ref{thm:main} in cases (2)-(4).
One difficulty that arises is that the quadratic space of signature $(3, 3)$ will have Witt rank less than 3, making it impossible to apply the Siegel-Weil formula to identify the theta integral with an Eisenstein series. Instead, one could try to add a twist to the theta integral (see \cite{Li16}), compute its Fourier expansion, and match it with that of an Eisenstein series.
When $E/\Qb$  is a field of degree greater than 4, the Hilbert Eisenstein series are over totally real fields of degree greater than 2, hence do not arise from theta integral of elliptic modular forms.
For such cases, one would need some new ideas.

In addition, there are other applications of these expected results.
For cases (2) and (3), by combining the analogue of Theorem \ref{thm:factor} and the idea in \cite{Li21a}, we hope to obtain non-existence result of genus 2 curves with CM Jacobian and having everywhere good reduction in certain families, generalizing the main result in \cite{HP17}.
In the last case, we expect a variation of our construction to lead to a proof of the factorization conjecture of CM-values of twisted Borcherds product in \cite{BY07}.

The paper is organized in the following way. Section \ref{sec:prelim} contains preliminaries. Most of these are standard, except for section \ref{subsec:modify}, which contains the adelic version of the results in \cite{CL20}.
Section \ref{sec:match} matches the coherent Eisenstein series with the Doi-Naganuma lift of Hecke's cusp form.
Section \ref{sec:DNMod} defines $\tI$ and studies its various properties.
Finally, we give the proofs of Theorems \ref{thm:factor} and \ref{thm:main} in the last section.

\noindent {\bf Acknowledgement}:
We thank Claudia Alfes, Chao Li, and Shaul Zemel for helpful comments on an earlier version.
We also thank Mingkuan Zhang and the anonymous referees for carefully reading through the draft and giving useful remarks to help clear confusions and improve the exposition.
J.B.\ and Y.L.\ were supported by the LOEWE research unit USAG, and by the Deutsche Forschungsgemeinschaft (DFG) through the Collaborative Research Centre TRR 326 ``Geometry and Arithmetic of Uniformized Structures'', project number 444845124.
T.Y.\ was partially supported by the Dorothy Gollmar Chair's Fund and Van Vleck research fund of the department of mathematics, UW-Madison.

\section{Preliminaries}
\label{sec:prelim}
In this section, we introduce some preliminary notions, most of which are standard from the literature. The only material not easily found in the literature are in sections \ref{sec:Hecke} and \ref{subsec:modify} concerning the weight one cusp forms of Hecke in the adelic language, which are translated from the results in \cite{CL20} in the classical language.

Let $\Nb$ denote the set of positive integers and $\Nb_0 := \Nb \cup \{0\}$.
For a number field $E$, let $\Ab_E$ be its ring of adeles, $\hat E$ the finite adeles, and $\Ab =\Ab_\Qb$ with $\psi = \psi_f \psi_\infty$ its usual additive character.
For an algebraic group $\mathrm{G}$ over $E$, denote $[\mathrm{G}] = \mathrm{G}(E)\backslash \mathrm{G}(\Ab_E)$.
As usual, let $G = \SL_2$ with standard Borel $B = MN \subset G$.
Denote also
$$
m(a) = \smat{a}{}{}{a^{-1}} \in M, n(b) := \smat{1}{b}{}{1} \in N,~
w = \smat{}{-1}{1}{} \in G,
$$
and
$$
T(R) = \{t(a) := \smat{a}{}{}{1}: a \in R^\times \}\subset \GL_2(R).
$$
Throughout the paper, $F$ will be a real quadratic field (unless stated otherwise).
Let $\prime \in \Gal(F/\Qb)$ be the non-trivial element.
It induces an automorphism of $\Ab_F, \Ab_F^\times$ and $F_p := F \otimes \Qb_p$ for each prime $p \le \infty$,
If $p$ is a finite prime that splits in $F$ (resp.\ is the infinite place), then $F$ has two embeddings into $\Qb_p$ (resp.\ $\Rb$), and $F_p$ is a 2-dimensional vector space over $\Qb_p$ (resp.\ $\Rb$).
For $\lambda \in F$, let $\lambda_1, \lambda_2$ denote the images under those embeddings.
We will also sometimes use $\lambda$ to represent the pair $(\lambda_1, \lambda_2)$ in $\Qb_p^2$ (resp.\ $\Rb^2$), and $\lambda'$ would represent $(\lambda_2, \lambda_1)$.
We have the incomplete Gamma function
$$
\Gamma(s, x) = \int^\infty_x t^{s-1}e^{-t} dt.
$$

\subsection{Differential Operators}
For a real-analytic function $f$ on $G(\Rb)$, the Lie algebra $\slf_2(\Cb)$ acts via
\begin{equation}
  \label{eq:Lieact}
  A(f)(g) := \partial_t f(g e^{tA}) \mid_{t = 0},~ A \in \slf_2(\Cb).
\end{equation}
We define the raising and lowering operator
\begin{equation}
  \label{eq:RL}
  R := \frac{1}{2} \pmat{1}{i}{i}{-1},~ L := \frac{1}{2} \pmat{1}{-i}{-i}{-1} .
\end{equation}
If $f$ is right $K_\infty$-equivariant of weight $k$, then we have
$$
\sqrt{v}^{-(k+2)} R(f)(g_\tau) = R_{\tau, k} (\sqrt{v}^{-k} f(g_\tau)),~
\sqrt{v}^{-(k-2)} L(f)(g_\tau) = L_{\tau, k} (\sqrt{v}^{-k} f(g_\tau)),
$$
where $R_{\tau, k}$ and $L_{\tau, k}$ are the usual raising and lowering operators given by
\begin{equation}
  \label{eq:RLtau}
    R_{\tau, k} := 2i \partial_\tau + \frac{k}{v},~
L_{\tau, k} := -2iv^2 \partial_{\overline{\tau}}.
\end{equation}
We say that $f$ is holomorphic, resp.\ anti-holomorphic, if $L(f) = 0$, resp.\ $R(f) = 0$.

For $ r\in \Nb_0$  and $k_1, k_2 \in \half\Zb$, define
\begin{equation}
  \label{eq:Qs}
  \begin{split}
      Q_{r, (k_1, k_2)}(X, Y) &:= \sum_{s = 0}^r \binom{r + k_1 - 1}{s}  \binom{r + k_2 - 1}{r - s} X^{r-s}(-Y)^s,\\
  \tQ_r(X, Y) &:= \frac{Q_{r, (1, 1)}(X, Y) (X + Y)}{X + (-1)^r Y}
  \end{split}
\end{equation}
in $\Qb[X, Y]$.
We omit $(k_1, k_2)$ from the notation when it is $(1, 1)$, in which case
$$
Q_r(X, Y) = (X + Y)^r P_r\lp \frac{ X - Y}{X+Y} \rp,
$$
with $P_r(x)$ the $r$-th Legendre polynomial given explicitly by
\begin{equation}
  \label{eq:Pr}
  P_r(x) = 2^{-r} \sum_{s = 0}^r \binom{r}{s}^2(x-1)^{r-s} (x+1)^s
  = (-1)^{r_0} \sum_{s = 0}^{r_0} \binom{r_0 - r - 1/2}{r_0 - s} \binom{r - r_0 - 1/2}{s} x^{r-2s},
\end{equation}
where $r_0 := \lfloor r/2 \rfloor$. The second identity comes from (3.133) on page 38 of \cite{Gould72} and direct calculation. We thank Zhiwei Sun for pointing us to this reference.

From the differential equation satisfied by $P_r$, we have
\begin{equation}
  \label{eq:Qdiff}
  (  \partial_X   \partial_Y)(Q_r(X, Y)(X+Y))
  = (r+1)(\partial_X +   \partial_Y)Q_r(X, Y).
\end{equation}
For $A \in \slf_2(\Cb)$, denote
\begin{equation}
  \label{eq:Aj}
A_1 = (A, 0),~ A_2 = (0, A)
\end{equation}
in $\slf_2(\Cb)^2$.
Then we have two operators
\begin{equation}
   \label{eq:RC}
   \RC_{r, (k_1, k_2)} := (-4\pi)^{-r} Q_{r, k_1, k_2}(R_1, R_2),~
   \tRC_r := (-4\pi)^{-r} \tQ_r(R_1, R_2)
 \end{equation}
 on  real-analytic functions on $G(\Rb)^2$.
 If $f: G(\Rb)^2 \to \Cb$ is holomorphic and right $K_\infty^2$-equivariant of weight $(k_1, k_2)$, then
 $\RC_{r, (k_1, k_2)}(f)^\Delta: G(\Rb) \to \Cb$ is holomorphic and  right $K_\infty$-equivariant of weight $k_1 + k_2 + 2r$, and the operator $\RC$ is usually called the Rankin-Cohen operator.
 Here $f^\Delta(g) := f(g^\Delta) = f(g, g)$ is the restriction of $f$ to the diagonal $G(\Rb) \subset G(\Rb)^2$.
 In fact, we have (see \cite[Proposition 19]{123})
 $$
 \RC_{r, (k_1, k_2)}(f)^\Delta(g_z) = (2\pi i)^{-r} \lp Q_r(\partial_{z_1}, \partial_{z_2}) f(g_{z_1}, g_{z_2})\rp \mid_{z_1 = z_2 = z}.
 $$
  For example, if $f(g_{z_1}, g_{z_2}) = \ebf(m_1 z_1 + m_2 z_2)$, then
 \begin{equation}
   \label{eq:RCexp}
   \RC_{r, (k_1, k_2)}(f)^\Delta(g_{z}) =  Q_{r, (k_1, k_2)}(m_1, m_2) \ebf((m_1 + m_2)z).
 \end{equation}
 From Lemma 2.2 in \cite{Li23}, we know that there are unique constants $c^{(r; k_1, k_2)}_{\ell} \in \Qb$ such that
 \begin{equation}
   \label{eq:Rtf}
   (4\pi)^{-r} (R_{1}^r f)^\Delta = \sum_{\ell = 0}^r c^{(r; k_1, k_2)}_\ell (4\pi)^{-r+\ell} R^{r-\ell}\RC_{\ell, (k_1, k_2)}(f)^\Delta
 \end{equation}
 whenever $k_1 + k_2 + 2r < 2$.

\subsection{Quadratic Space associated to Real Quadratic Field}

  \label{subsec:Va}
  Let $F = \Qb(\sqrt{D})$ be a real quadratic field, which becomes a $\Qb$-quadratic space of signature $(1, 1)$ with respect to the quadratic form
$$
Q_\a (\lambda):= \a  \Nm(\lambda)
,~ \lambda \in F
$$
for any $\a  \in \Qb^\times$.
We denote this quadratic space
by $V_\a $ and identify
\begin{equation}
  \label{eq:identify}
\begin{split}
\iota_\a:  V_\a (\Rb) = F \otimes_\Qb \Rb &\cong \Rb^2  \\
(\lambda_1, \lambda_2)  &\mapsto (\lambda_1, \sgn(\a )\lambda_2)\sqrt{|\a |}.
\end{split}
\end{equation}
This is an isometry, where the quadratic form on $\Rb^2$ is given by $Q((x_1, x_2)) = x_1x_2$.
The special orthogonal group $H_\a  := \mathrm{SO}({V_\a })$ satisfies
$$
H_\a (\Qb) \cong F^1 ( \cong F^\times/\Qb^\times),
$$
where $ F^1 := \{\mu \in F: \Nm(\mu) = 1\}$ acts
on $V$ via multiplication. 
Furthermore, we identify
$$
H_\a (\Rb) \cong \Rb^\times,
$$
where $t \in \Rb^\times$ acts on
$V_\a (\Rb) = F \otimes_\Qb \Rb \cong \Rb^2$
via $(x_1, x_2) \mapsto (t x_1, t^{-1}x_2)$.
Note that the invariant measure on $H(\Rb) \cong \Rb^\times$ is $\frac{dt}{|t|}$.
We have a character $\sgn : H_\a (\Rb) \cong \Rb^\times \to \{\pm 1\}$.
Denote
$$
H_\a (\Rb)^+ := \ker(\sgn) \cong \Rb_{> 0}
$$
its kernel, which is the connected component of $H_\a(\Rb)$ containing the identity.
We also denote
\begin{equation}
  \label{eq:HQ+}
  H_\a (\Qb)^+ := H_\a (\Rb)^+ \cap H_\a (\Qb)
\end{equation}
which is an index 2 subgroup of $H_\a (\Qb)$.

\begin{remark}
  \label{rmk:biquad}
  Let $\chi = \chi_{E/F}$ be a Hecke character associated to a quadratic extension $E/F$. Suppose $E/\Qb$ is biquadratic. Then $\chi\mid_{\Ab^\times}$ is trivial
  and $\chi$ factors through the map $\Nm^-: \Ab_F^\times \to H_\a (\Ab)$, i.e.\ there exists $\vc = \vc_{E/F}: H_\a (\Ab) \to \{\pm 1\}$ such that
\begin{equation}
  \label{eq:vc}
  \vc \circ \Nm^- = \chi.
\end{equation}
Note that $\vc$ is odd if and only if $E$ is totally imaginary.
We also denote the compact subgroup
\begin{equation}
  \label{eq:Kvc}
K_\vc := H_\a(\hat\Zb) \cap \mathrm{ker}(\vc).
\end{equation}
Note that $H_\a (\Qb)\backslash H_\a (\hat \Qb)/K_\vc$ is a finite set.

\end{remark}

\subsection{The Weil Representation and Theta Functions}
\label{subsec:WeilRep}

Let $(V, Q)$ be a rational quadratic space of signature $(p, q)$, and $H_V := \SO(V)$.
For a subfield $E \subset \Cb$, we denote $\Sc(\hat V; E)$, resp.\ $\Sc(V_p; E)$, to denote the space of Schwartz functions on $\hat V := V(\hat\Qb)$, resp.\ $V_p := V(\Qb_p)$, with values in $E$, which is a $E$-vector space. We omit $E$ from the notation if it is $\Qb$.
On the other hand, we write $\Sc(V(\Rb))$ and $\Sc(V(\Ab))$ for the space of Schwartz function on $V(\Rb)$ and $V(\Ab)$ valued in $\Cb$ respectively.

For a lattice\footnote{Lattices will be even and integral throughout the paper.} $L \subset V$, we denote $L^\vee \subset V$ its dual lattice, $\hat L := L \otimes \hat\Zb$, $\hat L^\vee := L^\vee \otimes \hat\Zb$ and
\begin{equation}
  \label{eq:Lmmu}
  L_{m, \mu} := \{\lambda \in L+ \mu: Q(\lambda) = m\}.
\end{equation}
for $m \in \Qb, \mu \in L^\vee/L$.
The finite dimensional $E$-subspace
\begin{equation}
  \label{eq:ScL}
  \Sc(L; E) :=  \{\phi \in \Sc(\hat V; E): \phi \text{ is $\hat L$-invariant  with support on } \hat L^\vee\} \subset \Sc(\hat V; E),
\end{equation}
 is spanned by $\{\phi_{L + \mu} : \mu \in \hat L^\vee/\hat L \cong L^\vee/L\}$ with
\begin{equation}
  \label{eq:phimu}
\phi_{L + \mu} := \cha(\hat L + \mu) \in \Sc(\hat V).
\end{equation}
For full sublattices $M \subset L \subset V$, it is clear that
\begin{equation}
  \label{eq:LM}
  \Sc(L; E) \subset \Sc(M; E) \subset \Sc(\hat V; E).
\end{equation}
As above, we also denote $\Sc(L) := \Sc(L; \Qb)$.
Furthermore, since
$$
\Sc(\hat V) = \bigcup_{L \subset V \text{ lattice}} \Sc(L),
$$
we have $\Sc(\hat V; E) = \Sc(\hat V) \otimes_\Qb E$ for any subfield $E \subset \Cb$.

Suppose $V = V_1 \oplus V_2$ is a decomposition of rational quadratic spaces.
For any lattice $L_i \subset V_i$, we have $\Sc(L_1 \oplus L_2; E) = \Sc(L_1; E) \otimes \Sc(L_2; E) \subset \Sc(\hat V_1; E) \otimes \Sc(\hat V_2; E)$ via the natural restriction map.
This also gives us
\begin{equation}
  \label{eq:Ssum}
  \Sc(\hat V; E) =
  \Sc(\hat V_1; E) \otimes \Sc(\hat V_2; E)
  = \bigoplus_{L_1 \subset V_1,~ L_2 \subset V_2 \text{ lattices}} \Sc(L_1; E) \otimes \Sc(L_2; E).
\end{equation}
Combining with equation \eqref{eq:LM}, we see that for any given $\phi \in \Sc(\hat V; E)$, we can find a lattice $L = L_1 \oplus L_2 \subset V$ such that $L_i \subset V_i$  and
\begin{equation}
  \label{eq:phiL}
  \phi \in \Sc(L; E) = \Sc(L_1; E) \otimes \Sc(L_2; E).
\end{equation}

Let $\tilde G(\Ab) := \mathrm{Mp}_2(\Ab)$ be the metaplectic cover of $G(\Ab)$.
The group  $\tilde G(\Ab) \times H_V(\Ab)$ acts on $\Sc(V(\Ab))$ via the Weil representation $\omega = \omega_{V, \psi}$ (see \cite[section 5]{Kudla94} for explicit formula).
For each prime $p \le \infty$, we also have the local Weil representation $\omega_p$ of $G(\Qb_p)$ acting on $\Sc(V(\Qb_p); \Cb)$.
Then $\omega_f := \otimes_{p < \infty} \omega_p$ gives a representation of $G(\hat \Qb)$ on $\Sc(\hat V; \Cb)$.

For any lattice $L \subset V$, the subspace $\Sc(L; \Cb)$ defined in \eqref{eq:ScL} is a $K_f$-invariant subspace with $K_f := G(\hat\Zb)$.
It has a unitary pairing $\langle, \rangle$ with the vector space $\Cb[L^\vee/L] := \oplus_{\mu \in L^\vee/L} \Cb \ef_\mu$ given by
\begin{equation}
  \label{eq:pair1}
  \langle \ef_{\mu} , \phi_{\mu'} \rangle :=  \langle \ef_{\mu} , \phi_{\mu'} \rangle_L :=
\begin{cases}
  1,& \mu = \mu',\\
  0,&\text{ otherwise.}
\end{cases}
\end{equation}
More generally if $L = L_1 \oplus L_2$, we have $L^\vee/L = L_1^\vee/L_1 \oplus L_2^\vee / L_2$ and $\Cb[L^\vee/L] \cong \Cb[L_1^\vee/L_1] \otimes \Cb[L_2^\vee/L_2]$.
Therefore, we can extend the pairing above to
\begin{equation}
  \label{eq:pair2}
  \langle \cdot,  \cdot\rangle_{L_2} : \Cb[L^\vee/L] \times \Sc(L_2; \Cb) \to \Cb[L_1^\vee/L_1],~
(    \vf_1 \otimes \vf_2 ,\phi) \mapsto    \langle\vf_2 ,\phi \rangle_{}\vf_1
\end{equation}
with $\vf_i \in \Cb[L_i^\vee/L_i]$ and $\phi \in \Sc(L_2; \Cb)$.

With respect to the perfect pairing in \eqref{eq:pair1}, the unitary dual of $\omega_f$ is the representation $\rho_L$ on $\Cb[L^\vee/L]$ given by
\begin{equation}
  \label{eq:rhoL}
  \begin{split}
    \rho_L(n(1))(\ef_\mu) &:= \ebf(Q(\mu)) \ef_\mu, \\
\rho_L(w)(\ef_\mu) &:= \frac{\ebf(-(p-q)/8)}{\sqrt{|L^\vee/L|}} \sum_{\mu \in L^\vee/L} \ebf(-(\mu, \mu')) \ef_{\mu'},
  \end{split}
\end{equation}
where  $(p, q)$ is the signature of  $V(\Rb)$.
This is the Weil representation on finite quadratic modules used by Borcherds in \cite{Borcherds98}.
If we identify $\Sc(L; \Cb)$ and $\Cb[L^\vee/L]$ with $\Cb^{|L^\vee/L|}$ via the bases $\{\phi_\mu: \mu \in \hat L^\vee/\hat L\}$ and $\{\ef_\mu: \mu \in  L^\vee/ L\}$ respectively, then $\omega_f = \overline{\rho_L} = \rho_L^{-1}$.
For full sublattices $M \subset L$, the following trace map
\begin{equation}
  \label{eq:trace}
  \tr^L_M: \Cb[L^\vee/L] \to \Cb[M^\vee/M],~ \ef_\mu \mapsto \frac{1}{[L:M]}\sum_{h \in M^\vee/M,~ h \equiv \mu \bmod{L}} \ef_h
\end{equation}
intertwines the Weil representation and is compatible with the inclusion in \eqref{eq:LM} in the sense that
\begin{equation}
  \label{eq:CLM}
\langle \tr^L_M(\vf), \phi \rangle_M = \langle \ef, \phi \rangle_L
\end{equation}
for any $\vf \in \Cb[L^\vee/L], \phi \in \Sc(L; \Cb)$.

The following result will be very useful for us later.
\begin{lemma}
  \label{lemma:omega-p-def}
For any prime $p$, the local Weil representation $\omega_p$ descends to $\Sc(V(\Qb_p); \Qip)$ with
\begin{equation}
  \label{eq:Qinfp}
  \Qip := \bigcup_{n \ge 1} \Qb(\zeta_{p^n}) \subset \Qb^{\mathrm{ab}}
\end{equation}
the maximal abelian extension of $\Qb$ ramified only at $p$.
\end{lemma}
\begin{proof}
  Via $L'/L = \oplus_p L'_p/L_p$ with $L_p := L \otimes \Zb_p$, we can write $\rho_L = \otimes_p \rho_{p}$ with $\rho_p$ the Weil representation associated to the finite quadratic module $L'_p/L_p$, and identify $\omega_p = \rho_{p}^{-1}$.
  It is well-known that any finite quadratic module can be written in the form $M'/M$ for an even integral lattice $M$ \cite{Ni79}.
  Therefore it suffices to prove the claim with $\omega_p$ replaced by $\rho_M$ for an even integral lattice $M$ with quadratic form valued in $\Zb[1/p]$.
  This follows then directly from formula \eqref{eq:rhoL} and Milgram's formula \cite[Corollary 4.2]{Borcherds98}.
\end{proof}

As usual, we let $H_{k, L}(\Gamma)$  denote the space of harmonic Maass forms valued in $\Cb[L^\vee/L]$ of weight $k \in \frac{1}{2}\Zb$ and representation $\rho_L$ on a congruence subgroup $\Gamma \subset \SL_2(\Zb)$ (see \cite[section 3]{BF04}).
It contains the subspace $M^!_{k, L}(\Gamma)$ of vector-valued weakly holomorphic modular forms.
Post-composing with $\tr^L_M$ in \eqref{eq:trace} induces a map
 $H_{k, L}(\Gamma) \to H_{k, M}(\Gamma)$,
 which preserves holomorphicity and rationality of holomorphic part Fourier coefficients.
 If $L^\vee/L$ is trivial (resp.\ $\Gamma = \SL_2(\Zb)$), then we drop $L$ (resp.\ $\Gamma$) from the above notations.
 Furthermore, we let
 \begin{equation}
   \label{eq:M!!}
   M^{!, \infty}_{k}(\Gamma):= \{f  \in M^!_k(\Gamma)  : f \text{
     is holomorphic away from the cusp }
   \infty \}.
 \end{equation}
 and denote for $f(\tau) = \sum_{m \in \Qb,~ \mu \in L^\vee/L} c(m, \mu) q^m \ef_\mu \in M^!_{k, L}$
 \begin{equation}
   \label{eq:prin}
  \prin(f) := \sum_{m \in \Qb_{< 0},~ \mu \in L^\vee/L} c(m, \mu) q^m \ef_\mu
 \end{equation}
the principal part of $f$.

In \cite[Theorem 4.3]{McGraw03}, McGraw extended
the representation $\rho_L$ to the metaplectic cover of $\GL_2$.
To simplify the notation, we recall it here for lattices with even rank, in which case this extension factors through $\GL_2$.
Using the short exact sequence
$$
1 \to \SL_2 \to \GL_2 \stackrel{\det}{\to} \Gm \to 1,$$
we can identify $\GL_2 \cong \SL_2 \rtimes T$.
Then $\omega_f$  extends to a $\Qb$-linear action of $\GL_2(\hat\Qb) = \GL_2(\Qb)\GL_2(\hat\Zb)$ on $\Sc(\hat V; \Qab)$ via
\begin{equation}
  \label{eq:weilGL2}
  (\omega_f(g, t(a))\phi)(x) := (\omega_f(g)\sigma_a(\phi))(x) =
\sigma_a(  (\omega_f(t(a)^{-1}gt(a))(\phi))(x))
\end{equation}
for $ g \in \SL_2(\hat\Qb), a \in \hat\Qb^\times, \phi \in \Sc(\hat V; \Qb^{\ab})$,
where $\sigma_a \in  \Gal(\Qb^\ab/\Qb)$ satisfies $\sigma_a(\psi_f(a')) = \psi_f(aa')$ for all $a , a' \in \hat\Qb^\times$ and acts on $\Sc(\hat V; \Qab)$ via its action on $\Qab$.
This gives us
\begin{equation}
  \label{eq:Galinv}
  \Sc(\hat V) = \Sc(\hat V; \Qab)^{T(\hat \Zb)}.
\end{equation}
For each $p < \infty$, this gives an extension of $\omega_p$ to a $\Qb$-linear action of $\GL_2(\Qb_p)$ on $\Sc(V_p; \Qip)$, which satisfies
\begin{equation}
  \label{eq:Galinvp}
  \Sc(V_p) = \Sc(V_p; \Qip)^{T(\Zb_p)}.
\end{equation}
For $\varphi \in \Sc(V(\Ab))$, we have the theta function
\begin{equation}
  \label{eq:theta}
  \theta_V(g, h, \varphi) := \sum_{x \in V(\Qb)} (\omega(g) \varphi)(h^{-1} x)
\end{equation}
for $(g, h) \in (G \times H_V)(\Ab)$ as usual.
For a lattice $L \subset V$, we also denote
\begin{equation}
  \label{eq:ThetaL}
  \Theta_L(\tau, h) := v^{-(p-q)/4}\sum_{\mu \in L^\vee/L} \theta_V(g_\tau, h, \phi_\mu\phi_\infty) \phi_\mu
\end{equation}
the vector-valued theta function with $\phi_\infty$ the Gaussian.

\subsection{CM Points and Higher Green Functions}
\label{subsec:CM}
We follow \cite{BKY12} and \cite{BEY21} to recall CM points and higher Green functions.
Let $(\V, Q)$ be a rational quadratic space of signature $(n, 2)$, and $\tH = \tH_\V := \mathrm{GSpin}(\V)$.
For an open compact subgroup $K \subset \tH(\hat\Qb)$, we have the associated  Shimura  variety $X_K$, whose $\Cb$-points are given by
$$
X_K(\Cb) = \tH(\Qb) \backslash(  \Db\times \tH(\hat\Qb) / K).
$$
Here $\Db = \Db^+ \sqcup \Db^-$ is the symmetric space associated to $\V(\Rb)$. For $m \in\Qb$ and $\varphi \in \Sc(\hat\V; \Cb)$, one can define the special divisor $Z(m, \varphi)$ on $X_K$ (see e.g.\ \cite[section 2]{BEY21}).

The CM points on $X_K$ can be described as follows.
For a totally real field $F$ of degree $d$ with real embeddings $\sigma_j, 1 \le j \le d$,  denote $\alpha_j := \sigma_j(\alpha)$ for $\alpha \in F$.
Suppose $\alpha_{j_0} < 0$ for some $j_0$ and $\alpha_j > 0$ when $j \neq j_0$.
Then a CM quadratic extension $E/F$ becomes an $F$-quadratic space $W = W_\alpha = E$ with respect to the quadratic form $Q_\alpha := \alpha \Nm_{E/F}$.
Suppose there is isometric embedding as in \eqref{eq:isometry}.
Then the subspace $W \otimes_{F, \sigma_{j_0}} \Rb \subset \V(\Rb)$ is a negative 2-plane, and determines a point $z_0^\pm \in \Db^\pm$ with a choice of orientation. For convenience, we denote
\begin{equation}
  \label{eq:z0}
z_0 := z_0^+.
\end{equation}
The group $\mathrm{Res}_{F/\Qb}(\SO(W))$ is contained in $\SO(\V)$, whose preimage in $\tH_\V$ is a torus denoted by $T_W$.
Note that $T_W(\Qb) = E^\times/F^1$.
We denote the CM cycle on $X_K$ associated to $T_W$ by
\begin{equation}
  \label{eq:ZW}
Z(W) := T_W(\Qb) \backslash (\{z_0^\pm\} \times T_W(\hat\Qb)/K_W) \subset X_K
\end{equation}
with $K_W := K \cap T_W(\hat\Qb)$.
It is defined over $F$, and its Galois conjugates are the CM cycles $Z(W(j))$ with $1 \le j \le d$, where $W(j)$ is the neighborhood $F$-quadratic spaces at $\sigma_j$ of admissible incoherent $\Ab_F$-quadratic space $\Wb$ associated to $W$  (see \cite{BY11, BKY12} for details).
Note that $W(j) = W_{\alpha(j)}$ for some $\alpha(j) \in F^\times$ and $\alpha(j_0) = \alpha$.
For totally positive $t \in F$, we define the ``Diff'' set
\begin{equation}
  \label{eq:diff}
  \mathrm{Diff}(W, t) := \{\pf: W_\pf \text{ does not represent }t\}
\end{equation}
following \cite{Kudla97}. Note that this set is finite and odd (see \cite[Proposition 2.7]{YY19}).

When $F$ is real quadratic (i.e.\ $d = 2$), for $\alpha \in F^\times$ with $\Nm(\alpha) < 0$, we set
  \begin{equation}
    \label{eq:alv}
   \alv := \alpha(2) \in F^\times.
  \end{equation}
  Then $(\alv)^\vee = \alpha$.
  Note that $\alv$ is not necessarily the Galois conjugate of $\alpha$!

Denote
\begin{equation}
  \label{eq:rho0}
  \sigma_0 := \frac{n}{4} - \frac{1}{2}.
\end{equation}
Let $L \subset \V$ be an even integral lattice such that $K$ stabilizes $\hat L$.
For $\mu \in L^\vee/L$ and $m \in \Zb + Q(\mu)$, we write $Z(m, \mu) := Z(m, \phi_\mu)$.
The automorphic Green function on $X_K \backslash Z(m, \mu)$ is defined by
\begin{equation}
  \label{eq:Phimmu}
  \Phi_{m, \mu}(z, h, s) := 2 \frac{\Gamma(s + \sigma_0 )}{\Gamma(2s)}
\sum_{ \lambda \in h(L_{m, \mu})}
\lp \frac{m}{Q(\lambda_{z^\perp})} \rp^{s + \sigma_0}
F\lp s + \sigma_0 , s - \sigma_0, 2s; \frac{m}{Q(\lambda_{z^\perp})} \rp,
\end{equation}
for $\mathrm{Re}(s) > \sigma_0 + 1$, where $F(a, b, c; z)$ is the Gauss hypergeometric function \cite[Chapter 15]{AS64}.
At $Z(m, \mu)$, the function $\Phi_{m, \mu}$ has logarithmic singularity.

At $s = \sigma_0 + 1 + r$ with $r \in \Nb$, the function $\Phi_{m, \mu}(z, h, s)$ is called a \textit{higher Green function}.
For a harmonic Maass form $f = \sum_{m, \mu} c(m, \mu) q^{-m} \phi_\mu + O(1)  \in H_{k - 2r, L}$ with $k:= -2\sigma_0$, define
\begin{equation}
  \label{eq:Phijf}
  \Phi_f^r(z, h) := r! \sum_{m > 0,~ \mu \in L'/L} c(m, \mu) m^r \Phi_{m, \mu} (z, h, \sigma_0 + 1 + r)
\end{equation}
to be the associated higher Green function.
Following from the work of Borcherds \cite{Borcherds98} and generalization by Bruinier \cite{Bruinier02} (also see \cite[Proposition 4.7]{BEY21}), the function $\Phi_f^r$ has the following integral representation
\begin{equation}
  \label{eq:Phijint}
  \begin{split}
    \Phi_f^r(z, h)
    &=
(4\pi)^{-r} \lim_{T \to \infty}  \int_{\Fc_T}^{} \langle R^r_\tau f(\tau),  \overline{\Theta_L(\tau, z, h)} \rangle d \mu(\tau)\\
&=  (-4\pi)^{-r} \lim_{T \to \infty}  \int_{\Fc_T}^{} \langle f(\tau), \overline{R^r_\tau \Theta_L(\tau, z, h)} \rangle d \mu(\tau),
  \end{split}
\end{equation}
where $\Fc_T$ is the truncated fundamental domain of $\SL_2(\Zb) \backslash \Hb$ at height $T> 1$ and $d\mu$ is the invariant measure given in \eqref{eq:dg}
It has logarithmic singularity along the special divisor
\begin{equation}
  \label{eq:Zf}
  Z_f := \sum_{m > 0,~ \mu \in L^\vee/L} c(-m, \mu) Z(m, \mu)
\end{equation}
on $X_K$.
Note that $[z_0, h] \in Z(W) \cap Z_f$ if and only if
\begin{equation}
  \label{eq:singint}
h(L_{m, \mu}) \cap z_0^\perp \neq \emptyset
\end{equation}
for some $m, \mu$ with $c(-m, \mu) \neq 0$.
\subsection{Product of Modular Curves as a Shimura Variety}
  \label{subsec:Gamma0N}
  We follow and slightly modify \cite[section 3]{YY19} to express $X_0(N) \times X_0(N)$ as $\mathrm{O}(2, 2)$ orthogonal Shimura variety.
    Consider $(\V, Q) = (M_2(\Qb), \det)$, and the lattice
  $$
L := \left\{ \pmat{a}{b}{Nc}{d}: a, b, c, d \in \Zb \right\} \subset \V
$$
for any $N \in \Nb$.
Then the dual lattice $L^\vee$ is given by
$$
L^\vee := \left\{ \pmat{a}{b/N}{c}{d}: a, b, c, d \in \Zb \right\} \subset \V
$$
and $L^\vee/L \cong (\Zb/N\Zb)^2$ is isomorphic to that of a scaled hyperbolic plane.

For $g_j \in \SL_2(\Qb)$ and $\Lambda \in \V(\Qb)$, the map
\begin{equation}
  \label{eq:SL22act}
\Lambda \mapsto g_1 \Lambda g_2^{-1}
\end{equation}
gives $\SL_2 \times \SL_2 \cong \Spin(\V)$
and identifies $H_\V$ as a subgroup of $\GL_2 \times \GL_2$ \cite[section 3.1]{YY19}.
Let $K(N) := K(\Gamma_0(N)) \subset \GL_2(\hat\Zb)$ be the open compact subgroup in \cite[section 3.1]{YY19} and $K := (K(N) \times K(N)) \cap H_\V(\hat \Qb)$.
Then the map
\begin{equation}
  \label{eq:w}
  w: \Hb^2 \to \Db^+,~ (z_1, z_2) \mapsto \Rb \Re\smat{z_1}{-z_1z_2}{1}{-z_2} + \Rb \Im\smat{z_1}{-z_1z_2}{1}{-z_2}
\end{equation}
induces an isomorphism
\begin{equation}
  \label{eq:XKisom}
  X_0(N) \times X_0(N) \cong X_K,~ (z_1, z_2) \mapsto [w(z_1, z_2), 1]
\end{equation}
with $X_K$ the Shimura variety for $H_\V$.

Under the map \eqref{eq:SL22act}, the inverse images of the discriminant kernel $\Gamma_L \subset \SO(L)$ are
$$
\Gamma^\Delta_0(N) := \left\{
(g_1, g_2) \in \Gamma_0(N)^2: g_1 g_2 \in \Gamma_1(N)
  \right\} \subset \Gamma_0(N)^2,
  $$
  which contains $ \Gamma_1(N) \times \Gamma_1(N)$ and is a normal subgroup of $\Gamma_0(N)^2$ satisfying
  $$
  \Gamma_0(N)^2 / \Gamma^\Delta_0(N) \cong (\Zb/N\Zb)^\times,~
  (\smat{a_1}{*}{*}{*}, \smat{a_2}{*}{*}{*}) \mapsto a_1a_2^{} \bmod{N}.
  $$
  The group $\SO(L^\vee/L) := \SO(L)/\Gamma_L \cong \Gamma_0(N)^2/\Gamma^\Delta_0(N) \cong (\Zb/N\Zb)^\times$ acts on $L^\vee/L \cong (\Zb/N\Zb)^2$ via
  $$
\alpha \cdot (b, c) :=  (\alpha b, \alpha^{-1} c),
  $$
  and the induced linear map on $\Cb[L^\vee/L]$ intertwines the Weil representation $\rho_L$.

  Now given $f \in M^!_{k}(\Gamma_0(N))$ for $k \in 2\Zb$, we can lift it to a vector-valued modular form in $M^!_{k, \rho_L}$ via the following map
  \begin{equation}
    \label{eq:vv}
    \vv:  M^!_{k}(\Gamma_0(N)) \to  M^!_{k, \rho_L},~
    f \mapsto \sum_{\gamma \in \SL_2(\Zb)/\Gamma_0(N)} (f \mid_{k} \gamma ) \rho_L(\gamma)^{-1} \cdot \ef_{0}.
  \end{equation}
  This map and its generalizations are well-studied (see e.g.\ \cite{Sch09}), whose properties are summarized in the following result.

  \begin{lemma}
    \label{lemma:sc-vv}
    When $k < 0$, we have
  \begin{equation}
    \label{eq:prinvv}
    \prin(\vv(f)) = \prin(f) \ef_{0}.
  \end{equation}
  for all  $f \in M^{!, \infty}_{k}(\Gamma_0(N))$, on which space
  the map $\vv$ is an isomorphism with the inverse given by
  \begin{equation}
    \label{eq:sc}
    g = \sum_{\mu \in L^\vee/L} g_\mu \ef_\mu \mapsto g_{0},
  \end{equation}
Furthermore, it preserves the rationality of the Fourier expansion at the cusp infinity.   \end{lemma}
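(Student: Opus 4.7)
My plan is to address the three claims in turn, exploiting the coset structure of the sum defining $\vv$ together with the basic fact $\rho_L(\Gamma_0(N))\ef_0 = \ef_0$ that already underwrites the well-definedness of $\vv$.

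First I would prove the principal-part identity \eqref{eq:prinvv}. In the defining sum $\vv(f) = \sum_{\gamma} (f\mid_k \gamma)\,\rho_L(\gamma)^{-1}\ef_0$, each summand contributes to the principal part at $\infty$ precisely when $f\mid_k\gamma$ itself does, which happens exactly when $\gamma\cdot\infty$ is $\Gamma_0(N)$-equivalent to $\infty$. Since the stabilizer of $\infty$ in $\SL_2(\Zb)$ is already contained in $\Gamma_0(N)$, this forces $\gamma\in\Gamma_0(N)$, i.e.\ the identity coset, whose contribution is $f\cdot\ef_0$ with principal part $\prin(f)\,\ef_0$. Every other summand is holomorphic at $\infty$ because $f\in M^{!,\infty}_k(\Gamma_0(N))$ is holomorphic at every other cusp of $\Gamma_0(N)$.

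Injectivity of $\vv$ then falls out: if $\vv(f)=0$, \eqref{eq:prinvv} gives $\prin(f)=0$, so $f$ is a holomorphic modular form of negative weight, hence zero. For the inverse $g\mapsto g_0$ the plan is a uniqueness argument. Because $\rho_L$ fixes $\ef_0$ pointwise on $\Gamma_0(N)$, the scalar function $\vv(f)_0$ transforms as a weight $k$ form on $\Gamma_0(N)$, and by \eqref{eq:prinvv} it has the same principal part $\prin(f)$ at $\infty$ as $f$ itself. It therefore suffices to show that $\vv(f)_0 - f$ is holomorphic at the remaining cusps of $\Gamma_0(N)$; once this is in place it is a holomorphic modular form of negative weight, hence vanishes, giving $\vv(f)_0 = f$. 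The rationality assertion then follows immediately: the inverse is projection onto the $\ef_0$-component, which preserves rationality of Fourier expansions at $\infty$, and the converse direction is read off from the definition of $\vv$ together with the fact that the entries of $\rho_L(\gamma)^{-1}\ef_0$ are algebraic.

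\textbf{Main obstacle.} The delicate step is verifying that $\vv(f)_0 - f$ is holomorphic at cusps of $\Gamma_0(N)$ other than $\infty$. Expanding at a cusp $\alpha\cdot\infty$ with $\alpha\in\SL_2(\Zb)\setminus\Gamma_0(N)$, we have $(\vv(f)_0\mid_k\alpha)(\tau) = \sum_\mu [\rho_L(\alpha)]_{0,\mu}\,\vv(f)_\mu(\tau)$, whose principal part at $\infty$ is governed by the matrix entries $[\rho_L(\alpha)]_{0,\mu}$ paired with the principal parts of the components $\vv(f)_\mu$; by \eqref{eq:prinvv} these vanish for $\mu\neq 0$, so the only term to control is $[\rho_L(\alpha)]_{0,0}\,\prin(f)$, and this must be matched against the expansion of $f\mid_k\alpha$ at $\infty$. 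Making this accounting precise requires the explicit description of $\rho_L$ via the splitting of the $\Gamma_0(N)$-lattice as a self-dual hyperbolic plane plus a rescaled one, combined with McGraw's extension of $\rho_L$ to an action of $\GL_2(\Zb/N\Zb)$ recorded earlier in this section, which together pin down how $\rho_L$ behaves across the $[\SL_2(\Zb):\Gamma_0(N)]$ cosets.
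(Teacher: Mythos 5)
Your first paragraph (the principal-part identity) and the injectivity deduction are fine and are the standard computation: only the identity coset can contribute negative exponents because $\Gamma_\infty\subset\Gamma_0(N)$ and $f$ is holomorphic at the other cusps. But the paper offers no argument of its own to compare against — its proof of this lemma is a pure citation to \cite{Sch09} and \cite{BHKRY20a} — and your direct route to the inverse statement has a genuine gap exactly at the step you flag as the ``main obstacle,'' and the gap is not merely technical bookkeeping. You reduce $\vv(f)_0=f$ to showing $\vv(f)_0-f$ is holomorphic at the cusps $\alpha\infty\neq\infty$, and you correctly compute that the principal part of $(\vv(f)_0)\mid_k\alpha$ at $\infty$ is $[\rho_L(\alpha)]_{0,0}\,\prin(f)$, since by \eqref{eq:prinvv} the components $\vv(f)_\mu$ with $\mu\neq 0$ have no principal part. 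However, the object you propose to ``match'' this against, namely the principal part of $f\mid_k\alpha$, is identically zero: that is precisely the defining property of $M^{!,\infty}_k(\Gamma_0(N))$. So your accounting closes only if $[\rho_L(\alpha)]_{0,0}=0$ for every $\alpha\in\SL_2(\Zb)\setminus\Gamma_0(N)$, and that is false for this lattice: since $L^\vee/L\cong(\Zb/N\Zb)^2$ has order $N^2$ and the signature is $(2,2)$, formula \eqref{eq:rhoL} gives $[\rho_L(w)]_{0,0}=1/N\neq 0$. Carried out honestly, your computation therefore produces a nonzero principal part $\tfrac1N\prin(f)$ for $(\vv(f)_0-f)\mid_k w$ at the cusp $0$ rather than holomorphy, so the negative-weight vanishing argument cannot be invoked; some further cancellation or a different normalization/convention (which is what the precise statements of \cite[Prop.~4.2]{Sch09} and \cite[Prop.~6.12, Cor.~6.14]{BHKRY20a} supply) is needed, and your sketch does not identify it.

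Two smaller omissions: the lemma asserts an isomorphism, so besides the left-inverse identity you would also need to control the image (equivalently show $\vv(g_0)=g$ on the relevant target space), which your proposal does not address; and for the rationality claim in the nontrivial direction (rational $f$ implies rational Fourier coefficients of $\vv(f)$), algebraicity of the entries of $\rho_L(\gamma)^{-1}\ef_0$ is not enough — the coefficients of $\vv(f)$ are a priori cyclotomic, and one needs the Galois-equivariance of the Weil representation (McGraw's extension, as recalled in \eqref{eq:weilGL2}) or an explicit coefficient formula to conclude they are rational.
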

  \begin{proof}
See Proposition 4.2 in \cite{Sch09} and Proposition 6.12, Corollary 6.14 in \cite{BHKRY20a}.
  \end{proof}

  As a consequence, we can relate the higher Green function $G^{\Gamma_0(N)}_{r+1, f}$   from the introduction to one on the Shimura variety $X_K$.
  \begin{corollary}
    \label{cor:1}
    Under the isomorphism \eqref{eq:XKisom}, we have
  \begin{equation}
    \label{eq:scG-vvG}
    G^{\Gamma_0(N)}_{r+1, f}(z_1, z_2)
    = - \Phi_{\vv(f)}^r([w(z_1, z_2), 1])
  \end{equation}
  with $G^{\Gamma_0(N)}_{r+1, f}$ the higher Green function defined in \eqref{eq:Grf}
  for $f \in M^{!, \infty}_{-2r}(\Gamma_0(N))$ with $r > 0$.
\end{corollary}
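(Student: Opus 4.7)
\textbf{Proof plan for Corollary \ref{cor:1}.} The strategy is to compute $\Phi_{\vv(f)}^r([w(z_1,z_2),1])$ by the integral representation \eqref{eq:Phijint} and unfold the vector-valued pairing to a scalar theta integral on $\Gamma_0(N)\backslash \Hb$, which then matches the classical sum \eqref{eq:Grf}. Since the map $\vv$ is $\Qb$-linear and preserves the principal part at infinity (Lemma \ref{lemma:sc-vv}), it is enough to fix $m\in\Nb$ and show that $\Phi^{r}_{q^{-m}\ef_{0}}([w(z_1,z_2),1]) = -m^{r}G^{\Gamma_0(N),m}_{r+1}(z_1,z_2)/r!$, with both sides interpreted via analytic continuation in $m$ (equivalently, using a Poincar\'e series $P_{-m}$ in place of $q^{-m}$).

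First I would use \eqref{eq:vv} to rewrite $\langle R^{r}\vv(f)(\tau),\overline{\Theta_L(\tau,w(z_1,z_2),1)}\rangle$. Because $\Theta_L$ is modular of weight $-n/2+q/2=0$ for $\rho_L$ and the coset representatives $\gamma \in \SL_2(\Zb)/\Gamma_0(N)$ act by $\rho_L(\gamma)^{-1}$ on $\ef_{0}$, the pairing unfolds to $(R^{r}f)(\tau)\,\Theta_{L}^{(0)}(\tau,w(z_1,z_2),1)$ integrated over $\Gamma_0(N)\backslash\Hb$, where $\Theta_{L}^{(0)}$ denotes the $\ef_{0}$-component. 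Using \eqref{eq:w} and the identification \eqref{eq:SL22act}, the negative $2$-plane $w(z_1,z_2)^{\perp}$ in $\V(\Rb)=M_2(\Rb)$ has the property that for $\Lambda=\smat{a}{b}{Nc}{d}\in R_N$ with $\det\Lambda=m$,
\begin{equation*}
\frac{m}{Q(\Lambda_{z^{\perp}})} \;=\; \Bigl(1+\tfrac{|z_1-\Lambda z_2|^{2}}{2\,\Im z_1\,\Im \Lambda z_2}\Bigr)^{-1},
\end{equation*}
which is the standard signature $(2,2)$ computation.

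Second I would evaluate the unfolded integral term by term in $\Lambda \in R_N$. Applying $R^{r}$ to the Gaussian factor of $\Theta_L^{(0)}$ and computing the Mellin transform against $P_{-m}$ (or against $(R^{r}f)(\tau)$ after Stokes truncation as in \eqref{eq:Phijint}) produces, for each $\Lambda$, precisely the integral representation of the Legendre function $Q_r\bigl(1+|z_1-\Lambda z_2|^{2}/(2\,\Im z_1\,\Im \Lambda z_2)\bigr)$ from \eqref{eq:Gs}, up to an explicit constant involving $\Gamma(r+1)^{2}/\Gamma(2r+2)$ that matches the hypergeometric prefactor in \eqref{eq:Phimmu} at $s=\sigma_0+1+r=r+1$ (here $n=2$, so $\sigma_0=0$). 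Summing over $\Lambda\in R_N$ with $\det\Lambda=m$ and collapsing the $\Gamma_0(N)$-integral reproduces $-G^{\Gamma_0(N),m}_{r+1}(z_1,z_2)/r!\cdot m^{r}$, and then linearity together with $\prin(\vv(f))=\prin(f)\ef_0$ yields \eqref{eq:scG-vvG}.

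The main obstacle is bookkeeping rather than substance: the factor $r!$ in \eqref{eq:Phijf}, the factor $-2$ and the normalization convention in \eqref{eq:Gs}, the powers of $4\pi$ entering through $R^r$, the Gauss hypergeometric identity at $s=r+1$, and the ratio $[\SL_2(\Zb):\Gamma_0(N)]/|\Gamma_L|$ relating the stabilizer of $w(z_1,z_2)$ in $\SO(L)$ to the diagonal $\Gamma_0(N)$-action, must all be tracked so that they collapse to the single overall sign $-1$. The regularization $\lim_{T\to\infty}$ in \eqref{eq:Phijint} causes no trouble because for $\Lambda\notin\Gamma_0(N)\cdot z_1\times\{z_2\}$ the theta contribution decays rapidly, and the singular diagonal terms produce on both sides the same logarithmic divergence along the Hecke correspondence $T_m$.
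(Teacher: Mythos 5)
Your proposal is mathematically sound but takes a genuinely different route from the paper. The paper's proof is indirect: it shows that both sides of \eqref{eq:scG-vvG} have logarithmic singularities along the same divisor (namely $Z(m,0) = T_m$, by \eqref{eq:Zm} and the definition of $Z_f$), and then, invoking Corollary 4.2 and Theorem 4.4 of \cite{BEY21}, that the difference lies in $L^2(X_0(N)^2)$ and is an eigenfunction of both Laplacians with negative eigenvalue $r(1-r)$; fixing $z_2$, such a smooth $L^2$-eigenfunction on $X_0(N)$ with negative eigenvalue must vanish. This is a two-line soft argument that completely sidesteps the constant-tracking your plan must carry out. Your approach, by contrast, is a direct unfolding of the regularized theta integral \eqref{eq:Phijint} along the lines of Borcherds--Bruinier: pass from the vector-valued pairing against $\vv(f)$ to a scalar integral over $\Gamma_0(N)\backslash\Hb$ using \eqref{eq:vv} and $\Gamma_0(N)$-equivariance, match $m/Q(\Lambda_{z^\perp})$ with the classical hyperbolic-distance argument of $Q_{s-1}$, and identify the hypergeometric factor in \eqref{eq:Phimmu} at $s=r+1$ (where $\sigma_0=0$) with the Legendre function in \eqref{eq:Gs}. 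What your route buys is self-containedness: it does not quote the $L^2$-estimate and eigenvalue computation from \cite{BEY21}, and it makes the geometry of the identification \eqref{eq:SL22act}--\eqref{eq:w} explicit. What it costs is the substantial bookkeeping you correctly flag (the $r!$, the $-2$, the $(4\pi)^{-r}$, the hypergeometric-to-Legendre identity, the index $[\SL_2(\Zb):\Gamma_0(N)]$ and its cancellation against the unfolding), plus a real gap in your sketch: the regularization of \eqref{eq:Phijint} against a weakly holomorphic input does not unfold termwise without justification — you need either the Poincar\'e-series limit argument you allude to, or a truncation-and-Stokes argument as in \cite{Bruinier02}, spelled out carefully enough to see that no boundary term survives. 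As written, "interpreted via analytic continuation in $m$" and "causes no trouble because the theta contribution decays rapidly" are assertions, not proofs; this is exactly the kind of convergence issue that the paper's uniqueness argument is designed to avoid.
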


\begin{proof}
Under \eqref{eq:XKisom}, the divisor
  \begin{equation}
    \label{eq:Zm}
    Z(m, 0) =
    \Gamma_0(N)^2 \backslash
    \{(z_1, z_2) \in \Hb^2: (\smat{z_1}{-z_1z_2}{1}{-z_2} , x) = 0 \text{ for some } x \in L_{m, 0}\}
  \end{equation}
on $X_K$  is simply the $m$-th Hecke correspondence $T_m$ on $X_0(N) \times X_0(N)$.
Therefore, the two sides of \eqref{eq:scG-vvG} have logarithmic singularity along the same divisor.
Using Corollary 4.2 and Theorem 4.4 of \cite{BEY21}, we see that their difference is a smooth function in $L^2(X_0(N)^2)$ and an eigenfunction of the Laplacians in $z_1$ and $z_2$ with eigenvalue $r(1-r) < 0$. By fixing $z_2$, this difference is an eigenfunction of the Laplacian on $X_0(N)$ with negative eigenvalue, which vanishes identically. This holds for any $z_2 \in X_0(N)$, and we obtain \eqref{eq:scG-vvG}.
\end{proof}

\begin{remark}
  \label{rmk:relation}
  Following section V.4 of \cite{GZ86}, we call a set of integers $\{\lambda_m: m \in \Nb\}$ a \textit{relation for $S_{2-k}(\Gamma_0(N))$} if only finitely many $\lambda_m$ are non-zero and
  $$
\sum_{m \ge 1} \lambda_m a_m = 0
$$
for all $\sum_{m \ge 1} a_m q^m \in S_{2-k}(\Gamma_0(N))$.
Since $g_0 \in S_{2-k}(\Gamma_0(N))$ for all $g \in S_{2-k, L^-}$, we have
$$
\{f_P, g \} := \mathrm{CT}\lp\sum_{\mu \in L^\vee/L} f_{P, \mu} g_\mu\rp = 0
$$
for $f_P = \sum_{m \ge 1} \lambda_m q^{-m} \ef_0$. By Serre duality \cite{Borcherds99}, there exists $f \in M^!_{k, L}$ with $\prin(f) = f_P$.
\end{remark}

Suppose $E_1, E_2$ are imaginary quadratic fields such that $E = E_1E_2$ is biquadratic containing a real quadratic field $F$, i.e.\ $E_1 \neq E_2$.
Then for any CM points $z_j \in E_j$, the point $(z_1, z_2) \in \Hb^2$ is sent to $Z(W_\alpha) \cup Z(W_\alv) \subset X_K$ under the isomorphism in \eqref{eq:XKisom} (see section 3.2 in \cite{YY19} for details).

\subsection{Eisenstein Series}
\label{subsec:Eisenstein}
We recall coherent and incoherent Eisenstein series for the group $G = \SL_2$ following \cite{BKY12}.
Let $F$ be a totally real field of degree $d$ and discriminant $D_F$, $E/F$ be a quadratic CM extension with absolute discriminant $D_{E}$ and $\chi = \chi_{E/F} = \otimes_{v \le \infty} \chi_v$ the associated Hecke character.
For a standard section $\Phi \in I(s, \chi)$ with
\begin{equation}
  \label{eq:Ischi}
  I(s, \chi) := \Ind^{G(\Ab_F)}_{B(\Ab_F)}(|\cdot|^s \chi) = \bigotimes_{v \le \infty} I_v(s, \chi_v),~ I_v(s, \chi_v) := \Ind^{G(F_v)}_{B(F_v)}(|\cdot|_v^s \chi_v),
\end{equation}
we can form the Eisenstein series
$$
E^*(g, s, \Phi) := \Lambda(s + 1, \chi) E(g, s, \Phi),~
E(g, s, \Phi) := \sum_{\gamma \in B\backslash \SL_2(F)} \Phi(\gamma g, s).
$$
where $\Lambda(s, \chi)$ is the completed $L$-function for $\chi$ (see equation (4.6) in \cite{BKY12}).
When $\Phi = \otimes_v \Phi_v$, the Eisenstein series $E(g, s, \Phi)$ has the Fourier expansion
$$
E^*(g,s , \Phi) = E^*_0(g,s , \Phi) + \sum_{\mut \in F^\times} E^*_\mut(g,s , \Phi),~
$$
and  for $t \in  F^\times$,
$$
E^*_\mut(g, s, \Phi) =  \prod_v W^*_{\mut, v}(g, s, \Phi_v),
$$
where $W^*_{t, v}$ is the normalized local Whittaker function defined by
\begin{equation}
  \label{eq:W*s}
  \begin{split}
    W^*_{\mut, v}(g_v, s, \Phi_v) &:= |D_E/D_F|_v^{-(s+1)/2} L(s + 1, \chi_v) \int_{F_v} \Phi_v(w n(b) g_v, s) \psi_v(-\mut b) db    .
  \end{split}
\end{equation}
For simplicity, we denote
\begin{equation}
  \label{eq:W*0}
  W^*_{t, v}(\Phi_v) :=   W^*_{t, v}(1, 0, \Phi_v),~
  W^{*, \prime}_{t, v}(\Phi_v) :=   \partial_s W^*_{t, v}(1, s, \Phi_v) \mid_{s = 0}.
\end{equation}
We will be interested in the case when $\Phi$ is a Siegel-Weil section.

Given $\alpha \in F^\times$, we view $W_\alpha = E$ as an $F$-quadratic space with quadratic form $Q_\alpha(z) := \alpha z\bar z$.
Denote $\omega_\alpha$ the associated Weil representation.
We have an $\SL_2$-equivariant map $\lambda_\alpha:  \Sc(W_\alpha(\Ab_F)) \to I(0, \chi)$ given by
\begin{equation}
  \label{eq:SWsection}
\lambda_\alpha(\phi)  (g) = (\omega_\alpha(g) \phi)(0).
\end{equation}
At each place $v$, there are local versions of $\omega_\alpha$ and $\lambda_\alpha$ as well, denoted by $\omega_{\alpha, v}$ and $\lambda_{\alpha, v}$.
When $\Phi =  \lambda_\alpha(\phi)$, resp.\ $\Phi_v = \lambda_{\alpha, v}(\phi_v)$, we replace $\Phi$, resp.\ $\Phi_v$, from the notations above by $\phi$, resp.\ $\phi_v$.

Let $Z(W_\alpha)$ be the CM points on $X_K$ as in Section \ref{subsec:CM} and $W_{\alpha, \Qb} \subset \V$ the rational quadratic space as in \eqref{eq:isometry}.
A special case of the Siegel-Weil formula (see \cite[Theorem 4.5]{BKY12}) gives us
\begin{equation}
  \label{eq:SWCM}
  \theta(g, Z(W_\alpha), \phi) = C \cdot E(g^\Delta, 0, \phi)
\end{equation}
for any $\phi \in \Sc(W_{\alpha, \Qb}(\Ab)) = \Sc(W_\alpha(\Ab_F))$.
Here $C = \deg(Z(W_\alpha))/2$.

Suppose only the $j$-th real embedding of $\alpha$ is negative.
Denote $\oneb := (1, \dots, 1)$ and $\oneb(j) := (1, \dots, -1, \dots, 1)$ with $-1$ at the $j$-th slot.
The sections
$\Phi(j) = \lambda_{\alpha, f}(\phi) \otimes \Phi^{\oneb(j)}_{\infty}$ and
$\Phi = \lambda_{\alpha, f}(\phi) \otimes \Phi^{\oneb}_{\infty}$ are coherent and incoherent respectively.
For all $\phi \in \Sc(W_\alpha(\hat F); \Cb)$, the Eisenstein series $E^*(g, s, \Phi(j))$ is holomorphic of weight $\oneb(j)$ at $s = 0$ and
\begin{equation}
  \label{eq:cohE}
  E^*(\tau, \phi, \oneb(j)) :=  \Nm(v)^{-1/2} E^{*}(g_\tau, 0, \Phi)
\end{equation}
is called a \textit{coherent} Eisenstein series.
On the other hand, the Eisenstein series $E^*(g, s, \Phi)$ vanishes at $s = 0$ and its derivative
\begin{equation}
  \label{eq:tE}
  E^{*, \prime}(\tau, \phi) :=
  \partial_s \Nm(v)^{-1/2} E^{*}(g_\tau, s, \Phi) \mid_{s = 0}
\end{equation}
is called an \textit{incoherent} Eisenstein series, which is related to the coherent Eisenstein series via the differential equation \cite[Lemma 4.3]{BKY12}
\begin{equation}
  \label{eq:diffeq}
2L_{\tau_j} E^{*, \prime}(\tau, \phi) = E^*(\tau, \phi, \oneb(j))
\end{equation}
for all $1 \le j \le d$.
Furthermore, it has the Fourier expansion
  \begin{equation}
    \label{eq:tEFE}
  E^{*, \prime}(\tau, \phi)  =
  \Ec(\tau, \phi) + \phi(0)  \Lambda(0, \chi) \log \Nm(v) + \Ec^*(\tau, \phi),
\end{equation}
where
$\Ec^*(\tau, \phi)$ has exponential decay near the cusp infinity and
$$
\Ec(\tau, \phi) = a_0(\phi) + \sum_{t \in F,~ t \gg 0} a_t(\phi) \ebf(\tr(t \tau)).
$$
Here, $a_0(\phi)$ is an explicit constant (see (2.24) in \cite{YY19}) and
\begin{equation}
  \label{eq:at}
  \begin{split}
    a_t(\phi) &:=
    \begin{cases}
(-i)^d \tW_t(\phi) \log \Nm(\pf)    , & \text{ if } \mathrm{Diff}(W_\alpha, t) = \{\pf\},\\
      0,& \text{ otherwise.}
    \end{cases}
  \end{split}
\end{equation}
The coefficient $\tW_t(\phi)$ is  given by (see \cite[Proposition 2.7]{YY19})
\begin{equation}
  \label{eq:tW}
  \tW_t(\phi) = 2^d \frac{W^{*, \prime}_{t, \pf}(\phi_\pf)}{\log \Nm(\pf)} \prod_{v \nmid \pf \infty} W^*_{t, v}(\phi_v) \in \Qb(\phi)
\end{equation}
when $\mathrm{Diff}(W_\alpha, t) = \{\pf\}$

\subsection{Hecke's Cusp Form}
\label{sec:Hecke}

Denote $\omega_\a  = \omega_{V_\a , \psi}$ the Weil representation
and $\theta_\a := \theta_{V_\a}$ the theta function as in \eqref{eq:theta}.
For
 a bounded, integrable function $\rho: H_\a (\Qb) \backslash H_\a (\Ab)/K \to \Cb$,
consider the following theta lift
\begin{equation}
  \label{eq:vta}
\vartheta_\a (g, \varphi, \rho) := \int_{[H_\a ]} \theta_\a (g, h, \varphi) \rho(h) dh,
\end{equation}
The measure $dh$ is the product measure of the local measures $dh_p$, where $dh_p$ is normalized such that the maximal compact subgroup in $H_\a(\Qb_p)$ has volume 1.
Such integral was first considered by Hecke in \cite{Hecke26} when $\rho = \vc$ is an odd, continuous character, i.e.
\begin{equation}
  \label{eq:rhosgn}
\vc(h) =  \sgn(h_\infty) \vc_f(h_f)
\end{equation}
 with  $\vc_f$ a continuous character on $H_a(\hat \Qb)$ and
 $\varphi = \varphi^\pm_\infty \varphi_f$ with
\begin{equation}
  \label{eq:varphipm}
\varphi^\pm_\infty(x_1, x_2) := (x_1 \pm  x_2) e^{-\pi (x_1^2 + x_2^2)} \in \Sc(\Rb^2).  \end{equation}
Notice that $\varphi^\pm_\infty$ satisfies $\varphi^\pm_\infty(-x_1, -x_2) = - \varphi^\pm_\infty(x_1, x_2)$.

In this case, the $m$-th Fourier coefficient of $\vartheta_\a $ is given by
$$
W_m(\varphi, \vc )(g) := \int_{[N]} \vartheta_\a (n g, \varphi, \vc)
{\psi(-mn)}dn.
$$
for $m \in \Qb$.
To evaluate it, we apply the usual unfolding trick
\begin{align*}
  W_m(\varphi, \vc )(g)
  &= \int_{[N]}
    \int_{[H_\a ]}\sum_{\lambda \in V_\a (\Qb)} (\omega_\a (ng) \varphi)(h^{-1} \lambda)    \vc(h) dh {\psi(-mn)}dn\\
  &=     \int_{[H_\a ]}\sum_{\lambda \in V_{\a , m}(\Qb)}  (\omega_\a (g)\varphi)(h^{-1} \lambda)    \vc(h)dh\\
  &=
    \sum_{\lambda \in H_{\a }(\Qb) \backslash V_{\a , m}(\Qb)}
    \int_{H_{\a , \lambda}(\Qb) \backslash H_\a (\Ab)} (\omega_\a (g)\varphi)( h^{-1} \lambda)        \vc(h)dh.
\end{align*}
When $m = 0$, we have $\lambda = 0$ since $V_\a $ is anisotropic and $\varphi(0) = \varphi^\pm_\infty(0)\varphi_f(0) = 0$.
When $Q_\a (\lambda) = m \neq 0$, the group $H_{\a , \lambda}$ is trivial.
We can then write $g = g_\tau g_f$ with $g_\tau = n(u)m(\sqrt{v}) \in G(\Rb)$ and $g_f \in G(\hat \Qb)$ and obtain
$$
  W_m(\varphi, \vc )(g) =    \sum_{\lambda \in H_{\a }(\Qb) \backslash V_{\a , m}(\Qb)}
  \int_{\Rb^\times}  (\omega_\a (g_\tau)\varphi^\pm_\infty)
(t^{-1}(\iota_\a(\lambda)))
  \frac{dt}{t}
\int_{ H_\a (\hat \Qb)}        \vc_f(h) (\omega_\a (g_f)\varphi_f)(h^{-1}\lambda) dh.
$$
The group $H_\a (\Qb)$ acts on $V_{\a , m}(\Qb)$ transitively.
The archimedean integral can be evaluated as
\begin{align*}
  \int_{\Rb^\times}  (\omega_\a (g_\tau)\varphi^\pm_\infty)( t^{-1}(\iota_\a(\lambda)))\frac{dt}{t}
  &= 2 \ebf(m u) {\frac{v}{\sqrt\a}} \int_{0}^\infty (t^{-1} \lambda_1 \pm \sgn(\a)t\lambda_2) e^{-\pi \frac{v}{|\a|}(t^{-2}\lambda_1^2 + t^2\lambda_2^2)}        \frac{dt}{t}\\
&    = 2 v \ebf(m u)\sgn(\lambda_1) \sqrt{|m|} \int^\infty_0 (t^{-1} \pm \sgn(m) t)e^{-\pi v|m|(t^{-2} + t^2)} \frac{dt}{t}.
\end{align*}
This is 0 if $\pm \sgn(m) < 0$ by the change of variable $t \mapsto 1/t$, and can be otherwise evaluated using the lemma below.
\begin{lemma}
  For any $\beta > 0$, we have
  $$
\int^\infty_0 (t^{-1} + t) e^{-\beta\pi(t^{-2} + t^2)} \frac{dt}{t} = \frac{e^{-2\pi \beta}}{\sqrt{\beta}}.
  $$
\end{lemma}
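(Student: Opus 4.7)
The plan is to use the symmetry $t \mapsto 1/t$ to reduce the integral to a single Gaussian integral over $\Rb$, with the exponential prefactor $e^{-2\pi\beta}$ appearing naturally after a completion-of-square.

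First I would split the integrand and apply the substitution $t \mapsto 1/t$ to the $t^{-1}/t$ piece: setting $u = 1/t$, $du = -dt/t^2$, one gets
\[
\int_0^\infty t^{-2} e^{-\beta\pi(t^{-2}+t^2)} dt = \int_0^\infty e^{-\beta\pi(u^{-2}+u^2)} du,
\]
so the two halves of the integral coincide and the left-hand side equals $2 \int_0^\infty e^{-\beta\pi(t^{-2}+t^2)} dt$.

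Next, I use the identity $t^2 + t^{-2} = (t - t^{-1})^2 + 2$ to pull out the factor $e^{-2\pi\beta}$, reducing the problem to evaluating $I := \int_0^\infty e^{-\beta\pi(t - t^{-1})^2} dt$. The standard trick is to apply $t \mapsto 1/t$ once more in $I$, which yields $I = \int_0^\infty t^{-2} e^{-\beta\pi(t - t^{-1})^2} dt$. Averaging the two expressions gives
\[
2I = \int_0^\infty (1 + t^{-2}) e^{-\beta\pi(t - t^{-1})^2} dt,
\]
and now the substitution $u = t - t^{-1}$, $du = (1 + t^{-2}) dt$, carries $(0,\infty)$ bijectively onto $(-\infty,\infty)$, turning the right-hand side into the Gaussian $\int_{-\infty}^\infty e^{-\beta\pi u^2} du = 1/\sqrt{\beta}$. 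Hence $I = 1/(2\sqrt{\beta})$, and combining with the doubling from the first step gives the claimed value $e^{-2\pi\beta}/\sqrt{\beta}$.

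There is no real obstacle; this is essentially the Glasser--type evaluation and the only thing to be careful about is that the substitution $u = t - t^{-1}$ on $(0,\infty)$ does give a bijection onto all of $\Rb$ (monotonically, since the derivative $1 + t^{-2}$ is positive), so no factor of $2$ is lost or gained when changing variables.
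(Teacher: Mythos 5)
Your argument is correct, and the paper itself does not supply a proof of this lemma (it is stated as a bare calculation), so there is nothing of theirs to compare against line by line. One small remark on efficiency: after rewriting the left-hand side as $\int_0^\infty (1+t^{-2})\,e^{-\beta\pi(t^{-2}+t^2)}\,dt$, the factor $(1+t^{-2})$ is \emph{already} the Jacobian $du$ for $u=t-t^{-1}$, so one can complete the square $t^2+t^{-2}=(t-t^{-1})^2+2$ and substitute $u=t-t^{-1}$ immediately to land on $e^{-2\pi\beta}\int_{-\infty}^{\infty}e^{-\beta\pi u^2}\,du = e^{-2\pi\beta}/\sqrt{\beta}$ in one step. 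Your route first discards the $t^{-2}$ piece by the involution $t\mapsto 1/t$, obtaining $2\int_0^\infty e^{-\beta\pi(t^{-2}+t^2)}\,dt$, and then has to re-symmetrize the resulting integral $I$ to reintroduce the $(1+t^{-2})$ factor -- a detour, but a harmless one, and your bookkeeping (in particular the observation that $u=t-t^{-1}$ is a monotone bijection from $(0,\infty)$ onto $\Rb$) is exactly right.
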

Therefore, we have
\begin{equation}
  \label{eq:FCm}
W_m(\varphi^\pm, \vc )((g_\tau, g_f)) =
2\sqrt{v} \ebf(mu) \ebf(|m|iv)
\sgn(\lambda_1)
\int_{ H_\a (\hat \Qb)}        \vc_f(h) (\omega_\a (g_f)\varphi_f)(h^{-1}\lambda) dh,
\end{equation}
when $\pm m > 0$ and $\lambda \in V_{\a , m}(\Qb) $. Otherwise, it is 0.
The integral in \eqref{eq:FCm} can be evaluated locally.
Notice that it always converges as $\varphi_f(h^{-1} \lambda)$ has compact support as a function of $h \in H_\a (\hat \Qb)$.

\subsection{The Deformed Theta Integral}
\label{subsec:modify}
We recall the real-analytic modular form of weight one constructed in \cite{CL20} using the notations of section \ref{sec:Hecke}.
Let $\vc$ be an odd, continuous character as in \eqref{eq:rhosgn}, and $K_\vc \subset H_\a (\hat\Zb)$ the open compact subgroup defined in \eqref{eq:Kvc}.
The intersection $H_\a(\Qb)^+ \cap K_\vc$, where $H_\a(\Qb)^+ := H_\a(\Qb) \cap H_\a(\Rb)^+$, is a cyclic subgroup
\begin{equation}
  \label{eq:eK1}
\Gamma_{\vc} = \langle \ve_{\vc} \rangle,~ \ve_{\vc} > 1 > \ve_{\vc}' > 0
\end{equation}
of the totally positive units in $\Oc$.
Then we have
\begin{equation}
  \label{eq:H1decomp}
H_\a (\Ab) = \coprod_{\xi \in C} H_\a (\Qb) H_\a (\Rb)^+ K_\vc \xi,
\end{equation}
where $C \subset H_\a (\hat\Qb)$ is a finite subset of elements representing $H_\a (\Qb)^+ \backslash H_\a (\hat\Qb) / K_\vc$. So given $h = (h_f, h_\infty)$, we can find $\alpha  \in H_\a (\Qb), t \in H_\a (\Rb)^+, k_1 \in K_\vc, \xi \in C$ all depending on $h$ such that
\begin{equation}
  \label{eq:hdecomp}
h =   (\alpha k_1 \xi, \alpha t),
\end{equation}
though the choice is not unique.
This gives us the identification
\begin{equation}
  \label{eq:H1id}
H_\a (\Qb) \backslash H_\a (\Ab) / K_\vc
\cong
\coprod_{\xi \in C} \Gamma_{\vc} \backslash H_\a (\Rb)^+  \xi
\end{equation}
by sending $h = (\alpha k_1 \xi, \alpha t) \in H_\a (\Ab)$ as in \eqref{eq:hdecomp} to $t \in H_\a (\Rb)^+$ in the $\xi$-component.
Just like the decomposition \eqref{eq:H1decomp}, this isomorphism depends on the choice of the set of representatives $C$.
Similarly, we have
\begin{equation}
  \label{eq:C}
  H_\a (\hat\Qb) / K_\vc   \cong
  \coprod_{\xi \in C} \Gamma_{\vc} \backslash H_\a (\Qb)^+\xi.
\end{equation}

Using the Fourier coefficient $W_m$ in \eqref{eq:FCm} and the decomposition in \eqref{eq:C},
we can write
%
\begin{equation}
  \label{eq:vt1}
  \vt_\a(g, \varphi^\pm, \vc) =
 \vol(K_\vc)  \sum_{\xi \in C} \vc(\xi)
  \sum_{\beta \in \Gamma_{\vc} \backslash V_{\a}(\Qb),~ \pm Q_\a(\beta) > 0}
     (\omega_\a(g) \varphi^{0, \pm}))(\xi^{-1} \beta)   .
   \end{equation}
   for $g \in G(\hat\Qb)\times B(\Rb)$,
where $\varphi^\pm = \varphi_f \varphi_\infty^\pm$ with $\varphi_f \in \Sc(\hat{V}_\a)$ being $K_\varrho$-invariant and
\begin{equation}
  \label{eq:varphi0}
 \varphi^{0, \pm} = \varphi_f \varphi_\infty^{0, \pm},~ \varphi^{0, \pm}_\infty(x_1, x_2) := \sgn(x_1) e^{\mp 2 \pi x_1 x_2}.
\end{equation}
Although $\varphi^{0, \pm}_\infty$ is not a Schwartz function on $\Rb^2$, the sum above still converges absolutely.
For $g \in B(\Rb)$, the quantity $\omega_a(g) \varphi^{0, \pm}_\infty$ is defined
with the usual formula of the Weil representation, and $\vt_\a(g, \varphi^\pm, \vc)$ is right $\SO_2(\Rb)$-equivariant with weight $\pm 1$.
We also  have a left $G(\Qb)$-invariant function
\begin{equation}
\label{eq:Theta}
  \begin{split}
      \Theta_{\a}(g, \varphi^\pm, \vc)
    &:=
    \int_{H_\a(\Qb)\backslash H_\a(\hat\Qb)} \theta_\a(g, h, \varphi^\pm) \vc(h) dh
=
\vol( {K_\vc})    \sum_{\xi \in C} \vc(\xi) \theta_{\a}(g, \xi, \varphi^\pm)
  \end{split}
  \end{equation}
on $G(\Ab)$ for $\varphi \in \Sc(V_\a(\Ab))$, which is
independent of  the choice of $C$.

We now define a function $\lg_C :
H_\a (\Qb) \backslash H_\a (\Ab) / {K_\vc} \to [0, 1)$ by
\begin{equation}
  \label{eq:lg}
  \begin{split}
    \lg_C((\alpha k_1 \xi, \alpha t))
    &:= 2 \log \ve_{\vc}\cdot  \{\log t / \log \ve_{\vc}\},\\
    \{a\} &:= a - \lim_{\e \to 0} \frac{1}{2}\lp \lfloor a + \e \rfloor + \lfloor a - \e \rfloor \rp.
  \end{split}
\end{equation}
Note that $\{0\} = \frac{1}{2}$.
 Unlike  the function considered by Hecke, $\lg_C$ cannot be written as the product of functions on $H_\a (\hat\Qb)$ and $H_\a (\Rb)$.
 Denote
\begin{equation}
  \label{eq:tvc}
  \tvc_C := \lg_C \cdot \vc:
H_\a (\Qb) \backslash H_\a (\Ab) / {K_\vc} \to \Cb.
\end{equation}
Given $\varphi = \varphi_f \varphi_\infty \in \Sc(V_a(\Ab))$ for some ${K_\vc}$-invariant $\varphi_f \in \Sc(\hat V_a; \Cb)$, the deformed theta integral $\vt_\a (g, \varphi, \tvc_C)$, where $\vt_\a $ is defined in \eqref{eq:vta}, was studied in \cite{CL20},
To describe its Fourier expansion, denote
\begin{align*}
  \vt^*_{\a}(g, \varphi_f, \vc)
  &:= \int_{H_\a(\Qb)^+\backslash H_\a(\hat\Qb)}\vc(h)  \sum_{\substack{\beta \in \Gamma_{\vc} \backslash V_{\a}(\Qb)\\Q_\a(\beta) < 0}} (\omega_\a(g) \varphi^{0, *})(h^{-1} \beta) dh\\
  &=
-\vol({K_\vc}) \sqrt{v}  \sum_{\xi \in C} \vc(\xi)
  \sum_{\substack{\beta \in \Gamma_{\vc} \backslash V_{\a}(\Qb)\\Q_\a(\beta) < 0}} (\omega_\a(g_f) \varphi_f)(\xi^{-1} \beta)
  \sgn(\beta)    \ebf(Q_\a(\beta)\tau ) \Gamma(0, 4\pi |Q_\a(\beta)| v).
\end{align*}
for $g = (g_f, g_\tau) \in G(\Ab)$ with $\varphi^{0, *} = \varphi_f \varphi^{0, *}_\infty$ and
\begin{equation}
  \label{eq:varphi0*}
  \varphi^{0, *}_\infty(x_1, x_2) := - \sgn(x_1) e^{-2\pi x_1x_2} \Gamma(0, 4\pi |x_1x_2|).
\end{equation}
Note that $\vt^*_\a$  is not necessarily left-$G(\Qb)$ invariant.
But it is modular after applying the lowering operator as
$$
L \lp   \vt^*_{\a}(g, \varphi_f, \vc) \rp
= \vt_{\a}(g, \varphi^-, \vc).
$$
Similarly for $\xi \in H_\a(\hat\Qb)$, define
\begin{equation}
  \label{eq:theta*}
  \begin{split}
    \theta^*_\a&(g, \xi, \varphi_f)
    :=    \sum_{\lambda \in V_{\a}(\Qb)} (\omega_\a(g) \varphi^*)(\xi^{-1} \lambda)
=     \sum_{\lambda \in V_{\a}(\Qb)} (\omega_\a(g_f) \varphi_f)(\xi^{-1} \lambda) (\omega_\a(g_\tau)\varphi^*)(\lambda)    \\
    &=  - v   \sum_{\lambda \in V_{\a}(\Qb)} (\omega_\a(g_f) \varphi_f)(\xi^{-1} \lambda)
    \frac{\sgn(\lambda_1 - \sgn(\a)\lambda_2)}{\sqrt{\pi}}
\ebf(Q_\a(\lambda)\tau) \Gamma\lp \frac{1}{2}, \frac{\pi v}{|\a|}(\lambda_1 - \sgn(\a)\lambda_2)^2\rp.
  \end{split}
\end{equation}
Here, we have employed the rapidly decaying function
\begin{equation}
  \label{eq:varphi*}
  \varphi^*(x_1, x_2) := -e^{-2\pi x_1 x_2}
  \sgn(x_1-x_2)\Gamma\lp \frac{1}{2}, \pi (x_1-x_2)^2\rp,
\end{equation}
where $B(\Rb)\subset G(\Rb)$ acts via $\omega_a$ and $\SO_2(\Rb)$ acts with weight 1.
Also, we denote
\begin{equation}
  \label{eq:Theta*}
    \Theta^*_{\a, C}(g, \varphi_f, \vc)
    := \vol({K_\vc})   \sum_{\xi \in C} \vc(\xi) \theta^*_{\a}(g, \xi, \varphi_f),
  \end{equation}
  which depends on the choice of $C$ and   satisfies
  \begin{equation}
    \label{eq:LTheta*}
    L \Theta^*_{\a, C}(g, \varphi_f, \vc)
    = \Theta_{\a}(g, \varphi^-, \vc).
  \end{equation}
We recall some results.
\begin{theorem}
  \label{thm:modified}
  Let $\tvc_C$ be as in \eqref{eq:tvc} and $\varphi_f \in V_\a(\hat\Qb)$ a right-${K_\vc}$ invariant function.
  Then the integral $\vt_\a(g, \varphi^+, \tvc_C)$ defines a $G(\Qb)$-invariant function in $g \in G(\Ab)$ of weight 1 with respect to $\SO_2(\Rb)$.
  Furthermore, it  has the Fourier expansion
\begin{equation}
  \label{eq:vtFE}
  \begin{split}
    \vt_\a(g, \varphi^+, \tvc_C)
    &=
     {\vt^*_\a(g, \varphi_f, \vc)}
      +  \log \ve_{\vc} \Theta^*_{\a, C}(g,  \varphi_f, \vc)\\
&\quad + \vol({K_\vc})     \sum_{\substack{\xi \in C\\ \beta \in \Gamma_{\vc}\backslash V_\a(\Qb)\\ Q_\a(\beta) > 0}}
      \tvc_C(      (\xi, \sqrt{|\beta/\beta'|})) (\omega_a(g)\varphi^{0, +})(\xi^{-1} \beta),
  \end{split}
\end{equation}
where $\varphi^{0, +} = \varphi_f \varphi^{0, +}_\infty$ is defined in \eqref{eq:varphi0}, and satisfies
  \begin{equation}
    \label{eq:diffeq}
    L \vt_\a(g, \varphi^+, \tvc_C) =
    \vt_{\a}(g, \varphi^-, \vc) + \log \ve_{\vc} \Theta_{\a}(g, \varphi^-, \vc).
  \end{equation}
\end{theorem}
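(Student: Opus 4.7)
The plan is to establish the three claims by unfolding $\vt_\a(g,\varphi^+,\tvc_C)$ as in section \ref{sec:Hecke}, simply replacing the factor $\vc(h)$ there by $\tvc_C(h) = \lg_C(h)\vc(h)$. The $G(\Qb)$-invariance and weight-$+1$ property under $\SO_2(\Rb)$ are formal: $\tvc_C$ is a bounded measurable function on $H_\a(\Qb)\backslash H_\a(\Ab)/K_\vc$, and the theta kernel $\theta_\a(g,h,\varphi^+)$ is $G(\Qb)$-invariant in $g$ and of weight $+1$ at infinity because $\varphi^+_\infty$ is, so integrating over $[H_\a]$ transfers these properties to $\vt_\a(g,\varphi^+,\tvc_C)$.

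For the Fourier expansion, I would compute the $m$-th coefficient as in the derivation of \eqref{eq:FCm}. Since $V_\a$ is anisotropic, the $\lambda=0$ orbit contributes nothing because $\varphi^+_\infty(0)=0$, and for $m\neq 0$ the stabilizer of any representative in $V_{\a,m}(\Qb)$ is trivial. Using the decomposition \eqref{eq:H1decomp} to write $h = (\alpha k_1 \xi,\alpha t)$, the computation reduces to evaluating, for each $\beta \in \Gamma_{\vc}\backslash V_\a(\Qb)$ and $\xi \in C$, the archimedean integral
\begin{equation*}
I_\beta(g_\tau) := \int_0^\infty \bigl(\omega_\a(g_\tau)\varphi^+_\infty\bigr)\bigl(t^{-1}\iota_\a(\beta)\bigr)\cdot 2\log\ve_{\vc} \bigl\{\log t/\log\ve_{\vc}\bigr\}\,\frac{dt}{t}.
\end{equation*}
Decomposing the sawtooth via $\{u\} = u - \lfloor u\rfloor$ splits $I_\beta$ into two pieces. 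The smooth piece coming from $u = \log t/\log\ve_{\vc}$ is evaluated by the integral identities of section \ref{sec:Hecke} generalized to include a factor of $\log t$: for $Q_\a(\beta)<0$ the $t\mapsto 1/t$ symmetry forces the Gaussian integral without $\log t$ to vanish, while with the logarithm one obtains the incomplete Gamma function $\Gamma(0,4\pi|Q_\a(\beta)|v)$ matching $\vt^*_\a$; for $Q_\a(\beta)>0$ it yields the ``holomorphic'' third term in \eqref{eq:vtFE} involving $\varphi^{0,+}$, with $\tvc_C$ evaluated at the critical point $t = \sqrt{|\beta/\beta'|}$ of the Gaussian $e^{-\pi v|\a|(t^{-2}\beta_1^2+t^2\beta_2^2)}$. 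The jump contributions from $\lfloor u\rfloor$, after summing over the $\Gamma_{\vc}$-orbit, reassemble into $\theta^*_\a(g,\xi,\varphi_f)$; multiplying by $\vc(\xi)\vol(K_\vc)\log\ve_{\vc}$ and summing over $\xi\in C$ gives the middle term $\log\ve_{\vc}\cdot\Theta^*_{\a,C}$.

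The differential equation \eqref{eq:diffeq} then follows by applying $L$ to \eqref{eq:vtFE} termwise. The weight-$+1$ ``holomorphic'' third term is annihilated by $L$ because $\varphi^{0,+}_\infty$ is of pure weight $+1$. The middle term transforms via \eqref{eq:LTheta*} to $\log\ve_{\vc}\cdot\Theta_{\a,C}(g,\varphi^-,\vc)$. For the first, $L\vt^*_\a(g,\varphi_f,\vc) = \vt_\a(g,\varphi^-,\vc)$ by direct comparison of Fourier coefficients: $L$ acts on $\sqrt{v}\,\ebf(Q_\a(\beta)\tau)\,\Gamma(0,4\pi|Q_\a(\beta)|v)$ and replaces the incomplete Gamma by the Gaussian needed to recover the Hecke coefficient of $\varphi^-$.

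The main obstacle is the careful bookkeeping of the jumps of $\lfloor\log t/\log\ve_{\vc}\rfloor$ in the archimedean computation. These boundary contributions must assemble correctly into $\Theta^*_{\a,C}$ across all orbits, and the symmetric definition of $\{a\}$ in \eqref{eq:lg} (averaging the one-sided limits of $\lfloor\cdot\rfloor$) is essential to ensure the jumps give unambiguous values independent of choices of representatives. This adelic translation essentially implements the classical construction of \cite{CL20}, which can be invoked to complete the analytic details.
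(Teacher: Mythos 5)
Your proposal is correct but reverses the logical ordering of the paper's argument. The paper first proves the lowering-operator identity \eqref{eq:diffeq} by integration by parts directly on the archimedean integral $J(\beta,v)$ that appears in each Fourier coefficient; once that is established, both sides of \eqref{eq:vtFE} have the same image under $L$, so their difference is holomorphic and it suffices to match the limits $\lim_{v\to\infty}e^{2\pi m v}J(\beta,v)$ — a one-line computation from the pieces $J_1,J_2$. You instead compute the full Fourier coefficients by splitting $\{u\}=u-\lfloor u\rfloor$, identifying the resulting incomplete Gamma and Gaussian integrals with $\vt^*_\a$, $\Theta^*_{\a,C}$ and the holomorphic sum, and only afterward derive \eqref{eq:diffeq} by applying $L$ termwise (using the stated facts $L\vt^*_\a=\vt_\a(\cdot,\varphi^-,\vc)$ and \eqref{eq:LTheta*}). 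Both routes are valid; yours is essentially the original argument of \cite{CL20}, which the paper explicitly replaces by a shorter one. The price of your route is that the full $v$-dependence of both pieces of $J(\beta,v)$ must be tracked, and the attribution is a little less clean than you state: the jump piece coming from $\lfloor\cdot\rfloor$ produces not only the $\Gamma(1/2,\cdot)$ terms of $\Theta^*_{\a,C}$ (whose holomorphic limit vanishes, thanks to the $\sgn(\lambda_1-\sgn(\a)\lambda_2)$ factor) but also a leftover holomorphic $\log\ve_\vc$ contribution on the boundary orbits $|\beta_0/\beta_0'|=1$, which is exactly what the convention $\{0\}=\tfrac12$ encodes and which is needed to make the third term of \eqref{eq:vtFE} come out with $\tvc_C(\xi,\sqrt{|\beta/\beta'|})$ rather than with the bare $\log|\beta_0/\beta_0'|$. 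You flag this issue qualitatively; in a complete write-up this bookkeeping is the delicate point, and the paper's order of argument sidesteps it.
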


\begin{proof}
  This follows essentially from Proposition 5.5 in \cite{CL20}. For completeness, we include a different (and slightly shorter) proof here.
  As in the evaluation of $\vt_\a(g, \varphi, \vc)$ in \eqref{eq:vt1}, we have
  \begin{align*}
    W_m(\varphi, \tvc_C)(g_\tau)
    &=
\vol({K_\vc})  \ebf(mu) \sqrt{v}\sum_{\xi \in C} \sum_{\beta \in \Gamma_{\vc} \backslash V_{\a, m}(\Qb)} \varphi_f(\xi^{-1} \beta) J(\beta, v), \\
  J(\beta, v)
    &:=2\log \ve_{\vc}   \int^\infty_0 \varphi^+_\infty(t^{-1}\cdot( \iota_\a(\beta) \sqrt{v})) \left\{ \frac{\log t}{\log \ve_{\vc}} \right\} \frac{dt}{t}\\
    &=2\log \ve_{\vc} \sgn(m\beta)   \int^\infty_0 \varphi^{\sgn(m)}_\infty(\sqrt{|m|v} (t, t^{-1})) \left\{ \frac{\log t}{\log \ve_{\vc}} + \frac{1}{2} \frac{\log |\beta/\beta'|}{\log \ve_{\vc}}      \right\} \frac{dt}{t}.
  \end{align*}
  To verify \eqref{eq:diffeq}, we start with
  \begin{align*}
L_\tau &\ebf(m u) J(\beta, v)
    =
      \ebf( m \tau)          v^2 \partial_v       \lp e^{2\pi m v} J(\beta, v)      \rp\\
    &=       \ebf( m \tau)
      2\log \ve_{\vc} \sgn(m\beta)   \int^\infty_0
      v^2 \partial_v
 e^{2\pi m v}
      \varphi^{\sgn(m)}_\infty(\sqrt{|m|v} (t, t^{-1}))
      \left\{ \frac{\log t}{\log \ve_{\vc}} + \frac{1}{2} \frac{\log |\beta/\beta'|}{\log \ve_{\vc}}      \right\} \frac{dt}{t}\\
    &=
      v^{}  \ebf(mu ) \sgn(m\beta)
      \log \ve_{\vc}   \int^\infty_0
 \partial_t
      \varphi^{-\sgn(m)}_\infty(\sqrt{|m|v}(t, t^{-1}))
\left\{ \frac{\log t}{\log \ve_{\vc}} + \frac{1}{2} \frac{\log |\beta/\beta'|}{\log \ve_{\vc}}      \right\} dt\\
    &=
-      v  \ebf(mu )
\lp \sgn(m\beta)      \int^\infty_0
      \varphi^{-\sgn(m)}_\infty(\sqrt{|m|v}(t, t^{-1}))
      \frac{dt}{t}-
      \log \ve_{\vc}      \sum_{\ve \in \Gamma_{\vc}}\varphi^+_\infty(\iota_\a(\beta \ve)) \rp\\
    &= v \ebf(mu ) \lp  \delta_{m < 0} \sgn(\beta) +       \log \ve_{\vc}      \sum_{\ve \in \Gamma_{\vc}}\varphi^+_\infty(\iota_\a(\beta \ve)) \rp.
  \end{align*}
  Substitute this into the left hand side of \eqref{eq:diffeq} proves it.

  Now to calculate the Fourier expansion, it suffices prove the claim
  \begin{equation}
    \label{eq:claim}
    \lim_{v \to \infty} e^{2\pi m v} J(\beta, v)
    = \sgn(\beta) \lg_C((1, \sqrt{|\beta/\beta'|}).
  \end{equation}
  For each $\beta \in \Gamma_{\vc} \backslash V_\a(\Qb)$, we choose the unique representative $\beta_0 \in \Gamma_{\vc} \beta$ such that $|\beta_0/\beta_0'| \in [1, \ve_{\vc}^2)$.
  We can then write $J(\beta, v) =    J_1(\beta_0, v) + J_2(\beta_0, v)$,   where
  \begin{align*}
    J_1(\beta_0, v)
    &:=2 \int^\infty_0 \varphi^+_\infty(t^{-1}\cdot( \iota_\a(\beta_0) \sqrt{v})) \log t  \frac{dt}{t},\\
    J_2(\beta_0, v)
    &:=- 2\log \ve_{\vc}   \int^\infty_0 \varphi^+_\infty(t^{-1}\cdot( \iota_\a(\beta_0) \sqrt{v})) \left\lfloor \frac{\log t}{\log \ve_{\vc}} \right\rfloor \frac{dt}{t}.
  \end{align*}
  For $J_1$, we have
  \begin{align*}
    \lim_{v \to \infty} e^{2\pi m v} J_1(\beta_0, v)
    &=
      \log |\beta_0/\beta_0'|      \sgn(m\beta_0)
      \lim_{v \to \infty} e^{2\pi m v}
      \int^\infty_0 \varphi^{\sgn(m)}_\infty(\sqrt{|m|v} (t, t^{-1}))       \frac{dt}{t}\\
      &=       \log |\beta_0/\beta_0'|      \sgn(\beta_0) \delta_{m > 0}.
  \end{align*}
  For $J_2$, the limit vanishes unless $|\beta_0/\beta_0'| = 1$, in which case
    \begin{align*}
    \lim_{v \to \infty} e^{2\pi m v} J_2(\beta_0, v)
    &=
2\log \ve_{\vc} \sgn(m\beta_0)      \lim_{v \to \infty} e^{2\pi m v}
      \int^1_{\ve_{\vc}^{-1}} \varphi^{\sgn(m)}_\infty(\sqrt{|m|v} (t, t^{-1}))       \frac{dt}{t}\\
      &= \log \ve_{\vc} \sgn(\beta_0) \delta_{m > 0}.
  \end{align*}
Putting these together proves claim \eqref{eq:claim}.
\end{proof}
Finally, we record a result as a direct consequence of Theorem 4.5 in \cite{CL20} (see also section 5 in \cite{LS22}).
\begin{proposition}
  \label{prop:mixmock}
  For any $\varphi_f \in \Sc(\hat V_1; \Cb)$, there exists a real-analytic modular form $\tilde\Theta_{\a, C}(g, \varphi^-, \vc) = \tilde\Theta^+_{\a, C}(g, \varphi^-, \vc) + \tilde\Theta^*_{\a, C}(g, \varphi^-, \vc)$ such that $L \tilde\Theta_{\a, C} = \Theta_{\a}$ and
  $\sqrt{v}\tilde\Theta^+_{\a, C}(g_\tau, \varphi^-, \vc)$ is holomorphic in $\tau$ with Fourier coefficients in $\Qb(\varphi_f)$.
\end{proposition}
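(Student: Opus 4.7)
The plan is to translate Theorem 4.5 of \cite{CL20} into adelic language, following the same template used in the proof of Theorem \ref{thm:modified}. For each $\xi\in C$, I would take $\tilde\theta^*_{\a}(g,\xi,\varphi_f)$ to be a real-analytic preimage of $\theta_\a(g,\xi,\varphi^-)$ under $L$, built as a mild modification of $\theta^*_\a(g,\xi,\varphi_f)$ from \eqref{eq:theta*} in which the archimedean factor $\varphi^*_\infty$ of \eqref{eq:varphi*} is split using $\Gamma(1/2,y)=\sqrt\pi-\gamma(1/2,y)$ into the sum of a piece contributing a $v^{-1/2}$-times-holomorphic Fourier expansion and a rapidly decaying completion. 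I would then set
$$\tilde\Theta_{\a,C}(g,\varphi^-,\vc) := \vol(K_\vc)\sum_{\xi\in C}\vc(\xi)\,\tilde\theta^*_\a(g,\xi,\varphi_f),$$
and decompose its Fourier expansion accordingly as $\tilde\Theta^+_{\a,C}+\tilde\Theta^*_{\a,C}$.

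The first key step is the $G(\Qb)$-invariance of $\tilde\Theta_{\a,C}$. The individual functions $\tilde\theta^*_\a(g,\xi,\varphi_f)$ are genuinely non-automorphic, and modularity arises only through the cancellation produced by summation against the odd character $\vc$. This cancellation is the substance of Theorem 4.5 of \cite{CL20} (and is revisited in section 5 of \cite{LS22}); a rearrangement argument parallel to the one establishing \eqref{eq:vtFE} above propagates the classical statement to the adelic setting, using the decomposition \eqref{eq:C} of $H_\a(\hat\Qb)/K_\vc$ into $\vc$-cosets.

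The second step, the identity $L\tilde\Theta_{\a,C}=\Theta_{\a,C}(g,\varphi^-,\vc)$, reduces to a local calculation at infinity that mirrors the differentiation of $J(\beta,v)$ carried out in the proof of Theorem \ref{thm:modified}. For the rationality claim, the Fourier coefficients of $\sqrt v\,\tilde\Theta^+_{\a,C}(g_\tau,\varphi^-,\vc)$ are rational linear combinations of the values $\vc(\xi)(\omega_\a(g_f)\varphi_f)(\xi^{-1}\lambda)$ for $\xi\in C$ and $\lambda\in V_\a(\Qb)$ with $Q_\a(\lambda)$ fixed (the explicit $\sqrt\pi$ arising from $\Gamma(1/2,0)$ cancels against the $1/\sqrt\pi$ already present in the normalization of $\varphi^*_\infty$), and therefore lie in $\Qb(\varphi_f)$. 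The main obstacle is the modularity assertion: it is the one place where the indefinite theta construction of \cite{CL20} does essential work, whereas the adelic transcription and the rationality check are formal consequences once that input is in place.
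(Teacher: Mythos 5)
The paper offers no proof of this statement at all: it is recorded as a ``direct consequence of Theorem~4.5 in \cite{CL20} (see also section~5 in \cite{LS22})''. Your proposal is a plausible attempt to reconstruct what that reference must deliver, and the high-level scaffolding is right --- the object should come out of the $\vc$-twisted sum $\vol(K_\vc)\sum_{\xi\in C}\vc(\xi)(\cdot)$, the lowering-operator identity reduces to a computation mirroring that of $J(\beta,v)$, and one expects the rationality to trace back to values of $\varphi_f$ against $\sgn$-data on $V_\a(\Qb)$.

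However, there is a genuine gap in the way you propose to extract $\tilde\Theta^+_{\a,C}$. The natural $L$-preimage the paper already has in hand is $\Theta^*_{\a,C}(g,\varphi_f,\vc)$ from \eqref{eq:Theta*}--\eqref{eq:LTheta*}, built with the kernel $\varphi^*_\infty$, and the split $\Gamma(1/2,y)=\sqrt\pi-\gamma(1/2,y)$ applied to that kernel does \emph{not} isolate a nonzero holomorphic part. Two problems arise. First, the $\sqrt\pi$ piece gives the bare kernel $\sgn(x_1-x_2)e^{-2\pi x_1x_2}$, which produces a divergent theta sum over the $Q_\a(\lambda)<0$ vectors; the split only makes sense after one organizes the sum by $\sgn Q_\a(\lambda)$. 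Second, and more importantly, after carrying out the $\Gamma_\vc$-orbit telescoping of $\sgn(x_1-x_2)$, the $\sqrt\pi$ piece of the $m$-th Fourier coefficient is of size $\sqrt v\,e^{-2\pi mv}\cdot O(1)$, and the complementary $\gamma$-piece (together with all $Q_\a(\lambda)<0$ terms) decays exponentially; so \emph{all} Fourier coefficients of $\Theta^*_{\a,C}$ tend to zero as $v\to\infty$, too fast to match the required shape $v^{-1/2}c_m e^{-2\pi mv}$ of the coefficients of $\tilde\Theta^+$. In other words $\Theta^*_{\a,C}$ contributes only to the $\tilde\Theta^*$ part. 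The genuine content of the proposition is a \emph{separate} holomorphic $q$-series $\tilde\Theta^+_{\a,C}$ --- the mock-modular period correction that has to be added to $\Theta^*_{\a,C}$ to restore $G(\Qb)$-invariance (the discontinuity of $\varphi^*_\infty$ across $x_1=x_2$ is the same for every $\xi\in C$, so the $\vc$-twist alone does not obviously remove the modular anomaly, contrary to what your proposal asserts) --- and the rationality of its coefficients, which rests on the arithmetic of the $\Gamma_\vc$-action on $V_\a(\Qb)$ and not on an incomplete-gamma split. Neither the existence of that correction nor its rationality is addressed in your proposal; this is precisely what Theorem~4.5 of \cite{CL20} supplies, and it is the step that cannot be recovered as a ``formal consequence.''
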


\section{
    Doi-Naganuma Lift of Hecke's Cusp Form}
\label{sec:match}
In this section, we are interested in computing the $\mathrm{O}(2, 2)$ theta lift of Hecke's cusp form from section \ref{sec:Hecke}, and realize it as coherent Hilbert Eisenstein series from \ref{subsec:Eisenstein} over real quadratic fields.
The main result of this section is the global matching Theorem \ref{thm:match}, where we show that any coherent Eisenstein series can be realized as such a theta lift.
This global statement follows from its local counterpart in Theorem \ref{thm:localmatch}, which is improved further in Theorem \ref{thm:localmatchs} to allow matching deformed local sections.
This last result will be crucial for us in proving the factorization result in Proposition \ref{prop:tI+} later.

\subsection{Quadratic Spaces}
\label{subsec:V}
Let $V_{\pm 1}$ be as in Section \ref{subsec:Va},   $\ell^+, \ell^-$ be isotropic lines such that $\ell^+ \oplus \ell^-$ is a hyperbolic plane and denote
\begin{equation}
  \label{eq:V}
  V := V_0 \oplus V_1,~
  V_0  := \ell^+ \oplus \ell^- \oplus V_{-1}.
\end{equation}
We can realize
\begin{align*}
  V_0(\Qb) &\cong \left\{
\Lambda \in M_2(F): \Lambda^t = \Lambda'
  \right\}\\
(a, b, \lambda) &\mapsto \pmat{a}{\lambda}{\lambda'}{b}
\end{align*}
with $ \det$ as the quadratic form, and furthermore write
\begin{equation}
  \label{eq:V0decomp}
  V_0 = V_{00} \oplus U_D,~ V_{00} := V_0 \cap M_2(\Qb),~ U_D := \sqrt{D}\Qb\pmat{0}{-1}{1}{0} \cong (\Qb, Q_D),
\end{equation}
where $Q_D(x) = Dx^2$.
So $V$ has Witt rank 3 and admits the isotropic decomposition
\begin{equation}
  \label{eq:Vpm}
V = V^+ + V^-,~
  V^\pm := \ell^\pm + (V_{-1} + V_1)^\pm,~
(V_{-1} + V_1)^\pm(\Qb) := \{(\lambda, \pm \lambda): \lambda \in F\}
\end{equation}
with $V^\pm$ maximal totally isotropic subspaces.
For a $\Qb$-algebra $R$, e.g.\ $R \in \{\Qb, \Qb_p, \Rb, \hat\Qb , \Ab\}$,
We will use
\begin{equation}
  \label{eq:Velt}
  (a, b, \lambda, \mu) \in V(R),~
  a, b \in R, \lambda \in R \otimes F \cong V_{-1}(R), \mu \in R \otimes F \cong V_{1}(R)
\end{equation}
to represent elements in $V(R)$.
Define elements $f_j^\pm \in V$ by
\begin{equation}
  \label{eq:fj}
  \begin{split}
  f_1^+ &:= (1, 0, 0, 0),~   f_1^- := (0, 1, 0, 0),~
  f_2^+ := (0, 0, 1/2, 1/2),~   f_2^- := (0, 0, 1/2, -1/2),\\
  f_3^+ &:= (0, 0, \sqrt{D}/2, \sqrt{D}/2),~   f_3^- := (0, 0, 1/(2\sqrt{D}), -1/(2\sqrt{D})).  \end{split}
\end{equation}
Then $\{f_j^\pm: j = 1, 2, 3\} \subset V^\pm$ is a $\Qb$-basis of $V^\pm$.
With respect to the ordered basis $(f_1^+, f_2^+, f_3^+, f_1^-, f_2^-, f_3^-)$, the Gram matrix of $Q$ is $\kzxz {0} {I_3} {I_3} {0}$.
For $i = 1, 2, 3$, the following linear transformations
\begin{equation}
  \label{eq:wi}
  w_i(f_j^\pm) :=
  \begin{cases}
    f_j^\mp, & \text{ if } i = j,\\
    f_j^\pm, & \text{ otherwise.}
  \end{cases}
\end{equation}
are easily checked to be in $\mathrm{O}(V)$.
The unimodular lattice
\begin{equation}
  \label{eq:VZ}
  V_\Zb := \{(a, b, \lambda, \mu ) \in V(\Qb) \cap (\Zb^2 \times (\df^{-1})^2): \lambda - \mu \in \Oc_F\} \subset V
\end{equation}
provides $V$ with an integral structure. Similarly for $?\in \{00, 0, 1\}$, the lattice $V_{?, \Zb} := V_\Zb \cap V_? $ in $V_?$ gives it with an integral structure.

For $? \in \{00, 0, 1, -1, \emptyset\}$. we write
\begin{equation}
  \label{eq:?s}
 \tH_? := \Gspin(V_?),~ H_? := \SO(V_?),
\end{equation}
which are subgroups of $\tH$ and $H$ respectively by acting trivially on $V^\perp_?$, and have the following exact sequences
\begin{equation}
  \label{eq:GSpin}
1 \to \mathrm{G}_m \to  \tH_? \to H_? \to 1.
\end{equation}
For any commutative ring $R$, we have explicitly
\begin{equation}
  \label{eq:GSpin}
\iota:  \GSpin(V_{0, \Zb})(R) \cong \{\gamma \in \GL_2(\Oc \otimes_\Zb R): \det(\gamma) \in R^\times\},
\end{equation}
via the action of $\gamma \in \GL_2(\Oc \otimes_\Zb R)$ on $V_{\Zb, 0}(R)$
\begin{equation}
  \label{eq:act}
\Lambda \mapsto \det(\gamma)^{-1} \gamma \Lambda (\gamma')^t.
\end{equation}
For any $\Qb$-algebra $R$, we also have $\tH_?(R) = \Gspin(V_{?, \Zb})(R)$ for $? \in \{00, 0, 1, \emptyset\}$.
Therefore through $\iota$, we have
\begin{equation}
  \label{eq:gpid}
G_0 :=  \mathrm{Spin}(V_0) \cong G_F :=  \mathrm{Res}_{F/\Qb}(G),~ G_{00} := \mathrm{Spin}(V_{00}) \cong G,~ H_{00} \cong \mathrm{PGL}_2,
\end{equation}
and will  represent elements in $H_0$ by their preimages in $G_F$.
Denote $T_0 := \iota^{-1}(T) \subset \tH_0$
Then the relations among these groups can be visualized in the following diagram
\begin{equation}
  \label{eq:diag1}
  \begin{tikzcd}[row sep=scriptsize, column sep=tiny]
  & G_{00} \arrow[dl, "\cong"'] \arrow[rr, hook] \arrow[dd]& & G_0 \arrow[dl, "\cong"'] \arrow[rr, hook] \arrow[dd] & & \tH_0 \arrow[dl, hook'] \arrow[dd, hook]  & & T_0 \arrow[ll, hook'] \arrow[dl, "\cong"']  \\
\SL_2 \arrow[rr, crossing over, hook] & &  G_F \arrow[rr, crossing over, hook]  & & \GL_{2 / F}   & & T_{} \arrow[ll, crossing over, hook']\\
& H_{00}  \arrow[rr, hook]& & H_0 \arrow[rr, hook] & & H_0 & &
\end{tikzcd}
\end{equation}
Here the horizontal and vertical arrows are natural inclusions and surjections of algebraic groups respectively, and the diagonal arrows are induced by $\iota$.
Let $B_F \subset G_F$ be the standard parabolic subgroup, and $B_0 := \iota^{-1}(B_F) \subset G_0$.
They can be visualized as
\begin{equation}
  \label{eq:diag2}
  \begin{tikzcd}[row sep=scriptsize, column sep=tiny]
  & B_0 \arrow[dl, "\cong"'] \arrow[rr, hook] & & G_0 \arrow[dl, "\cong"'] \\
B_F \arrow[rr, crossing over, hook] & &  G_F
\end{tikzcd}
\end{equation}
which gives us
\begin{equation}
  \label{eq:BG}
 B_0(\Qb) \backslash G_0(\Qb)\cong  B_F(\Qb) \backslash G_F(\Qb) = B(F)\backslash G(F)
\end{equation}
via $\iota$.
We also denote
\begin{equation}
  \label{eq:TT}
\TT \subset T \times T_0 \subset \GL_2 \times \tH_0
\end{equation}
the diagonal, which will play a crucial role in the local matching result in section \ref{subsec:local-match-I}.

Now let $P \subset H$ be the Siegel parabolic stabilizing $V^+$, whose Levi factor is isomorphic to $\GL(V^+)$.
Then $P_0 := P \cap H_0 \subset H$ is the subgroup stabilizing the line $\ell^+$
and acting trivially on $V_1$.
The preimage of $P_0 H_{-1} \subset H_0$ in $\tH_0$ is given by $B_0 T_0$.
Combining with \eqref{eq:BG}, we obtain
\begin{equation}
  \label{eq:BG1}
  (P_0 H_{-1})(\Qb) \backslash H_0(\Qb) =
  (B_0T_0)(\Qb) \backslash \tH_0(\Qb) =
  B_0(\Qb) \backslash G_0(\Qb) \cong B(F)\backslash G(F).
\end{equation}
For $\alpha \in F^\times, \beta \in F$, we then have $m(\alpha), n(\beta) \in G_0(\Qb) \subset \tH(\Qb)$.
It is easy to check that
\begin{align*}
  (  \omega(m(\alpha)) \varphi)(a, b, \nu, \lambda)
  &=  \varphi(a/{\alpha\alpha'}, \alpha\alpha' b, \alpha'\nu/\alpha, \lambda),\\
  (  \omega(n(\beta)) \varphi)(a, b, \nu, \lambda)
  &=  \varphi(a - \beta\nu - \beta'\nu' + \beta\beta'b,  \beta, \nu - \beta b, \lambda)
\end{align*}
for a Schwartz function $\varphi \in \Sc(V(\Ab))$.

\subsection{Theta Integral}
\label{subsec:thetaint}
Let $\theta_0(g, g_1, \varphi_0)$ denote the theta function on $[G \times H_0]$ associated to $\varphi_0 \in \Sc(V_0(\Ab))$.
{Suppose $\varphi_{0, \infty}$ is in the polynomial Fock space $\mathbb{S}(V_0(\Rb))$ (see section \ref{subsec:L}).
Using $\mathbb{S}(V_0(\Rb)) = \mathbb{S}(V_{00}(\Rb)) \otimes \mathbb{S}(U_D(\Rb))$, we can then restrict $\theta_0$ to the subgroup $[G \times H_{00}]$, view it as a function on $[G \times G_{00}]$,
  and write
  \begin{equation}
    \label{eq:theta0decomp}
  \theta_0(g, g_{00}, \varphi_0) =
  \sum_{j \in J}  \theta_{00}(g, g_{00}, \varphi_{00, j})
  \theta_D(g, \varphi_{D, j}),
  \end{equation}
  where $\varphi_0= \sum_{j \in J} \varphi_{00, j} \varphi_{D, j}$ with $\varphi_{00, j} \in \Sc(V_{00}(\Ab))$ and $\varphi_{D, j} \in \Sc(U_D(\Ab))$.
}

We now define
\begin{equation}
  \label{eq:I0}
I_0(h_0, \varphi_0, f) := \int_{[G]} \theta_0(g, h_0, \varphi_0) f(g) dg
\end{equation}
for $f$ a bounded, integrable function on $[G]$.
Note that the measure $dg$ is normalized so that $[G]$ has volume 1.
In particular for a right $G(\hat\Zb)$-invariant function $\phi$ on $[G]$, we have
\begin{equation}
  \label{eq:dg}
  \int_{[G]} \phi(g) dg = \frac{3}{\pi} \int_{\SL_2(\Zb)\backslash \Hb} \phi(g_\tau) d\mu(\tau),~ d\mu(\tau) := \frac{dudv}{v^2}.
\end{equation}
When $f(g) = \vt_1(g, \varphi_1, \rho)$
for a bounded, integrable function $\rho$ on $H_1(\Qb)\backslash H_1(\Ab)$, the integral $I_0$ above becomes
\begin{equation}
  \label{eq:Ic}
  \begin{split}
      \Ic(h_0, \varphi, \rho) &:= \int_{[H_1]}
I((h_0, h_1), \varphi)
\rho(h_1) dh_1 = I_0(h_0, \varphi_0, \vartheta_1(\cdot, \varphi_1, \rho)),\\
I(h, \varphi) &:=
\int_{[G]} \theta(g, h, \varphi) dg
  \end{split}
\end{equation}
with $\varphi = \varphi_0 \otimes \varphi_1$.

For our purpose, $\rho = \vc$ will be an odd, continuous character as in \eqref{eq:rhosgn}, %
and $\varphi = \varphi_f \varphi^{ ( \e, - \e)}_\infty$ for $\e = \pm 1$ with
\begin{equation}
  \label{eq:varphiinf}
  \begin{split}
      \varphi^{(\e, - \e)}_\infty &:= \varphi^{(\e, -\e)}_{0, \infty} \otimes \varphi^-_\infty,\\
  \varphi^{(\e, -\e)}_{0, \infty}(a, b, \nu_1, \nu_2)
  &:=
  (\e  i  (a+b) +  (\nu_1 - \nu_2))
  e^{-\pi (a^2 + b^2 + \nu_1^2 + \nu_2^2)},
  \end{split}
\end{equation}
and $\varphi^\pm_\infty$ defined in \eqref{eq:varphipm}.
Here we have identified $V(\Rb) = \Rb^2 \oplus V_{1}(\Rb) \oplus V_{-1}(\Rb) \cong (\Rb^2)^{\oplus 3}$ via \eqref{eq:identify}.
For any $\theta \in \Rb, \e = \pm 1$, we have
$$
\omega(\kappa(\theta))\varphi^{(\e, -\e)}_\infty = \varphi^{(\e, -\e)}_\infty,~
\kappa(\theta) := \smat{\cos \theta}{\sin \theta}{-\sin \theta}{\cos\theta} \in \SO_2(\Rb) \subset G(\Rb),
$$
where $\omega$ is the Weil representation of $G(\Rb)$ on $V(\Rb)$.
On the other hand, for $h(\theta) = (\kappa(\theta), 1), h'(\theta) = (1, \kappa(\theta)) \in H_0(\Rb)$ with any $\theta \in \Rb$, it is easy to check that
$$
\omega(h(\theta))\varphi^{(\e, -\e)}_\infty = e^{\e i\theta} \varphi^{(\e, -\e)}_\infty,~
\omega(h'(\theta))\varphi^{(\e, -\e)}_\infty = e^{-\e i\theta} \varphi^{(\e, -\e)}_\infty.
$$
So $\varphi^{(\e, -\e)}_\infty$ is equivariant of weight $(\e , -\e)$ with respect to the connected component $\SO_2(\Rb) \times \SO_2(\Rb)$ of the maximal compact of $H_0(\Rb)$.
Later, we will also consider the following integral
\begin{equation}
  \label{eq:If}
  \Ic_f(h_0,\varphi, \vc) := \int_{H_1(\Qb)\backslash H_1(\hat\Qb)} \vc(h_1) \int_{[G]} \theta(g, (h_0, h_1), \varphi) dg dh_1,
\end{equation}
which is  similar to $\Ic(h_0, \varphi, \vc)$ and well-defined as
$$
\vc(-h_1)\theta(g, (h_0, -h_1), \varphi) = \vc(h_1)\theta(g, (h_0, h_1), \varphi)$$
for all $g, h_0, h_1$ and $\varphi_f \in \Sc(\hat V)$.
When $\varphi = \varphi_0 \otimes \varphi_1$, we have
\begin{equation}
  \label{eq:If01}
  \Ic_f(h_0, \varphi, \vc) = I_0(h_0, \varphi_0, \Theta_1(\cdot, \varphi_1, \vc)),
\end{equation}
where $\Theta_a$ (with $a = 1$) is defined in \eqref{eq:Theta}.
\subsection{Fourier Transform and Siegel-Weil Formula}
\label{subsec:FTSW}
We  follow \cite{GQT14} to recall the Siegel-Weil formula needed for our purpose, which goes from the split orthogonal group to the symplectic group.
The range we need is in the 1st term range, and was originally proved in \cite{KR94}.
Let $\varphi = \varphi_\infty\varphi_f \in \Sc(V(\Ab))$ with $\varphi_\infty$ as in \eqref{eq:varphiinf} above.
For $(g, h) \in G(\Ab) \times H(\Ab)$, we have the theta function $\theta(g, h, \varphi)$, and are interested in the value of the convergent integral $I(h, \varphi)$ defined in \eqref{eq:Ic}.

For a rational quadratic space $(V, (,)_V)$, suppose $V = U^+ + U^- + \Vo$ with $U^+, U^-$ complementary totally isotropic subspaces and $\Vo = (U^+ + U^-)^\perp$.
Let $W = X + Y$ denote the symplectic space of rank 2 over $\Qb$ with the symplectic pairing $\langle, \rangle_W$. The rational vector space $\Wb := V \otimes W$ is then a symplectic space with respect to the pairing
\begin{equation}
  \label{eq:pairWb}
  \langle v_1 \otimes w_1,  v_2 \otimes w_2 \rangle_\Wb :=
  (v_1, v_2)_V \langle w_1, w_2\rangle_W.
\end{equation}
From this, we have the Fourier transform $\Fcr_{U^+}: \Sc(V(\Ab)) \to \Sc(((U^- \otimes W)+ \Vo)(\Ab))$ defined by
\begin{equation}
  \label{eq:Fc}
  \Fcr_{U^+}(\varphi)(\eta, \vo) := \int_{U^+(\Ab)} \varphi(u^+, \eta_1, \vo)\psi((u^+, \eta_2)_V) du^+
\end{equation}
with $\eta = (\eta_1, \eta_2) \in (U^-)^2(\Ab) \cong (U^- \otimes W)(\Ab)$ and $\eta_i \in U^-(\Ab)$.
Here $du^+$ is the Usual Haar measure on $U^+(\Ab) = \Ab$.
Note that we have $(u^+, \eta_1, \vo) \in (U^+\otimes X + U^-\otimes X + \Vo)(\Ab) = V(\Ab)$.
Note that on $\Sc((U^- \otimes W + \Vo)(\Ab))$, the  Weil representation $\omega$ acts as
\begin{equation}
  \label{eq:WeilFourier}
  \begin{split}
  (  \omega(g, 1)\phi)(\eta, \vo) &= \omega_{\Vo}(g)\phi(\eta g, \vo), g \in G(\Ab),\\
  (  \omega(1, a)\phi)(\eta, \vo) &= |\det(a)| \phi(a^{-1}\eta, \vo), a \in \GL(U^+)(\Ab), \\
  (  \omega(1, u)\phi)(\eta, \vo) &= \psi(\langle u(\eta), \eta \rangle /2) \phi(\eta, \vo), u \in N(U^+)(\Ab) \subset \mathrm{Hom}_\Qb(U^-, U^+)(\Ab),
\end{split}
\end{equation}
which makes $\Fcr_{U^+}$ an intertwining map.

For $V$ in \eqref{eq:V}, we can take $U^\pm = V^\pm$ and $\Vo$ trivial with $V^\pm$ defined in \eqref{eq:Vpm}.
Another possibility is to take $U^\pm = \ell^\pm$ and $\Vo = V_1 \oplus V_{-1}$, which will be used in calculating the Fourier expansion of the theta integral $I_0$ in \eqref{eq:I0}.
To simplify notations, we write
\begin{equation}
  \label{eq:Fc}
  \Fcr := \Fcr_{V^+},~
  \Fcr_1 := \Fcr_{\ell^+},
\end{equation}
and use them to represent the Fourier transform at the finite and infinite places as well.
For example, $\Fcr_1$ is given by
\begin{equation}
  \label{eq:Fc1}
              \Fcr_1(\varphi)    ((\eta_1, \eta_2), \nu, \lambda)
= \int_{\Ab} \varphi(b, \eta_1, \nu, \lambda)\psi(b\eta_2) db
\end{equation}
for $\varphi \in \Sc(V(\Ab))$.
As $\Fcr_1$ acts as $\Fcr_1' \otimes \mathrm{id}$ on $\Sc(V(\Ab)) = \Sc(V_0(\Ab)) \otimes \Sc(V_1(\Ab))$, we will abuse notation and write
 $\Fcr_1 = \Fcr_1'$, which acts on $\Sc(V_0(\Ab))$.

For a place $v \le \infty$ of $\Qb$ and corresponding local field $k = \Qb_v$, recall we have the Siegel-Weil section
\begin{align*}
  \Phi_v: \Sc((V^-\otimes W)(k)) &\to I^H_v(0)\\
  \phi_v&\mapsto (h \mapsto (\omega_v(h)\phi_v)(0)),
\end{align*}
where $I^H_v(s) = \Ind^{H(k)}_{P(k)}(|\cdot|^s)$ is the degenerate principal series.
The image of $\Phi_v$ is a submodule of $I^H_v(0)$ denoted by $R_v(W)$.
When $v < \infty$, it is known that (see \cite[Proposition 5.2(ii)]{GQT14})
$$
I^H_v(0) = R_v(W) \oplus (R_v(W) \otimes {\det}_{H}).
$$
It is clear that
\begin{equation}
  \label{eq:Ginv}
  \Phi_v(\omega(g) \phi_v) = \Phi_v(\phi_v)
\end{equation}
for any $g \in G(k)$.

Given any $\phi = \otimes_v \phi_v \in \Sc((V^-\otimes W)(\Ab))$, we denote $\Phi_s(\phi) \in I^H(s)$ the standard section satisfying $\Phi_0(\phi) = \otimes_{v} \Phi_v(\phi_v)$.
We can then form the Eisenstein series
$$
E^H_P(s, \phi)(h) := \sum_{\gamma \in P(\Qb) \backslash H(\Qb)} \Phi_s(\phi)(\gamma h),
$$
which has meromorphic continuation to $s \in \Cb$ and is holomorphic at $s = 0$.
The regularized Siegel-Weil formula by Kudla-Rallis gives then the following equality (see \cite[Theorem 7.3(ii)]{GQT14})
\begin{equation}
  \label{eq:SW}
2  I(h, \varphi) = E^H_P(0, \Fcr(\varphi))(h).
\end{equation}
As a special case of the proposition in section 2 of \cite{Moeglin97a}, following an argument in \cite{GPSR}, we have the following lemma.
\begin{lemma}
  \label{lemma:Ecompare}
  For any $h \in H(\Ab)$, we have
  \begin{equation}
    \label{eq:Ecomp}
    E^H_P(s, \phi)(h) = \sum_{\gamma_0 \in B(F) \backslash G(F),~ \gamma_1 \in H_1(\Qb)}
    \Phi_s(\phi)((\gamma_0, \gamma_1 )h).
  \end{equation}
\end{lemma}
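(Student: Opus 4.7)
The plan is to rewrite the sum over $P(\Qb)\backslash H(\Qb)$ defining $E^H_P(s,\phi)(h)$ by parametrizing the $P$-cosets via the embedded subgroup $G_0\times H_1 \hookrightarrow H$, following the double coset strategy of \cite{Moeglin97a} and \cite{GPSR}. The argument has three steps: verify that $(G_0\times H_1)(\Qb)$ acts transitively on $P(\Qb)\backslash H(\Qb)$; identify the stabilizer $Q := P(\Qb)\cap (G_0\times H_1)(\Qb)$; and check that the resulting double sum is well-defined after choosing lifts.

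For the first step, I would show $H(\Qb) = P(\Qb)\cdot(G_0\times H_1)(\Qb)$. The $P$-cosets parametrize maximal isotropic $3$-planes of $V$ in the $H$-orbit of $V^+$; since $V^+ = \ell^+ \oplus (V_{-1}+V_1)^+$ decomposes naturally via the diagonal $\{(\lambda,\lambda):\lambda\in F\}$, Witt's theorem together with transitivity of the $G_0$-action on isotropic lines in $V_0$ yields the claim. For the second step, a pair $(g_0,h_1)$ preserves $V^+$ iff $g_0$ stabilizes $\ell^+$ (so $g_0\in B_0$, since the constraint $\det g_0 = 1$ forces the $T_0$-part of $B_0 T_0$ to be trivial) and the action of $g_0$ on $V_{-1}$ matches that of $h_1$ on $V_1$, both viewed as multiplications on $F$. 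Writing $g_0 = \smat{a}{c}{0}{a^{-1}}$, a direct computation via \eqref{eq:act} shows $g_0$ acts on $V_{-1}$ as multiplication by $a/a'$, forcing $h_1 = a/a' \in F^1 = H_1(\Qb)$. Hence $Q = \{(b,\nu(b)):b\in B_0\}$ is the graph of the homomorphism $\nu: B_0 \to H_1$ sending $\smat{a}{*}{0}{a^{-1}} \mapsto a/a'$, consistent with \eqref{eq:BG1}.

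Combining these yields a bijection $P(\Qb)\backslash H(\Qb) \cong (G_0\times H_1)(\Qb)/\mathrm{graph}(\nu)$. Choosing a set of lifts $\tilde\gamma_0 \in G(F)$ for each $\bar\gamma_0 \in B(F)\backslash G(F)$, I would rewrite the Eisenstein series as the double sum $\sum_{\bar\gamma_0,\gamma_1} \Phi_s(\phi)((\tilde\gamma_0,\gamma_1)h)$. This sum is independent of the lift choice: replacing $\tilde\gamma_0$ by $b\tilde\gamma_0$ with $b \in B(F)$ gives $(b\tilde\gamma_0,\gamma_1) = (b,\nu(b))(\tilde\gamma_0,\nu(b)^{-1}\gamma_1)$, where the $\Qb$-rational factor $(b,\nu(b)) \in P(\Qb)$ acts trivially on $\Phi_s$ by the product formula applied to the modulus character of $P$, and the reindexing $\gamma_1 \mapsto \nu(b)^{-1}\gamma_1$ is absorbed into the inner sum over $H_1(\Qb)$.

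The main technical point is the single-cell transitivity $H(\Qb) = P(\Qb)\cdot (G_0\times H_1)(\Qb)$, since in principle the Bruhat decomposition could have several cells. A clean path is to classify the $(G_0\times H_1)$-orbits on maximal isotropic $3$-planes of $V$ directly, exploiting the decomposition $V = V_0 \oplus V_1$ and the explicit form of $V^+$, rather than going through the full Bruhat analysis of $H$.
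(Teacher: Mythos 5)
Your overall strategy mirrors the paper's: parametrize $P(\Qb)\backslash H(\Qb)$ via a natural subgroup, identify the stabilizer of $V^+$, and split the sum. The paper, however, works with the block subgroup $H_0\times H_1 = \SO(V_0)\times\SO(V_1)$, asserts $P(\Qb)\backslash H(\Qb) = (P\cap(H_0\times H_1))(\Qb)\backslash(H_0\times H_1)(\Qb)$, computes $P\cap(H_0\times H_1) = P_0P_1^\Delta$ (the graph of $H_1\hookrightarrow H_{-1}\times H_1$, matching your graph of $\nu$), and then invokes \eqref{eq:BG1} to pass to $B_0(\Qb)\backslash G_0(\Qb)\cong B(F)\backslash G(F)$. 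You instead work directly with $G_0\times H_1$ where $G_0 = \Spin(V_0)\cong\SL_2(F)$. The stabilizer computation in your proposal (the $a/a'$ formula, the forced vanishing of the $T_0$-part under $\det=1$) is correct and consistent with the paper's.

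The place where your argument has a real gap — which you yourself flag — is the single-cell transitivity $H(\Qb) = P(\Qb)\cdot(G_0\times H_1)(\Qb)$, and the appeal to ``Witt's theorem together with transitivity of the $G_0$-action on isotropic lines in $V_0$'' is not adequate as stated. Witt's theorem gives transitivity of $\mathrm{O}(V_0)(\Qb)$ (and with care $\SO(V_0)(\Qb)$), not of the image of $\SL_2(F)$, whose cokernel in $H_0(\Qb)$ is governed by the spinor norm and is genuinely nontrivial (isomorphic to $\Qb^\times/(\Qb^\times)^2$). What saves the day is precisely the content of \eqref{eq:BG1}: the preimage of $P_0H_{-1}$ in $\tH_0 = \GSpin(V_0)$ is $B_0T_0$, and $\tH_0(\Qb) = G_0(\Qb)T_0(\Qb)$ with $T_0(\Qb)$ mapping into $(P_0H_{-1})(\Qb)$; combined with Hilbert 90 for the surjectivity of $\tH_0(\Qb)\to H_0(\Qb)$, this shows the spinor norm obstruction is absorbed into the parabolic. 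So your proof should quote \eqref{eq:BG1} (or redo that reduction through $\tH_0$) at the transitivity step rather than appealing to Witt's theorem alone; once you do, your approach essentially coincides with the paper's.
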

\begin{proof}
  We will show that
$ P(\Qb)\backslash H(\Qb)\cong (B(F)\backslash G(F) ) \times H_1(\Qb)$
  %
with the map induced by \eqref{eq:act}.
First, we have $P(\Qb)\backslash H(\Qb) = (P \cap (H_0 \times H_1))(\Qb)\backslash (H_0 \times H_1)(\Qb)$.
  Let $H_{-1} \subset H_0$ denote the image of $\SO(V_{-1})$, which is isomorphic to $H_1$, and $P_0 := P \cap H_0$. Then  $P \cap (H_0 \times H_1) = P_0P_1^\Delta$ with $P_1^\Delta \cong H_1$ the image of the diagonal embedding of $H_1$ into $H_{-1} \times H_1$.
From this,   we obtain
$$
  (P \cap (H_0 \times H_1)) (\Qb)\backslash (H_0\times H_1)(\Qb)
=  (P_0P_1^\Delta) (\Qb)\backslash (H_0\times H_1)(\Qb)
  = (((P_0 H_{-1}) \backslash H_0) \times H_1 )(\Qb).
  $$
  Equation \eqref{eq:BG1} then finishes the proof.
\end{proof}

Suppose $\vc = \otimes_{p \le \infty} \vc_p$ is  an odd character of $H_1(\Ab)/H_1(\Qb)$ and
\begin{equation}
  \label{eq:chi}
\chi:= \vc \circ \Nm^- = \otimes_{v \le \infty} \chi_v
\end{equation}
a totally odd character of $\Ab_F^\times/F^\times$, which can be viewed as a character on $B_0(\Ab)$.
Denote
\begin{equation}
  \label{eq:I0chi}
  I^{G_0}(\chi) := \mathrm{Ind}_{B_0(\Ab)}^{G_0(\Ab)} \chi,~
  I^{G_0}_{ p}(\chi_p) := \mathrm{Ind}_{B_0(\Qb_p)}^{G_0(\Qb_p)} \chi_p,~ \chi_p := \bigotimes_{v \mid p} \chi_v.
\end{equation}
From \eqref{eq:diag2}, we see that
\begin{equation}
  \label{eq:Irel}
  I^{G_0}(\chi) = I(0, \chi),~
  I^{G_0}_p(\chi_p) = \bigotimes_{v \mid p} I_{v}(0, \chi_v)
\end{equation}
with $I(s, \chi)$ and $I_v(s, \chi_v)$ defined in \eqref{eq:Ischi}.
Using the formula \eqref{eq:SW} and Lemma \ref{lemma:Ecompare},
we can rewrite the function $\Ic(g_0, \varphi, \vc)$ in \eqref{eq:Ic} as
\begin{align*}
  2\Ic(g_0, \varphi, \vc)
  &=
\mathrm{CT}_{s=0}    \int_{[H_1]} \vc(h_1) E^H_P(s, \Fcr(\varphi))(g_0, h_1) dh_1
=      E^{G_0}_{B_0}(0, F_{\varphi, \vc})(g_0),
\end{align*}
for $g_0 \in G_0(\A)$, where $E^{G_0}_{B_0}(s', F_{\varphi, \vc})$ is the Eisenstein series for the standard section associated to
\begin{equation}
  \label{eq:Fvarphi}
  \begin{split}
      F_{\varphi, \vc}(g_0) &:=       F_{\varphi, \vc, 0}(g_0)
      \in \Ind^{G_0}_{B_0} \chi,\\
      F_{\varphi, \vc, s}(g_0) &:= \int_{H_1(\Ab)}\Phi_s(\Fcr(\varphi))(g_0, h_1) \vc(h_1) dh_1.
  \end{split}
\end{equation}
Note that $F_{\varphi, \vc, s}$ is not a standard section, i.e.\ it depends on $s$ when restricted to any open compact subgroup of $G_0(\hat\Qb)$.

If $\vc = \otimes_{p \le \infty} \vc_p$ and $\varphi = \otimes_{p \le \infty} \varphi_p$, then $F_{\varphi, \vc, s}$ is a product of local integrals.
\begin{equation}
  \label{eq:Fvarphi}
F_{\varphi_p, \vc_p, s}(g_{0, p}) := \int_{H_1(\Qb_p)}\Phi_s(\Fcr(\varphi_p))(g_{0, p}, h_1) \vc(h_1) dh_1,~
F_{\varphi_p, \vc_p} := F_{\varphi_p, \vc_p, 0}
\in I_{0, p}(\chi_p).
\end{equation}
Recall that $dh_1$ is normalized so that the maximal compact subgroup of $H_1(\Qb_p)$ has volume 1.
We have explicitly
\begin{equation}
\label{eq:Fexp}
F_{\varphi_p, \vc_p}(g)
 =
 \int_{H_1(\Qb_p) \times F_p \times \Q_p} \varphi_p((g, h_1)^{-1} (x, 0, \lambda, \lambda)) \vc_p(h_1) dx\, d\lambda\, dh_1
\end{equation}
with $d\lambda$ the self-dual measure on $F_p$ such that $\int_{\Oc_{F_p}} d\lambda = |D|_p^{1/2}$. 
From this, we see that
\begin{equation}
  \label{eq:Galois}
  \sigma_a^{}(|D|_p^{1/2}F_{\varphi_p, \vc_p}(g)) =
|D|_{p}^{1/2} F_{  \sigma_a^{}(\varphi_p), \vc_p}(g),~
  F_{\varphi_p, \vc_p}(t_0g) =
F_{  \varphi_p, \vc_p}(g)
\end{equation}
for all $a \in \Zb_p^\times$ and $t_0 \in T_0(\Zb_p)$.
At all but finitely many cases, the function $F_{\varphi_p, \vc_p, s}$ is given explicitly as follows.
\begin{lemma}
\label{lemma:3}
Suppose $p$ is unramified in $E$ and $\varphi_p$ is the characteristic function of the maximal lattice
$V_{\Zb} \otimes \Zb_p \subset V_p$.
Then
\begin{equation}
  \label{eq:Fsval}
  F_{\varphi_p, \vc_p, s}(g_p)
=
(1-p^{-2-2s})\prod_{v \mid p}  L(1 + s, \chi_v)
\end{equation}
for all $g_p \in G_0(\Zb_p)$.
\end{lemma}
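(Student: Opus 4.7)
The plan is a standard spherical-vector argument. Since $\varphi_p$ is the characteristic function of the unimodular lattice $V_\Zb \otimes \Zb_p$, which by construction is stabilized by the maximal compact $G_0(\Zb_p) \times H_1(\Zb_p)$, its Fourier transform $\Fcr(\varphi_p)$ is likewise fixed by these compacts acting via the Weil representation on the mixed model described in \eqref{eq:WeilFourier}. Consequently the Siegel--Weil section $\Phi_s(\Fcr(\varphi_p))$ is right $G_0(\Zb_p)$-invariant, and integrating against $\vc_p$ over $H_1(\Qb_p)$ yields a section $F_{\varphi_p, \vc_p, s} \in I^{G_0}_p(\chi_p)$ that remains right $G_0(\Zb_p)$-invariant.

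Second, since $p$ is unramified in $E$, each component $\chi_v$ of $\chi_p = \bigotimes_{v \mid p} \chi_v$ is unramified, so the space of $G_0(\Zb_p)$-fixed vectors in $I^{G_0}_p(\chi_p)$ is one-dimensional. Combined with the Iwasawa decomposition $G_0(\Qb_p) = B_0(\Qb_p)G_0(\Zb_p)$, this forces $F_{\varphi_p, \vc_p, s}$ to be constant on $G_0(\Zb_p)$. The lemma therefore reduces to computing the single value $F_{\varphi_p, \vc_p, s}(1)$, and the conclusion then extends automatically to all $g_p \in G_0(\Zb_p)$.

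To evaluate $F_{\varphi_p, \vc_p, s}(1)$, one extends \eqref{eq:Fexp} to general $s$ by writing $\Phi_s(\Fcr(\varphi_p))(1, h_1)$ in Iwasawa form on $H(\Qb_p)$ and unfolding the integral over $H_1(\Qb_p) = F_p^1$. The unramified assumption makes the integrand constant on cosets of $H_1(\Zb_p)$, and the remaining sum separates according to whether $p$ splits or is inert in $F$. In the split case $F_p \cong \Qb_p \times \Qb_p$, $H_1(\Qb_p) \cong \Qb_p^\times$ and the integral factors as a product of two local Tate integrals, each contributing a factor of $L(1+s, \chi_v)$; in the inert case $H_1(\Qb_p)$ is already compact and the same L-factor emerges directly from Tate's local zeta integral on $F_p$. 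The prefactor $(1 - p^{-2-2s}) = \zeta_p(2(s+1))^{-1}$ is forced by the self-dual measure $|D|_p^{1/2} d\lambda$ appearing in \eqref{eq:Fexp} together with the standard-section normalization on $H(\Qb_p)$.

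The only real obstacle is bookkeeping: one must carefully combine the self-dual measure on $F_p$, the standard-section extension to $H(\Qb_p)$, the Haar measure on $H_1(\Qb_p)$, and the Tate local functional equation so that they reproduce the clean quotient $\zeta_p(2(s+1))^{-1}\prod_{v \mid p} L(1+s, \chi_v)$. Once these normalizations are correctly matched, the remaining computation is a routine manipulation of local zeta integrals, and the formula \eqref{eq:Fsval} follows.
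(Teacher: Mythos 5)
Your opening reduction is fine, but over-engineered: right $G_0(\Zb_p)$-invariance of $F_{\varphi_p,\vc_p,s}$ follows directly from the $G_0(\Zb_p)$-invariance of $\varphi_p$ under the Weil representation (since $G_0$ commutes with $H_1$ inside $H$), and this already gives $F(g_p)=F(1\cdot g_p)=F(1)$ for every $g_p \in G_0(\Zb_p)$. The one-dimensionality of the spherical line in $I^{G_0}_p(\chi_p)$ is not needed for the reduction; the paper simply invokes the invariance.

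The real content of the lemma is the evaluation of $F_{\varphi_p,\vc_p,s}(1)$, and here your proposal has a genuine gap. In the split case $F_p\cong\Qb_p^2$, one must actually compute the value $\Phi_s(\Fcr(\varphi_p))(1,(\alpha,\alpha^{-1}))$ for the characteristic function of the maximal lattice in the $6$-dimensional space $V_p$; the paper does this explicitly in \eqref{eq:int1}, finding $\Phi_0(\Fcr(\varphi_p))(1,(\alpha,\alpha^{-1})) = p^{-|m|}$ with $m=\ord_p(\alpha)$, together with the $s$-dependence $\min\{|\alpha|_p,|\alpha^{-1}|_p\}^s$. The integral over $H_1(\Qb_p)\cong\Qb_p^\times$ is then a single geometric series $\sum_m \e^m p^{-|m|(1+s)}$ (with $\e=\chi_v(p)$), which is recognized algebraically as $L(1+s,\chi_v)L(1+s,\chi_{v'})(1-p^{-2-2s})$. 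Describing this as "a product of two local Tate integrals" does not match what actually happens — there is one integral, and the factorization is an identity of power series in $p^{-s}$, not a literal splitting of the domain. Likewise, the claim that the prefactor $(1-p^{-2-2s})$ is "forced by the self-dual measure $|D|_p^{1/2}d\lambda$" is off the mark for $p$ split, since $|D|_p=1$ there; that factor is produced by the $\min\{|\alpha|,|\alpha|^{-1}\}$ term, not by a measure normalization.

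Your treatment of the inert case is also inaccurate. When $p$ is inert in $F$, the group $H_1(\Qb_p)=\Oc_{F_p}^1$ is compact, $\vc_p$ is trivial on it (unramifiedness), and the integral is just $\mathrm{vol}(H_1(\Zb_p))=1$ — no Tate zeta integral occurs. The nontrivial point, which you do not address, is that the right-hand side $(1-p^{-2-2s})L(1+s,\chi_v)$ must also equal $1$, i.e.\ $\chi_v$ must be the trivial character. This follows from the biquadratic structure of $E/\Qb$: if $p$ is inert in $F$ and unramified in $E$, the decomposition group in the Klein four group $\Gal(E/\Qb)$ has order two and meets $\Gal(E/F)$ trivially, so $v$ splits in $E$. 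Without this observation, which is intrinsic to the setting $\chi=\vc\circ\Nm^-$, the inert case of the identity does not close. Deferring the entire computation as "routine bookkeeping" therefore omits precisely the steps where the lemma lives.
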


\begin{proof}
  Since $\varphi_p$ is $G_0(\Zb_p)$-invariant, we can suppose $g_p = 1$.

  If $p$ is inert in $F$,
  then
$$\Phi_s(\Fcr(\varphi_p))(1, h_1) = \Phi_0(\Fcr(\varphi_p))(1, h_1) = 1 = \vc(h_1)$$
 for all $h_1 \in H_1(\Qb_p) = H_1(\Zb_p) = \Oc_{F_p}^1 \subset \Oc_{F_p}^\times$, and $  F_{\varphi_p, \vc_p, s}(g_p) = \int_{H_1(\Zb_p)} dh_1 = 1$.

 If $p$ is split in $F$,
 we have
 $F_p \cong \Qb_p^2$, $H_1(\Qb_p) = \{(\alpha, \alpha^{-1}) \in F_p: \alpha \in \Qb_p^\times\} \cong \Qb_p^\times$
and $\chi_v = \chi_{v'}$ is a character of $\Qb_p^\times$.
 Straightforward (though involved) calculations show that
$$
\Phi_s(\Fcr(\varphi_p))(1, h_1) =
\Phi_0(\Fcr(\varphi_p))(1, h_1) \min\{|h_1|_v, |h_1|_{v'}\}^s .
$$
  For $h_1 = (\alpha, \alpha^{-1})$ with $o(\alpha) = m$, we have
  \begin{equation}
    \label{eq:int1}
      \begin{split}
        \Phi_0(\Fcr(\varphi_p))(1, h_1)
    &= \int_{\Qb_p^3} \cha(\Zb_p^6)(a, 0, \lambda_1, \lambda_2, \alpha^{-1} \lambda_1, \alpha \lambda_2) da d\lambda_1 d\lambda_2\\
    &=
      \int_{p^{\max\{0, m\}}\Zb_p}d\lambda_1
      \int_{p^{\max\{0, -m\}}\Zb_p}      d\lambda_2
      = p^{-|m|}.
  \end{split}
  \end{equation}
  Since $p$ is  unramified in $E$, we have $\vc_p((\alpha, \alpha^{-1})) = \e^{o(\alpha)}$ with $\e := \vc_p((p, p^{-1})) = \chi_v(p) = \chi_{v'}(p)$.
Putting these together then gives us
  \begin{equation}
    \label{eq:Fvs1}
      \begin{split}
    F_{\varphi_p, \vc_p, v, s}(1)
    &=\int_{\Qb_p^\times} \min\{|\alpha|_p, |\alpha^{-1}|_p\}^{1+s}
      \e^{o(\alpha)} |\alpha|_p^s d^\times \alpha\\
    &= \sum_{m \in \Zb} \e^m p^{-|m|(1+s)}
      = L(1 + s, \chi_v)L(1 + s, \chi_{v'})(1-p^{-2-2s}).
  \end{split}
  \end{equation}
This finishes the proof.
\end{proof}

\subsection{Matching Global Sections}
\label{subsec:global-match}
The function $\Ic(g_0, \varphi^{(k, k')}, \vc)$ is a Hilbert modular form of weight $(k, k')$. We want to suitably choose $\vc$ and $\varphi_f$  and compare this function to a coherent Eisenstein series.

Let $\chi = \chi_{E/F}$ be a Hecke character associated to a quadratic extension $E/F$ with $E/\Qb$ biquadratic,
and $\vc: \Ab_F^\times/F^\times \to \Cb^\times$ the character satisfying \eqref{eq:vc}, whose kernel  in $H_1(\hat\Zb)$ is denoted by $K_\vc$.
Let $\alpha \in F^\times, W_\alpha$ be the same as in section \ref{subsec:Eisenstein}.
For our purpose, we will choose $\phi^{(k, k')}_\infty \in \Sc(W_\alpha(F \otimes \Rb))$ to be eigenfunctions of $K_\infty = \SL_2(\Rb)^2$ with weight $(k, k')$ and normalized to have
$$
\phi^{(k, k')}_\infty(0) = 1.
$$
The matching result we will prove is the following.
\begin{theorem}
  \label{thm:match}
  For $\alpha \in F^\times$ with $\Nm(\alpha) < 0$, given any $\phi_f \in \Sc(\hat W_{\alpha})$, there exists $\varphi_f \in \Sc(\hat V; \Qb^\mathrm{ab})$ such that $\omega_f(-1)\varphi_f = - \varphi_f$ for $-1 \in H_1(\hat\Qb)$,
  it is invariant with respect to the compact subgroup $G(\hat\Zb) \TT(\hat\Zb)K_\vc \subset G(\Ab) \times H_{}(\Ab)$,
  and satisfies
  \begin{equation}
    \label{eq:match}
\fac    F_{\varphi, \vc} =
    2  \Lambda(1, \chi) \lambda_\alpha(\phi) \in I(0, \chi).
  \end{equation}
Here $\varphi = \varphi_f \varphi^{(\e, -\e)}_{\infty}$ with $\e := \sgn(\alpha_1) = -\sgn(\alpha_2)$ and  $\varphi^{(\pm 1, \mp 1)}_{ \infty}$ defined in \eqref{eq:varphiinf}, and $\phi = \phi_f \phi^{(\e, -\e)}_\infty$.
In particular, we have the equality
\begin{equation}
  \label{eq:Ematch}
\fac    \Ic(g, \varphi, \vc) =  E^*(g, \phi).
\end{equation}
\end{theorem}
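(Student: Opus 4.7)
The proof will proceed by reducing the assertion to a purely local matching problem at each place of $\Qb$, then reassembling.

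First, I will reduce \eqref{eq:Ematch} to \eqref{eq:match}. Combining the Siegel--Weil formula \eqref{eq:SW} with Lemma \ref{lemma:Ecompare}, the integral $\Ic(g_0,\varphi,\vc)$ equals $\tfrac{1}{2}E^{G_0}_{B_0}(0,F_{\varphi,\vc})(g_0)$, an Eisenstein series on $G_0$ built from the section $F_{\varphi,\vc}\in I(0,\chi)$ via \eqref{eq:BG1}. On the other hand, the completed incoherent Eisenstein series satisfies $E^*(g,0,\lambda_\alpha(\phi))=\Lambda(1,\chi)E(g,0,\lambda_\alpha(\phi))$ by definition. Thus once \eqref{eq:match} is established, multiplying by $\fac/2$ and applying linearity of the Eisenstein construction yields \eqref{eq:Ematch}.

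Next, I will prove \eqref{eq:match} place by place. Since $\chi=\otimes_v\chi_v$ is a product and the global section $F_{\varphi,\vc}$ factors as $\prod_p F_{\varphi_p,\vc_p}$ by \eqref{eq:Fvarphi}, while $\lambda_\alpha(\phi)=\otimes_v\lambda_{\alpha,v}(\phi_v)$ is a pure tensor, the identity \eqref{eq:match} is equivalent to a local identity of the form
\begin{equation}
c_p\cdot F_{\varphi_p,\vc_p}\;=\;\prod_{v\mid p}L(1,\chi_v)\,\lambda_{\alpha,p}(\phi_p)\qquad\text{in }I^{G_0}_p(\chi_p)\notag
\end{equation}
at each prime $p$, with constants $c_p$ whose product (together with the archimedean factor) equals $\fac\cdot 2\Lambda(1,\chi)^{-1}\cdot\prod_v L(1,\chi_v)$. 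At all but finitely many primes, $\varphi_p$ will be the characteristic function of the self-dual lattice $V_\Zb\otimes\Zb_p$; then Lemma \ref{lemma:3} evaluates $F_{\varphi_p,\vc_p}$ on $G_0(\Zb_p)$ explicitly as $(1-p^{-2})\prod_{v\mid p}L(1,\chi_v)$, matching the unramified vector $\lambda_{\alpha,p}(\phi_p^0)$ (which equals $1$ on $G_0(\Zb_p)$) up to that $L$-factor, and the product $\prod_p(1-p^{-2})=\zeta(2)^{-1}=6/\pi^2$ contributes the factor $\fac$ after accounting for the normalization $\zeta(2)$ that appears in $\Lambda(1,\chi)$ via $L(2,\mathbf{1})$. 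The archimedean factor reflects the fact that $\varphi_\infty^{(\e,-\e)}$ has been normalized in \eqref{eq:varphiinf} to match $\phi_\infty^{(\e,-\e)}$ of weight $(\e,-\e)$ with $\phi_\infty^{(\e,-\e)}(0)=1$, so the archimedean integral \eqref{eq:Fvarphi} produces exactly the archimedean $L$-factor times the weight-$(\e,-\e)$ vector in $I_\infty(0,\chi_\infty)$.

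At the remaining (finitely many) ramified primes, I invoke the local matching result Theorem \ref{thm:localmatch} (to be proved in section \ref{subsec:local-match-I}), which asserts that for every $\phi_p\in\Sc(W_{\alpha,p})$ one can find $\varphi_p\in\Sc(V_p;\Qip)$, invariant under $G(\Zb_p)\TT(\Zb_p)K_{\vc,p}$ and anti-invariant under $-1\in H_1(\Qb_p)$, realizing the required identity of local sections. The anti-invariance of each local $\varphi_p$ under $-1\in H_1(\Qb_p)$ is compatible with the oddness of $\vc_p$ and is the key to extracting \emph{coherent} sections on the left-hand side. The $\TT(\Zb_p)$-invariance is precisely the condition \eqref{eq:Galinv} needed to descend from $\Sc(V_p;\Qip)$ to $\Sc(V_p)$, guaranteeing that the Galois action given by \eqref{eq:Galois} is trivial, so $\varphi_f$ may be taken in $\Sc(\hat V;\Qab)$ globally.

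The main difficulty lies in the local matching at ramified places (Theorem \ref{thm:localmatch}): the section $F_{\varphi_p,\vc_p}$ is built via the Fourier transform $\Fcr$ and an integral against $\vc_p$, and is \emph{not} a standard section in general, so a direct comparison with $\lambda_{\alpha,p}(\phi_p)$ is nontrivial. This local surjectivity statement, with the additional equivariance properties needed, is the substance of the argument and is postponed to section \ref{subsec:local-match-I}. Once granted, the reassembly above --- Siegel--Weil plus the elementary volume computation $[\SL_2(\Zb):G(\Zb)]^{-1}\cdot\mathrm{vol}([G])=\fac\cdot\mathrm{vol}(\SL_2(\Zb)\backslash\Hb)^{-1}$ encoded in \eqref{eq:dg} --- produces \eqref{eq:Ematch}.
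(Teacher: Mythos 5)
Your overall strategy is the paper's: reduce \eqref{eq:Ematch} to \eqref{eq:match} via the identity $2\Ic(g_0,\varphi,\vc)=E^{G_0}_{B_0}(0,F_{\varphi,\vc})(g_0)$ from the Siegel--Weil formula and Lemma~\ref{lemma:Ecompare}, then match sections place by place using Lemma~\ref{lemma:3} at the good primes and Theorem~\ref{thm:localmatch} at the ramified ones. That skeleton is correct.

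However, there is a real gap in where you locate the anti-invariance under $-1\in H_1$ and the $K_\vc$-invariance. You write that Theorem~\ref{thm:localmatch} delivers $\varphi_p$ ``invariant under $G(\Zb_p)\TT(\Zb_p)K_{\vc,p}$ and anti-invariant under $-1\in H_1(\Qb_p)$.'' It does not: Theorem~\ref{thm:localmatch} guarantees only $(G\cdot\TT)(\Zb_p)$-invariance, and the anti-invariance under $(-1)_p\in H_1(\Qb_p)$ \emph{cannot} hold at every place. Indeed, oddness of $\vc$ and triviality on $H_1(\Qb)$ give $\vc_f((-1)_f)=-1$, but $\vc_f=\prod_p\vc_p$ and $(-1)_p\in H_\a(\Zb_p)$, so $\vc_p((-1)_p)=1$ at every unramified place; the sign $-1$ is carried by an odd number of ramified places only. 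If you forced $\omega_p((-1)_p)\varphi_p=-\varphi_p$ at a place where $\vc_p((-1)_p)=1$, the relation $F_{\omega_f(h)\varphi_f,\vc_f}=\vc_f(h)^{-1}F_{\varphi_f,\vc_f}$ would kill $F_{\varphi_p,\vc_p}$. The paper instead first assembles $\varphi_f=\otimes_p\varphi_p$ from Theorem~\ref{thm:localmatch}, and only then performs a \emph{global} anti-symmetrization $\varphi_f\mapsto(\varphi_f-\omega_f(-1)\varphi_f)/2$ (using $\vc_f(-1)=-1$ so that $F_{\varphi_f,\vc_f}$ is preserved) and a global average over $K_\vc$ (using $\vc_f\!\mid_{K_\vc}=1$). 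That global post-processing step is missing from your argument and is needed for the statement you are proving.

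A smaller point: you gesture at the archimedean matching but do not carry it out. The identity $F_{\varphi_\infty^{(\e,-\e)},\vc_\infty}(g)=\pi^{-1}\lambda_\alpha(\phi_\infty^{(\e,-\e)})(g)$ is obtained in the paper by an explicit Gaussian integral over $\Rb^3\times\Rb_{>0}$ giving $\pi^{-1}\sqrt{v_1v_2}$, and it is this $\pi^{-1}$ together with the $\zeta(2)^{-1}$ from Lemma~\ref{lemma:3} that produces the precise constant $\fac$. Your constant bookkeeping gets to the same place but should be substantiated by this computation rather than asserted.
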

\begin{remark}
  \label{rmk:Lambda1}
  The constants $\Lambda(0, \chi) = \Lambda(1, \chi) = \frac{\sqrt{D_E/D}}{\pi^2} L(1, \chi)$ and $\sqrt{D_E}$ are in $\Qb^\times$.
\end{remark}
\begin{remark}
  \label{rmk:match}
  For $L \subset W_{\alpha}(\hat\Qb)$ a lattice and $\mu \in L^\vee/L$,
  suppose $\varphi_\mu \in \Sc(\hat V; \Qab)$ satisfies \eqref{eq:match} with
   $\phi_f = \phi_{L + \mu}$.
  Then it is easy to see that
  $$
  \sum_{\mu \in L^\vee/L} \Ic(g^\Delta_\tau, \varphi_\mu, \vc) \ef_\mu: \Hb \to \Cb[L^\vee/L]
  $$ is a (non-holomorphic) vector-valued modular form of weight 0 on $\SL_2(\Zb)$ with representation $\overline{\rho_L}$.
\end{remark}

\begin{remark}
  \label{rmk:decompose}
  If we decompose $V_{0} = U \oplus U^\perp$ with $U = \ell^+ + \ell^-$ the hyperbolic plane, then it is easy to see that $T_0 \subset \SO(U) \subset  H_0$. Therefore for any $\varphi \in \Sc(\hat V; \Qab)^{\TT(\hat\Zb)}$, we can write it as
  $$
\varphi = \sum_{j \in J} \varphi_{U, j} \otimes \varphi_{U^\perp, j}
$$
such that $\varphi_{U, j} \in \Sc(\hat U; \Qab)^{\TT(\hat\Zb)}$ and  $\varphi_{U^\perp, j} \in \Sc(\hat U^\perp; \Qab)^{\TT(\hat\Zb)}$ for all $j \in J$.
This in particular implies that $\varphi_{U^\perp, j}$ is $T(\hat\Zb)$-invariant, i.e.\ it is $\Qb$-valued by \eqref{eq:Galinv}.
\end{remark}
\begin{proof}[Proof of Theorem \ref{thm:match}]
  Suppose $\phi = \otimes_{v \le \infty}\phi_v$.
  By Theorem \ref{thm:localmatch}, there exists $\varphi_p \in \Sc(V_p; \Qip)^{}$ invariant with respect to $G(\Zb_p) \TT(\Zb_p)$ and satisfying \eqref{eq:matchp}.
Furthermore, $\varphi_p$ is the characteristic function of the maximal lattice in $V_p$ for all but finitely many $p$.
Therefore $\varphi_f := \bigotimes_{p < \infty} \varphi_p$ is in $\Sc(\hat V; \Qb^{\mathrm{ab}})^{(G\cdot\TT)(\hat\Zb)}$, and satisfies
$$
F_{\varphi_f, \vc_f} = \zeta(2)^{-1} {L(1, \chi)}{\sqrt{D_E/D}} \lambda_\alpha(\phi_f) = 6 \Lambda(1, \chi) \lambda_\alpha(\phi_f).
$$
Since $\vc_f(-1) = \sgn(-1) = -1$, the function $\omega_f(-1)\varphi_f$ with $-1 \in H_1(\hat\Qb)$ also satisfy these conditions, we can replace $\varphi_f$ by $(\varphi_f - \omega_f(-1)\varphi_f)/2$ so that $\omega_f(-1) \varphi_f = - \varphi_f$.
Furthermore, we have $F_{\omega_f(h) \varphi_f, \vc_f} = F_{\varphi_f, \vc_f}$ for all $h \in K_\vc$, and can therefore average over $K_\vc$ to ensure that $\varphi_f$ is $K_\vc$-invariant.

To prove \eqref{eq:Ematch}, it suffices to check that $F_{\varphi^{(\e, -\e)}_\infty, \vc_\infty}(g) = \pi^{-1} \lambda_\alpha(\phi_\infty^{(\e, -\e)})(g)$
for $g = (g_{\tau_1}, g_{\tau_2})$. 
Using
  \begin{align*}
    \Phi_\infty(\Fcr(\varphi^{(\e, -\e)}_\infty))(g, t)
    &=
      \Fcr(\omega(g, t)\varphi^{(\e, -\e)}_\infty)(0)
    =
      \int_{\Rb^3} (\omega(g, t)\varphi^{(\e, -\e)}_\infty)(a, 0, \lambda_1, \lambda_2, \lambda_1, \lambda_2) da d\lambda_1 d\lambda_2   \\
    &=
      \int_{\Rb^3} \varphi^{(\e, -\e)}_\infty
      \lp \frac{a - \lambda_1 u_1 - \lambda_2 u_2}{\sqrt{v_1v_2}}, 0, v_1\lambda_1/\sqrt{v_1v_2}, v_2\lambda_2/\sqrt{v_1v_2}, t^{-1}\lambda_1, t\lambda_2\rp da d\lambda_1 d\lambda_2   \\
    &=
      \int_{\Rb^2}
      (v_1\lambda_1 - v_2\lambda_2) (t^{-1}\lambda_1 - t^{}\lambda_2)
e^{-\frac{\pi}{v_1v_2} ((v_1\lambda_1 - v_2\lambda_2)^2 + v_1v_2((t^{-1}\lambda_1 + t\lambda_2)^2))}
      d\lambda_1 d\lambda_2\\
    &=
      \int_{\Rb^2}
      x
\frac{2x + (t^{-1}v_2 - tv_1)y}{v_1t + v_2t^{-1}}
e^{-\frac{\pi}{v_1v_2} (x^2 + v_1v_2y^2))}
      \frac{dx dy}{v_1t + v_2t^{-1}}   \\
&=
\pi^{-1} 2 {(v_1v_2)^{3/2}}{}
      \frac{t^2}{(v_1t^2 + v_2)^2},
  \end{align*}
where we have  used the change of variable $x = v_1\lambda_1 - v_2\lambda_2, y = t^{-1} \lambda_1 + t \lambda_2$,
 we obtain
  \begin{align*}
    F_{\varphi^{(\e, -\e)}_\infty, \vc_\infty}(g) &=    \int_{0}^\infty
    \Phi_\infty(\Fcr(\varphi_\infty))(g, t)   \frac{dt}{t}
=\pi^{-1} 2 {(v_1v_2)^{3/2}}{}
   \int_{0}^\infty
      \frac{t dt}{(v_1t^2 + v_2)^2}
=\pi^{-1}  \sqrt{ v_1v_2}.
  \end{align*}
  On the other hand, we have
  $$
  \lambda_{\alpha}(\phi^{(\e, -\e)}_\infty)(g)
%
  = \sqrt{v_1v_2}.
  $$
  This finishes the proof.
\end{proof}

The requirement that $\varphi_f$ in Theorem \ref{thm:match} is invariant with respect to $\TT(\hat\Zb)$ will be important to deduce important rationality results in section \ref{subsec:Millson}. We give a taste of such results in the following lemma.

\begin{lemma}
    \label{lemma:hatvarphirat}
    If $\varphi_0 \in \Sc(V_0; \Qab)$ is invariant with respect to $\TT(\hat\Zb) \subset (\GL_2 \times H_0)(\hat\Zb)$, then $\Fcr_1(\varphi_0) \in \Sc(((\ell^- \otimes W) + V_{-1})(\hat\Qb); \Qab)$ satisfies
    \begin{equation}
      \label{eq:Galois0}
      \sigma_a      \lp \Fcr_1(\varphi_0)    ((\eta_1, \eta_2), \nu) \rp
      =  \Fcr_1(\varphi_0)    ((a^{-1} \eta_1, \eta_2), \nu)
\end{equation}
for any $\sigma_a \in \mathrm{Gal}(\Qab/\Qb)$ associated to $a \in \hat\Zb^\times$ as in section \ref{subsec:WeilRep}.
In particular, we have
\begin{equation}
  \label{eq:Fc1rat}
       \Fcr_1(\varphi_0)    ((0, r), \nu) \in\Qb
\end{equation}
 for all $r \in \hat\Qb, \nu \in \hat F$.
\end{lemma}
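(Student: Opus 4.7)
The plan is to translate the $\TT(\hat\Zb)$-invariance of $\varphi_0$ into an equivariance relation between the Galois action on values of $\varphi_0$ and a change of variables in its argument, and then to transport this relation through the Fourier transform $\Fcr_1$.

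First I would unpack what the invariance means concretely. Since $\TT \cong \Gm$ is the diagonal embedding $a \mapsto (t(a), t_0(a))$, an element with $a \in \hat\Zb^\times$ acts on $\varphi_0$ as the composition of the extended Weil action of $T(\hat\Zb) \subset \GL_2(\hat\Zb)$, which by \eqref{eq:weilGL2} with $g = 1$ reads $(\omega_f(t(a))\phi)(x) = \sigma_a(\phi(x))$, and the $T_0(\hat\Zb)$-action by change of variables $(\omega_f(t_0(a))\phi)(x) = \phi(t_0(a)^{-1}x)$. Computing $t_0(a)$ explicitly via \eqref{eq:act}, with $\gamma = \diag(a,1) \in \GL_{2/F}$ and $a' = a$, one checks that $t_0(a)$ sends the coordinates $(b, \eta_1, \nu) \in V_0(\hat\Qb)$ to $(ab, a^{-1}\eta_1, \nu)$. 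The invariance $\omega_f(t(a), t_0(a))\varphi_0 = \varphi_0$ then becomes the key relation
\begin{equation*}
\sigma_a\bigl(\varphi_0(b, \eta_1, \nu)\bigr) = \varphi_0(ab,\, a^{-1}\eta_1,\, \nu)
\end{equation*}
valid for all $a \in \hat\Zb^\times$ and all $(b, \eta_1, \nu) \in V_0(\hat\Qb)$.

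Next I would apply $\sigma_a$ termwise to the integral defining $\Fcr_1(\varphi_0)$ in \eqref{eq:Fc1}. This is legitimate because $\varphi_0$ is locally constant of compact support, so the integral is in fact a finite sum. Using the above relation together with $\sigma_a(\psi(t)) = \psi(at)$ (from section \ref{subsec:WeilRep}), the integrand transforms into $\varphi_0(ab,\, a^{-1}\eta_1,\, \nu)\,\psi(ab\eta_2)$. The substitution $b \mapsto a^{-1}b$, which preserves Haar measure on $\hat\Qb$ since $|a| = 1$, recasts this as $\Fcr_1(\varphi_0)((a^{-1}\eta_1, \eta_2), \nu)$, proving \eqref{eq:Galois0}. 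For the rationality statement \eqref{eq:Fc1rat}, setting $\eta_1 = 0$ makes the right-hand side equal to $\Fcr_1(\varphi_0)((0, r), \nu)$ itself, so this value is $\sigma_a$-fixed for every $a \in \hat\Zb^\times$; since $\hat\Zb^\times \twoheadrightarrow \Gal(\Qab/\Qb)$ by class field theory, it lies in $\Qb$.

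The main point requiring care is the identification of the $T_0$-action on the coordinates $(b, \eta_1, \nu)$ and verifying that the diagonal embedding $\TT$ is exactly what makes the Galois twist from the $\GL_2$-side and the translation from the $\tH_0$-side combine into a single clean relation; once that bookkeeping is in place, the remainder of the argument is a short change of variables.
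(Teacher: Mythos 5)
Your proof is correct and takes essentially the same route as the paper: both apply $\sigma_a$ termwise inside the (finite-sum) Fourier integral, use the $\TT(\hat\Zb)$-invariance to convert the Galois twist on $\varphi_0$ into the coordinate change $(b,\eta_1,\nu)\mapsto(ab,a^{-1}\eta_1,\nu)$, and then substitute $b\mapsto a^{-1}b$. Your explicit computation of the $t_0(a)$-action via \eqref{eq:act} is a more detailed unpacking of what the paper invokes implicitly, but the argument is the same.
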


\begin{proof}
  Using the expression for $\Fcr_1$ in \eqref{eq:Fc1}, we can write
  \begin{align*}
\sigma_a        &   \lp \Fcr_1(\varphi_0)    ((\eta_1, \eta_2), \nu) \rp
    = \sigma_a \lp \int_{\hat\Qb} \varphi_0(b, \eta_1, \nu)\psi_f(b\eta_2) db \rp
      = \int_{\hat\Qb} \sigma_a(\varphi_0(b, \eta_1, \nu))\psi_f(a b\eta_2) db\\
    &= \int_{\hat\Qb} \omega((t(a), 1))(\varphi_0)(b, \eta_1, \nu)\psi_f(a b\eta_2) db
= \int_{\hat\Qb} \omega((1, \iota(t(a^{-1}))))(\varphi_0)(b, \eta_1, \nu)\psi_f(a b\eta_2) db\\
    &= \int_{\hat\Qb} \varphi_0(ab, a^{-1}\eta_1, \nu)\psi_f(a b\eta_2) db =
       \Fcr_1(\varphi_0)    (( a^{-1}\eta_1,\eta_2), \nu).
  \end{align*}
  For the second step, we moved $\sigma_a$ inside the integral as $\varphi_0$ is a Schwartz function and the integral is a finite sum. The third and fourth steps used \eqref{eq:weilGL2} and the invariance of $\varphi_0$ under $(t, \iota(t)) \in \TT(\hat\Zb)$ respectively.
  Equation \eqref{eq:Fc1rat} now follows from \eqref{eq:Galois0} via \eqref{eq:weilGL2}.
\end{proof}
\subsection{Matching Local Sections I}
\label{subsec:local-match-I}
The goal of this section is to prove Theorem \ref{thm:localmatch}, the non-archimedean local counterpart of the matching result \ref{thm:match}.
For this purpose, we  fix a prime $p < \infty$ throughout this section.
The main input to Theorem \ref{thm:localmatch} is the following surjectivity result.

\begin{proposition}
  \label{prop:surj}
Let $\vc_p$ and $\chi_p$ be as in \eqref{eq:chi}.
Then  the following map
\begin{equation}
  \label{eq:beta}
  \begin{split}
    \beta: \Sc(V_{p}; \Cb)^{G(\Zb_p)} \subset \Sc(V_p; \Cb) &\to I_p^{G_0}(\chi_p)\\
    \varphi &\mapsto F_{\varphi, \vc_p}
  \end{split}
\end{equation}
is surjective.
Furthermore if $\Phi \in I_p^{G_0}(\chi_p)$ is valued in $\Qip$, then there exists $\varphi \in \Sc(V_p; \Qip)^{G(\Zb_p)}$ satisfying $\beta(\varphi) = \Phi$.
Here $\Qip \subset \Qab$ is the subfield defined in \eqref{eq:Qinfp}.
\end{proposition}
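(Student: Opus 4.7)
My plan is to argue that $\beta$ is a $G_0(\Qb_p)$-equivariant map of $G_0(\Qb_p)$-modules, produce enough preimages to show the image is nonzero, and then use the composition series of $I_p^{G_0}(\chi_p)$ to force surjectivity. For Step 1 (equivariance), the embedding $G_0 \hookrightarrow H$ via the Spin cover makes $G_0(\Qb_p)$ act on $\Sc(V_p;\Cb)$ through the orthogonal side of the Weil representation. Using the $G$-invariance of the Siegel--Weil section recalled in \eqref{eq:Ginv} together with the intertwining property of $\Fcr$, one verifies directly from \eqref{eq:Fexp} that
\[
F_{\omega(g_0)\varphi,\vc_p}(g) = F_{\varphi,\vc_p}(g g_0),\qquad g_0\in G_0(\Qb_p).
\]
Because $G(\Zb_p)$ and $G_0(\Qb_p)$ commute inside $(G\times H)(\Qb_p)$ (they form a dual pair in $\mathrm{Sp}$), the subspace $\Sc(V_p;\Cb)^{G(\Zb_p)}$ is $G_0(\Qb_p)$-stable, so $\mathrm{Im}(\beta)$ is a $G_0(\Qb_p)$-submodule of $I_p^{G_0}(\chi_p)$.

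For Step 2 (non-triviality), Lemma \ref{lemma:3} already exhibits, in the unramified case, a characteristic-function input producing a nonzero spherical vector. In the general case I would take $\varphi$ to be the characteristic function of a sufficiently small neighborhood of a rational isotropic point of the form $(x_0,0,\lambda_0,\lambda_0)$ appearing in \eqref{eq:Fexp}, and check by explicit integration that $F_{\varphi,\vc_p}(1)\ne 0$. To finish, decompose $I_p^{G_0}(\chi_p)=\bigotimes_{v\mid p} I_v(0,\chi_v)$ under the isogeny \eqref{eq:diag1}. Each factor is an $\SL_2(F_v)$-principal series induced from a character of order dividing two, hence of length at most two. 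In the irreducible case Steps 1--2 are enough. In the reducible case — this is the main obstacle — one has to realize a vector lying outside any proper submodule. I would do this by computing $F_{\varphi,\vc_p}$ along both Bruhat cells (at $1$ and at the long Weyl element $w$ in the $v$-component), and showing that, as $\varphi$ varies over $G(\Zb_p)$-invariant Schwartz functions, the ratio of the two values can be prescribed freely; this rules out $\mathrm{Im}(\beta)$ being contained in either the sub- or the quotient-representation, and hence forces surjectivity by the length-two structure.

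For Step 3 ($\Qip$-rationality), every ingredient in \eqref{eq:Fexp} is defined over $\Qip$: the Weil representation descends to $\Sc(V_p;\Qip)$ in a $G_0(\Qb_p)$-equivariant way by \eqref{eq:Galinvp}, the additive character $\psi_p$ is $\Qip$-valued, and the integration over $H_1(\Qb_p)$ against the $\{\pm1\}$-valued character $\vc_p$ preserves the coefficient field. Thus the entire argument runs verbatim with $\Cb$ replaced by $\Qip$; equivalently, given a $\Qip$-valued $\Phi$ with a $\Cb$-valued preimage $\varphi$, one can average $\varphi$ over $\mathrm{Gal}(\Cb/\Qip)$ (the averaging is finite on each $K$-type by smoothness) to obtain a $\Qip$-valued preimage. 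The delicate point throughout, and the step I expect to demand the most care, is the Bruhat-cell computation that distinguishes the two Jordan--Hölder constituents in the reducible case.
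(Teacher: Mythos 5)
Your overall strategy -- prove the image is a nonzero submodule of $I_p^{G_0}(\chi_p)=\bigotimes_{v\mid p}I_v(0,\chi_v)$ and then invoke the composition series of each factor -- is the same framework the paper uses, and Steps~1 and~3 are sound in outline. The genuine gap is in the ``main obstacle'' you flag yourself, the reducible case, and your proposed fix does not work. When $\chi_v$ is nontrivial, $I_v(0,\chi_v)$ at $s=0$ is not a nonsplit extension of a sub by a quotient along the Bruhat filtration; it is the \emph{direct sum} $R(W_\alpha)\oplus R(W_{\alpha'})$ of two infinite-dimensional irreducible submodules indexed by $\alpha\in E_v^\times/F_v^\times$. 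Both summands contain vectors with arbitrary (and arbitrarily related) values at $1$ and at $w$, so ``prescribing the ratio of the two Bruhat-cell evaluations freely'' does not rule out the image sitting entirely inside one $R(W_\alpha)$. The linear functional that actually distinguishes the two summands is the degenerate Whittaker/Fourier coefficient $W_m$ of~\eqref{eq:WmPhi}: one has $W_m|_{R(W_\alpha)}\equiv 0$ unless $m/\alpha\in\Nm_{E_v/F_v}E_v^\times$, i.e.\ unless $\chi_v(m/\alpha)=1$. This is exactly what the paper exploits: Lemma~\ref{lemma:calc} explicitly constructs Schwartz functions $\phi^\e$ with $W_{m^\e}(\phi^\e)\ne 0$ for $m^\e$ in each of the two norm classes, forcing the projection of $\mathrm{Im}(\beta)$ to each summand to be nonzero, whence surjectivity by irreducibility of each $R(W_\alpha)$. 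You should replace the Bruhat-cell computation by this Fourier-coefficient separation argument; without it the proof does not close.

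A smaller point on Step~3: averaging over $\mathrm{Gal}(\Cb/\Qip)$ is not a well-posed operation (this group has no invariant mean). The paper instead uses a clean linear-algebra argument: choose $\varphi=\sum_j c_j\varphi_j$ with $\varphi_j\in\Sc(V_p;\Qip)^{G(\Zb_p)}$, $c_j\in\Cb$, and $J$ minimal; since the resulting $F_{\varphi,\vc_p}$ is $\Qip$-valued and each $F_{\varphi_j,\vc_p}$ is nonzero, linear dependence of $\{1,c_1,\dots,c_J\}$ over $\Qip$ combined with minimality forces $J=1$ and $c_1\in\Qip$. Your alternative of rerunning the whole argument with coefficients in $\Qip$ could in principle work (the test functions in Lemma~\ref{lemma:calc} are characteristic functions of compact opens, hence $\Qb$-valued), but you would need to verify this claim rather than appeal to Galois averaging.

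Finally, one structural simplification the paper makes that you should note: since $F_{\omega(g)\varphi,\vc_p}=F_{\varphi,\vc_p}$ for $g\in G(\Zb_p)$, it suffices to prove surjectivity of $\beta$ on all of $\Sc(V_p;\Cb)$; a preimage can then be $G(\Zb_p)$-averaged. This removes the need for your Step~1 discussion of the $G_0$-stability of $\Sc(V_p;\Cb)^{G(\Zb_p)}$, although that discussion is also correct.
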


\begin{proof}
  Using \eqref{eq:Irel}, we can suppose $\Phi = \otimes_{v \mid p} \Phi_v$ with $\Phi_v \in I_v(0, \chi_v)$.
  Since $F_{\omega(g)\varphi, \vc_p} = F_{\varphi, \vc_p}$ for all $g \in G(\Zb_p)$
  and $\varphi \in \Sc(V_p; \Cb)$, it suffices to prove the surjectivity of $\beta$ on $\Sc(V_p; \Cb)$.
  To do this, we will use the  $m$-th Fourier coefficient of $\Phi_v \in I_v(0, \chi_v)$ for $m \in F_v$, which is defined by
\begin{equation}
  \label{eq:WmPhi}
  W_m(\Phi_v)
 := \int_{F_v} \Phi_v(w n(b)) \psi_v(-mb) db
\end{equation}
with $\psi_v$ an additive character of $F_v$.
For $m = (m_v)_{v \mid p} \in F_p$ and $\varphi \in \Sc(V_p; \Cb)$, we denote
\begin{equation}
  \label{eq:Wm}
  \begin{split}
    W_m(\varphi)
&:=
\prod_{v \mid p} W_{m_v}( (F_{\varphi, \vc_p})_v)\\
&=
 \int_{F_p\times H_1(\Qb_p)}
(\omega_p(wn(b), h_1) \Fcr(\varphi))(0)\psi_p(-mb) \vc_p(h_1) dh_1 \, db
  \end{split}
\end{equation}
with $\psi_p := \prod_{v \mid p} \psi_v$.
Now, the  $G(F_v)$-module $I_v(0, \chi_v)$ can be written as
$$
I_v(0, \chi_v) = \oplus_{\alpha \in F_v^\times / \Nm(E_v^\times)} R(W_\alpha)
$$
with $R(W_\alpha)$ the image of $\lambda_{\alpha, v}$ and irreducible.
So $I_v(0, \chi_v)$ is irreducible if and only if $\chi_v$ is trivial. Otherwise, it is the direct sum of two irreducible submodules.
Furthermore for $\Phi \in R(W_\alpha)$, the coefficient $W_m(\Phi)$ is zero unless $m/\alpha \in \norm_{E_{v}/F_{v}}E_{v}^\times$.
We then have two cases to consider, depending on whether $\chi_v = \chi_{v'}$ is trivial or not.

When $\chi_v = \chi_{v'}$ is trivial, Lemma \ref{lemma:calc} gives us $\varphi$ such  that $W_m( \varphi) \neq 0$ for some $m \in F_p^\times$.
So for $v \mid p$, the restriction of $\mathrm{im}(\beta) \subset I_p^{G_0}(\chi_p)$ to $G(F_v)$ gives a non-zero section in $I_v(0, \chi_v)$, and generates  a non-trivial, irreducible sub $G(F_v)$-module.
As $I_v(0, \chi_v)$ is irreducible, the map $\beta$ is surjective.
When $\chi_v = \chi_{v'}$ is non-trivial, we again apply Lemma \ref{lemma:calc} to obtain a submodule $R \subset I_v(0, \chi_v) = R(W_{\alpha_0}) \oplus R(W_{\alpha_1})$ from $\mathrm{im}(\beta)$ such that $\pi_i(R)$ is non-trivial with $\pi_i: I_v(0, \chi_v) \to R(W_{\alpha_i})$ the projection.
As $R(W_{\alpha_i})$ is irreducible, we have $\pi_i(R) = \pi_i(I_v(0, \chi_v))$.
Consider $R_i := \ker \pi_i \cap R$ as a submodule of the irreducible module $\ker \pi_i$.
As $R(W_{\alpha_0})$ and $R(W_{\alpha_1})$ are not isomorphic \cite[Proposition 3.4]{KR92}, $R_i$ cannot be trivial for both $i = 0, 1$, otherwise  $R \cong \pi_i(R) = R(W_{\alpha_i})$.
Thus $R_i = \ker \pi_i \subset R$ for an $i$, which implies $R = I_v(0, \chi_v)$ and proves surjectivity.

When $\Phi = \otimes_{v \mid p} \Phi_v$ has value in $\Qip$, we apply the surjectivity of $\beta$ and the discussion in section \ref{subsec:WeilRep} to choose $\varphi_j \in \Sc(V_p; \Qip)^{G(\Zb_p)}$ and $c_j \in \Cb$ such that
$$
\varphi := \sum_{j = 1}^J c_j \varphi_j \in \Sc(V_p; \Cb)
$$
satisfies $\beta(\varphi) = \Phi$ and $J$ is minimal.
Therefore, $F_{\varphi, \vc_p} = \sum_{j = 1}^J c_j F_{\varphi_j, \vc_p}$ is valued in $\Qip$.
By the minimality of $J$, the section $F_{\varphi_j, \vc_p}$ is not identically zero for all $j$.
Therefore, the set $\{1, c_1, \cdots, c_J\} \subset \Cb$ is linearly dependent over $\Qip$.
The minimality of $J$ then implies that $J = 1$ and $c_1 \in \Qip$, hence $\varphi \in \Sc(V_p; \Qip)^{G(\Zb_p)}$.
\end{proof}

Using this proposition, we can match any continuous function on $G_0(\Zb_p)$ via the map $\beta$. Furthermore, we can incorporate Galois action to obtain the following result.

\begin{proposition}
  \label{prop:extend}
  In the setting of Proposition \ref{prop:surj}, given any continuous function $\Phi: G_0(\Zb_p) \to \Cb$ satisfying
  \begin{equation}
    \label{eq:cond3}
    \Phi(m(a)n(b)k) = \chi(a) \Phi(k),~
  \end{equation}
  for all $m(a), n(b) \in B_0(\Zb_p), k \in G_0(\Zb_p)$,
     there exists $\varphi \in \Sc(V_p; \Cb)^{G(\Zb_p)}$ such that $F_{\varphi, \vc_p}(g) = \Phi(g)$ for all $g \in G_0(\Zb_p)$.
  Furthermore, if $\Phi$ takes values in $\Qip$ and satisfies
  \begin{equation}
    \label{eq:Galois1}
    \sigma_a(|D|_p^{-1/2} \Phi(t_0^{-1} g t_0)) = |D|_p^{-1/2} \Phi(g),~
  \end{equation}
with $    t_0 = \iota(t(a)) \in \tH_0(\Zb_p), t(a) = \smat{a}{}{}{1} \in T \subset \GL_2(\Zb_p)$
  for all $a \in \Zb_p^\times$ and $g \in G_0(\Zb_p)$, then $\varphi \in \Sc(V; \Qip)^{G(\Zb_p)}$ can be chosen to be $\TT(\Zb_p)$-invariant.
\end{proposition}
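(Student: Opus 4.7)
The plan is to extend $\Phi$ to a section of $I_p^{G_0}(\chi_p)$ on all of $G_0(\Qb_p)$ and then pull it back through Proposition \ref{prop:surj}. Using the Iwasawa decomposition $G_0(\Qb_p) = B_0(\Qb_p)\,G_0(\Zb_p)$, I would define
\begin{equation*}
\tilde\Phi(m(a)n(b)k) \;:=\; \chi_p(a)\,\Phi(k), \qquad m(a)n(b) \in B_0(\Qb_p),\; k \in G_0(\Zb_p).
\end{equation*}
Two Iwasawa decompositions of the same element differ by an element of $B_0(\Zb_p)$, on which both sides transform by the same $\chi_p$-factor thanks to \eqref{eq:cond3}, so $\tilde\Phi$ is well defined; continuity is inherited from $\Phi$. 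By construction $\tilde\Phi \in I_p^{G_0}(\chi_p)$ and $\tilde\Phi|_{G_0(\Zb_p)} = \Phi$. Proposition \ref{prop:surj} then produces $\varphi \in \Sc(V_p;\Cb)^{G(\Zb_p)}$, and $\varphi \in \Sc(V_p;\Qip)^{G(\Zb_p)}$ when $\Phi$ is $\Qip$-valued, with $F_{\varphi,\vc_p} = \tilde\Phi$, proving the first assertion.

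For the refined statement, I plan to enforce $\TT(\Zb_p)$-invariance by group averaging. Unwinding \eqref{eq:weilGL2} and the integral formula \eqref{eq:Fexp}, and combining with the two properties in \eqref{eq:Galois}, should yield the transformation rule
\begin{equation*}
F_{\omega_p(t(a),\iota(t(a)))\varphi,\,\vc_p}(g) \;=\; \sigma_a\!\left(F_{\varphi,\vc_p}\bigl(\iota(t(a))^{-1}\, g\,\iota(t(a))\bigr)\right) \qquad (a \in \Zb_p^\times).
\end{equation*}
Because $\chi_p$ is quadratic and $\{\pm 1\}$-valued, hypothesis \eqref{eq:Galois1} on $\Phi$ extends to the analogous relation for $\tilde\Phi$ on all of $G_0(\Qb_p)$ via the Iwasawa formula; that relation is precisely what is needed to make the right-hand side above equal $\tilde\Phi(g)$. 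Hence every translate $\omega_p(t(a),\iota(t(a)))\varphi$ is again a preimage of $\tilde\Phi$ under $\beta$. Since $\varphi$ is Schwartz it is stabilized by an open subgroup of $\TT(\Zb_p)$, so averaging over the finite quotient produces a $\TT(\Zb_p)$-invariant preimage $\bar\varphi$; as a $\Qb$-rational linear combination of Galois conjugates of $\varphi$, it still lies in $\Sc(V_p;\Qip)^{G(\Zb_p)}$.

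The main technical step is the displayed transformation rule. It reduces to a direct manipulation of \eqref{eq:Fexp}: the $H$-action $\omega_p(1,\iota(t(a)))$ on $\varphi$ translates into right translation of the $g$-argument of $F_{\varphi,\vc_p}$ by $\iota(t(a))$, which is then converted into the conjugation $\iota(t(a))^{-1} g\, \iota(t(a))$ using the left-$T_0(\Zb_p)$-invariance in \eqref{eq:Galois}, while the $T$-factor $t(a)$ on the $G$-side contributes the overall Galois twist $\sigma_a$ via \eqref{eq:weilGL2}. Everything else---the Iwasawa extension, the invocation of Proposition \ref{prop:surj}, and the $\TT(\Zb_p)$-averaging with rationality bookkeeping---is then formal.
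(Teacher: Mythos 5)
Your proof is correct and takes essentially the same route as the paper: Iwasawa extension of $\Phi$ to a section of $I_p^{G_0}(\chi_p)$, invocation of Proposition \ref{prop:surj}, and $\TT(\Zb_p)$-averaging justified by the same equivariance chain. One small but real fix: the displayed transformation rule needs the normalization $|D|_p^{-1/2}$ from \eqref{eq:Galois} on both sides, namely $|D|_p^{-1/2}F_{\omega_p(t(a),\iota(t(a)))\varphi,\vc_p}(g) = \sigma_a\lp |D|_p^{-1/2}F_{\varphi,\vc_p}\bigl(\iota(t(a))^{-1}g\,\iota(t(a))\bigr)\rp$, since $\sigma_a$ need not fix $|D|_p^{1/2}$; with this inserted, hypothesis \eqref{eq:Galois1} gives exactly the desired conclusion, matching the chain of equalities in the paper's proof.
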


\begin{proof}
  A continuous function $\Phi$ on $G_0(\Zb_p)$ satisfying \eqref{eq:cond3} can be uniquely extended to a section $\tilde\Phi \in I^{G_0}_p(\chi_p)$ by setting%
  $$
\tilde\Phi(g) := \chi_p(a) \Phi(k)
$$
with $g = m(a)n(b)k$ the Iwasawa decomposition of $g$.
Therefore the first claim is a direct consequence of Proposition \ref{prop:surj}.

For the second claim, we take any $\varphi \in \Sc(V_p; \Qip)^{G(\Zb_p)}$ and observe that
\begin{align*}
  \frac{F_{\omega_p(t, t_0)\varphi_p, \vc_p}(g)}{  |D|_p^{1/2}}
  &=   \frac{ F_{\omega_p(t)\varphi_p, \vc_p}(gt_0)}{|D|_p^{1/2} }
    =   \frac{F_{\sigma_a(\varphi_p), \vc_p}(t^{-1}gt_0)}{|D|_p^{1/2} }
    =  \sigma_a^{}\lp \frac{ F_{\varphi_p, \vc_p}(t^{-1}gt)}{ |D|_p^{1/2}}\rp\\
 & =    \sigma_a(|D|_p^{-1/2} \Phi(t_0^{-1} g t_0)) = |D|_p^{-1/2} \Phi(g)
\end{align*}
for any $(t, t_0) \in \TT(\Zb_p)$ with $t = t(a), t_0 = \iota(t(a))$ and $g \in G_0(\Zb_p)$.
Here we used \eqref{eq:Galois} for the first line.
By averaging $\varphi$ over $\TT(\Zb_p)$, we can suppose that it is $\TT(\Zb_p)$-invariant. This finishes the proof.
\end{proof}

We are now ready to state and prove the local matching result.
This is just the Kudla matching principle \cite{Kudla03} in some sense.
\begin{theorem}
  \label{thm:localmatch}
  For any $\phi_v \in \Sc(W_\alpha(F_v))$ with $v \mid p$, %
   there exists $\varphi_p \in \Sc(V_p; \Qip)^{(G\cdot \TT)(\Zb_p)}$ such that
  \begin{equation}
  \label{eq:matchp}
F_{\varphi_p, \vc_p} = (1 - p^{-2})|D/D_E|_p^{1/2} \prod_{v \mid p}  L(1, \chi_v) \lambda_{\alpha, v}(\phi_v).
\end{equation}
In addition, if $p$ is unramified in $E$ and co-prime to $\alpha$, and  $\phi_v$ is the characteristic function of the maximal lattice in $W_\alpha(F_v)$, then we can choose $\varphi_p$ to be the  characteristic function of the maximal lattice in $V_p$.
\end{theorem}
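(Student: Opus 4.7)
The plan is to reduce the first assertion to Proposition \ref{prop:extend} by verifying its hypotheses for the right-hand side of \eqref{eq:matchp}, and to deduce the supplementary assertion directly from Lemma \ref{lemma:3}. Set
\[
\Phi := (1-p^{-2})\,|D/D_E|_p^{1/2} \prod_{v \mid p} L(1, \chi_v)\, \lambda_{\alpha, v}(\phi_v) \in I^{G_0}_p(\chi_p)
\]
via the identification \eqref{eq:Irel}. By construction, $\Phi$ restricted to $G_0(\Zb_p)$ is continuous and satisfies the transformation rule \eqref{eq:cond3}. To apply Proposition \ref{prop:extend}, I must verify the Galois-equivariance condition \eqref{eq:Galois1}. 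The approach is to unwind $\lambda_{\alpha,v}$ from \eqref{eq:SWsection} and track the composition of conjugation by $t_0 = \iota(t(a)) \in T_0(\Zb_p)$ with the Galois twist $\sigma_a$: through the isomorphism $\iota$ of \eqref{eq:GSpin}, this conjugation corresponds, place by place, to the scaling action of $t(a) \in T$ on each $G(F_v)$, whose interplay with the Weil representation $\omega_\alpha$ and the character $\psi_v$ produces exactly the $\sigma_a$-twist on $\phi_v$ through \eqref{eq:weilGL2}, while the factor $|D|_p^{-1/2}$ absorbs the contribution of the different of $F_p$ to the self-dual measure used in \eqref{eq:Fexp}. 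Once \eqref{eq:Galois1} is confirmed, Proposition \ref{prop:extend} furnishes $\varphi_p \in \Sc(V_p; \Qip)^{(G \cdot \TT)(\Zb_p)}$ such that $F_{\varphi_p,\vc_p}$ and $\Phi$ agree on $G_0(\Zb_p)$; the Iwasawa decomposition $G_0(\Qb_p) = B_0(\Qb_p)\,G_0(\Zb_p)$ then propagates the equality to all of $G_0(\Qb_p)$.

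For the unramified supplementary statement, I would take $\varphi_p = \cha(V_\Zb \otimes \Zb_p)$. Lemma \ref{lemma:3} gives $F_{\varphi_p,\vc_p}(g_p) = (1-p^{-2})\prod_{v\mid p} L(1,\chi_v)$ for all $g_p \in G_0(\Zb_p)$. Since $p$ is unramified in $E$, one has $|D|_p = |D_E|_p = 1$ and thus $|D/D_E|_p^{1/2} = 1$; since $\phi_v$ is the characteristic function of the self-dual lattice in $W_\alpha(F_v)$ and is $G(\Oc_{F_v})$-invariant under $\omega_\alpha$, we have $\lambda_{\alpha,v}(\phi_v)(g_p) = \phi_v(0) = 1$ for $g_p \in G_0(\Zb_p)$. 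Thus the two sides of \eqref{eq:matchp} agree on $G_0(\Zb_p)$, and again by the Iwasawa decomposition, everywhere.

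The main obstacle is the verification of \eqref{eq:Galois1} for the Siegel-Weil section. Although it is ultimately a direct manipulation with Weil representations, it requires careful bookkeeping of how $\iota$ intertwines the $T \times T_0$ action, how the Galois twist $\sigma_a$ acts on the additive character $\psi_v$ and hence on the Schwartz space through \eqref{eq:weilGL2}, and how the measure normalization via the different of $F$ produces the compensating factor $|D|_p^{-1/2}$. Everything else reduces to a direct application of the surjectivity and extension results already established.
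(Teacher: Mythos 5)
Your proposal is correct and follows essentially the same route as the paper: reduce the general case to Proposition \ref{prop:extend} by verifying the Galois-equivariance condition \eqref{eq:Galois1} for the Siegel--Weil section via \eqref{eq:weilGL2}, and handle the unramified supplementary case by direct evaluation using Lemma \ref{lemma:3}. One small point worth sharpening: in the verification of \eqref{eq:Galois1}, the rationality of the prefactor $|D|_p^{-1/2}(1-p^{-2})|D/D_E|_p^{1/2}\prod_v L(1,\chi_v)$ comes from the fact that $\sqrt{D_E}\in\Qb^\times$ (Remark \ref{rmk:Lambda1}), not merely from the measure normalization; this is what makes the Galois action trivial on the constant so that only the $\sigma_a$-twist on $\phi_v$ (which vanishes because $\phi_v$ is $\Qb$-valued) remains.
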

\begin{proof}
  Suppose $p$ and $\alpha$ are co-prime, $E_p/\Qb_p$ is unramified and $\phi_v = \cha(\Oc_{E_v}), \varphi_p = \cha(\Oc_{F_p} \times \Zb_p^2 \times \Oc_{F_p})$.
  Then it is easy to check that $F_{\varphi_p, \vc_p}$ and $\prod_{v \mid p} \lambda_{\alpha, v}(\phi_v)$ are both right $G(\Oc_{F_p})$-invariant.
  Since they are both in $\prod_{v \mid p} I(0, \chi_v)$, we only need to check that
  $$
F_{\varphi_p, \vc_p}(1) = (1-p^{-2})\prod_{v \mid p} L(1, \chi_v) \lambda_{\alpha, v}(\phi_v)(1)
  $$
  by the Iwasawa decomposition of $G(F_p)$.
This is given precisely by Lemma \ref{lemma:3}, and proves \eqref{eq:matchp} for all but finitely many places.

When $\phi_v$ is $\Qb$-valued, we can use \eqref{eq:weilGL2} to check that
\begin{align*}
  \sigma_a^{}\lp  \prod_{v \mid p} \lambda_{\alpha, v}(\phi_v)(t^{-1}gt)\rp
  &=   \prod_{v \mid p}
    \sigma_a^{} \lp\omega_{\alpha, v}(t^{-1}gt)(\phi_v)(0)\rp
   =  \prod_{v \mid p}
    \lp\omega_{\alpha, v}(g)(\sigma_a(\phi_v))(0)\rp\\
 &  =  \prod_{v \mid p}
    \lp\omega_{\alpha, v}(g)(\phi_v)(0)\rp
  =   \prod_{v \mid p}
    \lambda_{\alpha, v}(\phi_v)(g)
\end{align*}
for any $(t, t_0) \in \TT(\Zb_p)$ with $t = t(a), t_0 = \iota(t(a))$ and $g \in G_0(\Zb_p)$.
Proposition \ref{prop:extend} combined with Remark \ref{rmk:Lambda1} then completes the proof.
\end{proof}

Finally, we record the two  local calculation lemmas used in proving Proposition \ref{prop:surj}.

\begin{lemma} \label{Fourier}
  Suppose $F_p/\Q_p$ is non-split with valuation ring $\Oc_p$, uniformizer  $\varpi$, residue field size $q$, and a non-trivial additive character $\psi$.
  For a   character  $\vc$    of $H_1(\Qb_p) = F_p^1 \subset\Oc_p^\times$, let
$$
n(\psi) :=\min\{n: \psi(\varpi^n \Oc_p) =1\},~
n(\vc) :=\min\{n\ge 0: \vc(K_n) =1\},
$$
be the conductors of $\psi$ and $\vc$ respectively,
where  $K_n:=F_p^1 \cap (1+ \varpi^n \Oc_p)$.
 Then
$$
\int_{F_p^1} \vc(x) \psi(mx) dx \ne 0
$$
for some $m \in \Oc_p^\times$ if and only if $n(\vc) \le n(\psi)$.
\end{lemma}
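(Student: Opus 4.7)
The plan is to use Fourier analysis on the compact abelian group $F_p^1$ and its finite quotients. The key starting observation is that for any $m \in \Oc_p^\times$, the function $\psi_m : F_p^1 \to \Cb$, $x \mapsto \psi(mx)$, is invariant under multiplication by $K_{n(\psi)}$: for $u = 1 + \varpi^{n(\psi)}\alpha \in K_{n(\psi)}$ and $x \in F_p^1$, one has
\[
\psi_m(xu) \;=\; \psi(mx)\,\psi\bigl(mx \cdot \varpi^{n(\psi)}\alpha\bigr) \;=\; \psi_m(x),
\]
because $mx \in \Oc_p^\times$ and $\psi$ is trivial on $\varpi^{n(\psi)}\Oc_p$. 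Consequently $\psi_m$ descends to the finite quotient $G := F_p^1/K_{n(\psi)}$, whose Pontryagin dual consists precisely of those $\chi$ on $F_p^1$ with $n(\chi) \le n(\psi)$. The ``only if'' direction is then immediate: if $n(\vc) > n(\psi)$, then $\vc$ does not factor through $G$, and integrating over cosets of $K_{n(\psi)}$ together with character orthogonality on $K_{n(\psi)}$ forces $\int_{F_p^1}\vc(x)\psi_m(x)\,dx = 0$ for every $m \in \Oc_p^\times$.

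For the converse, assume $n(\vc) \le n(\psi)$ so that $\vc$ descends to a character of $G$. I introduce the finite local ring $R := \Oc_p/\varpi^{n(\psi)}\Oc_p$ together with the non-degenerate additive character $\tilde\psi : R \to \Cb^\times$ induced by $\psi$. The reduction map $\pi : F_p^1 \to R^\times$ has kernel $K_{n(\psi)}$, hence descends to an injective group homomorphism $\pi : G \hookrightarrow R^\times$ with image $G' := \pi(G)$, and moreover $\psi_m(x) = \tilde\psi(\pi(m)\pi(x))$ for $m \in \Oc_p^\times$. Extending $\vc$ by zero from $G'$ to $R$ yields a function $F : R \to \Cb$. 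By Fourier duality on the additive group $R$, the desired existence of $m \in \Oc_p^\times$ with $\int_{F_p^1}\vc(x)\psi(mx)\,dx \ne 0$ is equivalent to $\widehat F$ being nonzero somewhere on $R^\times$, which is dually the assertion that $F$ is \emph{not} invariant under translation by the ideal $M := \varpi^{n(\psi)-1}R$ (the annihilator of $\varpi R$ under $\tilde\psi$).

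The main step is to verify this non-invariance. Suppose for contradiction that $F$ were $M$-translation-invariant; then for every $r \in G'$ and $m \in M$ one would need $r + m \in G'$. Expanding $(r+m)\overline{(r+m)} = 1 + \tr(\bar r m) + m\bar m$ in $R$, the term $m\bar m$ lies in $\varpi^{2(n(\psi)-1)}R$, which vanishes once $n(\psi) \ge 2$ (the cases $n(\psi) \in \{0,1\}$ reduce to a direct character-sum calculation on the residue field). Membership $r+m \in G'$ is then equivalent to $\tr(\bar r m) = 0$ in $R$, a $\Z_p$-linear condition on $m \in M$. Since the trace pairing $(r,m) \mapsto \tr(\bar r m)$ is non-degenerate on $\Oc_p$ and $\bar r \in R^\times$, its restriction to $M$ is a nonzero linear functional; hence one can choose $m \in M$ with $\tr(\bar r m) \ne 0$, forcing $r + m \notin G'$ and therefore $F(r+m) = 0 \ne F(r)$, a contradiction. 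The main obstacle is the uniform handling of the ramified and unramified cases in this last geometric step, which requires keeping track of how $\bar\varpi$ interacts with the filtration $\varpi^k R$; however, the underlying principle—that $G'$ is a one-dimensional Zariski-closed subvariety of $R^\times$ cut out by the norm and thus cannot absorb a nontrivial additive translation—applies uniformly.
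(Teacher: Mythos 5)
Your proof takes a genuinely different route from the paper's. The paper sets $f(m)$ equal to the integral for $m\in\Oc_p^\times$ and zero otherwise, computes the Fourier transform $\hat f(m)=\int_{F_p^1}\vc(x)\bigl(\int_{\Oc_p^\times}\psi(n(x-m))\,dn\bigr)dx$ explicitly in three cases according to whether $m$ is close to $F_p^1$, and then reads off the answer. You instead pass to the finite ring $R=\Oc_p/\varpi^{n(\psi)}\Oc_p$ and reduce the ``if'' direction to the non-invariance of the extension $F$ of $\vc$ by zero from $G'=\pi(F_p^1)\subset R^\times$ under additive translation by $M=\varpi^{n(\psi)-1}R$. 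Your ``only if'' direction is correct, and so is this reduction.

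The gap is in the final step: the claim that some $m\in M$ has $\tr(\bar r m)\ne 0$ in $R$, which you base on ``the trace pairing is non-degenerate on $\Oc_p$.'' For ramified $F_p/\Qb_p$ that is false -- the discriminant of the trace form on $\Oc_p$ is the non-trivial different $\df$ -- and the relevant consequence is that $\tr(\varpi^{n(\psi)-1}\Oc_p)\subseteq\Zb_p\cap\varpi^{n(\psi)}\Oc_p$ whenever $n(\psi)$ is even. Concretely, for $F_p=\Qb_p(\sqrt p)$ with $p$ odd and $n(\psi)=2$ one has $\tr(\varpi\Oc_p)=2p\Zb_p\subseteq p\Zb_p=\varpi^2\Oc_p\cap\Zb_p$, so $\tr(\bar r m)=0$ in $R$ for every $m\in M$, and in fact $G'=\{\pm1\}+M$ is $M$-invariant, so your contradiction never materializes -- precisely the point at which ``the underlying principle applies uniformly'' is doing unjustified work. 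There is also a logical hole independent of this: even when $G'$ is $M$-invariant, $F$ could still fail to be $M$-invariant because $\vc$ is nonconstant on an $M$-coset $r+M\subseteq G'$; your argument never considers this, and chasing it shows it rescues the statement exactly when $n(\vc)=n(\psi)$. In the remaining regime ($F_p$ ramified, $n(\psi)\ge2$ even, $n(\vc)<n(\psi)$) neither mechanism yields a contradiction and a direct computation gives $\int_{F_p^1}\vc(x)\psi(mx)\,dx=0$ for all $m\in\Oc_p^\times$. (Note that the paper's own proof has a parallel gap here: in its ``case 1'' it tacitly assumes $\vol(K_{n(\psi)})-q^{-1}\vol(K_{n(\psi)-1})\ne 0$, which fails exactly when $[K_{n(\psi)-1}:K_{n(\psi)}]=q$, and in that situation its ``case 2'' set of $m$'s is empty.)
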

\begin{proof}
  Let
$$
f(m) =\begin{cases}
     \int_{F_p^1} \vc(x) \psi(mx) dx &\fff m \in \Oc_p^\times
     \\
      0  &\hbox{otherwise} .
      \end{cases}
$$
Then $f \in \Sc(F_p)$ and its Fourier transformation with respect to $\psi$ is
\begin{align*}
\hat f(m) &= \int_{F_p} f(n) \psi(-nm) dn
=\int_{F_p^1} \vc(x) \int_{\Oc_p^\times} \psi(n(x-m) dn  dx
 \\
 &=\int_{F_p^1} \vc(x) (\cha(m+\varpi^{n(\psi)}\Oc_p)(x)  -q^{-1}\cha(m+\varpi^{n(\psi)-1}\Oc_p)(x) ) dx.
\end{align*}
First assume that there is some $h_0\in F_p^1$ such that $h_0-m \in \varpi^{n(\psi)}\Oc_p$.  Then
\begin{align*}
\hat f(m) &= \vc(h_0) (\int_{K_{n(\psi)}} \vc(x) dx -q^{-1}  \int_{K_{n(\psi)-1}} \vc(x) dx)
\\
 &=\begin{cases}
   0 &\fff n(\vc) > n(\psi),
   \\
   \vc(h_0)  \vol(K_{n(\psi)}) &\fff n(\vc) = n(\psi),
   \\
   \vc(h_0)(\vol(K_{n(\psi)}) - q^{-1} K_{n(\psi)-1}) &\fff  n(\vc) < n(\psi).
   \end{cases}
\end{align*}
Next we assume that there no $h_0\in F_p^1$ such that $h_0-m \in \varpi^{n(\psi)}\Oc_p$ but some $h_0 \in F_p^1$ with $h_0-m \in \varpi^{n(\psi)-1}\Oc_p$. Then
$$
\hat f(m) = \begin{cases}
   0 &\fff n(\vc) \ge  n(\psi),
   \\
   -q^{-1} \vc(h_0)\vol(K_{n(\psi)-1}) &\fff n(\vc) < n(\psi).
   \end{cases}
$$
Finally, if there is no $h_0 \in F_p^1$ with $h_0-m \in \varpi^{n(\psi)-1}\Oc_p$, then $\hat f (m) =0$.  Now the lemma is clear.
\end{proof}

\begin{lemma}
  \label{lemma:calc}
  When $\chi_v$ is trivial, there exists $\phi \in \Sc(V_p)$ such that $F_{\phi, \vc_p}$ is non-trivial.
  When $\chi_v$ is non-trivial,
    then for any $\e = (\e_v)_{v \mid p}$ with $\e_v = \pm 1$
  there exists $\phi^\e \in \Sc(V_p)$ and $m^\e \in F_p^\times$ such that $W_{m^\e}(\phi^\e) \neq 0$ and $m^\e = (m^{\e_v}_v)_{v \mid p}$ with $\chi_v(m^{\e_v}_v) = \e_v$.
\end{lemma}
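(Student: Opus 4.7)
The plan is to prove this local non-vanishing result---the crucial input to Proposition \ref{prop:surj}---by computing the Whittaker coefficient $W_m(\varphi)$ in \eqref{eq:Wm} explicitly for a tensor-product test function $\varphi = \varphi_0 \otimes \varphi_1 \in \Sc(V_{0,p}) \otimes \Sc(V_{1,p}) = \Sc(V_p)$ and reducing it to an integral of the form treated in Lemma \ref{Fourier}.

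First I would unfold $F_{\varphi, \vc_p}(wn(b))$ using the integral formula \eqref{eq:Fexp}, evaluating the isometric action of $(wn(b))^{-1} \in G_0 \cong G_F$ on the $V_0$-coordinates via \eqref{eq:act}. Concretely, for $(x, 0, \lambda, \lambda) \in V^+(\Qb_p)$ a direct computation yields
\begin{equation*}
(wn(b))^{-1}(x, 0, \lambda, \lambda) = (\tr_{F_p/\Qb_p}(b\lambda) + \Nm_{F_p/\Qb_p}(b)\,x,\; x,\; -\lambda'-bx,\; \lambda),
\end{equation*}
and the $H_1$-action further rescales the last coordinate by $h_1^{-1}$. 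Substituting into \eqref{eq:Wm} and performing the $b$-integration against $\psi_p(-\tr(mb))$ produces a Dirac-type constraint relating $(b,x,\lambda)$, which localises the $\varphi_0$-integration to a codimension-$[F:\Qb]$ fibre. Choosing $\varphi_0 = \cha(U_0)$ for a small compact-open $U_0 \subset V_{0, p}$ meeting this fibre transversally in a single residue class, the $x$- and $\lambda$-integrations collapse to a non-zero constant $C(\varphi_0)$, and we obtain the reduction
\begin{equation*}
W_m(\varphi) = C(\varphi_0)\int_{F_p^1} \vc_p(h_1)\,\widetilde{\varphi}_1(h_1^{-1} y_m)\,dh_1,
\end{equation*}
where $y_m \in V_{1,p} = F_p$ is explicit with $\Nm(y_m)$ determined by $m$, and $\widetilde{\varphi}_1$ is a partial Fourier transform of $\varphi_1$ in the direction determined by the $w$-action.

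With this reduction, both cases follow by judicious choice of $\widetilde{\varphi}_1$. When $\chi_v$ is trivial, so that $\vc_p$ is trivial on $F_p^1$, taking $\widetilde{\varphi}_1 = \cha(U_1)$ for a small $U_1 \subset F_p$ meeting the $F_p^1$-orbit of $y_m$ in positive measure gives a strictly positive integral, so $F_{\phi, \vc_p} \neq 0$. When $\chi_v$ is non-trivial, the integral factorises as $\prod_{v\mid p} \int_{F_v^1}\vc_v(h)\widetilde{\varphi}_{1,v}(h^{-1}y_{m,v})\,dh$; choosing $\widetilde{\varphi}_{1,v}$ to be the characteristic function of a small additive translate $y_{m,v} + \varpi_v^N \Oc_{F_v}$ converts each local factor, after a mild unfolding, precisely into the integral $\int_{F_v^1}\vc_v(h)\psi_v(m'_v h)\,dh$ addressed by Lemma \ref{Fourier}, which is non-zero for $m'_v \in \Oc_{F_v}^\times$ once the conductor condition $n(\vc_v) \le n(\psi_v)$ is arranged (by a preliminary twist of $\varphi_1$ by $m(a)$ with $a \in F_v^\times$ if necessary). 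The split case of $p$ in $F$ reduces to a direct Tate-type computation on $\Qb_p^\times \cong F_p^1$ and is similarly non-vanishing.

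The sign prescription $\chi_v(m^{\e_v}_v) = \e_v$ is then arranged by translating the support of $\widetilde\varphi_{1, v}$ into the appropriate coset of $\Nm_{E_v/F_v}(E_v^\times) \subset F_v^\times$; since $\vc_v$ distinguishes these two cosets, both sign patterns are realisable, and the factorisation over $v \mid p$ lets us do this independently at each place. The main obstacle is tracking this bookkeeping at a ramified place $v \mid p$, where the conductor inequality in Lemma \ref{Fourier} and the sign condition leave little slack, but choosing $m'_v$ of minimal valuation in each $\Nm_{E_v/F_v}(E_v^\times)$-coset of $F_v^\times$ resolves this simultaneously.
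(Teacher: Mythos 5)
Your overall strategy mirrors the paper's: unfold $W_m$ for $\varphi = \varphi_0 \otimes \varphi_1$, pin down the $V_0$-integral by choosing $\varphi_0$ to be the characteristic function of a small box, absorb what remains into a Gauss-sum-type integral over $H_1(\Qb_p) = F_p^1$, invoke Lemma \ref{Fourier}, and then move the support of $\varphi_1$ to land in a prescribed norm coset. The explicit action formula $(wn(b))^{-1}(x,0,\lambda,\lambda) = (\tr(b\lambda) + \Nm(b)x,\, x,\, -\lambda'-bx,\, \lambda)$ is what the paper uses as well, and the inert and ramified cases of your argument line up with the paper's after one minor repair: the conductor inequality $n(\vc_v) \le n(\psi_v)$ is arranged in the paper not by twisting $\varphi_1$ by $m(a)$, but by choosing the exponent $n$ in $\varphi_1 = \cha(\beta + \varpi^n\Oc_p)$ large enough; either mechanism works.

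However, there is a genuine gap in the split case. You write that the reduced integral ``factorises as $\prod_{v\mid p} \int_{F_v^1}\vc_v(h)\widetilde{\varphi}_{1,v}(h^{-1}y_{m,v})\,dh$,'' and you lean on this factorization to pick the signs $\e_v$ independently at $v$ and $v'$. But when $p = vv'$ splits, $H_1(\Qb_p) = F_p^1 = \{(\alpha,\alpha^{-1}) : \alpha \in \Qb_p^\times\} \cong \Qb_p^\times$ is a single one-dimensional torus, not the product of the norm-one tori at the two places (those would be $F_v^1 \cong \{\pm 1\}$, which is not what one is integrating over). So the Gauss-sum integral does not split over $v \mid p$, and the ``do it independently at each place'' step collapses. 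The paper handles the split case with a separate direct computation: writing $\phi_1 = \phi_{1,1}\otimes\phi_{1,2}$, it evaluates
\begin{equation*}
W_{(m_1,m_2)}(\phi_0\otimes\phi_1) = C\int_{\Q_p^\times}\eta(x)\,\psi(m_2 x + m_1 x^{-1})\,\cha(\cdots)\,\frac{dx}{|x|}
\end{equation*}
and shows non-vanishing for each of the four sign patterns by choosing $(m_1,m_2)$ with the right valuations and, for the unramified case with $(m_1,m_2)$ split across the two cosets, switching to the test function $\phi_1' = \cha(1+p^n\Z_p,\, p+p^n\Z_p)$. That extra case-by-case work is precisely what your factorization claim was meant to avoid, so you would need to supply a replacement argument there.
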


\begin{proof}
  When $\chi_v$ is trivial, the character $\vc_p$ of $H_1(\Qb_p)$ is also trivial. Suppose $\phi$ is the characteristic function of the maximal lattice in $V_p$, then integral in \eqref{eq:Fexp} is positive at $g = 1$, which means $F_{\phi, \vc_p}$ is non-trivial.

  Suppose now that $\chi_v$, hence $\vc_p$, is non-trivial.
  We can suppose that $n(\psi) = 0$.
When $\phi = \phi_0 \otimes \phi_1$ with $\phi_i \in \Sc(V_{i, p})$, we can apply \eqref{eq:Fexp} to write
  \begin{equation*}
      W_m(\phi)
 = \int_{F_p\times H_1(\Qb_p) \times F_p \times \Q_p}
 \phi_0((wn(b))^{-1}\cdot \kzxz {x} {\lambda} {\lambda'} {0})
 \phi_1(h_1^{-1} \lambda)
 \psi_p(-mb) \vc_p(h_1) dx\, d\lambda\, dh_1 \, db.
\end{equation*}
We first assume  that $p$ is non-split and use the  notation in Lemma \ref{Fourier}.
In this case, there is a unique place $v$ of $F$ above $p$, and $\e = \pm 1$.
For $n \ge \max\{n(\vc)+1, 1\}$ and  $\beta \in \Oc_p^\times$, let
$
\phi_1 = \phi_{1, \beta} = \cha(\beta+p^n \OO_p)
$
and
$$
\phi_0=\cha \kzxz {\Z_p} {\varpi^n\OO_p} {\varpi^n\OO_p} {1+ p^n\Z_p}.
$$
Then
\begin{align*}
&W_m(\phi_0 \otimes \phi_1)
=\int_{F_p\times F_p^1 \times F_p \times \Q_p}
\phi_0 \kzxz {b\lambda  + b'\lambda'  + b b' x} {-\lambda'-b x} {-\lambda-b'x} {x}
 \phi_1(h^{-1} \lambda)
                 \vc(h)\psi(-m b) dx\, d\lambda\, dh\, db\\
  &=\int_{F_p^1} c(h) \vc(h) dh,
\end{align*}
where
\begin{align*}
c(h)&:=\int_{ \beta h + \varpi^n \OO_p } \int_{-\lambda'+\varpi^n \OO_{\mathfrak p}}  \int_{1+p^n \Z_p} \psi(-m b) dx\, db\, d\lambda\\
&=p^{-n(1+f)}\cha(\varpi^{-n} \OO_p)(m) \int_{\beta h+ \varpi^n \OO_p} \psi(m\lambda')  d\lambda
=C \cha(\varpi^{-n} \OO_p)(m) \psi(m\beta' h^{-1})
\end{align*}
for some nonzero constant $C$. Here $f=1$ or $2$ depending on whether $F/\Q$ is ramified or inert at $p$.
Using $\vc(h) = \vc(h^{-1})$, we have
\begin{equation} \label{eq:Whittaker}
W_m(\phi_0 \otimes \phi_1) = C \cha(\varpi^{-n} \OO_p)(m) \int_{F_p^1} \vc(h) \psi(m\beta' h) dh.
\end{equation}
If  $F_p/\Qb_p$ is inert, then $\chi_p$ is ramified and non-trivial when restricted to $\Oc_p^\times$.
By Lemma \ref{Fourier}, we can find $m_0 \in \varpi^{ - n} \Oc_p^\times$ such that
$$
W_{m_0/\beta'}(  \phi_0 \otimes \phi_{1, \beta})
= C \int_{F_p^1} \vc(h) \psi(m_0 h) dh
\ne 0
$$
as $ n(\psi(\varpi^{ - n} \cdot)) = n  \ge n(\vc)$.
We can choose $\beta = \beta^\pm$ such that $\chi(m_0/(\beta^\pm)') = \pm 1$.
Then taking $\phi^{\pm} = \phi_{0} \otimes \phi_{1, \beta^\pm}$ proves the Lemma.
If $F_p/Q_p$ is ramified, then $E_v/F_v$ is inert and $\chi_v(\varpi) = -1, \chi_v \mid_{\Oc_p^\times} = 1$.
Again by Lemma \ref{Fourier}, we can find  $m_j \in \varpi^{- n + j} \Oc_p^\times$ for $j = 0, 1$ such that
$$
W_{m_j}(  \phi_0 \otimes \phi_{1, 1})
= C \int_{F_p^1} \vc(h) \psi(m_j h) dh
\ne 0
$$
as $ n(\psi(m_j \cdot)) = n - j  \ge n(\vc)$.
Therefore, $\phi^\pm = \phi_0 \otimes \phi_{1, 1}$ satisfies the Lemma.

Finally, we come to the case when $p = v_1v_2$ splits and  $\eta := \chi_{v_1} = \chi_{v_2}$ is non-trivial.
In this case,
$F_p=F_{v_1} \times F_{v_2} =\Qb_p^2$ and $\eta = \vc_p$ is a character of  $\Qb_p^\times \cong H_1(\Qb_p)$.
For $m \in F_p$, we write $m=(m_1, m_2)$ with $m_j \in \Qb_p$ and
\begin{align*}
  V_{0, p} &\cong M_2(\Q_p) \\
  \smat{a}{\lambda}{\lambda'}{b} &\mapsto
  \smat{a}{\lambda_1}{\lambda_2}{b}
\end{align*}
So we take $\phi_1 = \phi_{1, 1} \otimes \phi_{1,2}$ with $\phi_{1, j} \in \Sc(\Qb_p)$ and $\phi_{0} \in \Sc(M_2(\Q_p))$.
Simple calculation gives us
\begin{align*}
W_{m}(\phi_0 \otimes \phi_1)
=
  & \int_{\Q_p^2 \times \Q_p^\times \times \Q_p^2 \times \Q_p}
    \phi_{1, 1}( h^{-1}\lambda_1) \phi_{1, 2}(h \lambda_2)
    \phi_0 \kzxz { b_1 \lambda_1 + b_2 \lambda_2 + b_1 b_2 x} {-\lambda_2  - b_1 x} {-\lambda_1 - b_2 x} {x}
 \\
  &\quad \cdot \eta(h) \psi(-m_1b_1 -m_2 b_2) dx \, d\lambda_1\, d\lambda_2 \, {d^\times h} \, db_1 \, db_2.
\end{align*}
Taking $\phi_{1, j} =\cha(1+p^n\Z_p)$, $n \ge\max\{ 1, n(\eta)\}$,  and
$$
\phi_{0} =\cha (\kzxz {\Z_p} {p^n\Z_p} {p^n\Z_p} {1+ p^n\Z_p}),
$$
The same calculation as above gives
\begin{align*}
W_{m}(\phi_1 \otimes \phi_0)
  &= C \int_{\Q_p^\times} \eta(x) \psi(m_2 x + m_1 x^{-1})  \cha( p^{-n}\Z_p^2)(m_1, m_2)
   \cha( p^{-n}\Z_p^2)(m_1 x^{-1}, m_2 x)   \frac{dx}{|x|}
\end{align*}
for some nonzero constant $C$.

When  $\eta$ is ramified, we take $n=n(\rho)$, $m_1=p^l$  with  $l \ge n$ and $m_2 =m_0 p^{-n}\in  p^{-n}\Z_p^\times$ and obtain (write $o(x) =\hbox{ord}_{p}x$)
\begin{align*}
W_{m}(\phi_1 \otimes \phi_0)
  &= C \int_{0 \le o(x) \le n+l} \eta(x) \psi(m_2 x) \psi(m_1 x^{-1}) \frac{dx}{|x|}
  \\
  &= C \sum_{ 0 \le i \le n}\int_{p^i \Z_p^\times} \eta(x) \psi(m_2 x) \frac{dx}{|x|}
    + C \sum_{1 \le i \le l} \int_{ p^{-n-i} \Z_p^\times} \eta(x)^{-1} \psi( m_1 x)  \frac{dx}{|x|}
\\
  &=C \left[\eta(m_0)^{-1} \int_{\Z_p^\times}\eta(x) \psi(p^{-n} x) dx +\eta(p)^{n+l} \int_{\Z_p^\times} \eta^{-1}(x) \psi(p^{-n} x) dx\right]
  \\
   &\ne 0,
\end{align*}
for some $m_2$.

When $\eta$ is unramified, we have $n(\eta) = 0$ and take $n = 1$.
For $m_j \in  p^{-1} \Z_p^\times$, $j=1, 2$, we have
\begin{align*}
&W_{(m_1, m_2)}(\phi_0 \otimes \phi_1)
= C \int_{\Z_p^\times} \psi(m_2 x + m_1 x^{-1}) dx.
\end{align*}
If we sum this over $m_j \in p^{-1}\Zb_p^\times$, then the result is non-zero.
So there exists $m_j \in p^{-1}\Zb_p^\times$ such that $W_{(m_1, m_2)}(\phi_0 \otimes \phi_1) \neq 0$.
For  $m_j \in \Z_p^\times$, $j=1, 2$, we have
\begin{align*}
W_{(m_1, m_2)}(\phi_0 \otimes \phi_1)
&= C \left[ \int_{\Z_p^\times} dx + \eta(p)  \int_{p\Z_P^\times} \psi(m_1 x^{-1}) \frac{dx}{|x|}
   + \eta(p)^{-1} \int_{p^{-1} \Z_p^\times} \psi(m_2 x) \frac{dx}{|x|}\right]
   \\
   &= C(1-p^{-1} +p^{-1} + p^{-1}) \ne 0,
   \end{align*}
   as $\eta(p) =-1$.
Replacing $\phi_1$ by $\phi_1'=\cha(1+p^n \Z_p, p + p^n \Z_p)$, the same calculation gives
$$
W_{(m_1, m_2)}(\phi_0 \otimes \phi_1')\ne 0
$$
when $m_j \in \Z_p^\times$ and $m_{3-j} \in p^{-1}\Z_p^\times$.
This completes the proof.
\end{proof}

\subsection{Matching Local Sections II}
\label{subsec:local-match-II}
In order to give the factorization result, we also need a matching result involving the following local sections with the $s$ parameter.
When $p = vv'$ splits in $F$, we define a slightly modified section
\begin{equation}
  \label{eq:Fvs}
  F_{\varphi_p, \vc_p, v, s}(g_p) := \int_{H_1(\Qb_p)}\Phi_0(\Fcr(\varphi_p))(g_p, h_1) \vc(h_1) |h_1|_v^s dh_1.
\end{equation}
This function  on $G_0(\Qb_p)$ depends on the choice of $v \mid p$, and has the following property.
\begin{lemma}
  \label{lemma:Is}
  When $|s| < 1$, the integral in \eqref{eq:Fvs} converges absolutely and defines a rational function in $p^{s}$ defined over $\Qip(\varphi_p)$.
  Furthermore, when restricted to the first (resp.\ second) components of $G_0(\Qb_p) \cong G(F_p) \cong G(F_v) \times G(F_{v'})$,
  it defines  a section in $I(s, \chi_v)$ (resp.\ $I(-s, \chi_{v'})$).
\end{lemma}
\begin{proof}
  For the first claim, one can suppose $\omega(g_p)\Fcr_1(\varphi_p)$ is the characteristic function of
  $$
  C_1 \times (p^{a_1}\Zb_p + r_1) \times (p^{a_2}\Zb_p + r_2) \times (p^{b_1}\Zb_p + t_1) \times (p^{b_2}\Zb_p + t_2),
  $$
  with $C_1 \subset \Qb_p^2$ a compact subset and $a_j, b_j \in \Zb, r_j, t_j \in \Qb_p$.
  As  in Lemma \ref{lemma:3}, the integral defining $F_{\varphi_p, \vc_p, v, s}(g_p)$ is given by
  \begin{align*}
    F_{\varphi_p, \vc_p, v, s}(g_p)
    &=     \int_{\Qb_p^\times}\int_{\Qb_p^4}(\omega(g_p)\Fcr_1(\varphi_p))(0, 0, \lambda_1, \lambda_2, \alpha \lambda_1, \alpha^{-1} \lambda_2) d\lambda_1 d\lambda_2
      \vc((\alpha, \alpha^{-1})) |\alpha|_p^s d^\times \alpha.
  \end{align*}
  Suppose $(0, 0) \in C_1$, otherwise the integral vanishes identically.
  When $|\alpha|_p \ge p^N$ for $N$ sufficiently large, we have $|\alpha^{-1} \lambda_2|_p$ very small for all $\lambda_2 \in p^{a_2}\Zb_p + r_2$. Therefore,
  when $t_2 \not\in p^{b_2}\Zb_p$ or $r_1 \not\in p^{a_1}\Zb_p$, the integral over those $\alpha$ with $|\alpha|_p \ge p^N$ is zero.
  When $t_2 \in p^{b_2}\Zb_p$ and $r_1 \in p^{a_1}\Zb_p$, we have
  \begin{align*}
    &\quad \int_{|\alpha|_p \ge p^N}\int_{\Qb_p^4}(\omega(g_p)\Fcr_1(\varphi_p))(0, 0, \lambda_1, \lambda_2, \alpha \lambda_1, \alpha^{-1} \lambda_2) d\lambda_1 d\lambda_2
      \vc((\alpha, \alpha^{-1})) |\alpha|_p^s d^\times \alpha\\
    &=\vol(C_1)
      \sum_{n \ge N} \vc((p, p^{-1}))^{-n} p^{ns}
      \int_{\Zb_p^\times}  \vc((a, a^{-1}))\\
&\quad \times      \vol(p^{a_1} \Zb_p \cap p^n(p^{b_1} + a^{-1} t_1))
      \vol((p^{a_2} \Zb_p + r_2) \cap p^{-n+b_2}\Zb_p)      d^\times a\\
    &=\vol(C_1) \vol(p^{a_2} \Zb_p + r_2)  \vol(p^{b_1} + a^{-1} t_1)
\int_{\Zb_p^\times}  \vc((a, a^{-1}))           d^\times a
      \sum_{n \ge N} (\vc((p, p^{-1}))^{} p^{-1+s})^n    ,
  \end{align*}
  which converges  when $|s| < 1$ and defines a rational function in $p^s$.
  The same argument takes care of the case when $|\alpha|_p$ is sufficiently small. This proves the first claim.



  %
  For the second claim,  it is clear from the definition that $F_{\varphi_p, \varrho_p, v, s}$ is locally constant as $\varphi_p$ is a Schwartz function.
For the transformation property, we have
  \begin{equation}
    \label{eq:ma}
    (    \omega(m(\alpha)g, h) \Fcr(\varphi_p))(0)
    = |\alpha \alpha'|_v (\omega(g, h (\alpha'/\alpha)) \Fcr(\varphi))(0)
  \end{equation}
  for $\alpha = (\alpha_1, \alpha_2) \in F_p^\times = (\Qb_p^\times)^2$.
  A change of variable plus $\vc(\alpha/\alpha') =  \chi_v(\alpha) \chi_{v'}(\alpha)$ and $|\alpha|_v = |\alpha_1|_p, |\alpha'|_v = |\alpha_2|_p$ then finishes the proof.
\end{proof}
Now, we will extend the matching result in Theorem \ref{thm:localmatch} to standard sections.
\begin{theorem}
  \label{thm:localmatchs}
  In the setting of Theorem \ref{thm:localmatch}, suppose $p = vv'$ splits and let $\lambda_{\alpha, v, s}(\phi_v) \in I_v(s, \chi_v)$ denote the standard section associated to $\lambda_{\alpha, v}(\phi_v) \in I_v(0, \chi_v)$ for $\phi_v \in \Sc(W_\alpha(F_v))$.
For any $r \in \Nb$, there exists  $\varphi^{}_p \in \Sc(V_p; \Qip)^{(G\cdot \TT)(\Zb_p)}$ such that
  \begin{equation}
    \label{eq:matchps}
    F_{\varphi^{}_p, \vc_p, v, s}(g) = \mathcal{L}(s) \lambda_{\alpha, v, s}(\phi_v)(g_v) \lambda_{\alpha, v, -s}(\phi_{v'})(g_{v'}) + O(s^r).
  \end{equation}
  for all $g = (g_v, g_{v'}) \in G_0(\Qb_p)$, where $\mathcal{L}(s) := (1 - p^{-2}) |D_E|_p^{-1}  L(1+s, \chi_v) L(1-s, \chi_{v'})$.
  \end{theorem}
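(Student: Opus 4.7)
The plan is to proceed by induction on $r$, with the base case $r = 1$ being Theorem \ref{thm:localmatch}. For the induction step, given $\varphi_p^{(r)} \in X := \Sc(V_p; \Qip)^{(G \cdot \TT)(\Zb_p)}$ achieving the matching to order $O(s^r)$, set
\begin{equation*}
D(g, s) := F_{\varphi_p^{(r)}, \vc_p, v, s}(g) - \mathcal{L}(s) \lambda_{\alpha, v, s}(\phi_v)(g_v) \lambda_{\alpha, v, -s}(\phi_{v'})(g_{v'}) = s^r E(g) + O(s^{r+1}).
\end{equation*}
The goal is to produce a correction $\tilde\varphi \in X$ with $F_{\tilde\varphi, \vc_p, v, s}(g) = s^r E(g) + O(s^{r+1})$, so that $\varphi_p^{(r+1)} := \varphi_p^{(r)} - \tilde\varphi$ improves the matching by one order.

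Two structural facts about $E$ need verification. First, applying Leibniz to the section transformation $D(nm(a)g, s) = \chi_v(a_v)\chi_{v'}(a_{v'})|a_v|^{1+s}|a_{v'}|^{1-s} D(g, s)$ combined with $\partial_s^j D(k, s)|_{s=0} = 0$ for $j < r$ (so only the $j = r$ Leibniz term survives) shows $E \in I^{G_0}_p(\chi_p)$. Second, a direct Taylor expansion yields the clean factorization $E(g) = (\log p)^r \tilde E(g)$ with $\tilde E$ a $\Qip$-valued section: on the LHS side, $\partial_s^r |h_1|_v^s|_{s=0} = (-\log p)^r \ord_p(h_1)^r$; on the RHS side, each $s$-derivative of the factor $L(1 \pm s, \chi_?)$ in $\mathcal{L}(s) = (1 - p^{-2})|D_E|_p^{-1} L(1+s, \chi_v) L(1-s, \chi_{v'})$ produces exactly one power of $\log p$. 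The Galois equivariance \eqref{eq:Galois1} required by Proposition \ref{prop:extend} is then inherited by $\tilde E$ from the equivariances of both terms in $D$, since division by the Galois-invariant real number $(\log p)^r$ preserves it.

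The heart of the construction is a Weil-representation twist. Using the split hypothesis $p = vv'$, for each $c \in \Zb$ the element $h_1^{(c)} := (p^c, p^{-c}) \in H_1(\Qb_p) \cong F^1_p$ is well-defined, and the translate $\omega_p(1, h_1^{(c)})\varphi'$ of any $\varphi' \in X$ again lies in $X$, since $H_1$ commutes with both $G$ and $\TT \subset G \times \tH_0$ (as $H_1 \subset H$ acts trivially on $V_0$). A direct change of variable $h_1 \mapsto h_1 h_1^{(c)}$ in the integral \eqref{eq:Fvs} defining $F_{\omega_p(1, h_1^{(c)})\varphi', \vc_p, v, s}$, together with $\vc(h_1^{(c)}) = \chi_v(p)^c$ and $|h_1^{(c)}|_v = p^{-c}$, yields the key identity
\begin{equation*}
F_{\omega_p(1, h_1^{(c)})\varphi', \vc_p, v, s}(g) = \chi_v(p)^{-c}\, p^{cs}\, F_{\varphi', \vc_p, v, s}(g).
\end{equation*}

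To conclude, Proposition \ref{prop:extend} yields $\varphi' \in X$ with $F_{\varphi', \vc_p}(g) = \tilde E(g)$ on $K = G_0(\Zb_p)$. Setting
\begin{equation*}
\tilde\varphi := \sum_{c=0}^{r}\binom{r}{c}(-1)^{r-c}\chi_v(p)^{c}\, \omega_p(1, h_1^{(c)})\varphi' \in X,
\end{equation*}
and combining the twist identity with the binomial expansion $\sum_c \binom{r}{c}(-1)^{r-c}p^{cs} = (p^s - 1)^r$, one obtains $F_{\tilde\varphi, \vc_p, v, s}(g) = (p^s - 1)^r F_{\varphi', \vc_p, v, s}(g) = (\log p)^r s^r \tilde E(g) + O(s^{r+1}) = s^r E(g) + O(s^{r+1})$, as needed. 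The hardest part will be establishing the key Weil-representation twist identity together with the clean $(\log p)^r$-factorization of $E$; both are what make the $\Qip$-coefficient Vandermonde-type argument go through without Galois obstruction.
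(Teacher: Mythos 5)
Your proposal is correct and follows essentially the same strategy as the paper's proof: induction on $r$ with base case Theorem \ref{thm:localmatch}, extraction of the leading $s^r$-coefficient as a section in $I^{G_0}_p(\chi_p)$ satisfying the hypotheses of Proposition \ref{prop:extend}, and a correction built from $H_1(\Qb_p)$-translates of a matched Schwartz function so as to shift the Taylor expansion up by $r$ orders. The paper uses the single twist operator $\varphi^{(n)} := \frac{1}{(-2)^n n!}(\omega((p,p^{-1})^2) - 1)^n\varphi$, giving $F_{\varphi^{(n)}, \vc_p, v, s} = \frac{1}{n!}(\frac{1-p^{2s}}{2})^n F_{\varphi, \vc_p, v, s}$, whereas you expand out the binomial with translates $\omega_p(1,(p^c,p^{-c}))\varphi'$ and twist coefficients $\chi_v(p)^c$ to obtain the factor $(p^s-1)^r$ — these are equivalent, and your derivation of the key twist identity $F_{\omega_p(1, h_1^{(c)})\varphi', \vc_p, v, s} = \chi_v(p)^{-c}p^{cs}F_{\varphi', \vc_p, v, s}$ is correct. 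The paper also corrects separately for the coefficient $c_r$ of $\mathcal{L}(s)$ via $c_r\varphi^{(r)}$, while you fold this into the single function $\tilde E$; this is a cosmetic reorganization. The only place where you are slightly informal is in asserting the Galois equivariance \eqref{eq:Galois1} for $\tilde E$: what is actually needed (and what the paper states directly) is that the coefficient function $a_r(\varphi, \cdot)$, given by the integral with $\ord_v(h_1)^r$ inserted, inherits \eqref{eq:cond3} and \eqref{eq:Galois1} from the $\TT(\Zb_p)$-invariance of $\varphi$; once one verifies that — and the verification is the same computation as in the proof of Proposition \ref{prop:extend}, since the $\ord_v(h_1)^r$ factor is unaffected by the $G$- and $\tH_0$-actions — your argument closes.
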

\begin{remark}
  \label{rmk:standardsec}
    If $\vc_p$ is unramified and $\varphi_p$ is the characteristic function of the maximal lattice in $V_p$, then
  \begin{equation}
    \label{eq:localint}
    F_{\varphi_p, \vc_p, v, s}(1) = L(1 + s, \chi_v)L(1 - s, \chi_{v'})(1-p^{-2}).
  \end{equation}
by a similar calculation as in Lemma \ref{lemma:3}, and $F_{\varphi_p, \vc_p, v, s}/(L(1+s, \chi_v)L(1-s, \chi_{v'})(1-p^{-2}))$ is already the standard section $\lambda_{\alpha, v, s}(\phi_v)(g_v) \lambda_{\alpha, v, -s}(\phi_{v'})(g_{v'})$.
\end{remark}

\begin{proof}
  For   $ \varphi \in \Sc(V_p; \Cb)$ and $g \in G_0(\Qb_p)$, we write
  \begin{align*}
    F_{\varphi, \vc_p, v, s}(g)
    &= \sum_{n \ge 0} \frac{a_n(\varphi, g)}{n!} (-\log p \cdot s)^n,\\
    a_n(\varphi, g)
    &:= (- \log p)^{-n}\partial_s^{n} \lp
  F_{\varphi, \vc_p, v, s}(g)
      \rp\mid_{s = 0}
      =
      \int_{H_1(\Qb_p)}\Phi_0(\Fcr(\varphi_p))(g_p, h_1) \vc(h_1) \ord_v(h_1)^n  dh_1.            \end{align*}
    It is easy to check from definition that $F_{\varphi, \vc_p, v, s}: G_0(\Zb_p) \to \Cb$ satisfies \eqref{eq:cond3}, hence so does the function $a_n(\varphi, g)$ for all $n \ge 0$.
Now we define
$$
\varphi^{(n)} := \frac{1}{(-2)^{n}n!}(\omega((p, p^{-1})^2) - 1)^n \varphi  \in \Sc(V_p; \Cb)
$$
with $(p, p^{-1}) \in H_1(\Qb_p)$.
An easy induction shows that
$
F_{\varphi^{(n)}, \vc_p, v, s} = \tfrac{1}{n!}(\tfrac{1 - p^{2s}}{2})^n F_{\varphi^{}, \vc_p, v, s},
$
which implies
\begin{equation}
  \label{eq:varphin}
  a_{n'}(\varphi^{(n)}, g) =
  \begin{cases}
    0 & \text{ if } n' < n,\\
 F_{\varphi, \vc_p}& \text{ if } n' = n.
  \end{cases}
\end{equation}
When $\varphi \in \Sc(V_p; \Qip)$ is $(G\cdot \TT)(\Zb_p)$-invariant, so is the function $\varphi^{(n)} \in \Sc(V_p; \Qip)$.
Furthermore, the function $a_n(\varphi, \cdot): G_0(\Zb_p) \to \Qip$ satisfies conditions \eqref{eq:cond3} and \eqref{eq:Galois1}.
By Proposition \ref{prop:extend}, there exists $\varphi_{n} \in \Sc(V_p; \Qip)^{(G\cdot \TT)(\Zb_p)}$ such that
\begin{equation}
  \label{eq:varphi_n}
F_{\varphi_{n}, \vc_p}(k) = a_n(\varphi, k)
\end{equation}
for all $k \in G_0(\Zb_p)$.

Now we prove the theorem by induction on $r$.
The case $r = 1$ is just the content of Theorem \ref{thm:localmatch}.
Note that $|D|_p = 1$ when $p$ splits in $F$.
  Now suppose we have $\varphi$ satisfying \eqref{eq:matchps} for some $r \ge 1$.
  As
  $$
  \Phi_s :=
  \lambda_{\alpha, v, s}(\phi_v)(g_v) \lambda_{\alpha, v, -s}(\phi_{v'})(g_{v'}) \in I(s, \chi_v) I(-s, \chi_{v'})$$
  is a standard section, it satisfies
$
\Phi_s(k) = \Phi_0(k)
$ when $k \in G_0(\Zb_p)$.
So in that case, we have
\begin{align*}
  &    F_{\varphi^{}, \vc_p, v, s}(k)
    - \mathcal{L}(s)
    \Phi_s(k)
    =
    \lp a_r(\varphi, k) - c_r \Phi_0(k)\rp
    \frac{(-\log p \cdot s)^r}{r!} + O(s^{r+1}),
\end{align*}
with $c_r :=     (-\log p)^{-r} \partial^r_s \mathcal{L}(s)\mid_{s = 0}$ rational. %
If we set
$$
\tilde \varphi := \varphi - \varphi^{(r)}_r - c_r \varphi^{(r)} \in \Sc(V_p, \Qip)^{(G\cdot \TT)(\Zb_p)},
$$
then equations \eqref{eq:varphin} and \eqref{eq:varphi_n} give us
\begin{align*}
  &\quad    F_{\tilde\varphi^{}, \vc_p, v, s}(k)
-  \mathcal{L}(s) \Phi_s(k)\\
&    =
    \lp a_r(\varphi, k) - F_{\varphi_r, \vc_p}(k) + c_r F_{\varphi, \vc_p}(k) - c_r \Phi_0(k)\rp
    \frac{(-\log p \cdot s)^r}{r!} + O(s^{r+1}) = O(s^{r+1}).
\end{align*}
So $\tilde\varphi$ satisfies the claim for $r +1$. This completes the proof.
\end{proof}

Now, we can state a consequence of the matching result in Theorem \ref{thm:localmatchs}.
\begin{proposition}
  \label{prop:FCmatch}
  For matching sections $\varphi_p \in \Sc(V_p; \Qip)^{G(\Zb_p)}$ and $\{\phi_v \in \Sc(W_\alpha(F_v)): v \mid p\}$ as in Theorem \ref{thm:localmatchs} with $r = 2$, we have
  \begin{equation}
    \label{eq:FCmatch}
    \prod_{v \mid p} W^*_{t, v}( \phi_v) =
    \sum_{n \in \Zb}     \int_{H_1(\Qb_p)} \vc_p(h) \Fcr_1(\varphi_{p})((0, p^n), t'/p^{n}, - h^{-1} t/p^{n}) dh
  \end{equation}
  for all $t \in F_p^\times$.
  Furthermore when $p = vv'$ is a split prime and $W_{t, v}^*(\phi_v) = 0$, then we have
  \begin{equation}
    \label{eq:FCmatchs}
\frac{    W^{*, \prime}_{t, v}(\phi_v)      W^{*}_{t, v'}(\phi_{v'})}{\log p}
= \sum_{n \in \Zb}     \int_{H_1(\Qb_p)} \vc_p(h) \Fcr_1(\varphi_{p})((0, p^n), t'/p^{n}, -h^{-1} t/p^{n}) o_v(h) dh.
  \end{equation}
\end{proposition}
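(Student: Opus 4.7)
The plan is to apply the Whittaker functional in the $G_0$-variable to both sides of the matching identity from Theorem \ref{thm:localmatchs}, extract the constant term in $s$ to obtain \eqref{eq:FCmatch}, and extract the linear term in $s$ to obtain \eqref{eq:FCmatchs}. The first main task is to compute the $t$-th Whittaker coefficient of the deformed section $F_{\varphi_p,\vc_p,v,s}$ on $G_0$ directly from the defining integrals \eqref{eq:Fvs} and \eqref{eq:Fexp}. Unwinding the Weil representation action of $wn(b)$ on $\Phi_0(\Fcr(\varphi_p))$ and interchanging the Whittaker integration $\int_{F_p}(\cdot)\psi_p(-tb)\,db$ with the $H_1$-integration, the Fourier transform $\Fcr = \Fcr_{V^+}$ built into the Siegel-Weil section interacts with the Whittaker integral asymmetrically: the $b$-integration effectively undoes the $\ell^+$-component of $\Fcr$, while the $(V_{-1}+V_1)^+$-component of $\Fcr$ collapses against the diagonal $(\lambda,\lambda)$ appearing in \eqref{eq:Fexp}, leaving only the partial transform $\Fcr_1=\Fcr_{\ell^+}$ of $\varphi_p$.

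The sum $\sum_{n\in\Zb}$ and the specific evaluation points $((0,p^n),t'/p^n,-h^{-1}t/p^n)$ then arise from a partial Iwasawa-type decomposition of the remaining integration variables along $T\cong\Gm$: writing the relevant variable as $p^n u$ with $u$ a unit and using the $m(p^n)$-action via the identification \eqref{eq:act} to conjugate the argument of $\Fcr_1(\varphi_p)$, one obtains the scaling by $p^{-n}$ in the $V_{-1}$ and $V_1$ slots and the value $p^n$ in the dual $\ell^+$-slot. The appearance of $t'$ rather than $t$ in the $V_{-1}$-position reflects the $F$-trace pairing between $V_{-1}\cong F$ and $V_1\cong F$ in the quadratic form on $V_{-1}+V_1$, which forces the Galois conjugate of $t$; the factor $h^{-1}$ and its sign encode the action of $h\in H_1\subset F_p^\times$ via the Weil representation formulas \eqref{eq:weilrep}.

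With this explicit formula for the Whittaker coefficient of $F_{\varphi_p,\vc_p,v,s}$ in hand, setting $s=0$ and matching against the right-hand side of Theorem \ref{thm:localmatchs} yields \eqref{eq:FCmatch}: the Whittaker functional applied to $\mathcal{L}(0)\prod_v\lambda_{\alpha,v}(\phi_v)$ produces $\mathcal{L}(0)\prod_v W_{t,v}(\phi_v)$, and the explicit form $\mathcal{L}(0)=(1-p^{-2})|D_E|_p^{-1}\prod_v L(1,\chi_v)$ exactly cancels the normalizing factors $|D_E/D_F|_v^{-1/2}L(1,\chi_v)$ from the definition \eqref{eq:W*s} of $W^*_{t,v}$. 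For \eqref{eq:FCmatchs}, differentiating in $s$ at $s=0$ the factor $|h_1|_v^s = p^{-s\cdot o_v(h_1)}$ in \eqref{eq:Fvs} brings down $-\log p\cdot o_v(h)$, producing the $o_v(h)$-weighting and the $(\log p)^{-1}$ normalization; on the right, the hypothesis $W^*_{t,v}(\phi_v)=0$ combined with the product rule kills the $\mathcal{L}'(0)$-term and isolates $\mathcal{L}(0)W^{*,\prime}_{t,v}(\phi_v)W^{*}_{t,v'}(\phi_{v'})$, while the $O(s^2)$ error from Theorem \ref{thm:localmatchs} is annihilated by a single differentiation.

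The main technical obstacle is the Weil representation bookkeeping in the first step, namely identifying the Whittaker integral of $F_{\varphi_p,\vc_p,v,s}$ with the sum of $\Fcr_1$-integrals at precisely the stated evaluation points. This requires tracking the $F$-bilinear structure on $V_{-1}+V_1$ under the interplay between the $\ell^+$-Fourier transform and the Whittaker integral along $F_p$ (which uses $\psi_p = \prod_{v\mid p}\psi_v$ and the trace pairing), and correctly conjugating the $T_0$-action through both. Once this explicit formula is established, the $s=0$ and $\partial_s|_{s=0}$ statements follow formally from Theorem \ref{thm:localmatchs}.
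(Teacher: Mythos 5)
Your overall strategy — apply the $t$-th Whittaker functional to the deformed section $F_{\varphi_p,\vc_p,v,s}$, identify it with a sum of $\Fcr_1$-integrals, then read off the $s^0$ and $s^1$ coefficients against the matching of Theorem \ref{thm:localmatchs} — is exactly the route the paper takes. The reduction of the left-hand side to $\int_{F_p} F_{\varphi_p,\vc_p,v,s}(wn(b))\psi_p(-\tr(tb))\,db$, the cancellation of $\mathcal{L}(s)$ against the $W^*$-normalization, and the differentiation of $|h_1|_v^s$ to produce the $o_v(h)$-weight, all match the paper's argument.

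However, what you flag as ``the main technical obstacle'' — identifying the Whittaker integral with $\sum_n \Fcr_1(\varphi_p)((0,p^n), t'/p^n, -h^{-1}t/p^n)$ — is indeed the substance of the proof, and your heuristic for it does not constitute an argument. The paper isolates this in a separate lemma, Proposition \ref{prop:Key}: after pulling the $H_1$-integral outside, one needs
$$(1+1/p)^{-1}\int_{F_p}(\omega_p(wn(\beta))\Fcr(\varphi))(0)\,\psi_p(-\tr(t\beta))\,d\beta = \int_{\Qb_p^\times}\Fcr_1(\varphi)((0,u),t'/u,-t/u)\,d^\times u$$
for $G(\Zb_p)$-invariant $\varphi$ and $t\in F_p^\times$. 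This is not a simple bookkeeping exercise. The picture of ``the $b$-integration undoing the $\ell^+$-component of $\Fcr$ while the $(V_{-1}+V_1)^+$-piece collapses against the diagonal'' does not account for the normalization $(1+1/p)^{-1}$ nor for the role of $G(\Zb_p)$-invariance; the paper derives that constant precisely by invoking invariance to turn a discrete sum over $n$ into a $d^\times u$-integral, and then a further identity comparing $\int_{\Qb_p^\times}\Fcr(\varphi_\beta)((0,u),0)|u|^2\,d^\times u$ with $\int_{\Qb_p^2}\Fcr(\varphi_\beta)(a,0)\,da$ via a careful partition into annuli. Moreover, the right side has a singularity as $t\to 0$ that must be regularized: the paper introduces a cut-off $G_\e(t,\varphi)$, computes its inverse Fourier transform, and shows the error from the cut-off vanishes as $\e\to 0$ for fixed $t\ne 0$. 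Your sketch neither flags nor addresses this issue. So the plan is the right one, but the core lemma that makes it work would need to be stated and proved; as written, the proposal contains a genuine gap precisely where you acknowledge the difficulty lies.
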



\begin{remark}
  \label{rmk:vv'}
  Since $o_{v'}(h) = - o_v(h)$ for all $h \in H_1(\Qb_p)$, the left hand side of \eqref{eq:FCmatchs} gets a minus sign if $o_v(h)$ is replaced by $o_{v'}(h)$ on the right hand side.
\end{remark}

\begin{proof}
  To prove \eqref{eq:FCmatch}.   we apply the definition of $W^*_{t, v}$ in \eqref{eq:W*0} and Theorem \ref{thm:localmatch} to obtain
  \begin{align*}
    \prod_{v \mid p} W^*_{t, v}( \phi_v)
    &=
|D/D_E|_p^{1/2}      \prod_{v \mid p} L(1, \chi_v)
      \int_{F_v}\lambda_{\alpha, v}(\phi_v) (wn(b_v)) \psi_v(-tb_v) db_v\\
    &= (1-p^{-2})^{-1}  \int_{F_p} F_{\varphi_p, \vc_p}(wn(b)) \psi_p(-\tr(tb)) db\\
 &     =(1-p^{-2})^{-1}     \int_{H_1(\Qb_p)}    \vc(h_1)
 \int_{F_p}
\Fcr(\omega_p((wn(b), h_1))\varphi_p)(0) \psi_p(-\tr(tb))
   db dh_1 .
  \end{align*}
  Now the right hand side of \eqref{eq:FCmatch} can be rewritten as
  \begin{align*}
    \text{ Right hand side of }
    &\eqref{eq:FCmatch}
    =
      \int_{H_1(\Qb_p)} \vc_p(h_1)
      \sum_{n \in \Zb}
      \Fcr_1( \omega_p(h_1)\varphi_{p})((0, p^n), t'/p^{n}, -t/p^{n}) dh_1\\
    &=
      \int_{H_1(\Qb_p)} \vc_p(h_1)
      \sum_{n \in \Zb}
      (1-p^{-1})^{-1} \int_{p^n \Zb_p^\times}      \Fcr_1( \omega_p(h_1)\varphi_{p})((0, u), t'/u, -t/u) d^\times u dh_1    \\
    &=
      (1-p^{-1})^{-1}      \int_{H_1(\Qb_p)} \vc_p(h_1)
      \int_{\Qb_p^\times}      \Fcr_1( \omega_p(h_1)\varphi_{p})((0, u), t'/u, -t/u) d^\times u dh_1.
  \end{align*}
  For the second line, we have used  $\omega(m(a))\varphi_p = \varphi_p$ for all $a \in \Zb_p^\times$ since $\varphi_p$ is $G(\Zb_p)$-invariant.
  Equation \eqref{eq:FCmatch} now follows from applying Proposition \ref{prop:Key} to $\varphi = \omega_p(h_1)\varphi_p$.

  We now prove \eqref{eq:FCmatchs}.
Let $ \Phi_s \in I(s, \chi_v)I(-s, \chi_{v'})$ be the standard sections extending $ \lambda_{\alpha, v}(\phi_v)\lambda_{\alpha, v'}(\phi_{v'})$.
By Theorem \ref{thm:localmatchs} with $r = 2$, we have
  $$
  F_{\varphi_p, \vc_p, v, s} = (1-p^{-2})|D_E|_p^{-1}L(1, \chi_v)L(1, \chi_{v'}) \lambda_{\alpha, v, s}(\phi_v)\lambda_{\alpha, v', -s}(\phi_{v'}) + O(s^2).
  $$
  Therefore using $W^{*}_{t, v}(\phi_v) = 0$, we have
  \begin{align*}
    &W^{*, \prime}_{t, v}(\phi_v)
    W^{*}_{t, v'}(\phi_{v'})
    = \partial_s
\lp      W^{*}_{t, v}(1, s, \phi_v)      W^{*}_{t, v'}(1, -s, \phi_{v'}) \rp \mid_{s = 0}\\
    &= |D_E|_p^{-1}      L(1, \chi_v)      L(1, \chi_{v'})
\partial_s
      \lp \int_{F_p}
      \Phi_s((wn(b_v), wn(b_{v'})))
      \psi_{v}(-tb_v) \psi_{v'}(-tb_{v'}) db_v db_{v'} \rp \mid_{s = 0}\\
&=
(1-p^{-2})^{-1} \partial_s
\lp \int_{F_p}  F_{\varphi_p, \vc_p, v, s}(wn(b))\psi_p(-\tr(tb)) db \rp \mid_{s = 0}\\
&= (1-p^{-2})^{-1}
\partial_s
\lp \int_{H_1(\Qb_p)}    \vc(h_1) |h_1|_v^s
 \int_{F_p}
\Fcr(\omega_p((wn(b), h_1))\varphi_p)(0) \psi_p(-\tr(tb))
   db dh_1 \rp \mid_{s = 0}\\
&=
\log p (1-p^{-2})^{-1}
 \int_{H_1(\Qb_p)}    \vc(h_1) \ord_v(h_1)
 \int_{F_p}
\Fcr(\omega_p((wn(b), h_1))\varphi_p)(0) \psi_p(-\tr(tb))
   db dh_1.
  \end{align*}
  Applying Proposition \ref{prop:Key} and
  continuing as in the second half of the proof of \eqref{eq:FCmatch} then proves \eqref{eq:FCmatchs}.
\end{proof}

We end this section with the following technical result used in the proof of the previous proposition.
\begin{proposition}
  \label{prop:Key}
  For any $\varphi \in \Sc(V_p; \Cb)^{G(\Zb_p)}$ and $t \in F_p^\times$, we have
\begin{equation}
  \label{eq:Key}
(1+1/p)^{-1} \int_{F_p}
  (\omega_p((w n(\beta))\Fcr(\varphi))(0) \psi_p(-\tr(t\beta)) d\beta
    =
  \int_{\Qb_p^\times} \Fcr_1(\varphi)((0, u), t'/u, -t/u) d^\times u.
\end{equation}
\end{proposition}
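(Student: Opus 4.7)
The plan is a direct unfolding computation: combine the intertwining property of $\Fcr$ with the explicit action of $wn(\beta) \in G_0 \subset H$ on $V$, and then reduce to a Fourier inversion on the quadratic \'etale algebra $F_p$.

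First, by the intertwining $\omega_p \circ \Fcr = \Fcr \circ \omega_p$ between the two Schr\"odinger models, and using that $wn(\beta) \in G_0 \subset H$ acts on $V$ by an orthogonal transformation via the isogeny $G_0 \cong \mathrm{Res}_{F/\Qb}\SL_2$ from \eqref{eq:gpid} and the action formula \eqref{eq:act}, I would rewrite
$$
(\omega_p(wn(\beta))\Fcr(\varphi))(0) = \int_{V^+(\Qb_p)}\varphi\bigl((wn(\beta))^{-1}u^+\bigr)\,du^+.
$$
Parametrizing $V^+(\Qb_p) = \{(x, 0, \kappa, \kappa) : x \in \Qb_p,\, \kappa \in F_p\}$ and unfolding yields
$$
(wn(\beta))^{-1}(x, 0, \kappa, \kappa) = \bigl(\tr(\beta\kappa) + \Nm(\beta)x,\ x,\ -\kappa' - \beta x,\ \kappa\bigr).
$$

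For $x \in \Qb_p^\times$, I would substitute $N := -\kappa' - \beta x$, an $F_p$-linear bijection satisfying $d\beta = dN/|x|_p^2$. Short algebraic manipulations give the identities
$$
\tr(\beta\kappa) + \Nm(\beta)x = \frac{\Nm(N) - \Nm(\kappa)}{x},\qquad -\tr(t\beta) = \tr\bigl((t/x)N\bigr) + \tr\bigl((t'/x)\kappa\bigr),
$$
the second using $\tr(t\kappa'/x) = \tr((t'/x)\kappa)$. Consequently the LHS of \eqref{eq:Key}, multiplied by $1+1/p$, becomes
$$
\int_{\Qb_p^\times}\frac{dx}{|x|_p^2}\int_{F_p^2}\varphi\!\left(\tfrac{\Nm(N)-\Nm(\kappa)}{x},\, x,\, N,\, \kappa\right)\psi_p\!\bigl(\tr((t/x)N)+\tr((t'/x)\kappa)\bigr)\,dN\,d\kappa.
$$

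The main technical step, which I expect to be the chief obstacle, is to carry out the $(N, \kappa)$-Fourier integral. I would apply Fourier inversion in the first variable of $\varphi$, writing $\varphi(a, b, \nu, \mu) = \int_{\Qb_p}\hat\varphi_1(\xi, b, \nu, \mu)\psi_p(a\xi)\,d\xi$, after which the $(N, \kappa)$-integral factors into a product of two Gaussian integrals on $F_p$ of shape $\int_{F_p}\psi_p((\xi/x)\Nm(N) + \tr(Ns))\,dN$, one with parameter $s = t/x$ and the other with $s = t'/x$ and opposite sign in the quadratic term. Evaluating these Gaussians via Weil's formula on the quadratic space $(F_p, \Nm)$ and simplifying the Weil indices and discriminant contributions should collapse the remaining integration to the slice $(N, \kappa) = (t'/x, -t/x)$. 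The leftover $\xi$-integral then reassembles, using \eqref{eq:Fc1}, into $\int_{\Qb_p}\varphi(a, 0, t'/x, -t/x)\psi_p(ax)\,da = \Fcr_1(\varphi)((0, x), t'/x, -t/x)$. The combined Weil indices together with the conversion of $dx/|x|_p^2$ to the multiplicative Haar measure $d^\times x$ on $\Qb_p^\times$ should contribute precisely the factor $1+1/p$ needed to cancel the $(1+1/p)^{-1}$ prefactor on the LHS of \eqref{eq:Key}, completing the identity.
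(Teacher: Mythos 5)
Your opening moves are correct: the unfolding
$(\omega_p(wn(\beta))\Fcr(\varphi))(0) = \int_{V^+(\Qb_p)}\varphi\bigl((wn(\beta))^{-1}u^+\bigr)\,du^+$,
the action formula $(wn(\beta))^{-1}(x,0,\kappa,\kappa) = \bigl(x\Nm(\beta)+\tr(\kappa\beta),\,x,\,-\kappa'-\beta x,\,\kappa\bigr)$ coming from \eqref{eq:act}, the substitution $N = -\kappa' - \beta x$ with Jacobian $d\beta = dN/|x|_p^2$, and the two algebraic identities $x\Nm(\beta)+\tr(\kappa\beta) = (\Nm(N)-\Nm(\kappa))/x$ and $-\tr(t\beta) = \tr((t/x)N)+\tr((t'/x)\kappa)$ all check out. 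This is also a genuinely different route from the paper's: there, one regularizes the \emph{right}-hand side with a cutoff $G_\e$, computes its inverse Fourier transform $\hat G_\e(\beta,\varphi)$ in $t$, identifies it (via the Weyl elements $w_1,w_2\in\mathrm{O}(V)$ of \eqref{eq:wi} and the twist $\varphi_\beta := \omega(w_2n(\beta))\varphi$) with $(1+1/p)^{-1}(\omega(wn(\beta))\Fcr(\varphi))(0)$ plus an error that vanishes as $\e\to 0$, and only then Fourier-inverts back.

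The ``main technical step'' however does not go through as described, and the two weaknesses you flag are symptoms of a single omission. After Fourier inversion in the first slot the integrand is $\hat\varphi_1(\xi,x,N,\kappa)\psi_p(\cdots)$, and since $\hat\varphi_1$ does not factor into a function of $N$ times a function of $\kappa$, the $(N,\kappa)$-integral does not split into two independent Gaussians; more to the point, the $p$-adic quadratic Gaussian transform on $(F_p,\Nm)$ (Weil's formula) sends a Schwartz function to another Schwartz function weighted by a quadratic phase and a Weil index --- it does \emph{not} collapse an integral to the evaluation on a ``slice.'' The deeper issue is that your argument never uses the hypothesis $\varphi\in\Sc(V_p;\Cb)^{G(\Zb_p)}$, yet it is indispensable: by \eqref{eq:Ginv} and the intertwining property of $\Fcr$, the section $h\mapsto(\omega_p(h)\Fcr(\varphi))(0)$ and hence the left-hand side of \eqref{eq:Key} are unchanged under $\varphi\mapsto\omega_p(g)\varphi$ for any $g\in G(\Qb_p)$, while $\Fcr_1(\varphi)$ and the right-hand side are not, so the identity cannot hold for a general Schwartz function. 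In the paper's proof, $G(\Zb_p)$-invariance is precisely what produces the factor $(1+1/p)^{-1}$: it forces $a\mapsto\Fcr(\varphi_\beta)(a,0)$ to be constant on each shell $(p^n\Zb_p)^2\setminus(p^{n+1}\Zb_p)^2$, whose additive measure is $p^{-2n}(1-p^{-2}) = (1+1/p)\cdot p^{-2n}(1-1/p)$, so converting the multiplicative integral $\int_{\Qb_p^\times}(\cdot)|u|^2\,d^\times u$ into the additive one $\int_{\Qb_p^2}(\cdot)\,da$ (which is then recognized as $(\omega(wn(\beta))\Fcr(\varphi))(0)$) costs exactly that factor. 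Attributing $1+1/p$ to Weil indices and to the substitution $dx/|x|_p^2\leadsto d^\times x$ is therefore not correct, and without a step that genuinely invokes the $G(\Zb_p)$-invariance your unfolding cannot close.
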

\begin{proof}
Since the left hand side is essentially the Fourier transform of $(\omega(wn(\beta))\Fcr(\varphi_p))(0)$ as a function of $\beta \in F_p$, it suffices to calculate the inverse Fourier transform of the right hand side, though we need to be careful about the singularity of right hand side when $t \not\in F_p^\times$.
To take care of this, we define
\begin{equation}
  \label{eq:Gt}
  G_\e(t, \varphi) :=
  \begin{cases}
  \int_{\Qb_p^\times} \Fcr_1(\varphi)((0, u), t'/u, -t/u) d^\times u,& \text{ if } |t| > \e,\\
    0,& \text{ otherwise.}
  \end{cases}
\end{equation}
for  $\e > 0$ and $t \in F_p$.
Note that $|t| := \min\{|t_1|, |t_2|\}$ when $t = (t_1, t_2) \in F_p = \Qb_p^2$,
Given any fixed $t \in F_p^\times$, the limit $\lim_{\e \to 0} G_\e(t, \varphi)$ exists and is the right hand side of \eqref{eq:Key}.
Also for any fixed $\e > 0$, the function $G_\e(t, \varphi)$ is a Schwartz function on $F_p$.
Its inverse Fourier transform is given by
\begin{align*}
\hat G_\e(\beta, \varphi) &:=
\int_{F_p} G_\e(t, \varphi) \psi_p(\tr(t'\beta')) dt\\
&=    \int_{F_p\backslash D_\e}
   \int_{\Qb_p^\times} \Fcr_1(\varphi)((0, u),  t'/u, -t/u) d^\times u \psi_p(\tr(t'\beta')) dt\\
& =   \int_{\Qb_p^\times} \int_{F_p\backslash D_{\e/|u|}} \Fcr_1(\varphi)((0, u),  \tilde t', -\tilde t)
    \psi_p(\tr(\tilde t' u \beta')) |u|^2 d\tilde t {d^\times u},
   \end{align*}
where $D_\e \subset F_p$ is the $\e$-neighborhood of $0$.
Note that
\begin{align*}
   \Fcr_1(\varphi)((0, u),  \tilde t', -\tilde t)     \psi_p(\tr(\tilde t' u \beta'))
&= \Fcr_1(\varphi_\beta)((0, u),  - \tilde t, -\tilde t),
\end{align*}
where $\varphi_\beta := \omega(w_2 n(\beta))\varphi$ and $w_i \in H(\Qb)$ is defined in \eqref{eq:wi}.
Therefore, $\hat G_\e(\beta, \varphi)$ is given by
   \begin{align*}
\hat G_\e(\beta, \varphi)
& =  \int_{\Qb_p^\times} \int_{F_p\backslash D_{\e/|u|}}\Fcr_1(\varphi_\beta)((0, u),  - \tilde t, -\tilde t) |u|^2 d\tilde t {d^\times u}\\
&    =  \int_{\Qb_p^\times}
\lp
 \Fcr(\varphi_\beta)((0, u), 0)
-
\int_{D_{\e/|u|}}  \phi_\beta ((0, u),  s,  s) ds
 \rp
|u|^2 d^\times u
  \end{align*}
  with $s = -\tilde t$ and $ds = d \tilde t$.
  Using the $G(\Zb_p)$-invariance of $\varphi$, we have
  $$
  \Fcr(\varphi_\beta)((0, p^n u), 0) = \Fcr(\varphi_\beta)((0, p^n ), 0)
  = \Fcr(\varphi_\beta)(a, 0)
  $$
for all $u \in \Zb_p^\times, n \in \Zb$ and $a \in (p^n\Zb_p)^2  - (p^{n+1}\Zb_p)^2 $.
  Applying this, we can evaluate the first part as
  \begin{align*}
&  \int_{\Qb_p^\times}
\Fcr(\varphi_\beta)((0, u), 0) |u|^2 d^\times u
= \sum_{n \in \Zb}   \int_{\Zb_p^\times} \Fcr(\varphi_\beta)((0, p^n u), 0) |p^n u|^2 d^\times u\\
    &    = \sum_{n \in \Zb}  \Fcr(\varphi_\beta)((0, p^n), 0)
p^{-2n}(1-1/p)
    = (1+1/p)^{-1} \sum_{n \in \Zb}  \Fcr(\varphi_\beta)((0, p^n), 0)
      \int_{(p^n\Zb_p)^2 - (p^{n+1}\Zb_p)^2} da\\
&=
(1+1/p)^{-1} \sum_{n \in \Zb} \int_{(p^n\Zb_p)^2 - (p^{n+1}\Zb_p)^2}  \Fcr(\varphi_\beta)(a, 0) da
= (1+1/p)^{-1}  \int_{\Qb_p^2}  \Fcr(\varphi_\beta)(a, 0) da\\
&= (1+1/p)^{-1} \Fcr(\omega(w_1)\varphi_\beta)(0)
= (1+1/p)^{-1} (\omega(wn(\beta))\Fcr(\varphi))(0).
 \end{align*}
Then for any fixed $t \in F_p^\times$, we have
\begin{align*}
  G_\e(t, \varphi)
  &= \int_{F_p}   G_\e(\beta, \varphi)\psi_p(- \tr(\beta t)) d\beta\\
  &= (1+1/p)^{-1} \int_{F_p} ((\omega(wn(\beta))\Fcr)(\varphi))(0) \psi_p(- \tr(\beta t)) d\beta
    - E_\e(\varphi),\\
  E_\e(\varphi)
  &:=
\int_{F_p}  \psi_p(- \tr(\beta t))
\int_{\Qb_p^\times}    |u|^2
    \int_{D_{\e/|u|}}
   \Fcr_1(\varphi)((0, u),  -s', s)     \psi_p(-\tr( s u \beta)) ds
    d^\times u d\beta
\end{align*}
Since $\Fcr_1(\varphi)$ is a Schwartz function, we can replace the domain $\Qb_p^\times \times D_{\e/|u|}$ with a compact open subset independent of $\beta$, and interchange the order of integration to compute the integral over $\beta$ first, which yields
$$
E_\e(\varphi) = \int_{\Qb_p^\times}    |u|^2
\int_{D_{\e/|u|}}
\int_{F_p}  \psi_p(- \tr(\beta (t + su))) d\beta
\Fcr_1( \varphi) ((0, u),  -s',  s) ds
    d^\times u .
    $$
    When $\e$ is sufficiently small, we have $t \not\in D_\e$ and $t + su \neq 0$, in which case $E_\e = 0$.
    This finishes the proof.
\end{proof}

\section{
  Doi-Naganuma Lift of the Deformed Theta Integral}
\label{sec:DNMod}
In this section, we will define and study the properties of the function $\tI$ discussed in the introduction. In particular, we will calculate its Fourier coefficients and images under lowering differential operators.
The actions of differential operators follow from those on the theta kernel, which are given in section \ref{subsec:L}.
The Fourier expansion computations are carried out in section \ref{subsec:FEtI}, with the main result being Proposition \ref{prop:tI+}.
Section \ref{subsec:Millson} contains rationality results about theta lifts that will be needed to handle the error term mentioned in section \ref{subsec:idea}.

Choose $\varphi^{(1, 1)} :=  \varphi_f\varphi^{(1, 1)}_{\infty}  \in \Sc(V(\Ab))^{K_\vc}$ with $\varphi^{(1, 1)}_\infty := \varphi^{(1, 1)}_{0, \infty} \otimes \varphi^+_\infty \in \Sc(V_0(\Rb)) \otimes \Sc(V_1(\Rb))$  and
\begin{equation}
  \label{eq:tv}
  \varphi^{(1, 1)}_{0, \infty} (a, b, \nu, \nu') :=
  -i(a-b + i(\nu + \nu'))
  e^{-\pi (a^2 + b^2 + \nu^2 + (\nu')^2)}
  \in \Sc(V_0(\Rb)).
\end{equation}
For $\tvc_C$ as in \eqref{eq:tvc}, we can define
\begin{equation}
  \label{eq:tI}
  \tI(g_0) :=
  \Ic(g_0, \varphi^{(1, 1)}, \tvc_C)
= \int_{[H_1]}  \int_{[G]} \theta(g, (g_0, h_1), \varphi^{(1, 1)}) dg \tvc_C(h_1)dh_1.
\end{equation}
We will now analyze various properties of this integral.

\subsection{Lowering Operator Action}
\label{subsec:L}
To calculate the action of differential operators on $\tI$, it suffices to understand the effect on $\varphi_0$ via the Weil representation, which can be done in the Fock model. For this, we follow the appendices in \cite{FM06} and \cite{Li18} (see also \cite{KM90}).

We identify $(V_0(\Rb), Q) = (M_2(\Rb), \det) \cong \Rb^{2, 2}$ with the basis
\begin{equation}
  \label{eq:vs}
  \begin{split}
      v_1 &:=  \frac{1}{\sqrt{2}} \pmat{1}{0}{0}{1},
  v_2 := \frac{1}{\sqrt{2}} \pmat{0}{-1}{1}{0},
  v_3 :=  \frac{1}{\sqrt{2}} \pmat{-1}{0}{0}{1},
  v_4 :=  \frac{1}{\sqrt{2}} \pmat{0}{1}{1}{0},
  \end{split}
\end{equation}
which identifies $\Sc(\Rb^{2, 2}) \cong \Sc(V_0(\Rb))$ and gives us
\begin{equation}
  \label{eq:bases-change}
  \pmat{a}{\nu_1}{\nu_2}{b} = \frac{a + b}{\sqrt{2}} v_1 +
  \frac{-a + b}{\sqrt{2}} v_3 +
  \frac{-\nu_1 + \nu_2}{\sqrt{2}} v_2 +
  \frac{\nu_1 + \nu_2}{\sqrt{2}} v_4.
\end{equation}
The \textit{polynomial Fock space} is the subspace $\mathbb{S}(\Rb^{2, 2}) \subset \Sc(\Rb^{2, 2})$ spanned by  functions of the form $\prod_{1 \le j \le 4} D_j^{r_j} \varphi^\circ$ for $r_j \in \Nb_0$, where $\varphi^\circ \in \Sc(\Rb^{2, 2})$ is the Gaussian
$$
\varphi^\circ(x_1, x_2, x_3, x_4) := e^{-\pi(x_1^2 + x_2^2 + x_3^2 + x_4^2)}
$$
and $D_r$ are operators on $\Sc(\Rb^{2, 2})$ defined by
\begin{equation}
  \label{eq:Dj}
D_r := \partial_{x_r} - 2\pi x_r,~ 1 \le r \le 4.
\end{equation}
There is an isomorphism $\iota: \mathbb{S}(\Rb^{2, 2}) \to \Ps(\Cb^4) = \Cb[\zf_1, \zf_2, \zf_3, \zf_4]$ such that $    \iota( \varphi^\circ) = 1$, $D_r$ acts as $(-1)^{\lfloor (r -1)/2\rfloor} i \zf_r $.
We now set
 \begin{equation}
   \label{eq:vwf}
\vf:= \zf_1 + i \zf_2,~ \wf := \zf_3 - i\zf_4 .
\end{equation}
Then using \eqref{eq:bases-change}, the Schwartz functions $\varphi^{(\e, \e')}_{0, \infty} \in \Sc(V_0(\Rb))$ in \eqref{eq:varphiinf} and \eqref{eq:tv} become
\begin{equation}
  \label{eq:iotavarphi0}
  \begin{split}
    \iota(\varphi^{( 1, - 1)}_{0, \infty})
    &=     i\sqrt{2} \iota( ( x_1 + i x_2) \varphi^\circ)
= -\frac{i \sqrt{2}}{4\pi} \iota ( (D_1 + iD_2) \varphi^{\circ})
=    \frac{\sqrt{2}}{4\pi} \vf,\\
    \iota(\varphi^{( -1,  1)}_{0, \infty})
    &= - \frac{\sqrt{2}}{4\pi} \bvf,~
\iota(\varphi^{( 1,  1)}_{0, \infty}) = - \frac{\sqrt{2}}{4\pi} \wf.
  \end{split}
\end{equation}

Let $(W, \langle, \rangle)$ be the $\Rb$-symplectic space of dimension 2, and
$\Wb := V_0(\Rb) \otimes W$ the symplectic space with the skew-symmetric form $(,)\otimes \langle, \rangle$.
The Lie algebra $\spf(\Wb \otimes \Cb)$ acts on $\Sc(V_0)$ through the infinitesimal action induced by $\omega$, which we also denote by $\omega$.
In  $\spf(\Wb \otimes \Cb)$, we have the subalgebra $\spf(W \otimes \Cb) \times \mathfrak{o}(V_0 \otimes \Cb)$.
Through $\iota$, the elements $L, R \in \slf_2(\Cb) \cong \spf(W \otimes \Cb)$ defined in \eqref{eq:RL} act on $\Cb[\zf_1, \zf_2, \zf_3, \zf_4]$ as (see \cite[Lemma A.2]{FM06})
\begin{equation}
  \label{eq:fock1}
      \omega(L) = - 8\pi  \partial_{\vf} \partial_{\overline{\vf}}+    \frac{1}{8\pi} \wf \overline{\wf},~
  \omega(R) = - 8\pi \partial_{\wf} \partial_{\overline{\wf}} + \frac{1}{8\pi} \vf \overline{\vf}.
\end{equation}
Using the isomorphism
\begin{align*}
  \slf_2(\Cb)^2 &\to \mathfrak{o}(V_0 \otimes \Cb)  \\
(A, B) &\mapsto (v \mapsto A v + v B^t),
\end{align*}
we see that the elements $L_1 = (L, 0), L_2 = (0, L_2), R_1 = (R, 0), R_2 = (0, R)$ in $  \slf_2(\Cb)^2$ act on $\Cb[\zf_1, \zf_2, \zf_3, \zf_4]$ through $\iota$ as (see \cite[Lemma A.1]{FM06})
\begin{equation}
  \label{eq:fock}
  \begin{split}
    \omega(L_1) &= 8\pi  \partial_{\vf} \partial_{{\wf}} -    \frac{1}{8\pi} \overline{\vf} \overline{\wf},~
  \omega(R_1) =  8\pi \partial_{\overline{\vf}} \partial_{\overline{\wf}} - \frac{1}{8\pi} \vf \wf,\\
    \omega(L_2) &=  8\pi  \partial_{\overline{\vf}} \partial_{{\wf}} -    \frac{1}{8\pi} {\vf} \overline{\wf},~
  \omega(R_2) = 8\pi \partial_{{\vf}} \partial_{\overline{\wf}} - \frac{1}{8\pi} \overline{\vf} \wf,
  \end{split}
\end{equation}
For convenience, we slightly abuse notation and write $L, R, L_j, R_j$ for their corresponding actions on $\Ps(\Cb^4)$.
When we consider the decomposition $V_0 = V_{00} \oplus U_{D}$ in \eqref{eq:V0decomp}, the map $\iota$ induces $\mathbb{S}(V_{00}(\Rb)) \cong \Ps(\Cb^3) = \Cb[\zf_1, \zf_3, \zf_4]$ and  $\mathbb{S}(U_{D}(\Rb)) \cong \Ps(\Cb) = \Cb[\zf_2]$.
For $a, b, c \in \Nb_0$, we also define $\varphi^{(a, b)}_{00, \infty} \in \mathbb{S}(V_{00}(\Rb))$ and $\varphi^c_{D, \infty} \in \mathbb{S}(\Rb)$ by
\begin{equation}
  \label{eq:varphi00}
  \varphi^{(a, b)}_{00, \infty} := \lp - \frac{\sqrt{2}}{4\pi}\rp^{a+b}  \iota^{-1}(\zf_1^a \wf^{b}),~
  \varphi^{(c)}_{D, \infty}:=  \lp - \frac{\sqrt{2}i}{4\pi}  \rp^c \iota^{-1}(\zf_2^c).
\end{equation}

For $r \in \Nb_0$, we have the operators $\RC_{r}, \tRC_{r}$ defined in \eqref{eq:RC} that also act on $ \Ps(\Cb^4)$.
They are related by the following lemma.
\begin{lemma}
\label{lemma:RC}
In the notations above, we have
\begin{equation}
  \label{eq:RL}
  \begin{split}
    (    L_1 + L_2) \RC_r(\wf) &= -L (\tRC_r ( \vf +   (-1)^r  \bvf)),\\
    (\tRC_r - (R_1 + R_2)^r) ( \vf) &= (-8\pi)^{1-2r} 2^r R(\wf^r  p_r(\vf, \bvf)),\\
(\tRC_r - (R_1 + R_2)^r) ( \bvf) &= (-8\pi)^{1-2r} 2^r R(\wf^r p_r(\bvf, \vf)),~
\end{split}
\end{equation}
where $p_r(X, Y) := - (\tQ_r(X, Y) - (X + Y)^r)/Y \in \Qb[X, Y]$ for all $r \in \Nb_0$.
\end{lemma}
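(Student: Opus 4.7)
The proof reduces to polynomial identities in the Fock model $\Ps(\Cb^4) = \Cb[\vf, \bvf, \wf, \bwf]$, using the explicit formulas \eqref{eq:fock1} and \eqref{eq:fock} for $L, R, L_j, R_j$ as first-order differential plus multiplicative operators.

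The key observation is that on any monomial in $\vf, \bvf, \wf$ (i.e., one not involving $\bwf$), the derivative pieces $\partial_{\bwf}$ appearing in $R_1$ and $R_2$ annihilate it, so only the multiplicative pieces $-\tfrac{1}{8\pi}\vf\wf$ (in $R_1$) and $-\tfrac{1}{8\pi}\bvf\wf$ (in $R_2$) contribute. A direct computation then gives
\begin{align*}
R_1^a R_2^b(\wf) &= (-8\pi)^{-(a+b)}\vf^a\bvf^b\wf^{a+b+1},\\
R_1^a R_2^b(\vf) &= (-8\pi)^{-(a+b)}\vf^{a+1}\bvf^b\wf^{a+b},\\
R_1^a R_2^b(\bvf) &= (-8\pi)^{-(a+b)}\vf^a\bvf^{b+1}\wf^{a+b},
\end{align*}
so in particular $R_1, R_2$ commute on these vectors. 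Substituting into $Q_r(R_1, R_2)$ and $\tQ_r(R_1, R_2)$ and dividing by $(-4\pi)^r$ yields the closed forms $\RC_r(\wf) = (32\pi^2)^{-r}\wf^{r+1}Q_r(\vf,\bvf)$, $\tRC_r(\vf) = (32\pi^2)^{-r}\vf\wf^r\tQ_r(\vf,\bvf)$, $\tRC_r(\bvf) = (32\pi^2)^{-r}\bvf\wf^r\tQ_r(\vf,\bvf)$, and — interpreting $(R_1+R_2)^r$ with the same $(-4\pi)^{-r}$ normalization as $\tRC_r$, which is the only reading making the constants balance — $(R_1+R_2)^r(\vf) = (32\pi^2)^{-r}\vf\wf^r(\vf+\bvf)^r$.

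For the first identity of the lemma, use the defining relation $\tQ_r(X,Y)(X+(-1)^rY) = Q_r(X,Y)(X+Y)$ to collapse $\tRC_r(\vf+(-1)^r\bvf) = (32\pi^2)^{-r}\wf^r Q_r(\vf,\bvf)(\vf+\bvf)$. Applying $L$ to this and $L_1+L_2$ to $\RC_r(\wf)$, the multiplicative $\tfrac{1}{8\pi}\bwf(\vf+\bvf)\wf^{r+1}Q_r(\vf,\bvf)$ contributions cancel identically (taking into account the sign from $-L$), and equality of the remaining derivative pieces is precisely the polynomial identity \eqref{eq:Qdiff}, $\partial_\vf\partial_{\bvf}(Q_r(\vf,\bvf)(\vf+\bvf)) = (r+1)(\partial_\vf+\partial_{\bvf})Q_r(\vf,\bvf)$.

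For the second identity, the definition $p_r(X,Y) = -(\tQ_r(X,Y) - (X+Y)^r)/Y$ (well-defined since $\tQ_r(X,0) = X^r$) and the closed forms above give directly
$$(\tRC_r - (R_1+R_2)^r)(\vf) = -(32\pi^2)^{-r}\vf\bvf\wf^r p_r(\vf,\bvf).$$
On the other side, since $\partial_{\bwf}$ annihilates $\wf^r p_r(\vf,\bvf)$, one has $R(\wf^r p_r(\vf,\bvf)) = \tfrac{1}{8\pi}\vf\bvf\wf^r p_r(\vf,\bvf)$; the scalar identity $(-8\pi)^{1-2r}\cdot 2^r\cdot(8\pi)^{-1} = -(32\pi^2)^{-r}$ then matches the two sides. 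The third identity is analogous once one notes the symmetry $\bvf p_r(\vf,\bvf) = \vf p_r(\bvf,\vf)$, which follows from the symmetry of $\tQ_r(X,Y) - (X+Y)^r$ in $X,Y$: for $r$ even this is immediate since $Q_r$ is symmetric, while for $r$ odd the anti-symmetry $Q_r(Y,X) = -Q_r(X,Y)$ together with the factorization $(X-Y)\mid Q_r$ implied by $Q_r(X,X)=0$ makes $\tQ_r(X,Y) = Q_r(X,Y)(X+Y)/(X-Y)$ symmetric.

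The only real obstacle is the bookkeeping of the pervasive $\pm 4\pi$ and $\pm 8\pi$ factors, together with fixing the normalization convention for $(R_1+R_2)^r$ so that its scaling matches that of $\tRC_r$. Everything else is routine polynomial manipulation once the closed forms above are in hand.
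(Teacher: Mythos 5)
Your proof follows essentially the same route as the paper's: establish the closed forms $\RC_r(\wf) = (32\pi^2)^{-r}\wf^{r+1}Q_r(\vf,\bvf)$, $\tRC_r(\vf) = (32\pi^2)^{-r}\vf\wf^r\tQ_r(\vf,\bvf)$, $\tRC_r(\bvf) = (32\pi^2)^{-r}\bvf\wf^r\tQ_r(\vf,\bvf)$ in the Fock model (the paper states these as ``easy to check''; you supply the reason, namely that $R_1,R_2$ act as pure multiplication on polynomials free of $\bwf$ and hence commute there), then reduce the first identity to the polynomial identity \eqref{eq:Qdiff} after cancelling the $\bwf$-terms from $L$ and $L_1+L_2$, and read off the second and third from the closed forms and the definition of $p_r$. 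Your flag about the normalization of $(R_1+R_2)^r$ is a real catch: with the literal unnormalized reading one gets $(R_1+R_2)^r(\vf) = (-8\pi)^{-r}\vf\wf^r(\vf+\bvf)^r$ rather than the $(32\pi^2)^{-r}\vf\wf^r(\vf+\bvf)^r$ needed to match the stated right-hand side, so the statement implicitly carries the same $(-4\pi)^{-r}$ normalization on $(R_1+R_2)^r$ as on $\tRC_r$, exactly as you observe.
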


\begin{proof}
  It is easy to check that
  \begin{equation}
    \label{eq:RCact}
  \begin{split}
  \RC_r(\wf) &= (-8\pi)^{-2r}2^r \wf^{r+1} Q_r(\vf, \bvf),\\
    \tRC_r(\vf) &= (-8\pi)^{-2r}2^r \wf^{r} \tQ_r(\vf, \bvf) \vf,\\
    \tRC_r(\bvf) &= (-8\pi)^{-2r}2^r \wf^{r} \tQ_r(\vf, \bvf) \bvf.
  \end{split}
  \end{equation}
  This leads directly to the second equation in \eqref{eq:RL} from the definition.
  To prove the first equation, it is enough to verify
  $$
  (L_1 + L_2)\wf^{r+1} Q_r(\vf, \bvf)
  =
  -  L(  Q_r(\vf, \bvf)(\vf + \bvf)\wf^{r}),$$
  which follows from \eqref{eq:Qdiff}.
\end{proof}

\begin{proposition}
  \label{prop:diffop}
  Let $\phi_f \in \Sc(W_\alpha(\hat F))$ and $\varphi_f \in \Sc(\hat V; \Qab)$ be matching sections as in Theorem \ref{thm:match}, and denote $\e := \sgn(\alpha_1) = - \sgn(\alpha_2)$.
  Then  for $\tI$ defined in \eqref{eq:tI}, we have
  \begin{equation}
    \label{eq:Ldiff}
    \begin{split}
\fac          (L_1 + L_2) \RC_r &\tI(    g_{}) %
=
- (-4\pi)^{-r} (R_1 + R_2)^r (E^*(g_{}, \phi^{(1, -1)}) - (-1)^r E^{*}(g_{}, \phi^{(-1, 1)}))\\
 & -2\log \ve_\vc
\fac \tRC_r
  \Ic_f(g_{}, \varphi^{(1, -1)}_{} - (-1)^r \varphi^{(-1, 1)}_{}, \vc),
    \end{split}
  \end{equation}
  where $\Ic_f$ is defined in \eqref{eq:If} and $\varphi^{(\pm 1, \mp 1)} = \varphi_f \otimes \varphi^{(\pm 1, \mp 1)}_{\infty} \in \Sc(V(\Ab))$ is defined in \eqref{eq:varphiinf}.
\end{proposition}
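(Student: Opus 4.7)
The plan is to push the composite differential operator $(L_1+L_2)\RC_r$ from the $H_0$-variable through the Weil representation onto the archimedean Schwartz function $\varphi_{0,\infty}^{(1,1)}$, use the polynomial identities in the Fock model (Lemma \ref{lemma:RC}) to exchange $H_0$-side derivatives for $G$-side ones, apply Theorem \ref{thm:modified} after integration by parts over $[G]$ to compute the action of $L$ on $\vartheta_1(g',\varphi_1^+,\tvc_C)$, and then match the resulting theta integral against the coherent Eisenstein series via Theorem \ref{thm:match}.

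First, by the intertwining property of the Weil representation I can write
\begin{equation*}
  (L_1+L_2)\RC_r\tI(g) = \int_{[G]}\theta_0\bigl(g',g,\,\omega_{H_0}((L_1+L_2)\RC_r)\varphi_{0,\infty}^{(1,1)}\cdot\varphi_f^0\bigr)\vartheta_1(g',\varphi_1^+,\tvc_C)\,dg'.
\end{equation*}
Under the Fock-model identification $\iota(\varphi_{0,\infty}^{(1,1)})=-\tfrac{\sqrt{2}}{4\pi}\wf$ of \eqref{eq:iotavarphi0}, the first identity of Lemma \ref{lemma:RC} converts this into
\begin{equation*}
  \omega_{H_0}\bigl((L_1+L_2)\RC_r\bigr)\varphi_{0,\infty}^{(1,1)} = -\omega_G(L)\,\omega_{H_0}(\tRC_r)\bigl(\varphi_{0,\infty}^{(1,-1)}-(-1)^r\varphi_{0,\infty}^{(-1,1)}\bigr),
\end{equation*}
once the $\iota$-scalars of $\vf,\bvf$ are matched against those of $\varphi_{0,\infty}^{(1,-1)},\varphi_{0,\infty}^{(-1,1)}$.

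Next, I transfer $\omega_G(L)$ to $L_{g'}$ acting on the theta kernel in the $g'$-variable, and integrate by parts over the finite-volume quotient $[G]$ to move $L$ onto $\vartheta_1(g',\varphi_1^+,\tvc_C)$ with a sign change. Theorem \ref{thm:modified} gives
\begin{equation*}
  L\vartheta_1(g',\varphi_1^+,\tvc_C)=\vartheta_1(g',\varphi_1^-,\vc)+\log\ve_\vc\,\Theta_{1,C}(g',\varphi_1^-,\vc).
\end{equation*}
After pulling $\tRC_r$ outside the $[G]$-integral (it acts only in $g$), the first summand becomes $\tRC_r\Ic(g,\varphi^{(1,-1)}-(-1)^r\varphi^{(-1,1)},\vc)$, which by Theorem \ref{thm:match} equals $\fac^{-1}\tRC_r\bigl(E^*(g,\phi^{(1,-1)})-(-1)^r E^*(g,\phi^{(-1,1)})\bigr)$; the second summand is precisely $\log\ve_\vc\,\tRC_r\Ic_f(g,\varphi^{(1,-1)}-(-1)^r\varphi^{(-1,1)},\vc)$ via the identification \eqref{eq:If01}, which matches the error term of \eqref{eq:Ldiff}.

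Finally, I would replace $\tRC_r$ acting on the coherent Eisenstein series by $(-4\pi)^{-r}(R_1+R_2)^r$ by invoking the second and third identities of Lemma \ref{lemma:RC}: the discrepancy is expressed in the Fock model as $\omega_G(R)$ applied to a Schwartz function of $\wf^r$-type, which after being transferred to $g'$-differentiation and integrated by parts pushes an $R$ onto $\vartheta_1(g',\varphi_1^-,\vc)$, the holomorphic weight-one Hecke cusp form, where the expressions collapse. The main obstacle will be the careful bookkeeping of accumulated normalization scalars --- from the $\iota$-identifications in \eqref{eq:iotavarphi0}, the explicit powers of $-8\pi$ in Lemma \ref{lemma:RC}, the constant $\fac$ from Theorem \ref{thm:match}, and the sign changes from the two successive integrations by parts --- to arrive at the precise coefficients in \eqref{eq:Ldiff} (including the factor of $2$ on $\log\ve_\vc$, which originates from passing between $H_1(\Qb)^+$ and the full $H_1(\Qb)$).
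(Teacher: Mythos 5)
Your proposal matches the paper's proof step for step: intertwine $(L_1+L_2)\RC_r$ through the Weil representation to a $g'$-side $L$ acting on $\theta_0$ via the first identity of Lemma \ref{lemma:RC} and \eqref{eq:iotavarphi0}, integrate by parts over $[G]$, expand $L\vt_1(g',\varphi_1^+,\tvc_C)$ using Theorem \ref{thm:modified}, and finally identify the main term with the coherent Eisenstein series using Theorem \ref{thm:match} together with the remaining identities of Lemma \ref{lemma:RC} and the holomorphy $R\vt_1(g',\varphi_1^-,\vc)=0$. Two points of bookkeeping deserve attention. Your final paragraph applies Theorem \ref{thm:match} first and then proposes to replace $\tRC_r$ acting on $E^*$; but the justification you give --- expressing the discrepancy as $\omega_G(R)$ on a Schwartz function, transferring to $g'$-differentiation, and integrating by parts onto $\vt_1$ --- only makes sense while the $[G]$-integral is still present. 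The replacement $\tRC_r\mapsto(-4\pi)^{-r}(R_1+R_2)^r$ should therefore be carried out inside $\int_{[G]}\tRC_r\bigl(\theta_0(g',g,\varphi_0^{(\e,-\e)})\bigr)\vt_1(g',\varphi_1^-,\vc)\,dg'$, and only afterwards should Theorem \ref{thm:match} be invoked (as the paper does); your argument is salvaged by this re-ordering. Secondly, Theorem \ref{thm:modified} yields the term $\log\ve_\vc\,\Theta_{1,C}$, and since $\vc$ is odd, $-1\notin K_\vc$ and $\Theta_{1,C}=2\Theta_1$ where $\Theta_1$ is the kernel entering $\Ic_f$ through \eqref{eq:If01}. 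This is the source of the factor $2$ in $-2\log\ve_\vc$; you acknowledge it in your closing sentence, but your intermediate identification reads $\log\ve_\vc\,\tRC_r\Ic_f$ rather than $2\log\ve_\vc\,\tRC_r\Ic_f$ and should be corrected for consistency.
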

\begin{proof}
  Suppose $\varphi_f = \varphi_{0, f} \otimes \varphi_{1, f}$ and denote $\varphi^\pm_1 = \varphi_{1, f} \varphi^\pm_\infty, \varphi_{0}^{(k, k')} = \varphi_{0, f}\varphi^{(k, k')}_{0, \infty}$. Then
  \begin{align*}
&    (L_1 + L_2) \RC_r (\tI(    g_{}))
    =
      \int_{[G]}    (L_1 + L_2) \RC_r (\theta_0(g', g, \varphi^{(1, 1)}_{0}))
      \vt_1(g', \varphi_1^+, \tvc_C) dg'\\
    &=
     \int_{[G]}    L (\tRC_r (\theta_0(g' , g_{}, \varphi^{(1, -1)}_{0}) - (-1)^r \theta_0(g' , g_{}, \varphi^{(-1, 1)}_{0})))
      \vt_1(g', \varphi_1^+, \tvc_C) dg'
  \end{align*}
  by Lemma \ref{lemma:RC} and (\ref{eq:iotavarphi0}).
Now applying Stokes' theorem and Theorem \ref{thm:modified} gives us
\begin{align*}
    \int_{[G]}   & L( \tRC_r (\theta_0(g'  , g_{}, \varphi_{0}^{\e, -\e})))
        \vt_1(g',  \varphi_1^+, \tvc_C) dg'
  =
-    \int_{[G]}     \tRC_r (\theta_0(g'  , g_{}, \varphi_{0}^{\e, -\e}))
    L        \vt_1(g',  \varphi_1^+, \tvc_C) dg'    \\
  &=
-   \int_{[G]}     \tRC_r (\theta_0(g'  , g_{}, \varphi_{0}^{\e, -\e}))
    \lp        \vt_1(g',  \varphi_1^-, \vc)
+ 2\log \ve_\vc \Theta_1(g', \varphi_1^-, \vc)
    \rp
    dg'.
\end{align*}
with $\e = \pm 1$.
Since $R \vt_1(g', \varphi_1^-, \vc) = 0$, we can apply Stokes' theorem, Lemma \ref{lemma:RC} and Theorem \ref{thm:match} to obtain
\begin{align*}
      \int_{[G]}&     \tRC_r (\theta_0(g'  , g_{}, \varphi_{0}^{\e, -\e}))
        \vt_1(g',  \varphi_1^-, \vc) dg'\\
        &=
(-4\pi)^{-r} (R_1 + R_2)^{r}    \int_{[G]}     (\theta_0(g'  , g_{}, \varphi_{0}^{\e, -\e}))
                                                         \vt_1(g',  \varphi_1^-, \vc) dg'\\
&  = (-4\pi)^{-r} (R_1 + R_2)^{r} \Ic(g_{}, \varphi^{(\e, -\e)}, \vc)
  =\facinv (-4\pi)^{-r}  (R_1 + R_2)^{r} E^*(g_{}, \phi^{(\e, -\e)}).
\end{align*}
Putting these together finishes the proof.
\end{proof}

To understand the first term on the right hand side of \eqref{eq:Ldiff}, recall
the decomposition for $\theta_0(g', g^\Delta, \varphi_0)$ in \eqref{eq:theta0decomp} when $\varphi_{0, \infty} \in \mathbb{S}(V_0)$.
This allows us to define
\begin{equation}
  \label{eq:RC'}
  (  \RC^{', \Delta}_{r', (k_1,k_2)}\theta_0)(g', g_{00}^\Delta, \varphi_0)  := (-4\pi)^{-r'} Q_{r', (k_1, k_2)}(R_1', R_2')
\lp  \theta_{00}(g_1', g_{00}, \varphi_{00})   \theta_{D}(g_2', \varphi_{D}) \rp \mid_{g'_1 = g'_2 = g'}
\end{equation}
for $\varphi_0 = \varphi_{00} \otimes \varphi_{D}$ with $\varphi_{00} \in \Sc(V_{00}(\Ab)), \varphi_D \in \Sc(U_D(\Ab))$, and $R'_j$ the raising operator on $g'_j$.
In the Fock model, $R'_1$, resp.\ $R'_2$, acts on $\Cb[\zf_1, \zf_3, \zf_4]$, resp.\ $\Cb[\zf_2]$, as
\begin{equation}
  \label{eq:R'j}
  \omega(R'_1 ) =
   - 8\pi \partial_{\wf} \partial_{\overline{\wf}} + \frac{1}{8\pi} \zf_1^2,~
  \omega(R'_2 ) = \frac{1}{8\pi} \zf_2^2.
\end{equation}
This definition also extends by linearity to all $\varphi_0 \in \Sc(V_0(\Ab))$ satisfying $\varphi_{0, \infty} \in \mathbb{S}(V_0(\Rb))$.
We now record the following lemma.

\begin{lemma}
  \label{lemma:RC'}
  For $r \in \Nb_0$, denote $r_0 := \lfloor r/2\rfloor$.
  Then
  \begin{equation}
    \label{eq:RC't}
    \begin{split}
          (    \tRC_r \theta_0)&(g', g_{00}^\Delta, \varphi^{(1, -1)}_0 + (-1)^r \varphi^{(-1, 1)}_0 )\\
&    =
-2^{2r_0 - r + 1}(    \RC^{', \Delta}_{r_0, (-r + 1/2, r - 2r_0 + 1/2)} \theta_0)(g', g_{00}^\Delta, \varphi_{0, f}(\varphi^{(1, r)}_{00, \infty} \otimes  \varphi^{r - 2r_0}_{D, \infty}) )
    \end{split}
  \end{equation}
\end{lemma}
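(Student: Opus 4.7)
The plan is to reduce the identity to a polynomial identity in the Fock model, using that both $\tRC_r$ and $\RC^{',\Delta}_{r_0,(k_1,k_2)}$ act on theta kernels through Weil-representation images on the underlying Schwartz functions. By the standard equivariance $R\,\theta(g',x,\varphi) = \theta(g',x,\omega(R)\varphi)$, I will reduce the claimed identity of theta kernels to an equality of archimedean Schwartz functions in $\Sc(V_0(\Rb)) \cong \Sc(V_{00}(\Rb))\otimes\Sc(U_D(\Rb))$, where $\tRC_r$ acts via the $V_0$-Weil representation while $R'_1, R'_2$ act on the two tensor factors via the $V_{00}$- and $U_D$-Weil representations; the common finite-adelic factor $\varphi_{0,f}$ is inert under these differential operators and tensors through.

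Applying $\iota$ and working in the polynomial Fock space, the Fock images of $\varphi^{(\pm 1,\mp 1)}_{0,\infty}$ are scalar multiples of $\vf$ and $-\bvf$, so the left-hand input is a multiple of $\vf - (-1)^r\bvf$; applying $\tRC_r$ via Lemma \ref{lemma:RC} yields a scalar multiple of $\wf^r\tQ_r(\vf,\bvf)(\vf - (-1)^r\bvf)$. Using the identity $\tQ_r(X,Y)(X+(-1)^r Y) = Q_r(X,Y)(X+Y)$ built into the definition of $\tQ_r$, together with the substitutions $\vf+\bvf = 2\zf_1$ and $\vf-\bvf = 2i\zf_2$, and then expanding $Q_r(\vf,\bvf) = (\vf+\bvf)^r P_r((\vf-\bvf)/(\vf+\bvf))$ via the second explicit formula for $P_r$, this opens into a polynomial in $\wf, \zf_1, \zf_2$. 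On the right, the Fock image of $\varphi^{(1,r)}_{00,\infty}\otimes\varphi^{r-2r_0}_{D,\infty}$ is a scalar multiple of $\zf_1\wf^r\zf_2^{r-2r_0}$. The key simplification is that $\omega(R'_1) = -8\pi\partial_\wf\partial_{\bwf} + \zf_1^2/(8\pi)$ and $\omega(R'_2) = \zf_2^2/(8\pi)$ act on polynomials free of $\bwf$ purely by multiplication by $\zf_1^2/(8\pi)$ and $\zf_2^2/(8\pi)$; iterating, $(R'_1)^{r_0-s}(R'_2)^s$ contributes $(8\pi)^{-r_0}\zf_1^{2(r_0-s)}\zf_2^{2s}$, so the right-hand Fock polynomial factors as $\zf_1\wf^r\zf_2^{r-2r_0}\cdot Q_{r_0,\,(-r+\frac12,\,r-2r_0+\frac12)}(\zf_1^2,\zf_2^2)$ up to a scalar.

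Matching the two Fock polynomials, the reindexing $s \mapsto r_0-s$ in the sum defining $Q_{r_0,(k_1,k_2)}$ aligns its binomial coefficients with those of the Legendre expansion of $Q_r(\vf,\bvf)$, reducing the equality of polynomial parts to a direct substitution. It remains to collect the scalar constants $\sqrt 2/(4\pi)$ from the Fock image of $\varphi^{(1,-1)}_{0,\infty}$, $(-8\pi)^{-2r}2^r$ from $\tRC_r(\vf) = (-8\pi)^{-2r}2^r\wf^r\tQ_r(\vf,\bvf)\vf$, $(-\sqrt 2/(4\pi))^{1+r}(-\sqrt 2 i/(4\pi))^{r-2r_0}$ from the normalizations of $\varphi^{(1,r)}_{00,\infty}$ and $\varphi^{r-2r_0}_{D,\infty}$, and $(-4\pi)^{-r_0}(8\pi)^{-r_0}$ from the definition of $\RC^{',\Delta}$; their ratio will produce exactly $-2^{2r_0-r+1}$.

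The main obstacle will be the bookkeeping of constants and signs, and reconciling the two parity cases $r$ even ($r-2r_0=0$) and $r$ odd ($r-2r_0=1$) uniformly: they yield Fock polynomials with slightly different structural monomials (an ``extra'' $\zf_2$ versus $\zf_1^2$), which must be reorganized against the uniform right-hand side indexed by $r-2r_0 \in \{0,1\}$. Once this is handled, the underlying polynomial identity is a direct consequence of the explicit Legendre-polynomial formula.
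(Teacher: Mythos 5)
Your strategy mirrors the paper's: pass to the Fock model via $\iota$, note that $R'_1,R'_2$ act as multiplication by $\zf_1^2/(8\pi),\zf_2^2/(8\pi)$ on $\bwf$-free polynomials, and reduce everything to a scalar-and-index bookkeeping exercise against the explicit formula \eqref{eq:Pr} for $P_r$.

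However, there is a concrete gap at the pivot step. You correctly compute that
$\iota\bigl(\varphi^{(1,-1)}_{0,\infty}+(-1)^r\varphi^{(-1,1)}_{0,\infty}\bigr)\propto \vf-(-1)^r\bvf$,
and then assert that ``the identity $\tQ_r(X,Y)(X+(-1)^rY)=Q_r(X,Y)(X+Y)$'' simplifies $\wf^r\tQ_r(\vf,\bvf)(\vf-(-1)^r\bvf)$. But the defining identity \eqref{eq:Qs} rewrites $\tQ_r(X,Y)\cdot\bigl(X+(-1)^rY\bigr)$, not $\tQ_r(X,Y)\cdot\bigl(X-(-1)^rY\bigr)$; your expression is not of that shape. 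Pushing through anyway, for $r$ even one gets $Q_r(\vf,\bvf)(\vf-\bvf)\propto\zf_2\,Q_r(\vf,\bvf)$, whose $\zf_2$-exponents are all odd, whereas the right-hand Fock monomial $\zf_1\wf^r\zf_2^{r-2r_0}\cdot Q_{r_0,(k_1,k_2)}(\zf_1^2,\zf_2^2)$ has $\zf_2$-exponents all even (since $r-2r_0=0$); for $r$ odd the parities likewise fail to match. So the computation cannot close as you have set it up, and the ``$r$ even versus $r$ odd'' complication you anticipate is a symptom of this parity mismatch rather than mere bookkeeping.

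The resolution is that the paper's own proof of this lemma actually starts from $\varphi^{(1,-1)}_0-(-1)^r\varphi^{(-1,1)}_0$, whose Fock image is $\propto \vf+(-1)^r\bvf$. Then the identity is applicable as written, $\tQ_r(\vf,\bvf)\bigl(\vf+(-1)^r\bvf\bigr)=Q_r(\vf,\bvf)(\vf+\bvf)=2\zf_1\,Q_r(\vf,\bvf)$, and the two parity cases are handled uniformly: the monomials $\zf_1^{2s+1}\zf_2^{r-2s}$ ($0\le s\le r_0$) appearing in $2\zf_1Q_r(\vf,\bvf)$ land exactly on $\zf_1\zf_2^{r-2r_0}$ times even powers. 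This also matches how the lemma is invoked later (Proposition \ref{prop:rational} applies it to $\varphi^{(1,-1)}-(-1)^r\varphi^{(-1,1)}$). You faithfully reproduced the printed sign without checking it; catching this discrepancy — either correcting the displayed sign to a minus, or observing that the computation as literally stated cannot succeed — is the missing ingredient. Once the sign is fixed, your outline becomes the paper's proof, and the remaining work really is the constant-chasing you anticipate.
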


\begin{proof}
Suppose $\varphi_{0, f} = \varphi_{00, f} \otimes \varphi_{D, f}$.
Then equations in \eqref{eq:iotavarphi0} implies
$$
(    \tRC_r \theta_0)(g', g_{00}^\Delta, \varphi^{(1, -1)}_0 - (-1)^r \varphi^{(-1, 1)}_0 )
=
 \frac{\sqrt{2}}{4\pi}
\theta_0(g', g_{00}^\Delta, \varphi_{0, f}\iota^{-1}\tRC_r(\vf  + (-1)^r \bvf)).
$$
From \eqref{eq:RCact} and the definition of $P_r$ in \eqref{eq:Pr}, we have
\begin{align*}
  \tRC_r(\vf + (-1)^r \bvf))
  &=
(-4\pi)^{-2r}2^{-r} \wf^{r} \tQ_r(\vf, \bvf)( \vf  + (-1)^r \bvf)
    = (-4\pi)^{-2r}2^{-r} \wf^{r} Q_r(\vf, \bvf)( \vf  + \bvf)\\
  &= (-4\pi)^{-2r}2 \wf^{r} \zf_1^{r+1} P_r(i\zf_2/\zf_1)\\
  &= (-4\pi)^{-2r}    2 \wf^{r} \zf_1
    (-1)^{r_0} \sum_{s = 0}^{r_0} \binom{r_0 - r - 1/2}{r_0 - s} \binom{r - r_0 - 1/2}{s} \zf_1^{2s} (i\zf_2)^{r-2s}\\
  &= (-4\pi)^{r_0 -2r}2 i^r (-2)^{r_0} Q_{r_0, (-r+1/2, r-2r_0 + 1/2)}(R_1', R_2') \zf_1 \wf^r \zf_2^{r - 2r_0}.
\end{align*}
Substituting the definition \eqref{eq:varphi00} finishes the proof.
\end{proof}

The following technical lemma concerns a change of regularized integrals and follows from the proof of Lemma 5.4.3 in \cite{Li18}.
\begin{lemma}
  \label{lemma:switch}
  Given $\varphi_{i, f} \in \Sc(\hat V_i)$ with $i = 0, 1$,  let $\Gamma \subset \PSL_2(\Zb) \subset G_{00}(\Qb)$ be a congruence subgroup that acts trivially on $\varphi_{0, f}$.
  For any $a \ge 1, b \ge 0$ and $f \in M^!_{-2b}(\Gamma)$, we have
  \begin{align*}
    \int^{\mathrm{reg}}_{\Gamma \backslash \Hb} y^b f(z)
    \int^{\mathrm{}}_{[\SL_2]}
    &
\theta_1(g, \xi, \varphi_1^-)    \theta_0(g, g_{z}^\Delta, \varphi_{0, f}
  (\varphi^{(a, b)}_{00, \infty} \otimes  \varphi^{b-a+1}_{D, \infty}) )
dg  d\mu(z)\\
&=    \int^{\mathrm{reg}}_{[\SL_2]} \theta_1(g, \xi, \varphi_1^-)\int^{\mathrm{reg}}_{\Gamma \backslash \Hb} y^b f(z)
    \theta_0(g, g_{z}^\Delta, \varphi_{0, f}
  (\varphi^{(a, b)}_{00, \infty} \otimes  \varphi^{b-a+1}_{D, \infty}) )
d\mu(z)  dg.
  \end{align*}
\end{lemma}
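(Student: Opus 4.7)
The plan is to follow the argument in the proof of Lemma 5.4.3 of \cite{Li18}. Each of the two iterated integrals is individually regularized: the integral over $\Gamma\backslash\Hb$ is a Borcherds-style regularized theta integral needed because $f \in M^{!}_{-2b}(\Gamma)$ may have poles at the cusps, while the integral over $[\SL_2]$ is regularized (by Kudla--Rallis) to accommodate the possible slow decay at the cusps arising from the isotropic summand $V_{0}$. Consequently Fubini is not directly applicable, and we must instead truncate, switch, and then take limits.

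First I would rewrite the Schwartz factor $\varphi_{0,f}(\varphi^{(a,b)}_{00,\infty}\otimes\varphi^{b-a+1}_{D,\infty})$ in Fock coordinates (using \eqref{eq:varphi00}) as $\varphi_{0,f}\otimes \iota^{-1}(\zf_1^{a}\wf^{b}\zf_2^{b-a+1})$. This explicit form gives precise control of the Fourier expansion of $\theta_0(g',g_z^{\Delta}, \varphi_0)$ in the variable $z$ at each cusp of $\Gamma$; in particular, because $a\ge 1$, the Fock polynomial is divisible by $\zf_1$, which corresponds to the raising operator on $V_{00}$ and rules out the purely constant contribution along the cuspidal direction that would otherwise obstruct absolute convergence in $z$. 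Next I would truncate $\Gamma\backslash\Hb$ at height $T$, replacing the inner integral by $\int_{\Fc_T}$, and simultaneously truncate $[\SL_2]$ using a Siegel-domain exhaustion at a parameter $T'$. On the doubly-truncated product the integrand is continuous and compactly supported, so Fubini applies.

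The next step is to take $T,T'\to\infty$ and identify the limits on both sides with the regularized integrals appearing in the statement. The boundary contributions at the cusps of $\Gamma\backslash\Hb$ are computed from the constant Fourier coefficient in $z$ of $y^{b}f(z)\theta_0(g',g_z^{\Delta},\cdots)$; using the Fock-model description from Step~1, together with the factorization $\theta_0 \mapsto \theta_{00}\cdot\theta_D$ from \eqref{eq:theta0decomp}, this constant term is expressible in terms of lower-rank Siegel theta kernels. These lower-rank contributions are then shown to match, after integration against $\theta_1(g',\xi,\varphi_1^{-})$, the corresponding Kudla--Rallis boundary terms on the $[\SL_2]$ side, since the anisotropy of $V_1$ makes $\theta_1(g',\xi,\varphi_1^{-})$ cuspidal and of rapid decay on $[\SL_2]$.

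The main obstacle will be the bookkeeping in the last step: matching the Borcherds regularization on $\Gamma\backslash\Hb$ with the regularization on $[\SL_2]$ after the switch, and verifying that the auxiliary polynomial growth terms do not contribute. The hypotheses that $a\ge 1$ and that $\Gamma$ acts trivially on $\varphi_{0,f}$ are what make this matching possible, since the former ensures the vanishing of the obstructing zero-Fourier-coefficient and the latter allows the cuspidal Fourier expansion of $\theta_0$ to be written uniformly over $\Gamma$-cosets. Once these are in place, the identity follows by a direct comparison of limits.
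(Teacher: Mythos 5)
You are right that the paper's argument for this lemma consists entirely of the citation to Lemma 5.4.3 of \cite{Li18}; the truncate--Fubini--compare-limits strategy you outline is the correct shape of that proof. However, you have misplaced the regularizations, and this has to be corrected before the comparison-of-limits step can be carried out. Look at where the $\mathrm{reg}$ superscripts actually sit in the statement: the inner $[\SL_2]$-integral on the left-hand side carries \emph{no} regularization. It converges absolutely because, as you yourself note at the end of your sketch, $V_1$ is anisotropic and $\varphi^-_\infty(0)=0$, so $\theta_1(g,\xi,\varphi^-_1)$ is a rapidly decreasing function of $g$, and its product with the (moderately growing) $\theta_0$ is integrable over $[\SL_2]$ for each fixed $z$ and $\xi$. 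There is no Kudla--Rallis regularization anywhere in the lemma --- that device is for $\int_{[\SL_2]}\theta(g,h,\varphi)\,dg$ with a generic Schwartz function on a high-Witt-rank space, whereas here the cuspidal factor $\theta_1$ removes the divergence entirely. The only $[\SL_2]$-regularization appears on the \emph{outer} integral of the right-hand side, and it is needed because the inner Borcherds-regularized $z$-integral, viewed as a function of $g$, is a regularized theta lift of a weakly holomorphic form and can grow exponentially at the cusp of $[\SL_2]$; it has nothing to do with the isotropy of $V_0$.

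Once that is corrected, the two iterated limits you must compare are: (i) truncate $[\SL_2]$ at $T'$ and let $T'\to\infty$ first, then run the Borcherds regularization $T\to\infty$; versus (ii) run the Borcherds regularization first, then truncate at $T'$ and take the limit/constant term. Your heuristic that $a\ge 1$ gives a factor of $\zf_1$ which controls the growth of $\theta_0(g,g_z^\Delta,\cdot)$ near the $\Gamma$-cusps points in the right direction, but as written it is only a heuristic: you would need to write out the Fourier expansion of $\theta_0$ in $z$ in the mixed model at each cusp, isolate the coefficient that survives as $y\to\infty$, and verify that for $a\ge 1$ it vanishes or contributes only a controlled boundary term; the $\Gamma$-invariance of $\varphi_{0,f}$ is what makes this expansion uniform across cusps. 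That tail estimate is the actual content of \cite{Li18} that your sketch currently takes on faith.
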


\subsection{Fourier Expansion of $\tI$}
\label{subsec:FEtI}
To evaluate the Fourier expansion of $\tI$ in \eqref{eq:tI}, we change to a mixed model of the Weil representation using the partial Fourier transform $\Fcr_1$ defined in \eqref{eq:Fc}.

Throughout the section, we write
\begin{equation}
  \label{eq:g0}
g_0 = (g_{z_1}, g_{z_2}) \in G_0(\Rb)
\end{equation}
 for $(z_1, z_2) \in \Hb^2$ with $z_i = x_i + iy_i$
and $g_\taub \in G(\Rb)$  with $\taub = \ub + i\vb \in \Hb$, then \eqref{eq:WeilFourier} implies
        \begin{equation}
          \label{eq:omega0}
\begin{split}
  \omega_{0, \infty}(g_0)&       \Fcr_{1}(\varphi_{0, \infty})((0, r), \nu)
      =  \sqrt{y_1y_2} \ebf(r (x_2 \nu + x_1\nu')) \Fcr_{1}(\varphi_{0, \infty})((0, r \sqrt{y_1y_2}), \nu \sqrt{y_2/y_1}),\\
      \omega_{0, \infty}(g_\taub)
      & \Fcr_{1}(\varphi_{0, \infty})( (0, r), \nu)
      =
        \sqrt{\vb} \ebf(-\ub\nu\nu') \Fcr_{1}(\varphi_{0, \infty})((0, r/\sqrt{\vb}), \sqrt{\vb} \nu).
        \end{split}
        \end{equation}
        Also when $\varphi_{0, \infty} = \varphi^{(k, k')}_{0, \infty}$ with $k, k' = \pm 1$ as given in \eqref{eq:varphiinf} and \eqref{eq:tv}, we have
\begin{equation}
  \label{eq:FTv}
  \Fcr_{1}(    \varphi^{(k, k')}_{0, \infty})((0, r), \nu) =
  \varphi^{(k, k')}_{0, \infty}(i r, 0, \nu) e^{-2\pi r^2}
\end{equation}
with $r \in \Rb, \nu = (\nu_1, \nu_2) \in \Rb^2$.
After applying Poisson summation and unfolding, we can rewrite the theta kernel $\theta_0(g, g_0, \varphi_0)$ as
\begin{align*}
  \theta_0(g, g_0, \varphi_0)
  &=    \sum_{\lambda \in V_0(\Qb)} \omega_0(g, g_0) \varphi_0(\lambda)
    =    \sum_{\substack{\nu \in V_{-1}(\Qb)\\ \eta \in \Qb^2}}
  \omega_{-1}(g) \omega_0(g_0) \Fcr_1(\varphi_0)(\eta g, \nu ) \\
  &=     \sum_{\nu \in V_{-1}(\Qb)}
    \omega_{-1}(g) \omega_0(g_0)
    \lp      \Fcr_1(\varphi_0)((0, 0), \nu) +
    \sum_{\substack{r \in \Qb^\times\\ \gamma \in \Gamma_\infty\backslash \SL_2(\Zb)}}
  \Fcr_1(\varphi_0)((0, r)\gamma g, \nu)
    \rp.
\end{align*}
For a bounded, integrable function $f$ on $[G]$ such that $\theta_0(g, g_0, \varphi_0) f(g)$ is
right $\SL_2(\hat\Zb)\SO_2(\R)$-invariant,
we have
$$
I_0(g_0, \varphi_0, f)
= \int_{[G]} \theta_0(g, g_0, \varphi_0) f(g) dg
= \facinv \int_{\SL_2(\Zb) \backslash \Hb} \theta_0(g_\taub, g_0, \varphi_0) f(g_\taub) d\mu(\taub),
$$
which    can be written as
$   I_0(g_0, \varphi_0, f) =   I^0_0(g_0, \varphi_0, f) +   I^+_0(g_0, \varphi_0, f)$ with (see e.g.\ equation (4.2) in \cite{Kudla16})
\begin{equation}
  \label{eq:I02}
  \begin{split}
I^0_0(g_0, \varphi_0, f)  &:=
\facinv         \sum_{\nu \in V_{-1}(\Qb)}
\int_{\SL_2(\Zb)\backslash\Hb}         \omega_{-1}(g_\taub) \omega_0(g_0) \Fcr_1(\varphi_0)((0, 0), \nu) f(g_\taub) d\mu(\taub), \\
I^+_0(g_0, \varphi_0, f)
& =
\facinv
\sum_{\substack{\nu \in V_{-1}(\Qb),~ r \in \Qb^\times\\ \gamma \in \Gamma_\infty\backslash \SL_2(\Zb)}}
\int_{\SL_2(\Zb) \backslash \Hb} f(g_{\gamma \taub})
\omega_{-1}(g_{\gamma \taub}) \omega_0(g_0)
  \Fcr_1(\varphi_0)((0, r)g_{\gamma\taub}, \nu)  d\mu(\taub)
\\
&=
    \sum_{\nu \in V_{-1}(\Qb),~ r \in \Qb^\times}
    \Fcr_{1}(\varphi_{0, f})((0, r), \nu)
    \Ff_{r, \nu}(\varphi_{0, \infty})(z_1, z_2, f )\\
\Ff_{r, \nu} ( \varphi)(z_1, z_2, f)
  &:=    \facinv \int_{\Gamma_\infty\backslash \Hb}
(    \omega_0(g_\taub, g_{0}) \Fcr_{1}(\varphi))((0, r), \nu )f(g_\taub) d\mu(\taub).
    \end{split}
  \end{equation}
  Using the $\SL_2(\hat\Zb)$-invariance of $\theta_0(g, g_0, \varphi_0)f(g)$, we can rewrite for any $N \in \Nb$
  \begin{equation}
    \label{eq:Fb2}
\Ff_{r, \nu} ( \varphi)(z_1, z_2, f)
  :=    N^{-1} \facinv \int_{\Gamma_\infty^N \backslash \Hb}
(    \omega_0(g_\taub, g_{0}) \Fcr_{1}(\varphi))((0, r), \nu )f(g_\taub) d\mu(\taub)
  \end{equation}
with $\Gamma_\infty^N := \{n(Nb): b \in \Zb\} \subset \Gamma_\infty$.

For our purpose, we are interested in the case when $f(g) = \vartheta_1(g, \varphi_1, \rho)$ with $\rho \in\{ \vc, \tvc_C\}, \varphi_1 = \varphi^\pm_1$ and $\varphi_0 = \varphi_0^{(k, k')}$ for $k, k' = \pm 1$.
In that case, we have $I_0(g_0, \varphi_0, f) = \Ic(g_0, \varphi, \rho)$ with $\varphi = \varphi_0 \otimes \varphi_1$, and denote
\begin{equation}
  \label{eq:Ic+}
  \Ic^+(g_0, \varphi, \rho) := I^+_0(g_0, \varphi_0, \vartheta(\cdot, \varphi_1, \rho)),~
  \Ic^0(g_0, \varphi, \rho) := I^0_0(g_0, \varphi_0, \vartheta(\cdot, \varphi_1, \rho)).
\end{equation}
This can be extended by $\Qb$-linearity to all $\varphi = \varphi_f \varphi_\infty \in \Sc(V(\Ab))$ with $\varphi_f \in \Sc(\hat V)$ and $\varphi_\infty \in \mathbb{S}(V(\Rb))$.

The  constant term $I^0_0$ in the Fourier expansion of $I_0(g_0, \varphi_0, f)$ is independent of $x_1, x_2$ and can be evaluated by the change of model in section \ref{subsec:FTSW}.
For our purpose, we will state a decay result needed to prove Theorem \ref{thm:main-O22}
\begin{lemma}
  \label{lemma:decay}
  Suppose there is $s \in \Rb$ such that $|f(g_\tau)| \ll v^s$ for all $\tau$ in the usual fundamental domain of $\SL_2(\Zb)\backslash \Hb$.
  Then
  $$
\lim_{y \to \infty} y^{-c} \lp\partial_{y_1}^a \partial_{y_2}^b \frac{I^0_0(g_0, \varphi_0, f)}{\sqrt{y_1y_2}} \rp\mid_{y_1 = y_2 = y} = 0
  $$
  for any $a, b, c \in \Nb_0$ satisfying $a + b + c \ge 1$.
  When $a = b= c = 0$, the limit exists.
\end{lemma}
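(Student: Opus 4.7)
The plan is to exploit the fact that, after dividing by $\sqrt{y_1 y_2}$, the constant term $I_0^0$ depends on $(y_1, y_2)$ only through the single variable $t := \sqrt{y_2/y_1}$, which equals $1$ precisely on the diagonal $y_1 = y_2$. First I would substitute the formula for $I_0^0$ from \eqref{eq:I02} and apply the archimedean Weil representation formula \eqref{eq:omega0} (with $r = 0$), together with the explicit action of $\omega_{-1}(g_\tau)$ on the $V_{-1}$-Schwartz function $\Fcr_1(\varphi_0)((0,0),\cdot)$, to obtain
\begin{align*}
\frac{I_0^0(g_0, \varphi_0, f)}{\sqrt{y_1 y_2}} = \facinv \sum_{\nu \in V_{-1}(\Qb)} \int_{\SL_2(\Zb)\backslash \Hb} v^{1/2}\,\chi_{V_{-1}}(\sqrt v)\,\ebf(uv Q_{-1}(\nu))\, \Fcr_1(\varphi_0)\!\left((0,0),\, \sqrt v\, \nu\, t\right) f(g_\tau)\, d\mu(\tau),
\end{align*}
where the only $(y_1, y_2)$-dependence is through $t$ inside the Schwartz factor.

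For $a = b = 0$, evaluation at $y_1 = y_2 = y$ gives $t = 1$ and the expression is a constant in $y$, so the limit trivially exists. For general $(a, b)$ with $a + b \ge 1$ I would invoke the elementary identity
\begin{align*}
\partial_{y_1}^a \partial_{y_2}^b G\!\left(\sqrt{y_2/y_1}\right)\Big|_{y_1 = y_2 = y} = y^{-(a+b)} \sum_{k=1}^{a+b} c_{a,b,k}\, G^{(k)}(1),
\end{align*}
valid for any smooth $G$ with no $G(1)$ term. This is most transparent in logarithmic coordinates $u_i = \log y_i$: there $\sqrt{y_2/y_1} = \exp((u_2 - u_1)/2)$ depends only on $u_2 - u_1$, so every mixed derivative in the $u_i$ reduces to a derivative of $G$ at $1$, and the relation $\partial_{y_i} = y_i^{-1}\partial_{u_i}$ produces the exact factor $y^{-(a+b)}$ upon restriction to the diagonal.

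Applying this identity with $G(t) = \Fcr_1(\varphi_0)((0,0), \sqrt v\,\nu\, t)$ converts the quantity of interest into $y^{-(a+b)}$ times a $y$-independent sum/integral. The potentially divergent $\nu = 0$ contribution drops out automatically, because $G(t)$ is constant in $t$ when $\nu = 0$ so $G^{(k)}(1) = 0$ for $k \ge 1$; for $\nu \ne 0$, the rapid decay of $\Fcr_1(\varphi_0)((0,0), \cdot)$ in $\sqrt v\,\nu$ dominates both the polynomial growth bound $|f(g_\tau)| \ll v^s$ and the extra $v^{k/2}|\nu|^k$ factors produced by differentiation, so the sum/integral converges absolutely to a constant $C_{a,b}$. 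Multiplying by $y^{-c}$ yields $C_{a,b}\, y^{-(a+b+c)}$, which vanishes as $y \to \infty$ whenever $a + b + c \ge 1$. Combined with the already-established $y$-independence in the $a = b = 0$ case (which handles $a = b = 0, c \ge 1$ via the explicit $y^{-c}$ factor), this covers every case in the statement.

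The only technical point — rather than a genuine obstacle — is justifying the interchange of $\partial_{y_1}^a \partial_{y_2}^b$ with the double sum/integral. This I would address by dominated convergence, using the uniform Schwartz-type bounds on all relevant derivatives of $\Fcr_1(\varphi_0)((0,0), \cdot)$ in a neighborhood of the diagonal, which produce a summable, integrable majorant independent of $y$ in that neighborhood. The conceptual content of the lemma is entirely concentrated in the observation that the $(y_1, y_2)$-dependence is a function of $t$ alone and vanishes to exact order $a+b$ at $t = 1$; everything else is bookkeeping.
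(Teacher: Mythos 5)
Your proposal captures the same two observations that drive the paper's proof: first, after dividing by $\sqrt{y_1y_2}$ the constant term depends on $(y_1,y_2)$ only through $t=\sqrt{y_2/y_1}$ (this is exactly what \eqref{eq:omega0} with $r=0$ says), and second, restricting to the diagonal kills the dependence entirely while differentiating first produces the factor $y^{-(a+b)}$. Your logarithmic-coordinate explanation of the chain-rule identity is a clean way to make precise what the paper compresses into ``first applying $\partial_{y_1}^a\partial_{y_2}^b\ldots$, then conducting the same estimate,'' so this part of your argument is, if anything, more explicit than the published one.

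There is one point you should not wave away as ``bookkeeping.'' Your treatment of $\nu=0$ only works for $a+b\ge 1$, where you correctly note that $G$ is constant in $t$ at $\nu=0$ so $G^{(k)}(1)=0$ for $k\ge 1$. For $a=b=0$ (and hence also for $a=b=0,\,c\ge 1$) you claim the expression ``is a constant in $y$, so the limit trivially exists,'' but a constant is only a legitimate limit if it is finite; with $|f(g_\tau)|\ll v^s$ for arbitrary $s$ the $\nu=0$ term $\Fcr_1(\varphi_0)((0,0),0)\int_{\Fc} f\,d\mu$ can easily diverge. The paper sidesteps this because its explicit formula carries the prefactor $\nu\sqrt{y_2/y_1}+\nu'\sqrt{y_1/y_2}$, which vanishes identically at $\nu=0$; this prefactor is not generic but reflects the odd linear polynomial in $\varphi^{(1,1)}_{0,\infty}$ (equivalently, $\Fcr_1(\varphi^{(1,1)}_{0,\infty})((0,0),0)=0$), the only case in which the lemma is applied. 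Your proof should record this vanishing explicitly; without it, the $a=b=0$ case of your argument is not closed.
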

\begin{remark}
  \label{rmk:input-decay}
  It is easy to check that $f(g) = \vartheta(g, \varphi_1, \tvc_C)$ fulfills the condition in the lemma.
\end{remark}
\begin{proof}
  Let $\Fc$ denote the fundamental domain. Then we can use \eqref{eq:omega0} to obtain
  \begin{align*}
    \frac{    I^0_0(g_0, \varphi_0, f)}{\sqrt{y_1y_2}}
    &=
\facinv         \sum_{\nu \in F} \Fcr_1(\varphi_{0, f})((0, 0), \nu)
         \lp \nu \sqrt{y_2/y_1} + \nu' \sqrt{y_1/y_2}\rp\\
&\times      \int_{\Fc} \ebf(-\ub \nu\nu') e^{-\pi \vb      (\nu^2 y_2^2 + (\nu')^2 y_1^2)/(y_1y_2)} f(g_\taub) \frac{d\ub d\vb}{\vb}.
  \end{align*}
  Since $\Fcr_1(\varphi_{0, f})$ is a Schwartz function, we can suppose the sum over $\lambda \in F$ is replaced by a sum over the translate of a lattice.
  For the integral on the second line, we can trivially estimate it by
  $$
  \int^\infty_{\sqrt{3}/2}  e^{-\pi \vb      (\nu^2 y_2^2 + (\nu')^2 y_1^2)/(y_1y_2)} \vb^s\frac{ d\vb}{\vb}.
  $$
  From this, we see that $|   y^{-1}I^0_0((g_z, g_z), \varphi_0, f)|$ is bounded independent of $y$, and the second claim holds. This also gives the first claim for $a = b = 0$ and $c \ge 1$.
  The other cases follows from first applying $\partial_{y_1}^a\partial_{y_2}^b$ to $   \frac{    I^0_0(g_0, \varphi_0, f)}{\sqrt{y_1y_2}}$, then conducting the same estimate.
\end{proof}

We will now evaluate the non-constant term $\Ic^+$.
Let $\vc$ and $\tvc_C$ be as in \eqref{eq:tvc}, and $K_\vc \subset H_1(\hat\Zb), \Gamma_\vc \subset H_1^+(\Qb)$ be as in \eqref{eq:Kvc} and \eqref{eq:eK1} respectively.
For $f(g) = \vartheta_1(g, \varphi^-_1, \vc)$ and $\varphi_{0, \infty} = \varphi^{(\pm 1, \mp 1)}_{0, \infty}$ defined in \eqref{eq:varphiinf}, we can apply \eqref{eq:omega0}, \eqref{eq:FTv} and \eqref{eq:Fb2} to obtain
\begin{align*}
  \Ff_{r, \nu} ( \varphi^{(\pm 1, \mp 1)}_{0, \infty})
  (z_1, z_2, f)
&=\sgn(\nu)    2\sqrt{y_1y_2} \ebf(r ((x_2 \mp i y_2) \nu + (x_1 \pm i y_1)\nu'))\\
&  \times
  \vol(K_\vc)
  \sum_{\xi \in C} \vc(\xi)
  \sum_{\substack{\beta \in \Gamma_\vc \backslash F^\times\\  \beta\beta' = \nu\nu' < 0}}
\sgn(\beta) \varphi_{1, f}(\xi^{-1} \beta)
\end{align*}
if $\mp r \nu > 0$, and zero otherwise.
After the change of variable $t = r \nu'$, we have
\begin{align*}
  \Ic^+(g_0, \varphi^{(1, -1)}, \vc)
  &=
\facinv    \sqrt{y_1y_2}
    \sum_{t \in F^\times,~ t_1 > 0 > t_2}
c_t(\varphi_{f}, \vc)    \ebf(t_1 z_1 + t_2 \overline{z_2}),\\
  \Ic^+(g_0, \varphi^{(-1, 1)}, \vc)
  &=
\facinv    \sqrt{y_1y_2}
    \sum_{t \in F^\times,~ t_2 > 0 > t_1}
c_t(\varphi_{f}, \vc)    \ebf(t_1 \overline{z_1} + t_2 {z_2}),\\
  c_t(\varphi_{f}, \vc)
  &:= 2 \vol(K_\vc)
  \sum_{\xi \in C} \vc(\xi)
    \sum_{r \in \Qb^\times}
  \sum_{\substack{\beta \in \Gamma_\vc \backslash F^\times\\  \Nm(\beta) = \Nm(t)/r^2 < 0}}
    \sgn(\beta_1 t_2/r)
\Fcr_{1}(\varphi_{f})((0, r), t/r,  \xi^{-1} \beta)     .
\end{align*}
In the case of $\Ic^+(g_0, \varphi, \tvc_C)$, we have the following result.

\begin{proposition}
  \label{prop:tI+}
  Given $\phi_f \in \Sc(\hat W_\alpha)$, let  $\varphi_f
  \in \Sc(\hat V)^{G(\hat\Zb)\TT(\hat\Zb)K_\vc}$ be   a matching section as in Theorem \ref{thm:match}.
  For $\tvc_C$ as in \eqref{eq:tvc}, we have
  \begin{equation}
    \label{eq:I+FE}
\frac{   \Ic^+(g_0, \varphi_f \varphi^{(1, 1)}_\infty, \tvc_C)}{ \sqrt{y_1y_2}}
    = \facinv \sum_{t \in F^\times,~ t \gg 0 }
  \tc_t(\varphi_{f}, \vc)    \ebf(t_1 z_1 + t_2 {z_2})
  +\sum_{t \in F^\times} e_t(\varphi_f; y_1, y_2) \ebf(t_1z_1 + t_2z_2)
  \end{equation}
  where $\tc_t(\varphi_f, \vc) \in \Cb$ and $e_t(\varphi_f; \cdot): \Rb_{> 0}^2 \to \Cb$ are given in \eqref{eq:tc} and \eqref{eq:et} below.
  There is a constant $M \in \Nb$ such that $\tc_t(\varphi_f, \vc) = 0$ when $ t \not\in M^{-1}\Oc$.

  Let
  \begin{equation}
    \label{eq:SC}
  S_C := \{v \text{ prime in }F: \ord_v(\xi) \neq 0 \text{ for some }\xi \in C\}
  \end{equation}
  be a finite set of primes, and
  Then there exists  $\kappa \in \Nb$ and $\beta(t, \phi_f) \in F^\times$ such that
  $\tc_t(\varphi_f, \vc)  = -\frac{2}{\kappa}\log |\beta(t, \phi_f)/\beta(t, \phi_f)'|$  and
  \begin{equation}
    \label{eq:betafac}
    \kappa^{-1}    \ord_\pf(\beta(t, \phi_f)/ \beta(t, \phi_f)') =
    \begin{cases}
      \tW_t(\phi_f)      ,&\hbox{if }     \Diff(W_\alpha, t) = \{\pf\},\\
-\tW_t(\phi_f)      ,&\hbox{if }     \Diff(W_\alpha, t) = \{\pf'\},\\
      0,&\hbox{otherwise, }
    \end{cases}
  \end{equation}
when $\Diff(W_\alpha, t) \subset S_C^c$  with %
  $\tW_t$ defined in %
  \eqref{eq:tW}. %
Furthermore, the function  $e_t$ satisfies
  \begin{equation}
    \label{eq:et0}
    \lim_{y \to \infty}
    y^{-c} \lp \partial^a_{y_1}\partial^b_{y_2}  e_t(y_1, y_2) \rp\mid_{y_1 = y_2 = y} = 0
  \end{equation}
  for all $a, b, c \in \Nb_0$.
\end{proposition}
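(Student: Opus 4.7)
The plan is to insert the Fourier expansion \eqref{eq:vtFE} of the deformed theta integral $\vartheta_1(g,\varphi_1^+,\tvc_C)$ into the defining integral for $\Ic^+$ and evaluate each piece separately. First, using the definitions \eqref{eq:I02}--\eqref{eq:Ic+}, we write
\begin{align*}
\Ic^+(g_0,\varphi_f\varphi^{(1,1)}_\infty,\tvc_C)
&=\sum_{r\in\Qb^\times,\,\nu\in F}
\Fcr_1(\varphi_{0,f})((0,r),\nu)\Ff_{r,\nu}(\varphi^{(1,1)}_{0,\infty})(z_1,z_2,\vartheta_1(\cdot,\varphi_1^+,\tvc_C)).
\end{align*}
Substituting \eqref{eq:vtFE} splits the $\tau$-integral into three contributions. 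The pieces coming from $\vt^*_1$ and from $\log\ve_\vc\,\Theta^*_{1,C}$ involve the rapidly decaying functions $\varphi^{0,*}_\infty$ and $\varphi^*_\infty$ of \eqref{eq:varphi0*} and \eqref{eq:varphi*}, both built from incomplete gamma values. After unfolding $\Gamma_\infty\backslash\Hb$ and performing the Gaussian integral in $\ub$ to isolate a Fourier coefficient in $t=(t_1,t_2)$, these two pieces produce a function $e_t(y_1,y_2)\ebf(t_1z_1+t_2z_2)$; standard tail estimates for $\Gamma(s,x)$ as $x\to\infty$ yield the decay \eqref{eq:et0}.

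The main term comes from the third summand of \eqref{eq:vtFE}, whose $\tau$-dependence is purely via $\ebf(Q_1(\beta)\tau)$. For this term the $\tau$-integral is an elementary unfolding against the $\ebf(t_1z_1+t_2z_2)$-Fourier mode, yielding
\begin{align*}
\tc_t(\varphi_f,\vc)=-2\vol(K_\vc)\sum_{\xi\in C}\vc(\xi)\sum_{r\in\Qb^\times}\sum_{\substack{\beta\in\Gamma_\vc\backslash F^\times\\ \Nm(\beta)=\Nm(t)/r^2<0}}\sgn(\beta)\,\Fcr_1(\varphi_f)((0,r),t/r,\xi^{-1}\beta)\,\lg_C((\xi,\sqrt{|\beta/\beta'|})),
\end{align*}
where we have used \eqref{eq:FTv} to evaluate the archimedean partial Fourier transform and \eqref{eq:tvc} to factor $\tvc_C$. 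The support condition $\tc_t=0$ for $t\notin M^{-1}\Oc$ is immediate from the Schwartz property of $\varphi_f$, which restricts $r$ and $\beta$ to a finite set modulo $\Gamma_\vc$. Moreover, by Remark \ref{rmk:decompose} and Lemma \ref{lemma:hatvarphirat}, the value $\Fcr_1(\varphi_f)((0,r),t/r,\xi^{-1}\beta)$ is $\Qb$-rational, so the irrationality in $\tc_t$ comes entirely from the factor $\lg_C$, which equals $2\log\ve_\vc\cdot\{\tfrac{1}{2}\log|\beta/\beta'|/\log\ve_\vc\}$. Collecting $\beta$ modulo $\Gamma_\vc$ rewrites $\tc_t$ as $-\tfrac{2}{\kappa}\log|\beta(t,\phi_f)/\beta(t,\phi_f)'|$ for a suitable $\beta(t,\phi_f)\in F^\times$ and an integer $\kappa$ depending only on $\vol(K_\vc)$ and the $\Qb$-denominators of $\Fcr_1(\varphi_f)$.

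The main obstacle is the factorization identity \eqref{eq:betafac}, which relates the \emph{archimedean} valuation $\kappa^{-1}\ord_\pf(\beta/\beta')$ to the \emph{non-archimedean} quantity $\tW_t(\phi_f)$ defined in \eqref{eq:tW}. The strategy is to compute $\ord_\pf\tc_t$ prime by prime, separating the sum over $\beta$ by the valuation data $\ord_\pf(\beta)-\ord_{\pf'}(\beta)$. Interpreting the resulting inner sum adelically and using the hypothesis $\Diff(W_\alpha,t)\subset S_C^c$ so that $\xi$ can be chosen trivial at $\pf$, one identifies this inner sum at a fixed valuation with precisely the left-hand side of the matching identity \eqref{eq:FCmatchs} of Proposition \ref{prop:FCmatch}, namely with the normalized derivative $W^{*,\prime}_{t,\pf}(\phi_\pf)W^*_{t,\pf'}(\phi_{\pf'})/\log p$. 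Here the vanishing hypothesis $W^*_{t,\pf}(\phi_\pf)=0$ in Proposition \ref{prop:FCmatch} is supplied by the fact that $\pf\in\Diff(W_\alpha,t)$, while the sign discrepancy between the $\pf$- and $\pf'$-cases is explained by Remark \ref{rmk:vv'}. For primes $\pf\notin\Diff(W_\alpha,t)$ the analogous analysis, this time using \eqref{eq:FCmatch} instead of \eqref{eq:FCmatchs}, together with a telescoping in $\ord_v(h)$ forces $\ord_\pf(\beta/\beta')=0$. Combining these two cases yields \eqref{eq:betafac}.
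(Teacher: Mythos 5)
Your proposal follows essentially the same route as the paper's proof: insert the Fourier expansion \eqref{eq:vtFE} of $\vt_1(g,\varphi_1^+,\tvc_C)$, let the $\vt^*_1$ and $\Theta^*_{1,C}$ pieces produce the exponentially decaying $e_t$ (controlled by standard incomplete Gamma estimates), read the main term off the third summand via the function $\lg_C$, use Lemma \ref{lemma:hatvarphirat} to conclude that the $\Fcr_1(\varphi_f)$ values are $\Qb$-rational so that $\tc_t$ becomes $-\frac{2}{\kappa}\log|\beta/\beta'|$, and then compute $\ord_\pf(\beta/\beta')$ by converting the $\Gamma_\vc\backslash F^\times$ sum into an adelic integral over $H_1(\hat\Qb)$ and matching with $W^*_{t,v}$ and $W^{*,\prime}_{t,\pf}$ via Proposition \ref{prop:FCmatch} and Remark \ref{rmk:vv'}. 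One small slip: in your display for $\tc_t$ the constraint should read $\Nm(\beta)=\Nm(t)/r^2>0$ (and the $V_{-1}$-slot should be $t'/r$ rather than $t/r$), since this term comes from the $Q_1(\beta)>0$ part of \eqref{eq:vtFE}; the condition $<0$ belongs to the coherent case $\Ic^+(g_0,\varphi^{(1,-1)},\vc)$.
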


\begin{remark}
  \label{rmk:tI}
  Note that we have
  $$
\tI(g_0) = \Ic(g_0, \varphi^{(1, 1)}, \tvc_C) = \Ic^0(g_0, \varphi^{(1, 1)}, \tvc_C) + \Ic^+(g_0, \varphi^{(1, 1)}, \tvc_C).
  $$
  from \eqref{eq:tI} and \eqref{eq:Ic+}.
\end{remark}
\begin{proof}
   Suppose $\varphi_f = \varphi_{0, f} \otimes \varphi_{1, f}$, as the general case follows by linearity.
   We first prove \eqref{eq:I+FE}.
   Using \eqref{eq:I02}, it is enough to evaluate $\Ff_{r, \nu}(\varphi^{(1, 1)}_{0, \infty})(z_1, z_2, \vartheta_1(\cdot, \varphi^+_1, \tvc_C))$.
   If we set $t := r\nu'$, then we have by (\ref{eq:omega0}), (\ref{eq:FTv}), and Theorem  \ref{thm:modified}
  \begin{align*}
    \Ff_{r, \nu'}(\varphi^{(1, 1)}_{0, \infty})(z_1, z_2, \vartheta_1(\cdot, \varphi^+_1, \tvc_C))
    &= \lp e_{ r, \nu'}(\varphi_{1, f};y_1, y_2)%
      +  \tc_{r, \nu'}(\varphi_{1, f}, \vc) \rp \sqrt{y_1y_2}\ebf( t_1 z_1 + t_2 z_2))
  \end{align*}
  with $\tc_{r, \nu'}(\varphi, \vc)$ and $e_{r, \nu}(\varphi; y, y')$ given by
  %
  \begin{align*}
    \tc_{r, \nu'}(\varphi, \vc)
    &:=
      \begin{cases}
4              \vol(K_\vc)
      \displaystyle\sum_{\substack{\beta \in \Gamma_\vc \backslash F^\times\\  \beta\beta' = \nu\nu' > 0\\ \xi \in C}}              \vc(\xi)
      \sgn(r\beta) \varphi_{}(\xi^{-1} \beta)
\log (\ve_{\vc})    \left\{\log_{\ve_{\vc}} \sqrt{|\beta/\beta'|} \right\},      & t \gg 0\\
      0,& \text{ otherwise.}
      \end{cases}\\
    e_{r, \nu}(\varphi; y_1, y_2)
    &:=
-\frac{\vol(K_\vc)}{\sqrt{y_1y_2}}
      \sum_{\substack{\beta \in \Gamma_\vc \backslash F^\times\\  \beta\beta' = \nu\nu'\\ \xi \in C}}
 \vc(\xi)
    \varphi_{}(\xi^{-1}\beta)
\lp \delta_{\nu\nu' < 0} \sgn(\beta)
    e^*_{r, \nu}(y_1, y_2)
    +
    \frac{\log \ve_{\vc}}{\sqrt{\pi}}
    e^\dagger_{r, \nu, \beta}(y_1, y_2)
    \rp.
  \end{align*}
Here we have set
  \begin{align*}
    e^*_{r, \nu}(y_1, y_2)
    &:=     \int^\infty_0 \Gamma(0, 4\pi |\nu\nu'| \vb)
K_{r, \nu}(\vb, y_1, y_2) \frac{d\vb}{\vb},\\
      e^\dagger_{r, \nu, \beta}(y_1, y_2)
    &:=
      \int^\infty_0
      \sum_{\e \in \Gamma_1}
      \sgn(\beta\e - \beta'\e')
      \Gamma(1/2, \pi (\beta\e  - \beta'\e')^2\vb)
      \sqrt{\vb} K_{r, \nu}(\vb, y_1, y_2) \frac{d\vb}{\vb},\\
    K_{r, \nu}(\vb, y_1, y_2)
    &:=     e^{-\pi \lp \frac{A}{\sqrt{\vb}} - B\sqrt{ \vb}   \rp^2}
      \lp \frac{A }{\sqrt{\vb}} + B \sqrt{\vb} \rp,~
    A:= r \sqrt{y_1y_2},~ B := \frac{\nu y_1 + \nu' y_2}{\sqrt{y_1y_2}}.
  \end{align*}
  So if we set
  \begin{align}
    \label{eq:tc}
    \tc_t(\varphi_f, \vc) &:=
    \sum_{r \in \Qb^\times}    \Fcr_1(\varphi_{0, f})((0, r), t'/r) \tc_{r, t'/r}(\varphi_{1, f}, \vc),\\
    \label{eq:et}
    e_t(\varphi_f; y_1, y_2) &:=
    \sum_{r \in \Qb^\times}    \Fcr_1(\varphi_{0, f})((0, r), t'/r) e_{r, t'/r}(\varphi_{1, f};y_1, y_2),
  \end{align}
  then equation \eqref{eq:I+FE} holds by \eqref{eq:I02}.
  Since $\varphi_f$ is a Schwartz function, the sum defining $\tc_t$ is finite, and equals to 0 when $t \not\in M^{-1} \Oc$ for some $M \in \Nb$ depending only on $\varphi_f$.

  To prove \eqref{eq:betafac}, notice that
  \begin{align*}
    \tc_t(\varphi_f, \vc) &=
              4\vol(K_\vc)
    \sum_{r \in \Qb^\times}
\displaystyle\sum_{\substack{\beta \in \Gamma_\vc \backslash F^\times\\
        1 \le |\beta/\beta'| \le \ve_{\vc}^2\\
    \beta\beta' = tt'/r^2 > 0\\ \xi \in C}}              \vc(\xi)
\Fcr_1(\varphi_{ f})((0, r), t'/r, \xi^{-1} \beta)      \sgn(r\beta)
    \log_{} \sqrt{|\beta/\beta'|}
  \end{align*}
  By Theorem \ref{thm:localmatch} and Lemma \ref{lemma:hatvarphirat}, there exists $c \in \Nb$ such that $2c \Fcr_1(\varphi_f)((0, r), \nu, \lambda) \in \Zb$ for all $r \in \hat\Qb$ and $\nu, \lambda \in \hat F$.
  Then we can write
  $$
  \tc_t(\varphi_f, \vc) = -\frac{2}{\kappa}\log \left| \frac{\beta(t, \varphi_f)}{\beta(t, \varphi_f)'}\right|,~
  \beta(t, \varphi_f) :=
    \prod_{r \in \Qb^\times}
    \displaystyle\prod_{\substack{\beta \in \Gamma_\vc \backslash F^\times\\
        1 \le |\beta/\beta'| \le \ve_{\vc}^2\\  \beta\beta' = tt'/r^2 > 0\\ \xi \in C}}
(r \beta)^{
-\vol(K_\vc)    \vc(\xi)
\kappa \Fcr_1(\varphi_{ f})((0, r), t'/r, \xi^{-1} \beta)      \sgn(r\beta) }
  $$
  For   any split rational prime $p = \pf \pf'$ with any $\pf \not\in S_C$, we have
  \begin{align*}
      \kappa^{-1} &\ord_{\pf} \beta(t, \varphi_f)
  =
-\vol(K_\vc)    \sum_{r \in \Qb^\times}
    \sum_{\substack{\beta \in \Gamma_\vc \backslash F^\times\\
    \beta\beta' = tt'/r^2 > 0\\ \xi \in C}}
    \vc(\xi)
     \Fcr_1(\varphi_{ f})((0, r), t'/r, \xi^{-1} \beta)      \sgn(r\beta)
    \ord_{\pf} (\xi^{-1} r\beta)\\
&= - \vol(K_\vc)    \sum_{r \in \Qb^\times}
    \sum_{\substack{h \in \Gamma_\vc\backslash H_1(\Qb)^+\\
    \xi \in C}}
    \vc(\xi)
     \Fcr_1(\varphi_{ f} - \omega_f(-1)\varphi_f)((0, r), t'/r, \xi^{-1} h^{-1} t/r)      \sgn(h^{-1} t)
    \ord_{\pf} (\xi^{-1} h^{-1} t)\\
&= 2 \sum_{r \in \Qb^\times}
    \int_{H_1(\hat\Qb)}
    \vc(h_1)
     \Fcr_1(\varphi_{ f})((0, r), t'/r, -h_1^{-1}  t/r)
    \ord_{\pf} (h_1^{-1} t') dh_1
  \end{align*}
  since $t \gg 0, \ord_\pf(\beta) = \ord_{\pf}(\xi^{-1} \beta)$
  and $\vc_f(h) = \sgn(h)$, where $h_1 = -\xi h \in H_1(\hat\Qb)$ and $\vc(h_1) = - \vc(\xi)$.
The last step follows from \eqref{eq:C}.

Notice that the quantity above factors as the following product of sums of local integrals
\begin{align*}
  \kappa^{-1} \ord_{\pf} \beta(t, \varphi_f)
  &=    2 \prod_{\ell < \infty,~ \ell \neq p}
    \lp \sum_{n \in \Zb}
    \int_{H_1(\Qb_\ell)}
    \vc_\ell(h_{1, \ell})
     \Fcr_1(\varphi_{ f, \ell})((0, \ell^n), t'/\ell^n, -h_{1, \ell}^{-1}  t/\ell^n)
           dh_{1, \ell}\rp\\
&  \times \lp \sum_{n \in \Zb}
    \int_{H_1(\Qb_p)}
    \vc_p(h_{1, p})
     \Fcr_1(\varphi_{ f, p})((0, p^n), t'/p^n, -h_{1, p}^{-1}  t/p^n)
           \ord_{\pf} (h_{1, p}^{-1} t')
           dh_{1, p}\rp.
\end{align*}
Applying \eqref{eq:FCmatch} turns the first line on the right hand side into $2 \prod_{v < \infty,~ v \nmid p} W^*_{t, v}(\phi_v)$.
If this is non-zero, then $\mathrm{Diff}(W_\alpha, t)$ is either $\{\pf\}$ or $\{\pf'\}$ as it has odd size.
If $\mathrm{Diff}(W_\alpha, t) = \{\pf\}$, then $W^*_{t, \pf}(\phi_\pf) = 0$ and Proposition \ \ref{prop:FCmatch} tell us that the second line on the right hand side becomes
\begin{align*}
  \frac{    W^{*, \prime}_{t, \pf}(\phi_\pf)      W^{*}_{t, \pf'}(\phi_{\pf''})}{\log p}
  +      W^{*}_{t, \pf}(\phi_\pf)      W^{*}_{t, \pf'}(\phi_{\pf'}) \ord_{\pf}(t')
  =   \frac{    W^{*, \prime}_{t, \pf}(\phi_\pf)      W^{*}_{t, \pf'}(\phi_{\pf'})}{\log p}
\end{align*}
as $\ord_{\pf} (h_{1, p}^{-1} t') = \ord_{\pf} (h_{1, p}^{-1}) + \ord_{\pf}( t')$.
This gives us
$$
\kappa^{-1} \ord_{\pf} \beta(t, \varphi_f)
= \tW_t(\phi_f)/2.
$$
Repeat the above argument together with Remark \ref{rmk:vv'}, we obtain
$\kappa^{-1} \ord_{\pf'} \beta(t, \varphi_f)= -\tW_t(\phi_f)/2$.
Putting this together gives us \eqref{eq:betafac}, where the case with $\mathrm{Diff}(W_\alpha, t) = \{\pf'\}$ is obtained similarly.

  Now we will prove \eqref{eq:et0}.
  Since $\varphi$ has compact support, the summation over $\xi$ and $\beta$ in $e_{r, \nu}$ and the summation over $r$ in \eqref{eq:et} are finite sums, it suffices to establish \eqref{eq:et0} with $e_t$ replaced by $e^*_{r, \nu}$ and $e^\dagger_{r, \nu, \beta}$ with $r > 0$.
For any fixed $C, \e > 0$, $s \in \Rb$ and $a, b, c \in \Nb_0$, we have
\begin{align*}
  \left|
y^c \partial^a_{y} \partial^b_{y'}
\int^{A^{1 - \e}}_0
e^{-C \vb} \vb^s
K_{r, \nu}(\vb, y, y') \frac{d\vb}{\vb} \right|
&\ll
\int^{A^{1 - \e}}_0
K_{r, \nu}(\vb, y, y') \frac{d\vb}{\vb}
\ll e^{- A^{\e/2}} ,\\
  \left|
y^c \partial^a_{y} \partial^b_{y'}
\int^\infty_{A^{1 - \e}}
e^{-C \vb} \vb^s
K_{r, \nu}(\vb, y, y') \frac{d\vb}{\vb} \right|
&\ll
\int^\infty_{A^{1 - \e}}
e^{-C\vb/2} d\vb
\ll e^{- A^{1/2-\e/2}}
\end{align*}
when $B$ is in a compact subset of $\Rb$ and $A > 0$ is sufficiently large.
Furthermore, it is easy to see that there exists $C > 0$ such that
  $$
|\Gamma(0, 4\pi |\nu\nu'| \vb)| \ll \vb^{-1} e^{-C \vb},~
\left|  \sum_{\ve \in \Gamma_1}
      \sgn(\beta\ve - \beta'\ve')
      \Gamma(1/2, \pi (\beta\ve  - \beta'\ve')^2\vb) \right|
    \ll \vb^{-1/2} e^{-C \vb}
      $$
for all $\vb > 0$.
Combining these then proves \eqref{eq:et0}.
\end{proof}

\subsection{Rationality of Theta Lifts}
\label{subsec:Millson}
Recall that the rational quadratic space $V_\alpha$ is the restriction of scalars of the $F$-quadratic space $W_\alpha$.
The following result shows that the Millson theta lift preserves rationality.
\begin{proposition}
  \label{prop:millson}
  Let $f = \sum_{\mu \in L_{}^\vee/L_{}} f_\mu \ef_\mu \in M^!_{-2r, \rho_{L_{}}}$ be weakly holomorphic for some $r \in \Nb$ and lattice $L_{} \subset V_{\alpha}$.
  For $\varphi^{(1, r)}_{} = \varphi^{(1, r)}_{00, \infty} \varphi_f$ with $\varphi_{  f} \in \Sc(\hat V_{00}; \Qab)^{\TT(\hat\Zb)}$, let $\Gamma(N) \subset \SL_2(\Zb)$ be a congruence subgroup contained in $\ker(\rho_L)$ that fixes $\varphi_f$. The following regularized integral\footnote{The regularization is the same as in \cite{BF04} or \cite{ANS18}.}
  \begin{equation}
    \label{eq:Millson}
 I^M_{}(\tau, f_\mu, \varphi_{f}) :=
 \frac{1}{[\mathrm{SL}_2(\Zb): \Gamma(N)]}
 \int^{\mathrm{reg}}_{\Gamma(N) \backslash\Hb } y^r f_\mu(z)
 \theta_{00}(g_\tau, h_z, \varphi^{(1, r)}_{}) d\mu(z)
                 \end{equation}
                                 defines a weakly holomorphic modular form of weight $-r + 1/2 < 0$.
  Suppose $f$  has rational Fourier coefficients at the cusp $\infty$,  so does $I^M(\tau, f_\mu, \varphi_f)$ for all  $\mu \in L_{}^\vee/L_{}$.
\end{proposition}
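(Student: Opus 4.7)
The plan is to combine standard properties of theta lifts with the Fourier-expansion formula for Millson-type lifts computed in \cite{ANS18}, and then extract rationality from the $\TT(\hat\Zb)$-invariance of $\varphi_f$ via a variant of Lemma \ref{lemma:hatvarphirat} (now for $V_{00}$ in place of $V_0$).

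First, I would verify that $I^M(\tau, f_\mu, \varphi_f)$ is a non-holomorphic modular form in $\tau$ of weight $-r + 1/2$ for a suitable Weil-type representation on $L$. Modularity comes from the $\SL_2$-equivariance of the theta kernel $\theta_{00}$ under the Weil representation, together with the $\Gamma(N)$-invariance of the integrand that justifies descending to $\Gamma(N)\backslash \Hb$. The weight is read off from the $K_\infty$-equivariance of $\varphi^{(1,r)}_{00,\infty}$, which by \eqref{eq:varphi00} corresponds to $\iota^{-1}(\zf_1 \wf^r)$ in the Fock model on the signature $(2,1)$ space $V_{00}$: the theta kernel for the bare Gaussian has weight $1/2$, and each factor of $\wf$ lowers the weight by $1$. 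Convergence of the regularized integral is handled by rapid decay of the theta kernel on compact subsets of $\Gamma(N) \backslash \Hb$ together with the standard Borcherds--Bruinier--Funke regularization at the cusps, where $f_\mu$ grows only polynomially times an exponential.

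Second, I would show weak holomorphicity, i.e.\ $L_\tau I^M = 0$. Using \eqref{eq:fock1}, one checks that $\omega(L) \iota^{-1}(\zf_1 \wf^r)$ equals $-\frac{1}{8\pi}\zf_1 \wf^{r+1}\bar\wf$, which — this is the defining property of the Millson kernel in \cite{ANS18} — corresponds to a $\bar z$-derivative on $\Gamma(N)\backslash\Hb$ of a theta-type kernel with Schwartz function $\varphi^{(1,r+1)}_{00,\infty}\otimes \varphi_f$. Since $y^r f_\mu(z) \, d\mu(z)$ is closed (as $f_\mu$ is holomorphic), Stokes' theorem applied inside the regularized integral reduces $L_\tau I^M$ to boundary contributions at the cusps, which are absorbed by the standard regularization. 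Hence $I^M \in M^!_{-r + 1/2, \rho_L^\vee}$.

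Third — and this is the main step — I would extract the Fourier expansion of $I^M(\tau, f_\mu, \varphi_f)$ at the cusp $\infty$ from the formula in \cite{ANS18}, suitably adelized. That formula expresses the $n$-th Fourier coefficient as a finite sum, indexed by $\Gamma(N)$-classes of lattice vectors $\lambda \in V_{00}(\Qb)$ of a prescribed norm and class, whose summands are $\Qb$-linear combinations (with universal $\Qb$-coefficients) of products $c_{f_\mu}(m)\cdot \varphi_f(\lambda)$, where $c_{f_\mu}(m)$ are principal-part Fourier coefficients of $f_\mu$. Since $f$ has rational Fourier coefficients, rationality of $I^M$ reduces to showing that $\varphi_f(\lambda) \in \Qb$ for the lattice vectors appearing. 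This is exactly where the $\TT(\hat\Zb)$-invariance enters: arguing as in the proof of Lemma \ref{lemma:hatvarphirat}, any $\sigma_a \in \Gal(\Qab/\Qb)$ acts on $\varphi_f(\lambda)$ via the Weil-representation formula \eqref{eq:weilGL2} as $\omega(m(a^{-1}))\varphi_f(\lambda) = \varphi_f(a^{-1}\lambda)$; combined with the $\TT(\hat\Zb)$-invariance, $\omega(\iota(t(a)))\varphi_f = \sigma_a(\varphi_f) = \omega(m(a))\varphi_f$, so $\varphi_f(\lambda) = \sigma_a(\varphi_f(\lambda))$ whenever $\lambda$ is fixed by the appropriate $T(\hat\Zb)$-action. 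The vectors $\lambda$ indexing nonzero Fourier coefficients at the cusp $\infty$ satisfy this invariance, and hence $\varphi_f(\lambda)\in \Qb$, finishing the proof.

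The main obstacle I anticipate is the last point: matching the classical lattice-theoretic formulation of the Millson Fourier expansion in \cite{ANS18} with the adelic Schwartz-function setup here, and verifying precisely that the lattice vectors $\lambda$ contributing to the Fourier expansion at $\infty$ lie in the locus on which $\TT(\hat\Zb)$-invariance forces $\Qb$-rationality. Once this bookkeeping is carried out, the rationality propagates coefficient-by-coefficient by linearity from the principal part of $f$.
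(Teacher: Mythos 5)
Your proposal misreads the structure of the Fourier expansion in \cite{ANS18}, and this makes the rationality argument fail as stated. You write that the Fourier coefficients of $I^M$ are ``$\Qb$-linear combinations (with universal $\Qb$-coefficients) of products $c_{f_\mu}(m)\cdot\varphi_f(\lambda)$, where $c_{f_\mu}(m)$ are principal-part Fourier coefficients of $f_\mu$,'' and conclude that rationality of $I^M$ reduces to showing $\varphi_f(\lambda)\in\Qb$. This is not what the formula gives. In \cite[Theorem 5.1]{ANS18} the principal part of $I^M$ involves, for each lattice vector $x$ with $Q(x)=-d^2$, the two isotropic lines $\ell_{\pm x}\subset x^\perp$, and the contribution from $x$ is built from $c_{\ell_{\pm x}}(w,\mu)$ — the Fourier coefficients of $f_\mu\mid_{-2r}\gamma_{\ell_{\pm x}}$ \emph{at the cusps $\ell_{\pm x}$} — multiplied by roots of unity $\ebf(r_{\pm x}w)$ and the value $\varphi_f(x)$. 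Even if $f$ has rational Fourier coefficients at $\infty$, the quantities $c_{\ell_{\pm x}}(w,\mu)$ and $\ebf(r_{\pm x}w)$ generally live in a cyclotomic field, not in $\Qb$. Hence establishing $\varphi_f(\lambda)\in\Qb$ alone cannot yield rationality of the individual summands, and the proposed reduction collapses.

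The correct mechanism, which your proposal gestures at but does not carry out, is a Galois-\emph{permutation} argument on the entire sum rather than a term-by-term rationality claim. For $\sigma_a\in\Gal(\Qab/\Qb)$ one chooses a prime $p\equiv a\bmod N$ so that the map $\tx\mapsto \tx'=t(p)\cdot\tx$ is a bijection on the relevant set of representatives, with $\ell_{\pm\tx'}=t(p)\cdot\ell_{\pm\tx}$ and $\gamma_{\ell_{\pm\tx'}}\equiv t(a)\gamma_{\ell_{\pm\tx}}t(a)^{-1}\bmod N$. Using the McGraw extension \eqref{eq:weilGL2} of $\rho_L$ to $\GL_2(\hat\Zb)$, one gets $\sigma_a(f\mid_{-2r}\gamma_{\ell_{\pm\tx}})=f\mid_{-2r}\gamma_{\ell_{\pm\tx'}}$, and likewise $\sigma_a(\ebf(r_{\tx}w))=\ebf(r_{\tx'}w)$; the $\TT(\hat\Zb)$-invariance of $\varphi_f$ then supplies $\varphi_f(\tx)=(\omega(\iota(t(a)))\varphi_f)(\tx')$. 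These three identities together show that $\sigma_a$ permutes the terms of the principal-part sum and gives $\sigma_a(I^M(\tau,f_\mu,\varphi_f))=I^M(\tau,f_\mu,\omega(t(a),\iota(t(a)))\varphi_f)=I^M(\tau,f_\mu,\varphi_f)$, from which rationality follows because $I^M$ has negative weight and is determined by its principal part. Without this matching-under-Galois bookkeeping — and in particular, without tracking the Fourier expansions of $f_\mu$ at all the relevant cusps $\ell_{\pm x}$ rather than only at $\infty$ — the rationality claim does not go through.
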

\begin{remark}
  \label{rmk:r=0}
  When $r = 0$ and $f$ has vanishing constant term, the same proof shows that the weakly holomorphic modular form $I^M(\tau, f_\mu, \varphi_f)$ has rational Fourier coefficients up to algebraic multiples of weight $1/2$ unary theta series.
\end{remark}
\begin{proof}
  We will use the Fourier expansion of Millson theta lift calculated in Theorem 5.1 of \cite{ANS18}, which we now recall.
Fix an orientation on $V_{00}(\Rb)$.
  For an isotropic line $\ell \subset V_{00}$, let $G_{00, \ell} \subset G_{00}$ be its stabilizer and $\gamma_\ell \in \SL_2(\Zb)$ such that $\gamma_\ell^{-1} G_{00, \ell} \gamma_\ell = G_{00, \ell_\infty}$ with $\ell_\infty = \Qb  v_\infty$ and $v_\infty := \smat{1}{0}{0}{0}$.
  Denote  $c_\ell(m, \mu)$  the $m$-th Fourier coefficient of $f_\mu \mid_{-2r} \gamma_\ell$.
  If $x \in V_{00}(\Qb)$ satisfies $\sqrt{-Q(x)} = d \in \Qb_{> 0}$, then
  $x^\perp$ is a hyperbolic plane spanned by two isotropic lines $\ell_x$ and $\ell_{-x}$ such that $x, \gamma_{\ell_x}v_\infty,\gamma_{\ell_{-x}}v_\infty$ is positively oriented.
We can then define  $r_{x} \in \Qb$ by
$$
\gamma_{\ell_{ x}}^{-1} \cdot x = -d \pmat{2r_{ x}}{1}{1}{0}.
$$

  Suppose $r \ge 1$.
  From \cite[Theorem 5.1]{ANS18}, we know that $[\SL_2(\Zb): \Gamma(N)] \cdot I^M_{}$ is weakly holomorphic of weight $-r + 1/2 < 0$ with principal part
  given by   \footnote{Up to a sign $(-1)^{r+1}$ depending on the orientation of $V_{00}(\Rb)$.}
  $$
  \sum_{d > 0} \frac{\ebf(-d^2 \tau)}{2d^{1+r}}
  \sum_{\substack{x \in \Gamma_{L}\backslash V_{00, -d^2}(\Qb)\\
        w \in \Qb_{ < 0}}}
w^k
  \varphi_{   f}(x)
(  c_{\ell_x}(w, \mu)   \ebf(r_{x} w)
+ (-1)^{r+1}  c_{\ell_{-x}}(w, \mu)  \ebf(r_{-x} w)) \in \Qab.
  $$
  Note that the inner sum above vanishes for $d$ sufficiently large by Proposition 4.7  in \cite{BF04}, and $I^M$ is uniquely determined by its principal part because its weight is negative.
 Now we can enlarge $N$ such that
 $      N r_{\pm x} w \in \Zb $ whenever $ c_{\ell_{\pm x}}(w, \mu)  \neq 0$.
Then given a prime $p \nmid N$, for an element $x \in \Gamma_{L}\backslash V_{00, -d^2}(\Qb)$ to have a representative $\tx \in V_{00}$  such that both $t(p) \cdot \tx$ and $\tx$ are both $p$-integral is equivalent to find a $p$-integral representative $\smat{A}{B}{B}{C}$ with $p \nmid A$.
  Note that the set
  $$
  S_d(\varphi) := \{x \in \Gamma_{L}\backslash V_{00, -d^2}(\Qb): \varphi_{}(x) \neq 0
  \}
  $$
is a finite set for any $\varphi \in \Sc(\hat V_{00})$.

  For any $\sigma_a \in \Gal(\Qb^\ab/\Qb)$ with $a \in \hat\Zb^\times$, we have $t(a) \in T(\hat\Zb) \subset \GL_2(\hat\Zb)$.
  Choose an odd prime $p \nmid N$ such that $a \equiv p \bmod{N}$ and every $ x\in S_d(\varphi_{  f})$ has a $p$-integral representative $\tx \in V_{00}(\Qb)$ such that $t(p) \cdot \tx$ is $p$-integral.
Let $\tilde{S}_d(\varphi_{  f})$ be such a set of representatives.

  Denote $\tx' := t(p)\cdot \tx$ for $\tx \in \tilde{S}_d(\varphi_{  f})$, which is $p$-integral
  and satisfies  $\ell_{\pm \tx' } = t(p) \cdot \ell_{\pm \tx}$ and
  $$
  \gamma_{\ell_{\pm \tx'}} \equiv
  t(p) \gamma_{\ell_{\pm \tx}} t(p)^{-1} \equiv
  t(a) \gamma_{\ell_{\pm \tx}} t(a)^{-1}
  \bmod{N},~ r_{\pm \tx'}w - p r_{\pm \tx}w \in \Zb.
  $$
when $c_{\ell_{\tx}}(w)$ or  $c_{\ell_{-\tx}}(w)$ is non-zero.
By equation \eqref{eq:weilGL2}, $\Gamma(N) \subset \ker(\rho_L)$ and the fact that $f$ has rational Fourier coefficients, we then have
  $$
  \sigma_a( f \mid_{-2r} \gamma_{\ell_{\pm \tx}})
  =\rho_L(t(a) \gamma_{\ell_{\pm \tx}}) f
  =\rho_L(t(a) \gamma_{\ell_{\pm \tx}} t(a)^{-1})f
  =\rho_L(\gamma_{\ell_{\pm \tx'}})f
  =  f \mid_{-2r}  \gamma_{\ell_{\pm \tx'}}.
  $$
These imply that
  \begin{equation}
    \label{eq:Galc}
    \sigma_a(c_{\ell_{\pm \tx}}(w, \mu)) = c_{\ell_{\pm \tx'}}(w, \mu),~
    \sigma_a(\ebf(r_{\tx} w)) =     \ebf(p r_{\tx} w)     =     \ebf(r_{\tx'} w)
  \end{equation}
for all $d > 0$, $\tx \in \tilde S_d(\varphi_f)$ and $w \in \Qb_{< 0}$.
Finally, we have $\varphi_{  f}(\tx) =         \varphi_{  f}(t(p)^{-1}\tx')$, which implies
  \begin{equation}
    \label{eq:isom}
    \begin{split}
      \varphi_{  f}(\tx)) =
      \varphi_{  f}(t(a)^{-1}\tx') =
      \omega(\iota(t(a)))\varphi_{  f})(\tx').
    \end{split}
  \end{equation}
  since $\varphi_{  f} \in \Sc(L_{})$ and $p$ is co-prime to the level of $L_{}$.
  Here $\iota$ is the map defined in \eqref{eq:GSpin}.
  The map $\tx \mapsto \tx'$ then gives a bijection between $S_d(\varphi_{  f})$
  and $S_d(\omega(\iota(t(a)))\varphi_{  f})$.
  From this, we then obtain
\begin{equation}
  \label{eq:saact}
\sigma_a (I^M_{}(\tau, f_\mu, \varphi_{  f})) =
I^M_{}(\tau, f_\mu, \omega(\iota(t(a))) \sigma_a( \varphi_{  f})) =
I^M_{}(\tau, f_\mu, \omega(t(a), \iota(t(a)))  \varphi_{  f}).
\end{equation}
As $(t(a), \iota(t(a))) \in \TT(\hat\Zb)$, and  $\varphi_{f}$ is $\TT(\hat\Zb)$-invariant, the modular form $I^M(\tau, f_\mu, \varphi_f)$ has rational Fourier coefficients.
\end{proof}

From Propositions \ref{prop:mixmock} and \ref{prop:millson}, we can deduce the following result.

\begin{proposition}
  \label{prop:rational}
  Let $r \in \Nb_0$ and $f \in M^!_{-2r, \rho_L}$ as in Proposition \ref{prop:millson}. %
For all $\mu \in L^\vee/L$ and congruence subgroup $\Gamma(N) \subset \ker(\rho_L)$ fixing $\varphi_\mu \in \Sc(\hat V; \Qab)^{(G \cdot \TT)(\hat\Zb)}$, which is a matching section of $\phi_{\mu}$ as in Theorem \ref{thm:match},
  the regularized integral
\begin{equation}
  \label{eq:ratconst}
  c_\mu(f):= \sqrt{D}^{-r}
  \fac
\frac{1}{  [\SL_2(\Zb): \Gamma(N)]}
\int^{\mathrm{reg}}_{\Gamma(N)\backslash \Hb}
v^{r}
f_\mu(\tau) \tRC_r \Ic_f(g_\tau^\Delta, \vc, \varphi_{ \mu}^{(1, -1)} - (-1)^r \varphi_{\mu}^{(-1, 1)} )d\mu(\tau)
\end{equation}
is a rational number.
\end{proposition}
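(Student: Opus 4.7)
The plan is to prove the rationality of $c_\mu(f)$ by two successive changes of integration order, reducing the problem to pairing Millson theta lifts (Proposition \ref{prop:millson}) against Hecke theta kernels, with the pairing controlled by Proposition \ref{prop:mixmock}.

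First, I would substitute identity \eqref{eq:If01} to write $\Ic_f(g^\Delta_\tau, \varphi, \vc) = I_0(g^\Delta_\tau, \varphi_0, \Theta_1(\cdot, \varphi_1^-, \vc))$ as a regularized integral over $\SL_2(\Zb)\backslash\Hb$ in a new variable $\taub$. Since $\TT(\hat\Zb)$ preserves the decomposition $V_0 = V_{00}\oplus U_D$ of \eqref{eq:V0decomp} (it acts diagonally on the $M_2(F)$-realization) and acts trivially on $V_1$, I would write $\varphi_{\mu,f} = \sum_j \varphi_{00,f,j}\otimes\varphi_{D,f,j}\otimes\varphi_{1,f,j}$ with each factor $\TT(\hat\Zb)$-invariant and $\Qab$-valued. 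Lemma \ref{lemma:RC'} then converts $\tRC_r$ acting in the external $\tau$-variable on $\theta_0(g_\taub, g^\Delta_\tau, \varphi^{(1,-1)}_0-(-1)^r\varphi^{(-1,1)}_0)$ into $\RC^{',\Delta}_{r_0,(-r+1/2,r-2r_0+1/2)}$ acting in the internal $\taub$-variable on the product $\theta_{00}(g_\taub,g_\tau,\varphi_{00,f,j}\varphi^{(1,r)}_{00,\infty})\cdot\theta_D(g_\taub,\varphi_{D,f,j}\varphi^{r-2r_0}_{D,\infty})$.

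Second, since $\RC^{',\Delta}_{r_0}$ commutes with integration over $\tau$, I would expand it into monomials $(R'_1)^{r_0-s}(R'_2)^s$ for $0\le s\le r_0$. Using the Fock-model formulas \eqref{eq:R'j}, each such monomial produces a Schwartz function of shape $\varphi^{(1+2(r_0-s),r)}_{00,\infty}\otimes\varphi^{r-2(r_0-s)}_{D,\infty}$, which satisfies $c=b-a+1$ with $a=1+2(r_0-s)$, $b=r$, $c=r-2(r_0-s)\ge 0$, precisely the hypothesis of Lemma \ref{lemma:switch}. Swapping the order of integration term by term, the inner $\tau$-integral becomes the Millson theta lift $I^M(g_\taub,f_\mu,\varphi_{00,f,j})$ of \eqref{eq:Millson}; by Proposition \ref{prop:millson} (or Remark \ref{rmk:r=0} when $r=0$) and the $\TT(\hat\Zb)$-invariance of $\varphi_{00,f,j}$, this is a weakly holomorphic modular form of weight $-r+1/2$ with rational Fourier coefficients.

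Third, the Rankin--Cohen bracket of this Millson lift with the unary theta series $\theta_D(g_\taub,\varphi_{D,f,j}\varphi^{r-2r_0}_{D,\infty})$ of weight $(r-2r_0)+1/2$ and rational Fourier coefficients produces a weakly holomorphic weight-one form $M_j(g_\taub)$ with rational Fourier coefficients. Hence up to explicit rational constants,
$$c_\mu(f) \propto \sum_\xi \vc(\xi)\sum_j \int^{\mathrm{reg}}_{[G]} \theta_1(g_\taub,\xi,\varphi_{1,f,j}\varphi_1^-)\cdot M_j(g_\taub)\, dg_\taub.$$
I would conclude by applying Proposition \ref{prop:mixmock} to obtain a mock-modular preimage $\tilde\Theta_{1,C}$ of $\Theta_{1,C}=\vol(K_\vc)\sum_\xi\vc(\xi)\theta_1(\cdot,\xi,\varphi_1^-)$ whose holomorphic part has rational Fourier coefficients. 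Since $LM_j=0$, Stokes' theorem applied to $L\tilde\Theta_{1,C}=\Theta_{1,C}$ reduces the pairing to a boundary contribution at the cusp $\infty$, which is a constant-term extraction of products of rational Fourier coefficients of $\tilde\Theta^+_{1,C}$ and $M_j$, yielding $c_\mu(f)\in\Qb$. The main obstacle will be justifying the commutation of $\RC^{',\Delta}_{r_0}$ with the regularized $\tau$-integration and controlling the cuspidal boundary terms in the Stokes step; both are manageable because the Millson lift has negative weight when $r\ge 1$, which makes the integrand decay sufficiently fast at the cusp, and the $r=0$ case is covered by Remark \ref{rmk:r=0}.
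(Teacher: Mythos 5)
Your proposal is correct and follows essentially the same route as the paper: decompose $\varphi_\mu$ along $V_0 = V_{00}\oplus U_D\oplus V_1$, convert $\tRC_r$ to $\RC^{',\Delta}_{r_0}$ via Lemma~\ref{lemma:RC'}, swap integration orders via Lemma~\ref{lemma:switch}, recognize the inner integral as the Millson lift $I^M$ (rational by Proposition~\ref{prop:millson}), and finish with Proposition~\ref{prop:mixmock} plus Stokes' theorem. The only cosmetic difference is that you expand $\RC^{',\Delta}_{r_0}$ into monomials to verify the hypotheses of Lemma~\ref{lemma:switch} term by term, whereas the paper applies the switch before the Fock-model identity; both orderings are sound and lead to the same constant-term pairing.
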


\begin{proof}
Since $\varphi_\mu$ is $G(\hat\Zb)$-invariant, we can rewrite the constant $c_\mu := c_\mu(f)$ as
  \begin{align*}
\sqrt{D}^{r}c_\mu
    = &
     \int^{\mathrm{reg}}_{\Gamma_L\backslash \Hb}
                           v^r f_\mu(\tau)
       \lim_{T' \to \infty}      \int_{\Fcr_{T'}}
\int_{H_1(\Qb)\backslash H_1(\hat\Qb)}
    \theta(g_{\tau'}, (g^\Delta, h_1),\varphi_\mu^r)
    \vc(h_1) dh_1
    d\mu(\tau')                   d\mu(\tau)
  \end{align*}
with $\varphi^r_\mu :=  \tRC_r(\varphi_\mu^{(1, -1)} - (-1)^r \varphi_\mu^{(-1, 1)}) \in \Sc(V(\Ab))$.
Using Lemma \ref{lemma:switch}, we can switch the regularized integral in $g$ with the limit in $T'$.
Then by the rational decomposition $V = V_{00} \oplus U_{D} \oplus V_1$
, we can write
$$
\varphi_{\mu} = \sum_{j \in J}
\varphi_{00,  \mu, j} \otimes \varphi_{D,  \mu, j} \otimes \varphi_{1,  \mu, j},~
$$
with $ \varphi_{00, \mu, j} \in \Sc(\hat V_{00}; \Qab)^{\TT(\hat\Zb)},
\varphi_{1,  \mu, j} \in \Sc(\hat V_1)$ and $\varphi_{D,  \mu, j} \in \Sc(\hat U_D)$.
The constant $c_\mu$ can then be rewritten as
$$
{c}_\mu = \sum_{j \in J}  c_{\mu, j},
$$
where $c_{\mu, j}$ is defined by %
  \begin{align*}
    c_{\mu, j} &:=
 \sqrt{D}^{-r} \vol(K_\vc) \lim_{T' \to \infty}
        \int_{\Fcr_{T'}}
\Theta_{1}(g_{\tau'}, \varphi_{1, \mu, j}^-, \vc)\\
&        \times \int^{\mathrm{reg}}_{\Gamma(L) \backslash \Hb}                             v^r f_\mu(\tau)
    \theta_0(g_{\tau'}, g^\Delta, \tRC_r(\varphi_{0, \mu, j}^{(1, -1)} - (-1)^r \varphi_{0, \mu, j}^{(-1, 1)}))
                      d\mu(\tau)                   d\mu(\tau').
  \end{align*}
    Now with  Lemma \ref{lemma:RC'}, we obtain
  \begin{align*}
    \frac{c_{\mu, j}}{2^{2r_0 - r + 1}}
    &=
\vol(K_\vc)      \lim_{T' \to \infty}
        \int_{\Fcr_{T'}}
(v')^{-1/2} \Theta_{1}(g_{\tau'}, \varphi_{1, \mu, j}^-, \vc)
      G_{\mu, j}(\tau')       d\mu(\tau'),
  \end{align*}
  where $G_{\mu, j}$   is a weakly holomorphic modular form of weight $-1$ defined by
  $$
  G_{\mu, j}(\tau') :=     \sqrt{v'}        \sqrt{D}^{ - 3r} \RC^{', \Delta}_{r_0, (-r + 1/2, r - 2r_0 + 1/2)}
  \lp \theta_D(g_{\tau_2'}, \varphi^{r - 2r_0}_{D, \mu, j}) I^M(\tau_1', f_\mu, \varphi_{00, \mu, j})\rp\mid_{\tau_1' = \tau_2' = \tau'},
  $$
  and has rational Fourier coefficient at the cusp $\infty$ by Proposition \ \ref{prop:rational}.
  As $\varphi_\mu$ is $\SL_2(\hat\Zb)$-invariant, the function
  $(v')^{-1/2} \sum_{j \in J} \Theta_{1}(g_{\tau'}, \varphi_{1, \mu, j}^-, \vc)      G_{\mu, j}(\tau')$ is $\SL_2(\Zb)$-invariant in $\tau'$.
  Applying Proposition \ \ref{prop:mixmock} and Stokes' Theorem, we then have
  \begin{align*}
    \frac{c_{\mu}}{2^{2r_0 - r + 1}}
    &=
\vol(K_\vc)      \lim_{T' \to \infty}
        \int_{\Fcr_{T'}}
      \sum_{j \in J}
      L_{\tau'} \lp \sqrt{v'} \tilde\Theta_{1, C}(g_{\tau'}, \varphi_{1, \mu, j}^-, \vc) \rp
      G_{\mu, j}(\tau', j)       d\mu(\tau')\\
    &=
-\vol(K_\vc)      \sum_{j \in J} \mathrm{CT}
\lp      \sqrt{v'}\tilde\Theta^+_{1, C}(g_{\tau'}, \varphi_{1, \mu, j}^-, \vc)
      G_{\mu, j}(\tau', j)       \rp \in \Qb.
  \end{align*}
  This finishes the proof.
\end{proof}

\section{Proofs of Theorems }
\label{sec:ThmPfs}
In this section, we will prove Theorem \ref{thm:factor}.
First, we state and prove the case for $\mathrm{O}(2, 2)$.

\begin{theorem}
  \label{thm:main-O22}
  Let $F$ be a real quadratic field and $W = W_\alpha$ an $F$-quadratic space and $W_{\alv}$ its neighborhood quadratic space as in section \ref{subsec:CM}.
Suppose $\alpha_1 < 0 < \alpha_2$.
For  $r \in \Nb_0$ and a lattice $L \subset W_\Qb$, suppose
  $$
  f = \sum_{m \in \Qb,~ \mu \in L^\vee/L} c(m, \mu) q^m \ef_\mu \in M_{-2r, \rho_L}^!
  $$
  is  a weakly holomorphic modular form with rational Fourier coefficients.
  Furthermore, suppose it has vanishing constant term when $r = 0$.
Then  there exists $\kappa, M \in \Nb$ depending on $f$ such that
  \begin{equation}
    \label{eq:main1}
    \begin{split}
\kappa&\lp           \Phi^r_f(Z(W_\alpha)) - (-1)^r \Phi^r_f(Z(W_{\alv})) \rp\\
          &=
- \frac{\deg(Z(W ))}{\Lambda(0, \chi)}
          \sum_{m > 0,~ \mu \in L^\vee/L} c(-m, \mu)m^r \sum_{\substack{\lambda \in F^\times \cap M^{-1}\Oc\\ \lambda\gg 0\\ \tr(\lambda) = m}}
P_r\lp \frac{\lambda - \lambda'}{m}\rp
\log\left| \frac{\beta(\lambda, \phi_{\mu} )}{\beta(\lambda, \phi_{\mu})'}\right|,
    \end{split}
  \end{equation}
  where $\beta(t, \phi_f) \in F^\times$ is non-zero which has the property
  \begin{equation}
    \label{eq:betafac2}
        \kappa^{-1}    \ord_\pf(\beta(t, \phi_f)/\beta(t, \phi_f)') =
    \begin{cases}
     \tW_t(\phi_f)      ,&\hbox{if }     \mathrm{Diff}(W , t) = \{\pf\},\\
-\tW_t(\phi_f)      ,&\hbox{if }     \mathrm{Diff}(W , t) = \{\pf'\},\\
      0,&\hbox{otherwise. }
    \end{cases}
  \end{equation}
\end{theorem}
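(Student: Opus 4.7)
The plan is to evaluate the difference $\Phi^r_f(Z(W_\alpha)) - (-1)^r \Phi^r_f(Z(W_{\alv}))$ as a regularized Petersson pairing of $f$ against the Rankin--Cohen derivative of the deformed theta integral $\tI$ restricted to the diagonal, then apply Stokes' theorem using the differential equation in Proposition \ref{prop:diffop} to recover the desired CM values on one side while reading off the factorization from the Fourier expansion on the other.

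First I would form the regularized integral
\begin{equation*}
J(f) := \lim_{T\to\infty} \int_{\Fc_T} v^{r} f(\tau) \,\tRC_r \tI(g_\tau^\Delta)\, d\mu(\tau).
\end{equation*}
Using Proposition \ref{prop:diffop}, after transferring $(L_1+L_2)$ onto $f$ via Stokes' theorem (in the style of \cite{BEY21}, keeping track of the relation \eqref{eq:Rtf} between iterated raising operators on $f$ and diagonal Rankin--Cohen brackets), $J(f)$ unfolds into two pieces: a main geometric piece coming from $E^*(g,\phi^{(1,-1)}) - (-1)^r E^*(g,\phi^{(-1,1)})$, and an error piece proportional to $2\log\varepsilon_\vc$ times the quantity $c_\mu(f)$ of Proposition \ref{prop:rational}. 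By the integral representation \eqref{eq:Phijint} combined with the Siegel--Weil identity \eqref{eq:SWCM} applied at the two neighboring CM cycles $Z(W_\alpha)$ and $Z(W_{\alv})$, the main geometric piece equals $\tfrac{\Lambda(0,\chi)}{\deg(Z(W))}\bigl(\Phi^r_f(Z(W_\alpha)) - (-1)^r \Phi^r_f(Z(W_{\alv}))\bigr)$; the minus sign between the two cycles comes precisely from the eigenvalue $\mp i$ of the rotation on $\varphi^{(\pm 1,\mp 1)}_{0,\infty}$ recorded in section \ref{subsec:thetaint}. The error $c_\mu(f)$ lies in $\Qb$ by Proposition \ref{prop:rational}, so after multiplying by a suitable denominator it can be absorbed into the logarithm of an $F^\times$-element with trivial contribution to $\ord_\pf$ outside the primes already controlled.

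Second, I would compute $J(f)$ directly from the Fourier expansion of $\tI$ in Remark \ref{rmk:tI}. The constant-term piece $\Ic^0$ contributes nothing in the limit by Lemma \ref{lemma:decay} combined with the weakly holomorphic, negatively-weighted nature of $f$. The non-holomorphic piece $e_t$ of $\Ic^+$ contributes nothing by the decay estimate \eqref{eq:et0} and the standard argument that its pairing against $f$ vanishes after the Rankin--Cohen differential. The holomorphic piece pairs against $f$ via \eqref{eq:RCexp}: for each Fourier mode $\ebf(t_1 z + t_2 z)$ of $\tI$ the Rankin--Cohen operator outputs $Q_r(t_1,t_2)\ebf(\tr(t)\tau) = \tr(t)^r P_r((t_1-t_2)/\tr(t))\ebf(\tr(t)\tau)$, and pairing against $f = \sum c(-m,\mu) q^{-m}\ef_\mu + \dots$ extracts exactly
\begin{equation*}
\sum_{m>0,\mu} c(-m,\mu)\, m^r \sum_{\substack{\lambda\in M^{-1}\Oc\\ \lambda\gg 0,\ \tr(\lambda)=m}} P_r\bigl(\tfrac{\lambda-\lambda'}{m}\bigr) \tc_\lambda(\varphi_{f,\mu},\vc).
\end{equation*}
By Proposition \ref{prop:tI+}, each $\tc_\lambda(\varphi_{f,\mu},\vc)$ equals $-\tfrac{2}{\kappa}\log|\beta(\lambda,\phi_\mu)/\beta(\lambda,\phi_\mu)'|$ with the prime-by-prime factorization \eqref{eq:betafac} valid at primes $\pf\notin S_C$.

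Equating the two expressions for $J(f)$ and solving gives \eqref{eq:main1}. The main obstacle is the factorization identity \eqref{eq:betafac2} at the finitely many primes of the auxiliary set $S_C$ defined in \eqref{eq:SC}, where Proposition \ref{prop:tI+} is silent. I would handle this by a limiting / independence-of-$C$ argument: the left-hand side of \eqref{eq:main1} is intrinsic and independent of the choice of representatives $C$, so by varying $C$ (which only shifts $S_C$) and combining the corresponding factorizations, one forces the desired prime-by-prime description at every $\pf$. The rationality of the error term $c_\mu(f)$ from Proposition \ref{prop:rational} is exactly what is needed to absorb the $\log\varepsilon_\vc$ contribution into $\log|\beta/\beta'|$ without polluting $\ord_\pf$ at primes where $\Diff(W,t)$ is a singleton. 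After possibly enlarging $\kappa$ once to clear all denominators appearing in $c_\mu(f)$ and in the matching sections of Theorem \ref{thm:match}, the proof is complete.
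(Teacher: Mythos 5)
Your proposal follows essentially the same route as the paper's proof: Siegel--Weil to express the CM values as regularized pairings against coherent Eisenstein series, Proposition~\ref{prop:diffop} to replace those by $L_\tau(\RC_r\tI)$ plus a $\log\ve_\vc\cdot c_\mu(f)$ error, Proposition~\ref{prop:rational} to see $c_\mu(f)\in\Qb$, Stokes' theorem to turn the remaining integral into a boundary term, and then Lemma~\ref{lemma:decay} plus Proposition~\ref{prop:tI+} to read off the Fourier coefficients $\tc_\lambda$ and their factorizations. The one place you deviate is the treatment of the exceptional prime set $S_C$ from \eqref{eq:SC}. You propose an ``independence of $C$'' / averaging argument: since the CM value is intrinsic, vary $C$ and reconcile the resulting factorizations. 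This is more delicate than it looks, because changing $C$ changes not only $S_C$ but also the integer $\kappa$, the elements $\beta(t,\phi_\mu)\in F^\times$, and the period $\ve_\vc$-error absorbed into $c_\mu(f)$; reconciling these requires additional bookkeeping you do not carry out. The paper takes a simpler route: the sum in \eqref{eq:main1} runs over only finitely many $\lambda$ (those $\lambda\in M^{-1}\Oc$, $\lambda\gg 0$, $\tr\lambda=m$ with $c(-m,\mu)\neq 0$), hence only finitely many $\Diff(W,\lambda)$ occur, and one chooses $C$ \emph{once and for all} so that $S_C^c$ contains all of them; Proposition~\ref{prop:tI+} then directly gives \eqref{eq:betafac2} at every relevant prime, with no averaging needed. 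This fixed-choice argument is cleaner and is what the paper actually uses; your variant is not incorrect in spirit, but as written it leaves the reconciliation step as a gap rather than closing it.
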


\begin{proof}
  By the Siegel-Weil formula in \eqref{eq:SWCM}, we have
  $$
  \Phi^r_f(Z(W )) =
\frac{C'}{(-4\pi)^r}  \cdot       \int^{\reg}_{\SL_2(\Zb)\backslash \Hb}
  v^r    \sum_{\mu \in L^\vee/L} f_\mu(\tau)
      R^r_\tau  E^*(g_\tau^\Delta, \phi_{\mu}^{(\sgn(\alpha), -\sgn(\alpha))})
      d\mu(\tau)
  $$
  with $C' := \deg(Z(W ))/(2\Lambda(0, \chi)) \in \Qb^\times$.
For each $\mu \in L^\vee/L$, let $\varphi_{\mu} \in \Sc(\hat V)^{(G\cdot \TT)(\hat\Zb)}$ be a matching section of $\phi_{\mu}$ as in Theorem \ref{thm:match}.
Then we can apply Proposition \ref{prop:diffop} to obtain
  \begin{align*}
    \Phi^r_f&(Z(W_{\alv} )) - (-1)^r \Phi^r_f(Z(W_{\alpha} ))
    \\
&= \frac{C'}{(-4\pi)^r} \cdot     \int^{\reg}_{\SL_2(\Zb)\backslash \Hb}
      \sum_{\mu \in L^\vee/L} f_\mu(\tau)
      R^r_\tau \lp E^*(g_\tau^\Delta, \phi_{\mu}^{(1, -1)}) -(-1)^r E^*(g_\tau^\Delta, \phi_{\mu}^{(-1, 1)}) \rp d\mu(\tau)\\
&= - {C'} \cdot \fac      \int^{\reg}_{\SL_2(\Zb)\backslash \Hb}
      \sum_{\mu \in L^\vee/L} f_\mu(\tau)
  L_\tau (\RC_r \tI)(g_\tau^\Delta, \tvc_C, \varphi^{(1, 1)}_\mu) d\mu(\tau) \\
            &+ \frac{C' 2 \log \ve_\vc}{[\SL_2(\Zb) : \Gamma(N)]}       \sum_{\mu \in L^\vee/L}
\fac \int^\reg_{\Gamma(N) \backslash \Hb}              f_\mu(\tau)  \tRC_r \Ic_f(g_\tau^\Delta, \vc, \varphi^{(1, -1)}_\mu - (-1)^r\varphi^{(-1, 1)}_\mu) d\mu(\tau)\\
&= - {C'} \cdot \fac      \int^{\reg}_{\SL_2(\Zb)\backslash \Hb}
      \sum_{\mu \in L^\vee/L} f_\mu(\tau)
  L_\tau (\RC_r \tI)(g_\tau^\Delta, \tvc_C, \varphi^{(1, 1)}_\mu) d\mu(\tau)
            + {2 C' \sqrt{D}^r \log \ve_\vc
   \sum_{\mu \in L^\vee/L} c_\mu(f) }.
  \end{align*}
  By Proposition \ref{prop:rational}, we know that $c_\mu(f) \in \Qb$ for all $\mu \in L^\vee/L$.
  For the other term, we can apply Stokes' theorem to obtain
  \begin{align*}
 {C'}& \cdot \fac \lim_{T \to \infty}      \int^{}_{\Fcr_T}
      \sum_{\mu \in L^\vee/L} f_\mu(\tau)
  L_\tau (\RC_r \tI)(g_\tau^\Delta, \tvc_C, \varphi^{(1, 1)}_\mu)d \mu(\tau)\\
 &   =
     {C'} \cdot \fac \lim_{v \to \infty}
         \int_0^1 \sum_{\mu \in L^\vee/L} f_\mu(\tau)
(\RC_r    \tI)(g_\tau^\Delta, \tvc_C, \varphi^{(1, 1)}_\mu)    du \\
    &=  C' \sum_{m > 0,~ \mu \in L^\vee/L} c(-m, \mu)
\sum_{\lambda \in F^\times,~ \lambda \gg 0,~ \tr(\lambda) = m}
m^r      P_r\lp \frac{\lambda - \lambda'}{m}\rp
\tc_\lambda(\varphi_{\mu}, \vc)
  \end{align*}
  For the last step, we have applied Remark \ref{rmk:tI} to replace $\tI$ with $\Ic^0 + \Ic^+$, used Lemma \ref{lemma:decay} to see that $\Ic^0$ contribute nothing, and substitute in  the Fourier coefficients of $\RC_r\Ic^+$ in terms of   $\tc_\lambda( \varphi_{\mu}, \vc)$,  the $\lambda$-th Fourier coefficient of $\tI(g_\tau^\Delta, \varphi^{(1, 1)}_\mu, \tvc_C)$.
  Note that $m^r P_r((\lambda - \lambda')/m)$ appears by \eqref{eq:RCexp} and is a rational multiple of  $\sqrt{D}^r$.
As the sum above is finite, we can choose $C$ such that $S_C^c$ contains $\Diff(W , t)$ for all the $t$ that appears in this sum.
  Finally, the knowledge about the factorization of these coefficients in Proposition \ref{prop:tI+} finishes the proof.
\end{proof}

\begin{corollary}
  \label{cor:1-CM}
  In the setting of Theorem \ref{thm:main-O22}, suppose that $Z_f$ does not intersect with  $Z(W )$ when $r = 0$.
Then  there exists $\kappa \in \Nb$ and $\gamma(\lambda, \phi_\mu) \in F^\times$ such that
  \begin{equation}
    \label{eq:main2}
    \begin{split}
\kappa            \Phi^r_f(Z(W ))
          &=
-2 \frac{\deg(Z(W ))}{\Lambda(0, \chi)}
\sum_{m > 0,~ \mu \in L^\vee/L} c(-m, \mu) m^r
\sum_{\substack{\lambda \in F^\times \cap M^{-1}\Oc\\ \lambda\gg 0\\ \tr(\lambda) = m}}
P_r\lp \frac{\lambda - \lambda'}{m}\rp
\log|\gamma(\lambda, \phi_{\mu})|
    \end{split}
  \end{equation}
and %
  \begin{equation}
    \label{eq:gammafac}
        \kappa^{-1}    \ord_\pf(\gamma(t, \phi_f)) =
    \begin{cases}
      \tW_t(\phi_f)      ,&\hbox{if }     \mathrm{Diff}(W , t) = \{\pf\},\\
      0,&\hbox{otherwise. }
    \end{cases}
  \end{equation}
\end{corollary}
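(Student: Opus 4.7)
The plan is to obtain Corollary \ref{cor:1-CM} by combining the difference formula \eqref{eq:main1} of Theorem \ref{thm:main-O22} with a companion ``sum formula'' for $\Phi^r_f(Z(W_\alpha)) + (-1)^r\Phi^r_f(Z(W_{\alv}))$. Averaging the two identities will recover $\Phi^r_f(Z(W_\alpha))$ alone, and the resulting expression will be rewritten as $\log|\gamma|$ for $\gamma\in F^\times$ having the prescribed valuations \eqref{eq:gammafac}.

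To produce the sum formula I would rerun the derivation used in the proof of Theorem \ref{thm:main-O22}, except with $\tI$ replaced by the incoherent Eisenstein series $E^{*,\prime}$ and the deformed differential equation \eqref{eq:Ldiff} replaced by the genuine differential equation \eqref{eq:diffeq} satisfied by $E^{*,\prime}$. Stokes' theorem on the truncated fundamental domain, together with the Siegel-Weil identification \eqref{eq:SWCM} of $E^*(g,\phi^{(\pm 1,\mp 1)})$ with theta integrals over $Z(W_\alpha)$ and $Z(W_{\alv})$ respectively, and the holomorphic Fourier expansion \eqref{eq:tEFE}--\eqref{eq:at} of $E^{*,\prime}$, combine after clearing denominators by some positive integer to give an identity of the same shape as \eqref{eq:main1} but with $\log|\beta(\lambda,\phi_\mu)/\beta(\lambda,\phi_\mu)'|$ replaced by $\tW_\lambda(\phi_\mu)\log\Nm(\pf_\lambda)$, where $\pf_\lambda$ denotes the unique prime occurring in $\Diff(W_\alpha,\lambda)$ when that set is a singleton. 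The vanishing of the constant term of $f$ when $r=0$, together with the hypothesis $Z_f\cap Z(W)=\emptyset$, guarantees that there is no contribution from logarithmic singularities along $Z_f$ and that Stokes' theorem applies without boundary defect.

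Averaging the sum formula against \eqref{eq:main1} will then express $\kappa\Phi^r_f(Z(W_\alpha))$, for an appropriate common $\kappa\in\Nb$, as a $\Qb$-linear combination of terms of the form $\tfrac{1}{2}\bigl(\tW_\lambda(\phi_\mu)\log\Nm(\pf_\lambda)+\log|\beta(\lambda,\phi_\mu)/\beta(\lambda,\phi_\mu)'|\bigr)$. For each $\lambda$ with $\Diff(W_\alpha,\lambda)=\{\pf\}$ I would invoke weak approximation in $F^\times$ to produce $\gamma(\lambda,\phi_\mu)\in F^\times$ whose finite valuations satisfy $\ord_\pf\gamma=\tW_\lambda(\phi_\mu)$ and $\ord_q\gamma=0$ for every other prime $q$, and whose ratio $\gamma/\gamma'$ agrees, up to global units, with $\beta(\lambda,\phi_\mu)/\beta(\lambda,\phi_\mu)'$; this last normalization is feasible precisely because \eqref{eq:betafac2} pins down the divisor of $\beta/\beta'$. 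The product formula in $F$ then forces $\log|\gamma|+\log|\gamma'|=\tW_\lambda(\phi_\mu)\log\Nm(\pf)$ up to contributions of units, so that $2\log|\gamma|$ coincides with the bracketed expression, yielding \eqref{eq:main2} together with the valuation assertion \eqref{eq:gammafac}. Setting $\gamma=1$ whenever $\Diff(W_\alpha,\lambda)$ is not a singleton absorbs all trivial contributions.

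The main obstacle lies in the sign bookkeeping and in pairing Galois-conjugate summands. The Legendre-polynomial symmetry $P_r(-x)=(-1)^rP_r(x)$, the Galois involution $\Diff(W_\alpha,\lambda)=\{\pf\}\Leftrightarrow\Diff(W_\alpha,\lambda')=\{\pf'\}$, and the sign $\pm\tW_\lambda(\phi_\mu)$ appearing in \eqref{eq:betafac2} must conspire so that the $\lambda$- and $\lambda'$-contributions to the outer sum assemble consistently, and in particular so that the ``$\Diff=\{\pf'\}$'' case of \eqref{eq:betafac2} is re-indexed as the Galois conjugate of a ``$\Diff=\{\pf\}$'' summand; only then does the constructed $\gamma$ have trivial valuation at $\pf'$ as required by \eqref{eq:gammafac}. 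A subsidiary point is to verify that the common denominator $\kappa$ depends only on $Z(W)$ and $r$ when $f$ has integral Fourier coefficients, in line with Remark \ref{rmk:kappa}.
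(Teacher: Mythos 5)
The overall strategy you propose — cite or re-derive the ``sum'' formula for $\Phi^r_f(Z(W_\alpha)) + (-1)^r\Phi^r_f(Z(W_{\alv}))$ in terms of the coefficients $a_\lambda(\phi_\mu)$ of $E^{*,\prime}$, combine it with the ``difference'' formula \eqref{eq:main1}, and then extract $\gamma$ from \eqref{eq:betafac2} and \eqref{eq:at} — is exactly what the paper does. The one cosmetic difference is that you re-run the Stokes/Siegel--Weil argument of Theorem~\ref{thm:main-O22} with $\tI$ replaced by $E^{*,\prime}$, whereas the paper simply cites \cite[Theorem 5.10]{BEY21} for the resulting identity.

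The genuine gap is in your construction of $\gamma(\lambda,\phi_\mu)$. Prescribing the divisor of $\gamma$ (supported at $\pf$) and then asking that $\gamma/\gamma'$ ``agree up to global units'' with $\beta/\beta'$ does not pin down $\log|\gamma|$: since the unit group $\Oc^\times$ has rank one and the map $w\mapsto\log|w|$ has infinite image, the quantity $\log|\gamma/\gamma'|$ is only determined modulo $\Zb\log|\ve_\vc|$ by the divisor, so ``up to contributions of units'' is exactly the ambiguity you need to eliminate, not absorb. Concretely, $2\log|\gamma|=(\log|\gamma|+\log|\gamma'|)+\log|\gamma/\gamma'|$; the first summand is fixed by the divisor via the product formula, but the second carries an uncontrolled $\log|u|$, so your $2\log|\gamma|$ need not equal the bracketed expression. (A second, smaller issue: ``weak approximation'' gives local approximations, not an element of $F^\times$ with prescribed divisor and trivial valuations elsewhere — that is a class group statement, and is one of the things the flexible $\kappa$ is there to absorb.) The intended argument is more direct: one takes $\gamma$ to be an explicit expression built from $\beta(\lambda,\phi_\mu)/\beta(\lambda,\phi_\mu)'$ and a power of $\Nm(\pf)$ (for $\pf$ the prime in $\Diff(W_\alpha,\lambda)$), so that $\log|\gamma|$ is literally the quantity produced by adding the two Fourier-coefficient identities, and the valuation statement \eqref{eq:gammafac} then falls out of \eqref{eq:betafac2} and \eqref{eq:at} with no approximation argument at all. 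Your sign/Galois bookkeeping remarks are on target as a caution, but they are not where the missing step is.
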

\begin{remark}
  The constant $\frac{\deg(Z(W ))}{\Lambda(0, \chi)}$ can be explicitly given when $X_K = X_0(1)^2$ (see \cite[Remark 3.6]{Li21a}).
\end{remark}
\begin{proof}
  By Theorem 5.10 in \cite{BEY21},
  we have
  \begin{align*}
(           \Phi^r_f(Z(W_\alpha )) &+ (-1)^r \Phi^r_f(Z(W_{\alv} )) )\\
 &         =
\frac{\deg(Z(W ))}{\Lambda(0, \chi)}
          \sum_{m > 0,~ \mu \in L^\vee/L} c(-m, \mu)m^r \sum_{\substack{\lambda \in F^\times\\ \lambda\gg 0\\ \tr(\lambda) = m}}
P_r\lp \frac{\lambda - \lambda'}{m}\rp
a_\lambda(\phi_\mu),
  \end{align*}
  where $a_t(\phi_\mu)$ is $t$-th the Fourier coefficient of the holomorphic part of the incoherent Eisenstein series, and given in \eqref{eq:at}.
    Adding this to \eqref{eq:main1} and applying \eqref{eq:betafac2} finishes the proof.
  \end{proof}

  Now we can prove Theorem \ref{thm:factor}.

  \begin{proof}[Proof of Theorem \ref{thm:factor}]
    Write $\V = \Vc \oplus W_\Qb$ and    $L_W := L \cap W_\Qb,~ \Lc := L \cap \Vc     $.
    Then $\Lc \oplus L_W \subset L$ is a full sublattice, and we can write
    $$
    \langle f(\tau), \overline{\Theta_L(\tau, Z(W))}\rangle_L
    =
    \langle \tf(\tau, \tau),
\overline{\Theta_{L_W}(\tau, Z(W))     }   \rangle_{L_W}
    $$
    with $\tf(\tau_1, \tau_2) :=    \langle \tr^L_{\Lc \oplus L_W}(f(\tau_1)), \overline{\Theta_{\Lc}(\tau_2)}\rangle_{\Lc}$.
    Using \eqref{eq:Rtf}, we have
    \begin{align*}
          (4\pi)^{-r}&    \langle R^r_{\tau} f(\tau), \overline{\Theta_L(\tau, Z(W))}\rangle_L
=    (4\pi)^{-r} (R_{\tau_1}^r
    \langle f(\tau_1), \overline{\Theta_L(\tau, Z(W))}\rangle_L  \mid_{\tau_1 = \tau}  \\
      &=    \langle    (4\pi)^{-r} (R_{\tau_1}^r (\tf)^\Delta,
\overline{\Theta_{L_W}(\tau, Z(W))     }   \rangle_{L_W}\\
        &= \sum_{\ell = 0}^r c^{(r; k_1, k_2)}_\ell (4\pi)^{-r+\ell} \langle R_{\tau}^{r-\ell}\tf_\ell , \overline{\Theta_{L_W}(\tau, Z(W))     }   \rangle_{L_W},
    \end{align*}
    where $k_1 = -2r + 1 - \frac{n}{2}, k_2 = \frac{n}{2}-1$ and
    $$
    \tf_\ell :=
    \RC_{\ell, (k_1, k_2)}(\tf)^\Delta \in M^!_{-2r+2\ell, L_W}
    $$
    has rational Fourier coefficients.
    Therefore, we have
    \begin{equation}
      \label{eq:n2decomp}
      \Phi^r_f(Z(W))
      =  \sum_{\ell = 0}^r c^{(r; k_1, k_2)}_\ell \Phi^{r-\ell}_{\tf_\ell}(Z(W)).
    \end{equation}
If $f$ has the Fourier expansion
$$
f(\tau) = \sum_{\nu \in L^\vee/L,~ n \in \Zb + Q(\nu)} c(n, \nu) q^{n} \ef_\nu,
$$
then the $(m, \mu)$-th Fourier coefficient of $\tf_\ell$, denoted by $c_\ell(m, \mu)$, can be expressed as
$$
c_\ell(m, \mu) = \sum_{\lambda_\circ \in \Lc^\vee} Q_{\ell, (k_1, k_2)}(m-Q(\lambda_\circ), Q(\lambda_\circ)) c(m- Q(\lambda_\circ), (\lambda_\circ, \mu)).
$$
with $Q_{\ell, (k_1, k_2)}(X, Y) \in \Qb[X, Y]$ defined in \eqref{eq:Qs}.
In particular when $Z(W) \cap Z_f = \emptyset$, we have $c_r(0, \mu) = 0$ for all $0 \le \ell \le r$ and $\mu \in L^\vee/L$ as $c(-Q(\lambda_\circ), (\lambda_\circ, \mu)) = 0$ for all $\lambda_\circ \in \Lc^\vee$ and $\mu \in L_W^\vee/L_W$ by \eqref{eq:singint}.

    By Corollary \ref{cor:1-CM}, we can write
    \begin{align*}
      \kappa    \Phi^r_f(Z(W)) =
-2 \frac{\deg(Z(W))}{\Lambda(0, \chi)}
&    \sum_{\ell = 0}^r c^{(r; k_1, k_2)}_\ell
    \sum_{m > 0,~ \mu \in L_W^\vee/L_W} c_\ell(-m, \mu)m^{r-\ell}\\
&\times    \sum_{\substack{\lambda \in F^\times \cap M^{-1}\Oc\\ \lambda\gg 0\\ \tr(\lambda) = m}}
P_{r-\ell}\lp \frac{\lambda - \lambda'}{m}\rp
\log|\gamma_\ell(\lambda, \phi_{\mu})|
    \end{align*}
    with
    $\gamma_\ell(\lambda, \phi_\mu) \in F^\times$ having factorization as in \eqref{eq:gammafac} independent of $\ell$, and express $\Phi^r_f(Z(W))$ as in \eqref{eq:sumalg} such that
    \begin{equation}
      \label{eq:n2fac}
      \begin{split}
  \kappa^{-1} \ord_\pf(a_j) =
  -2 \frac{\deg(Z(W))}{\Lambda(1, \chi)}
&  \sum_{\substack{m > 0\\ \mu \in L_W^\vee/L_W\\ \lambda_\circ \in \Lc^\vee}}
  c(-m- Q(\lambda_\circ), (\lambda_\circ, \mu))\\
&\times  \sum_{\substack{0 \le \ell \le r\\ r - \ell \equiv j \bmod{2}}} c^{(r; k_1, k_2)}_\ell
  Q_{\ell, (k_1, k_2)}(-m-Q(\lambda_\circ), Q(\lambda_\circ))  \\
&\times    \sum_{\substack{\lambda \in F^\times \cap M^{-1}\Oc\\ \lambda\gg 0\\ \tr(\lambda) = m\\\Diff(W, \lambda) = \{\pf\} }}
\frac{1}{\sqrt{D}^{j \bmod{2}}} P_{r-\ell}\lp \frac{\lambda - \lambda'}{m}\rp \tW_\lambda(\phi_\mu)
\end{split}
\end{equation}
for all prime $\pf$ of $F$.
\end{proof}

Finally, we prove Theorem \ref{thm:main}.

\begin{proof}[Proof of Theorem \ref{thm:main}]
  By the main result in \cite{Li23}, we have $\tkappa \in \Nb$ and Galois equivariant maps $\tilde\alpha_j: T_W(\hat\Qb) \to E^{\mathrm{ab}}$ satisfying
\begin{equation}
  \label{eq:diffalg}
\Phi_{f}^r([z_0, h]) - \Phi_{f}^r([z_0, h'])
=
\frac{1}{\tkappa}
\lp
\log \left|\frac{\tilde\alpha_1(h)}{\tilde\alpha_1(h')}\right|
+ \sqrt{D} \log \left|\frac{\tilde\alpha_2(h)}{\tilde\alpha_2(h')}\right|
\rp
\end{equation}
for all $h, h' \in T_W(\hat\Qb)$.
Furthermore when $n = 2$, we have $\tilde\alpha_j = 1$ for $j \equiv r \bmod{2}$.
  Setting $\alpha_j(h) := a_j \prod_{[z_0, h'] \in Z(W) \backslash [z_0, h]} \frac{\tilde\alpha_j(h)}{\tilde\alpha_j(h')}$ and $\kappa := \tkappa |Z(W)|$ and applying equations \eqref{eq:diffalg} and \eqref{eq:sumalg} proves the first two claims. Combining with Corollary \ref{cor:1}, we see that Conjecture \ref{conj:GZ} holds.
\end{proof}

Conflicts of interest: none.

\bibliography{RETL}{}
\bibliographystyle{alpha}

\end{document}